\newtheorem{dummy}{dummy}[section]
\newtheorem{lemma}[dummy]{Lemma}
\newtheorem{theorem}[dummy]{Theorem}
\newtheorem{corollary}[dummy]{Corollary}
\newtheorem{proposition}[dummy]{Proposition}
\theoremstyle{definition}
\newtheorem{definition}[dummy]{Definition}
\newtheorem{example}[dummy]{Example}
\newtheorem{remark}[dummy]{Remark}
\newcommand{\bA}{\mathbb{A}}
\newcommand{\C}{k}
\newcommand{\bG}{\mathbb{G}}
\newcommand{\bN}{\mathbb{N}}
\newcommand{\bP}{\mathbb{P}}
\newcommand{\R}{\mathbb{R}}
\newcommand{\Z}{\mathbb{Z}}
\newcommand{\bZ}{\mathbb{Z}}
\newcommand{\cB}{\mathcal{B}}
\newcommand{\cC}{\mathcal{C}}
\newcommand{\cE}{\mathcal{E}}
\newcommand{\cF}{\mathcal{F}}
\newcommand{\cO}{\mathcal{O}}
\newcommand{\cP}{\mathcal{P}}
\DeclareMathOperator{\val}{val}
\DeclareMathOperator{\Perf}{Perf}
\DeclareMathOperator{\Qcoh}{QCoh}
\DeclareMathOperator{\Spec}{Spec}
\DeclareMathOperator{\Dsing}{DSing}
\DeclareMathOperator{\Coh}{Coh}
\DeclareMathOperator{\MF}{MF}
\newcommand{\lf}{\mathrm{lf}}
\newcommand{\Fbal}{\mathrm{Fuk}^{bal}}
\newcommand{\Fbalc}{\mathrm{Fuk}^{bal,c}}
\newcommand{\Fex}{\mathrm{Fuk}^{ex}}
\newcommand{\Fexc}{\mathrm{Fuk}^{ex,c}}
\newcommand{\Fw}{\mathrm{Fuk}^{w}}
\newcommand{\Fc}{\mathrm{Fuk}^{c}}
\newcommand{\Flf}{\mathrm{Fuk}^{\lf}}
\newcommand{\sbu}{[\![u]\!]}
\newcommand{\rbu}{(\!(u)\!)}
\begin{document}

\title{Fukaya categories of higher-genus surfaces and pants decompositions}
\begin{abstract} In this paper we prove a local-to-global principle for the Fukaya category of a closed Riemann surface $\Sigma$ of genus $g \geq 2$. We  show that $\mathrm{Fuk}(\Sigma)$ can be glued from the Fukaya category of the pair-of-pants making up a pants decomposition of $\Sigma$. This extends our earlier results for the case of punctured Riemann surfaces. 
Our result has several interesting consequences: we obtain   simple proofs of old and new HMS  statements for Riemann surfaces, and establish a  geometrization theorem for the objects of $\mathrm{Fuk}(\Sigma)$.
\end{abstract}

\author{James Pascaleff}
\address{James Pascaleff, Department of 
Mathematics, University of Illinois at Urbana-Champaign, IL, US}
\email{jpascale@illinois.edu}

\author{Nicol\`o Sibilla}
\address{Nicol\`o Sibilla, 
SISSA,
Via Bonomea 265, 
34136 Trieste, Italy}
\email{nsibilla@sissa.it}

\maketitle

{\small \tableofcontents}

\section{Introduction}
\label{sec:introduction}

The problem of homological mirror symmetry (HMS) where the A-side is taken to be a Riemann surface has been much studied. When the surface is compact of genus $g \geq 2$, this problem is an instance of mirror symmetry for varieties of general type \cite{kapustin2009homological, gross2017towards}. For $g = 2$, this form of HMS was proven by Seidel \cite{seidel2011homological}, and Efimov \cite{efimov2012homological} extended Seidel's method to higher genera.

The case of open or punctured Riemann surfaces has also received a lot of attention, including works of Bocklandt \cite{bocklandt2016noncommutative}, Abouzaid-Auroux-Efimov-Katzarkov-Orlov \cite{aaeko}, Lee \cite{lee2016homological}, Ruddat \cite{ruddat2017perverse}, Lekili-Polishchuk \cite{lekili2018auslander}, and Cho-Hong-Lau \cite{cho2018gluing}. The structure of Fukaya categories of surfaces was elucidated by Haiden-Katzarkov-Kontsevich \cite{haiden2017flat}, and also by the theory of topological Fukaya categories as presented for instance in \cite{dyckerhoff2018triangulated}. The viewpoint of this paper and our previous work \cite{pascaleff2019topological} is similar to those of Lee and Cho-Hong-Lau, since they seek to understand the Fukaya category of a surface by decomposing it into pairs of pants (see also \cite{seidel2012some}).

Returning to the compact case, the mirror geometry is a certain Landau-Ginzburg model defined on a three fold. In each case, the mirror model is presented in a fairly complicated way, and in particular the connection to the geometry of the genus $g$ surface is not readily apparent (of course, by actually reading the proof one sees the connection, but it is based on branched covering arguments and does not seem very intrinsic.)

Part of the reason why the setting of Seidel and Efimov's results is so intricate is that, although compact Riemann surfaces are among the simplest examples of symplectic manifolds, describing their Fukaya category is not obvious. The main issue is that the Fukaya category is a global invariant which cannot be computed from simpler local data, or at least this is the general expectation.

 It turns out that this is not always true: mathematicians have realised that there is a large class of symplectic manifolds for which the Fukaya category has good local-to-global properties. 
  One of the first results in this direction was Nadler and Zaslow's description of the Fukaya category of cotangent bundles in terms of constructible sheaves on the base \cite{nadler2009constructible}. Later in \cite{kontsevich2009symplectic}  Kontsevich 
conjectured that the Fukaya category of Weinstein manifolds should localize over a Lagrangian skeleton. This is an area of intense current research;   
a local-to-global (sheaf-theoretic) description of the Fukaya category of Weinstein symplectic manifolds has now been  established in many cases, in \cite{ganatra2020covariantly} and in subsequent articles by the same authors. 

This point of view has been applied with great success to HMS. The sheaf-theoretic models of the Fukaya category are, as a rule, much easier to compute since everything relies on relatively simple local calculations; and there are no pseudo-holomorphic discs involved.  
The HMS statements one is interested in often localize: complicated equivalences can be obtained by putting together more accessible local statements. We followed this strategy in our previous article \cite{pascaleff2019topological} to study Hori--Vafa mirror symmetry for toric Calabi--Yau threefolds. The mirror symplectic manifolds are punctured surfaces, and their Fukaya category can be studied very efficiently via sheaf-theoretic methods. 

How much of this picture can be expected to hold in the compact setting? In this article we address this question in the simplest case of compact Riemann surfaces of genus $g \geq 2$. We will give a complete local-to-global description of their Fukaya category which parallels our earlier results for punctured Riemann surfaces. We will use this description to study HMS for Riemann surfaces: we obtain new and simpler proof of Seidel and Efimov's results and obtain many new HMS equivalences; in fact, in a way our methods allow us to study at once all possible mirror partners of compact Riemann surfaces, and all possible HMS equivalences.

Except for interesting proposals of Tamarkin \cite{tamarkin2015microlocal} and Tsygan \cite{tsygan2009oscillatory} from a different point of view, to the best of our knowledge this is the first description of the Fukaya category of a compact manifold in terms of local-to-global data.
The compact setting is seemingly very different from the exact setting. One of  key players in the exact story   are \emph{skeleta}, i.e. half-dimensional CW complexes sitting inside the symplectic manifold as Lagrangian deformation retracts. Kontsevich's proposal relied precisely on the fact that we could flow Lagrangians in an exact symplectic manifold as close as we wish to a skeleton, where the symplectic geometry is the same as that of  cotangent bundles. However, for obvious topological reasons, compact symplectic manifolds do not admit a skeleta.\footnote{Roughly, the choice of skeleton is analogous to choosing a Morse function all of whose critical points have index at most half the real dimension; on a compact manifold any Morse function must have a critical point of index equal to the dimension.}

Nonetheless our results are a proof of concept that  a sheaf theoretic approach is feasible also in the compact case, at least under appropriate assumptions.  We stress that the restriction on the genus ($g \geq 2$) is not accidental. The hyperbolic/general type setting is similar to the exact setting in that contributions from pseudo-holomorphic disks can be controlled. Formally, this has the consequence that, contrary to the general case, the Fukaya category can be defined over the ground field rather than the Novikov ring.  In future work we plan to study generalisations of our techniques to higher dimensions, where some of the tricks we will use in this paper will not be available.

Let us give a more detailed summary of our results and of the main ideas going into their proofs.
%

\subsection{Main results}
The starting point for this of project was our earlier work \cite{pascaleff2019topological}. Our main technical result there was a description of the Fukaya category of a Riemann surface with punctures $\Sigma$ in terms of the Fukaya category of the pairs-of-pants.  
Let $\mathcal{P}$ be a pants decomposition of $\Sigma$, and denote by $P \in \mathcal{P}$ the individual pairs-of-pants. In this model is convenient to view pants as overlapping along cylinders: so the intersection of two neighbouring pants is a symplectic cylinder $C$. We proved that $\mathrm{Fuk}(\Sigma)$ can be expressed as a (homotopy) equalizer  of dg categories
\begin{equation}
\label{fssvb}
\mathrm{Fuk}(\Sigma) \simeq \varprojlim \Big ( \xymatrix{\prod_{P\in \mathcal{P}} \mathrm{Fuk}(P)
\ar@<-.5ex>[r]_-*!/d0.5pt/{\labelstyle }
\ar@<.5ex>[r]^ -*!/u0.7pt/{\labelstyle } 
& \prod_{C=P_1 \cap P_2} \mathrm{Fuk}(C) \Big )
}
\end{equation} 
This was the key input in our proof of HMS for $\Sigma$  which reduces to checking it for the pairs-of-pants $P$, and then using equivalence (\ref{fssvb}) to generalize it to all genera and an arbitrary number of punctures.

Equivalence (\ref{fssvb})  is closely related to, but different from, the local-to-global properties which are built in in the definition of the topological Fukaya category: it is a genuinely different manifestation of the locality of the Fukaya category in the exact setting, since it does not follow from gluing together local descriptions arising from an open cover of the skeleton; instead (\ref{fssvb}) can be proved by taking \emph{closed} covers of the skeleton of $\Sigma$ with suitable properties. However the statement finally does not even refer to the skeleton, and only depends on a pants-decomposition: of course, up to equivalence, any pants-decomposition yields the same answer.

We are stressing this point because whereas the more familiar locality statements available in the exact setting depend on working on a skeleton, (\ref{fssvb}) does not. And in fact, whereas skeleta are not well-defined for compact Riemann surfaces and so approaches based on skeleta cannot extend beyond the exact setting, it turns out that  (\ref{fssvb}) holds also in the compact setting. This is the content of our first main result; we refer the reader to Theorem \ref{essential} in the main text for a more precise statement: 
\begin{theorem}
\label{mainintro}
Let $\Sigma$ be a compact Riemann surface of genus $g \geq 2$, equipped with a pants decomposition $\mathcal{P}$. Then there is an equivalence of categories
\begin{equation}
\label{fssvb2}
\mathrm{Fuk}(\Sigma) \simeq \varprojlim \Big ( \xymatrix{\prod_{P\in \mathcal{P}} \mathrm{Fuk}(P)
\ar@<-.5ex>[r]_-*!/d0.5pt/{\labelstyle }
\ar@<.5ex>[r]^ -*!/u0.7pt/{\labelstyle } 
& \prod_{C=P_1 \cap P_2} \mathrm{Fuk}(C) \Big )
}
\end{equation}
\end{theorem}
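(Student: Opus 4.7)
The plan is to mimic the strategy used in \cite{pascaleff2019topological} for the punctured case, but upgrading each step so that it works in the compact hyperbolic setting where disk bubbling cannot be avoided by working on a skeleton. The first move is to construct the comparison functor
\[
\Phi: \mathrm{Fuk}(\Sigma) \longrightarrow \varprojlim \Big( \prod_{P} \mathrm{Fuk}(P) \rightrightarrows \prod_{C} \mathrm{Fuk}(C) \Big),
\]
by assembling restriction functors $\mathrm{Fuk}(\Sigma) \to \mathrm{Fuk}(P)$ for each pair-of-pants $P$ and checking compatibility on overlaps $C$. Because the pants $P$ and cylinders $C$ appearing in the decomposition are subsurfaces of $\Sigma$ with convex boundary (circles isotopic to the pants curves), any compact exact Lagrangian in $P$ or $C$ can be regarded as a Lagrangian in $\Sigma$; conversely, after an isotopy that displaces a global Lagrangian away from the pants curves, one obtains restrictions to each piece. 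The fact that the pants decomposition covers $\Sigma$ by overlapping pieces, each containing a cylindrical neck, is what gives rise to the two parallel arrows in the cosimplicial diagram.

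The heart of the argument is to show $\Phi$ is an equivalence. For essential surjectivity, I would show that a set of generators of the right-hand equalizer is naturally matched by Lagrangians in $\Sigma$: a natural candidate is the collection of immersed curves obtained by gluing the arcs inside each $P$ (which generate $\mathrm{Fuk}(P)$) along the matching data on the cylinders $C$. This is exactly the geometrization statement advertised in the abstract, so one expects it to fall out of the same argument. For fully faithfulness, the task is a Mayer--Vietoris/descent statement for Floer complexes: one needs $CF^*_\Sigma(L,L')$ to be computed as the homotopy equalizer of $\prod_P CF^*_P(L|_P, L'|_P) \rightrightarrows \prod_C CF^*_C(L|_C, L'|_C)$. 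Chain-level this reduces to a partition-of-intersection-points statement: every generator of $CF^*_\Sigma(L,L')$ lies in some $P$, every generator in a cylindrical overlap is counted twice (once in each adjacent pant), and each holomorphic strip is confined to one of the pieces.

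The main obstacle, and the step that is genuinely new compared to the punctured case, is confining the $J$-holomorphic strips. In the exact setting we could shrink Lagrangians onto a skeleton and invoke an open-mapping/maximum-principle argument against cylindrical ends; here $\Sigma$ is compact and such arguments are unavailable. The plan is to exploit the hyperbolic geometry: equip $\Sigma$ with a hyperbolic metric for which the pants curves are closed geodesics, and choose the almost complex structure compatible with this geometry. One then argues, via a monotonicity/area bound of the style used by Seidel and Abouzaid for flat/hyperbolic surfaces, that a $J$-holomorphic strip with boundary on Lagrangians supported in a single $P$ whose boundary passes through a neighboring pant must have area exceeding the area of the pant, forcing trivial contributions. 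This is exactly the mechanism that lets the Fukaya category be defined over $\C$ rather than the Novikov ring, and it is the reason the hypothesis $g\geq 2$ is essential.

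Once these two ingredients are in place, the remainder is bookkeeping: checking that the restriction functors are $A_\infty$-functors with the expected compatibility, verifying that the cosimplicial diagram one writes down is indeed a homotopy equalizer (and not just a $1$-categorical equalizer), and matching conventions with those used in the punctured case so that Theorem~\ref{essential} reduces on the nose to the equivalence in the statement.
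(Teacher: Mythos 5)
Your proposal takes a genuinely different route from the paper. You propose building the comparison functor $\Phi$ directly from restriction functors $\mathrm{Fuk}(\Sigma) \to \mathrm{Fuk}(P)$ and then proving a Mayer--Vietoris statement for Floer complexes in $\Sigma$ itself. The paper instead passes to the maximal tropical cover $\pi:\widetilde{\Sigma}\to\Sigma$, which is an infinite-type but Stein (hence exact) surface, defines a locally finite Fukaya category $\Flf(\widetilde{\Sigma})$ built from Viterbo restrictions between finite-type exact subsurfaces, and proves $\mathrm{Fuk}(\Sigma) \simeq \Flf(\widetilde{\Sigma})^{O_\pi}$ via a pullback functor and an equivariance argument. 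The theorem then follows by combining this with the already-established descent of the topological Fukaya category over the graph $\widetilde{G}$. The paper explicitly contemplates your style of direct-gluing argument (cf. the remark after Theorem \ref{mainintro}) and attributes it to Lee and Auroux--Smith, but notes that the existing direct-gluing technology only covers cutting a compact surface along a single circle and has not been extended to a full pants decomposition; the paper deliberately sidesteps that difficulty by reducing everything to the exact setting via the covering.

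There is a genuine gap in the core of your argument, concentrated in two places. First, the restriction functors $\mathrm{Fuk}(\Sigma) \to \mathrm{Fuk}(P)$ are not constructed, and their existence is the hard technical content of this theorem. You assert that ``after an isotopy that displaces a global Lagrangian away from the pants curves, one obtains restrictions to each piece,'' but a non-separating simple closed curve cannot be isotoped off all pants curves simultaneously, and the target $\mathrm{Fuk}(P)$ is a (partially) wrapped category of an open surface whose objects include non-compact arcs; turning a compact immersed curve in $\Sigma$ that crosses a pants curve into a compatible system of objects across the $P$'s and $C$'s requires the carefully-chosen-Hamiltonian machinery of Lee, which you do not supply and which has not been worked out for simultaneous cuts. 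Second, the confinement claim for $J$-holomorphic strips is false as stated. The pants curves are interior circles of a closed surface, not boundaries; there is no maximum principle available (unlike in the Liouville/cylindrical-end case), and a holomorphic strip between two curves in $\Sigma$ that each cross several pants curves will in general itself cross them. Your appeal to hyperbolic geometry conflates two different issues: area bounds from hyperbolicity do control Gromov compactness and allow the Fukaya category to be defined over the ground field rather than the Novikov ring, but they do not make cross-pants contributions vanish, so they do not deliver the partition-of-strips decomposition your Mayer--Vietoris argument needs. Your essential surjectivity step via geometrization is in the right spirit (the paper also uses geometrization), but in the paper this is applied to the open surface $\widetilde{\Sigma}$, where geometricity is already known from Haiden--Katzarkov--Kontsevich and Auroux--Smith; the compact-surface geometrization you invoke is in fact a \emph{consequence} of the theorem being proved, so it cannot be used as an input without making the argument circular.
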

The idea behind the proof of Theorem \ref{mainintro} is very simple. It is a variation on a trick which has been used often in symplectic geometry, namely translating between the Fukaya category of a space and that of an unramified covering. Except that, contrary to what  usually happens, we  work with an \emph{infinite} covering $\pi:\widetilde{\Sigma} \to \Sigma$, which we call the \emph{maximal tropical cover} (see Section \ref{rsamtc} for the construction). 
The group of deck transformations of $\pi$ is a free abelian group $O_\pi$ of rank equal to the genus of $\Sigma$. The surface $\widetilde{\Sigma}$ is in a sense a much more complicated object that $\Sigma$: it is not of finite type, and comes with an infinite pants decomposition  $\widetilde{\cP}$ induced by that of $\Sigma$. However, as all non-compact Riemann surfaces it is Stein: its Fukaya category is amenable to sheaf-theoretic methods, and we show that equivalence (\ref{fssvb}) from our previous paper applies.

In fact, since  
$\widetilde{\Sigma}$ is not finite type, one needs to finesse somewhat  the  kind of Fukaya category one wants to work with, because there are several meaningful options. It turns out that for our argument, the relevant object is what we call the \emph{locally finite} Fukaya category, and define in Section \ref{lffc}: this includes both compact and non-compact Lagrangians, but satisfying a local finiteness condition. We denote it $\mathrm{Fuk}^{\lf}(\widetilde{\Sigma})$; it carries a $O_{\pi}$-action because  $O_{\pi}$ acts via deck transformations on $\widetilde{\Sigma}$. The key observation is that there is an equivalence (see Theorem \ref{essential} in the main text)
\begin{equation}
\label{gammaeq}
\mathrm{Fuk}^{\lf}(\widetilde{\Sigma})^{O_{\pi}} \simeq \mathrm{Fuk}(\Sigma)
\end{equation}
between $O_{\pi}$-equivariant objects in $\mathrm{Fuk}^{\lf}(\widetilde{\Sigma})$, and the Fukaya category of $\Sigma$. The left-hand side is in a way more complicated but  we can understand it via sheaf theoretic methods and equivalence (\ref{fssvb}); while  $\mathrm{Fuk}(\Sigma)$ can be defined only in terms of Floer-theory. Equivalence (\ref{gammaeq}) readily implies Theorem \ref{mainintro}.

  \begin{remark}
    The strategy to prove Theorem \ref{mainintro} presented here is not the only one we can imagine. Heather Lee \cite{lee2016homological}
    uses carefully chosen Hamiltonians to prove gluing results for open surfaces, and Auroux-Smith have recently extended Lee's technique to the case where a compact surface is cut along a single simple closed curve \cite[Corollary 5.8]{auroux2020fukaya}. It seems highly plausible that these techniques could be extended to the case of cutting a compact surface along several circles simultaneously, which would allow one to deduce Theorem \ref{mainintro}. Instead of using Lee's techniques, we use more generic properties of Fukaya categories, such as the relationship between covering spaces and equivariance, and Viterbo restriction functors for wrapped categories of open surfaces (in addition to the detailed understanding of the structure of the Fukaya categories of open surfaces coming from Haiden-Katzarkov-Kontsevich \cite{haiden2017flat}).
  \end{remark}

Additionally our methods allow us to answer an open question from \cite{haiden2017flat}. One of the main results of \cite{haiden2017flat} is a geometrization theorem for the 
($\bZ$-graded) Fukaya category of a punctured surface $\Sigma$: the authors prove that all objects in $\mathrm{Fuk}(\Sigma)$ are \emph{geometric}, i.e. they are equivalent in the Fukaya category to a union of arcs and immersed curves equipped with a brane structure. Question 2 in Section 7 of \cite{haiden2017flat} asks whether geometrization holds in the compact setting. 
In this article we give a complete positive answer (see Corollary \ref{geometric} in the main text)
\begin{theorem}
Let $\Sigma$ be a closed surface of genus $g \geq 2$. Then all objects in $\mathrm{Fuk}(\Sigma)$ are geometric.
\end{theorem}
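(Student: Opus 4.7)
The plan is to reduce the geometrization statement on the closed surface $\Sigma$ to the Haiden--Katzarkov--Kontsevich theorem on the open cover $\widetilde{\Sigma}$, using the equivalence (\ref{gammaeq}): $\mathrm{Fuk}^{\lf}(\widetilde{\Sigma})^{O_\pi} \simeq \mathrm{Fuk}(\Sigma)$.

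First, given $L \in \mathrm{Fuk}(\Sigma)$, I transport it through (\ref{gammaeq}) to an $O_\pi$-equivariant object $\widetilde{L} \in \mathrm{Fuk}^{\lf}(\widetilde{\Sigma})$. Since $\widetilde{\Sigma}$ is a non-compact Stein surface, I would then establish a version of the HKK geometrization theorem for $\mathrm{Fuk}^{\lf}(\widetilde{\Sigma})$, producing a geometric representative $\widetilde{\mathcal{L}}$: a locally finite collection of graded, possibly non-compact, immersed curves. The HKK argument is proved for finite-type surfaces, but it should bootstrap to $\widetilde{\Sigma}$ via the gluing equivalence (\ref{fssvb}) from \cite{pascaleff2019topological} applied to the infinite pants decomposition $\widetilde{\mathcal{P}}$ induced by $\mathcal{P}$: on each pair-of-pants $P \in \widetilde{\mathcal{P}}$, the restriction of $\widetilde{L}$ is geometric by HKK, and compatibility across the cylindrical overlaps forces these local representatives to assemble into a global locally finite brane $\widetilde{\mathcal{L}}$.

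Next, I would upgrade $\widetilde{\mathcal{L}}$ to an $O_\pi$-equivariant brane. Each $\gamma \in O_\pi$ induces an isomorphism $\gamma \cdot \widetilde{L} \xrightarrow{\sim} \widetilde{L}$, which by the uniqueness part of HKK is realized by an isotopy of immersed branes $\gamma \cdot \widetilde{\mathcal{L}} \to \widetilde{\mathcal{L}}$. These isotopies satisfy the cocycle relations up to higher isotopies that are themselves rigid, so one can rectify $\widetilde{\mathcal{L}}$ to a strictly $O_\pi$-invariant collection of immersed branes. The quotient $\mathcal{L} := \pi(\widetilde{\mathcal{L}})$ is then a collection of immersed branes in $\Sigma$; local finiteness of $\widetilde{\mathcal{L}}$ combined with $O_\pi$-invariance forces $\mathcal{L}$ to have finitely many components, and any non-compact component of $\widetilde{\mathcal{L}}$ must descend to a closed immersed curve in $\Sigma$ (since $\Sigma$ is closed and has no ends). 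The brane data descends by equivariance, and transporting back through (\ref{gammaeq}) shows that $\mathcal{L}$ represents $L$.

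The main obstacle is the first step, where one must extend HKK's geometrization from finite-type surfaces to $\mathrm{Fuk}^{\lf}(\widetilde{\Sigma})$ in a manner compatible with the equivariant structure. The gluing technology of (\ref{fssvb}) together with the local HKK classification on each pair-of-pants provides the key mechanism, but making the local geometric representatives canonical enough to glue equivariantly, and then verifying that the rectification is possible without destroying the brane data, is where the technical work lies.
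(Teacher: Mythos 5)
Your overall strategy matches the paper's: transport the question through the equivalence $\mathrm{Fuk}(\Sigma)\simeq \mathrm{Fuk}^{\lf}(\widetilde{\Sigma})^{O_\pi}$, geometrize on the open cover $\widetilde{\Sigma}$ using HKK/Auroux--Smith, and then descend. However, you misidentify where the real difficulty lies, and the step you handle most casually is the one that requires genuine work.

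You flag extending geometrization to $\widetilde{\Sigma}$ as ``the main obstacle,'' but the paper disposes of this quite directly in Corollary \ref{geometricupstairs}: it does not reassemble local models pair-of-pants by pair-of-pants via the gluing equivalence as you propose, but instead exploits the \emph{uniqueness up to unique isotopy} of geometric representatives on each finite-type subsurface $\widetilde{\Sigma}_N$ to build, by induction along the exhaustion, a single globally coherent immersed model. This is more economical and sidesteps the need to match infinitely many local choices across cylindrical overlaps. Your route is plausible but would require the same uniqueness statement anyway to make the local representatives canonical, so it does not save anything.

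The genuine gap is in the equivariantization step. You write that the isomorphisms $\gamma\cdot\widetilde{L}\to\widetilde{L}$ are realized by isotopies ``which satisfy the cocycle relations up to higher isotopies that are themselves rigid, so one can rectify.'' This is not justified and is precisely where the paper works hardest. An $O_\pi$-equivariant structure is a collection of isomorphisms on the underlying object that can \emph{mix} together distinct geometric components: block-triangular or permuting equivariant data are genuinely possible, and indeed occur (a noncompact component with $H\cong\mathbb{Z}$ stabilizer contributes a rank-$r$ local system on the quotient curve when $r$ copies are mixed by the $H$-action). The paper deals with this in Proposition \ref{componentsofequivariant} and, crucially, Lemma \ref{lem:split}: there, intermediate coverings $\widetilde{\Sigma}/H$ for proper saturated sublattices $H\subset O_\pi$ are used, together with geometricity on those intermediate covers, to show that one can conjugate the equivariant structure into a form where there is no mixing between components corresponding to non-isotopic curves in $\Sigma$. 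This is what makes the final splitting $\tilde{C}^{\oplus r}$ in Proposition \ref{prop:split} possible, and it is not a matter of ``rigid higher isotopies.'' Your sketch would need this intermediate-cover argument, or something equivalent, to be complete. The rest of your descent logic (finitely many components from local finiteness plus equivariance; noncompact components descending to closed curves) is correct and matches the paper's reasoning.
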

We remark that the case $g=0$ of the statement is trivial, the case and $g=1$ is essentially equivalent via HMS to Atiyah's classification of vector bundles on elliptic curves.

  Auroux-Smith \cite{auroux2020fukaya} also considers the question of geometricity of objects in Fukaya categories of compact surfaces. Their result is different in that it has both a stronger hypothesis and a stronger conclusion: it says that a spherical object with nonzero Chern character is represented by a simple closed curve with rank one local system.

\subsubsection*{Homological mirror symmetry}
This description of the Fukaya category of a compact Riemann surface of genus $g \geq 2$ has  significant applications to HMS. In particular, we recover  Seidel and Efimov's beautiful HMS  results  for curves of genus $g \geq 2$ \cite{seidel2011homological} \cite{efimov2012homological}. Our method of proof  however is different,  and can help clarify why these mirror constructions actually work. Both Seidel and Efimov start from carefully designed, and somewhat ad hoc, superpotentials. The proof then goes through familiar but delicate steps in HMS: matching generators, and deformation theory.  In particular, as pointed out by Seidel in the Introduction of \cite{seidel2011homological}, these proofs  do not apply to 
the other constructions of LG mirrors of curves   proposed in the literature. 
 
%

In order to apply Theorem \ref{mainintro} to the problem of  HMS, we need to study the singularity category of a normal crossing surface $X$. We require that $X$ that may be presented as $f^{-1}(0)$ for some morphism $f : Y \to \bA^{1}$ from a smooth $3$-fold $Y$ to the line, and also that the dual intersection complex of $X$ be orientable. We require also that the irreducible components of $Z$, the singular locus of $X$, are rational curves. These rational curves correspond to the edges of a trivalent graph $G(X)$ that encodes their intersections. We shall prove that, as long as $X$ satisfies the stated requirements, the singularity category $\Dsing(X)$ depends only on the graph $G(X)$ but not on any other details of $X$. For comparison, there is a more general result of Orlov, that states that $\Dsing(X)$ (which in this paper is always taken idempotent complete) only depends on the infinitesimal neighbourhood of $Z$ in $X$. In our setting, the singularity category is also insensitive to the infinitesimal neighbourhood, and is completely determined by the combinatorics of $Z$; we must remark, however, that the requirement that $X$ arises as the fiber of a morphism from a smooth $3$-fold does entail a topological restriction on the infinitesimal neighbourhood.

The main ingredient is that $\Dsing(-)$ satisfies \'etale (and in particular Zariski) descent with respect to varieties that are presented as the zero fiber of a morphism. This allows us to write $\Dsing(X)$ as a limit that matches the limit calculating the Fukaya category in  Theorem \ref{mainintro}. Our main result is a kind of universal HMS statement for compact Riemann surfaces, which we state as Theorem \ref{hmsintro} below.

If $X$ is a normal crossing surface with rational singular locus $Z$ as before, we  associate to it a trivalent graph $G(X)$ as follows: the vertices of $G(X)$ are the singular points of $Z$; two vertices are joined by an edge if they lie on the same irreducible component of $Z$.  Pants decompositions of Riemann surfaces also give rise to trivalent graphs via their dual intersection complex. Up to homeomorphism, there is a unique Riemann surface $\Sigma_{G(X)}$ equipped with a pants decomposition corresponding to $G(X)$: if $Z$ is compact, then $\Sigma_{G(X)}$ is a compact Riemann surface without boundary; the genus of $\Sigma_{G(X)}$ is equal to the rank of $H_1(G(X), \mathbb{Z})$. 

The following is our main HMS result, see Theorem \ref{main} in the body of the paper.

\begin{theorem}
\label{hmsintro}
There is an equivalence of categories 
$$
\Dsing(X) \simeq \mathrm{Fuk}(\Sigma_{G(X)})
$$
\end{theorem}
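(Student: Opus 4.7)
The plan is to exhibit both sides of the claimed equivalence as limits over the same combinatorial diagram associated to $G(X)$, and then match the local pieces by a known pants-level HMS. Theorem \ref{mainintro} already accomplishes this on the symplectic side: it realizes $\mathrm{Fuk}(\Sigma_{G(X)})$ as an equalizer of the Fukaya categories of the pants indexed by the vertices of $G(X)$, mapping into the Fukaya categories of the overlap cylinders indexed by the edges.

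The first step is to produce the analogous expression on the algebraic side. I would cover $X$ by Zariski opens $X_v$ indexed by the vertices $v$ of $G(X)$ (i.e., the triple points of $Z$), chosen so that each $X_v$ retracts onto the ``tripod'' neighborhood of $v$ in $Z$, and so that pairwise intersections $X_v \cap X_w$ are neighborhoods of a single irreducible rational component of $Z$ corresponding to the edge joining $v$ and $w$, with triple intersections being negligible. Since $X$ is presented as $f^{-1}(0)$, the restrictions of $f$ present each $X_v$ and each $X_v \cap X_w$ as zero fibers as well, so the promised Zariski descent for $\Dsing$ of zero fibers of morphisms yields
\[
\Dsing(X) \;\simeq\; \varprojlim \Big( \prod_v \Dsing(X_v) \rightrightarrows \prod_{\{v,w\}} \Dsing(X_v \cap X_w) \Big).
\]

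The second step is to match the vertex and edge pieces with pants and cylinders via local HMS. Each $X_v$ should deformation-retract (in the $\Dsing$-sense) to a standard model of three smooth surfaces meeting transversely along coordinate axes, whose singularity category is already known to be equivalent to $\mathrm{Fuk}(P)$ for a pair-of-pants $P$; this is a piece of the HMS results of Abouzaid--Auroux--Efimov--Katzarkov--Orlov and of the authors' own earlier work. Similarly each overlap $X_v \cap X_w$ should reduce to a neighborhood of a nodal rational curve, with $\Dsing$ equivalent to $\mathrm{Fuk}(C)$ for a cylinder. The insensitivity of $\Dsing$ to the infinitesimal neighborhood of $Z$ (alluded to in the introduction and to be proved in the body) is what makes the local models depend only on $G(X)$.

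The third step is to assemble these local equivalences into an equivalence of equalizer diagrams and conclude via Theorem \ref{mainintro}. The main obstacle I expect is precisely this assembly: one must verify that the algebraic restriction functors $\Dsing(X_v) \to \Dsing(X_v \cap X_w)$, induced by open immersion (pullback of matrix factorizations), intertwine with the Viterbo-type restriction functors $\mathrm{Fuk}(P_v) \to \mathrm{Fuk}(C_{vw})$ on the symplectic side, coherently across all pairs. This is subtle because the restriction functors appearing in (\ref{fssvb2}) are the key structural content of the gluing, and matching them algebraically is more than a vertex-by-vertex check; however, because the local models are completely universal (depending only on the combinatorics of $G(X)$), the verification should reduce to a single calculation on the standard tripod-and-cylinder pair, which is essentially already done in \cite{pascaleff2019topological}.
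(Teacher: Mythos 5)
Your overall strategy matches the paper's proof of Theorem \ref{main}: use Zariski descent for $\Dsing(-)$ of schemes presented as zero fibers (Preygel) together with Orlov's insensitivity to the complement of an infinitesimal neighborhood of the singular locus to express $\Dsing(X)$ as a limit over the quiver of $G(X)$, then compare vertex-by-vertex and edge-by-edge with the pants/cylinder pieces of the Fukaya side. The paper factors this through an intermediate combinatorial category of global sections $\cB(G(X))$ (Theorem \ref{matrixgraph} on the B-side, Corollary \ref{mainlocalcor} on the A-side) rather than matching the two equalizers directly, but that is a cosmetic reorganization of the same idea.

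However, your handling of the assembly step has a genuine gap. You correctly flag the coherence of restriction functors as the crux, but you assert it ``should reduce to a single calculation on the standard tripod-and-cylinder pair.'' It does not. The local identification $\Dsing(X_v)\simeq\mathrm{MF}(\bA^3,x_1x_2x_3)$ followed by restriction to an edge invokes a Kn\"orrer periodicity equivalence, and in the $\Z/2$-graded setting this equivalence is only canonical up to a shift $[1]$. At a single vertex this ambiguity can be absorbed, but along a cycle of $G(X)$ the choices can fail to be globally consistent: one is in effect choosing a $\Z/2$-valued $1$-cocycle on $G(X)$, and if its class in $H^1(G(X),\Z/2)$ is nontrivial the glued category differs from $\cB(G(X))$. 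This is exactly the issue raised by Segal, and it is why Theorem \ref{main} carries the hypothesis that the dual intersection complex of $X$ be orientable: the orientation induces cyclic orderings of edges at each trivalent vertex, which together with edge orientations give framing data that fix the Kn\"orrer periodicity choices coherently, and the consistency is then verified by tracking how the structure sheaves $\cO_D$ of the irreducible components restrict. Your proposal does not mention this hypothesis and has no mechanism to rule out the shift cocycle obstruction. A secondary omission in the same vein: the paper also has to rescale local coordinates (encoded by the weights on $G$) so that the edge-gluings literally agree rather than agree up to a $\bG_m$-action; this is handled by an induction on the vertices and governs the $\kappa\rbu$-linear (2-periodicity) structure, which in the compact case is only pinned down up to rescaling $u$.
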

Seidel and Efimov's results are special cases of Theorem \ref{hmsintro}, for very specific choices of the mirror superpotential $W$ (where $X=W^{-1}(0)$). But contrary to their original proofs, our approach shows that it is only the combinatorics of the singular locus  that comes into play. This makes it relatively straightforward to compare different mirror constructions: it is enough to check the shape of the singular locus of the superpotential. 
This viewpoint   has several   benefits: our proof of HMS applies to all genera, and works equally well in the compact or punctured setting; and  it clarifies that HMS depends on a simple geometric relationship between the surface and its mirror.  

As we pointed out earlier, surfaces have actually  many different geometrically meaningful mirrors. The techniques developed in this paper allow us to easily check that they indeed give rise to HMS equivalences, which are alternative to Seidel's and Efimov's. We focus on two constructions:  
\begin{enumerate}
\item Hori--Vafa mirror symmetry matches a toric CY 3-folds $X_\Sigma$,  equipped with a toric superpotential $W_\Sigma$,  to a punctured surface $\Sigma$ equipped with a pants decomposition. We show that the Hori--Vafa picture holds in complete generality for all toric 3-folds, regardless of the CY assumption, and even in the compact setting where no superpotential is available.
\item We show that the Mumford degeneration of  abelian surfaces provides a natural mirror LG model for  Riemann surfaces of genus $2$. The same construction appears, for the other direction of HMS, in beautiful recent work of Cannizzo \cite{cannizzo2020categorical}.
\end{enumerate}

\noindent \textbf{Notes on the second arXiv version.} The principal difference between the first and second arXiv versions is that discussion of the sheaf of categories in Section \ref{Grcat} has been significantly expanded and made more precise.  It seemed to us that a full exposition of this sheaf of categories would be of independent interest, especially since others might wish to adapt this sheaf for other purposes. The new version also clarifies and corrects some issues that were raised regarding the first version. One issue, raised by Ed Segal \cite{segal2021line}, was an ambiguity in the Kn\"{o}rrer periodicity equivalences used in the definition of the sheaf. In the second arXiv version, we make an auxiliary choice of what we call framing data on the graph $G$ to resolve this ambiguity. This requires the addition of a hypothesis in Theorem \ref{matrixgraph} that the dual intersection complex of $X$ be orientable. Another clarification has to do with how the area dependence of the Fukaya categories of surfaces arises from our sheaf of categories. The notion of weights introduced Section \ref{Grcat} makes this area dependence much more explicit. 

\noindent \textbf{Acknowledgments.} This project started in April 2019 when the second author visited the University of Illinois. At that time Thomas Nevins provided valuable suggestions and encouragement. We are saddened that his untimely death in February 2020 has deprived us of the opportunity to show him the results.

We thank Tommaso de Fernex for the reference \cite{camacho2003neighborhoods}. We thank Fabian Haiden and Denis Auroux for helpful correspondence about the dependence of our categories on various parameters. We thank Yanki Lekili and Kazushi Ueda for helping us uncover an unacknowledged hypothesis in an earlier version of Theorem \ref{matrixgraph}, namely that $X$ must be presented as the fiber of a morphism.

JP was partially supported by a Collaboration Grant from the Simons Foundation.
\section{Preliminaries}
\subsection{Notations and conventions} 
Much of the set-up of the paper will be borrowed from our previous article \cite{pascaleff2019topological}. In this section we briefly recapitulate some of the  background from \cite{pascaleff2019topological}. We will specify the categorical setting in which we will place ourselves; and then we will recall some basic notations and constructions in the (topological) Fukaya category of surfaces. The reader is advised to consult \cite{pascaleff2019topological} for additional information. 

Let $\kappa$ be a ring of characteristic $0$. We remark that in the course of the article it will be important to take $\kappa$ to be equal to either \begin{itemize}
\item a fixed ground field $k$,
\item the field $\Lambda$ of universal Novikov series over $k$ with parameter $q$,
  \begin{equation*}
    \Lambda = \left\{ \sum_{i=0}^{\infty} a_{i}q^{\lambda_{i}} \mid a_{i} \in k,\ \lambda_{i} \in \R,\ \lim_{i\to\infty} \lambda_{i} = \infty \right\},
  \end{equation*}
\item or the subring $\Lambda^{fin} \subset \Lambda$ of Novikov series with finitely many nonzero terms.
\end{itemize}

\subsubsection{Preliminaries on dg categories}

By \emph{dg category} we mean one of the following equivalent notions:
\begin{enumerate}
\item a $\Z/2\Z$-graded $\kappa$-linear dg category, or
\item a $\Z$-graded $\kappa \rbu$-linear dg category, where $u$ has cohomological degree $2$ (a $2$-periodic dg category).
\end{enumerate}
The former perspective is common in the literature on Fukaya categories, while the second perspective is used in \cite{preygel2011thom}, which is a key reference for us.\footnote{Preygel denotes $u$ by $\beta$ and gives it degree $-2$, but his differentials have degree $-1$.} There is one subtle difference between these perspectives: in the latter one, the operator $u$ can be rescaled as $u \mapsto \gamma u$ for $\gamma \in \kappa^{\times}$. This changes the $\kappa\rbu$-linear structure, and this kind of twist will appear in Section \ref{Grcat}.

Throughout the paper we will work with  dg categories up to \emph{Morita equivalence}.   References for the Morita theory of $\kappa$-linear $\Z/2\Z$-graded dg categories are Section 1 of \cite{dyckerhoff2018triangulated} and Section 2 of \cite{dyckerhoff2017}. 
\begin{enumerate}
\item  We denote by $\mathrm{DGCat^{(2), non-cocmpl}}$ the $\infty$-category of (not necessarily small) dg categories
\item We denote by $\mathrm{DGCat^{(2)}_{cont}}$ the $\infty$-category of cocomplete $\kappa$-linear dg categories. If $C$ is in 
$\mathrm{DGCat^{(2)}_{cont}}$ we denote by $C^\omega$ its full subcategory of compact objects
\item We denote by $\mathrm{DGCat^{(2)}_{small}}$ the $\infty$-category of small $\kappa$-linear dg categories. 
\end{enumerate}
There are natural functors
$$
\mathrm{U}: \mathrm{DGCat^{(2)}_{cont}} \to \mathrm{DGCat^{(2), non-cocmpl}} \quad , \quad \mathrm{Ind}:  \mathrm{DGCat^{(2)}_{small}} \to  \mathrm{DGCat^{(2)}_{cont}}
$$
given respectively by the forgetful functor, and by Ind-completion.  
We recall some useful facts about limits and colimits of dg categories. 
\begin{lemma}[\cite{gaitsgory2017study}, Corollary 7.2.7]
The functor $\mathrm{Ind}:  \mathrm{DGCat^{(2)}_{small}} \to  \mathrm{DGCat^{(2)}_{cont}}$ preserves small colimits.
\end{lemma}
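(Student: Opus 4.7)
The plan is to exhibit $\mathrm{Ind}$ as a left adjoint, after which preservation of all small colimits follows formally. The universal property of Ind-completion states that for any small $\kappa$-linear dg category $C$ and any cocomplete $\kappa$-linear dg category $D$, there is a natural equivalence
\[
\mathrm{Fun}^{L}(\mathrm{Ind}(C), D) \simeq \mathrm{Fun}(C, D),
\]
where $\mathrm{Fun}^{L}$ denotes the $\infty$-category of cocontinuous $\kappa$-linear functors and $\mathrm{Fun}$ denotes all $\kappa$-linear functors. This identifies $\mathrm{Ind}$ with the left adjoint to the (appropriately universe-enlarged) forgetful functor from $\mathrm{DGCat}^{(2)}_{\mathrm{cont}}$ to $\kappa$-linear dg categories, the one that forgets cocontinuity of morphisms and the cocompleteness structure.

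To make the argument concrete, one reduces to verifying preservation of pushouts and small filtered colimits, since these together with an initial object generate all small colimits. For a pushout square $B \leftarrow A \to C$ in $\mathrm{DGCat}^{(2)}_{\mathrm{small}}$, one tests the candidate pushout $\mathrm{Ind}(B) \sqcup_{\mathrm{Ind}(A)} \mathrm{Ind}(C)$ in $\mathrm{DGCat}^{(2)}_{\mathrm{cont}}$ against an arbitrary cocomplete $D$: cocontinuous $\kappa$-linear functors out of the pushout correspond to compatible pairs of such functors out of $\mathrm{Ind}(B)$ and $\mathrm{Ind}(C)$ restricting to the same functor on $\mathrm{Ind}(A)$, which by the universal property of Ind correspond to compatible pairs of dg functors out of $B$ and $C$ restricting to the same functor on $A$, which in turn correspond to dg functors out of $B \sqcup_{A} C$, i.e. to cocontinuous functors out of $\mathrm{Ind}(B \sqcup_{A} C)$. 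The case of filtered colimits is analogous, exploiting the observation that filtered colimits commute with the formation of ind-objects at the level of underlying diagrams.

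The main technical obstacle is 2-categorical coherence together with the size mismatch: the candidate right adjoint does not land in $\mathrm{DGCat}^{(2)}_{\mathrm{small}}$ because cocomplete $\kappa$-linear dg categories are typically large, so the adjunction has to be set up after enlarging the ambient universe and checking that the relevant $\infty$-categorical constructions are still well-behaved. Treating this size/coherence issue rigorously is precisely what is done in Gaitsgory--Rozenblyum; once it is in place, preservation of small colimits by $\mathrm{Ind}$ is an immediate formal consequence of being a left adjoint.
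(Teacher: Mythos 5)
The paper does not prove this lemma but cites it directly from Gaitsgory--Rozenblyum, where the argument is essentially the one you give: $\mathrm{Ind}$ is a (relative) left adjoint to the forgetful functor by the universal property of free cocompletion, and colimit preservation follows formally, with the size issues handled by passing to a larger universe and noting that small colimits of small dg categories remain small. Your proposal is correct; the only inessential detail is that the Yoneda-style computation you carry out for pushouts already works verbatim for an arbitrary small diagram, so the preliminary reduction to pushouts, filtered colimits, and the initial object can be dropped.
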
 
\begin{lemma}[\cite{drinfeld2015compact}, Proposition 1.7.5]
\label{libas}
Let $I$ be a small $\infty$-category and let $\Psi: I \to \mathrm{DGCat^{(2)}_{cont}}$ a functor. Assume that for all morphisms $\alpha_{i,j}: i \to j$ in $I$ the functor $\Psi_{i,j}:=\Psi(\alpha_{i,j})$ admits a continuous right adjoint $\Phi_{i,j}$. Then there is a  well-defined functor $\Phi:I^{op} \to \mathrm{DGCat^{(2)}_{cont}}$ such that for all  $\alpha_{i,j}: i \to j$ in $I$ $\Phi(\alpha_{i,j}) = \Phi_{i,j}$, and a canonical equivalence
$$
\varinjlim_I \Psi \simeq \varprojlim_{I^{op}} \Phi \in \mathrm{DGCat^{(2)}_{cont}}
$$
\end{lemma}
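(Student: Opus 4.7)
The plan is to prove the lemma in two steps: first, construct the functor $\Phi : I^{op} \to \mathrm{DGCat}^{(2)}_{cont}$ that encodes the collection of right adjoints coherently, and second, identify $\varinjlim_I \Psi$ with $\varprojlim_{I^{op}} \Phi$. Both steps follow from general features of presentable $\infty$-categories applied to the full subcategory $\mathrm{DGCat}^{(2)}_{cont} \subset \mathrm{Pr}^L$ of cocomplete $\kappa$-linear dg categories with continuous functors.

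For the first step, the key input is that passing to right adjoints is functorial at the $(\infty,2)$-level. The $\infty$-category $\mathrm{Pr}^L$ underlies a natural $(\infty,2)$-category structure, with $2$-morphisms the natural transformations, and the (non-full) subcategory whose $1$-morphisms are left adjoints is canonically equivalent, via the adjoint correspondence, to $(\mathrm{Pr}^R)^{op}$. Since every $\Psi_{i,j}$ is by hypothesis a left adjoint with continuous right adjoint $\Phi_{i,j}$, the diagram $\Psi$ factors through this subcategory, and the adjoint equivalence produces a functor $\Phi : I^{op} \to \mathrm{DGCat}^{(2)}_{cont}$ with $\Phi(\alpha_{i,j}) = \Phi_{i,j}$ together with all the necessary higher coherences. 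The formal statement (\emph{HTT} Corollary 5.5.3.4, or equivalently the treatment of adjoints in $(\infty,2)$-categories in the work of Gaitsgory--Rozenblyum) packages exactly this.

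For the second step, the plan is to invoke the canonical equivalence $\mathrm{Pr}^L \simeq (\mathrm{Pr}^R)^{op}$ that is the identity on objects and sends a left adjoint to its right adjoint. Under this equivalence, the diagram $\Psi : I \to \mathrm{Pr}^L$ corresponds to $\Phi : I^{op} \to \mathrm{Pr}^R$, and small colimits in $\mathrm{Pr}^L$ correspond to small limits in $\mathrm{Pr}^R$; limits in $\mathrm{Pr}^R$, in turn, can be computed in the underlying $\infty$-category of $\infty$-categories (since continuous right adjoints create limits). Because $\mathrm{DGCat}^{(2)}_{cont}$ is closed under these operations, one obtains the desired canonical equivalence $\varinjlim_I \Psi \simeq \varprojlim_{I^{op}} \Phi$ in $\mathrm{DGCat}^{(2)}_{cont}$.

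The hard part is really the coherence content of step one: the collection $\{\Phi_{i,j}\}$ is given only as pointwise data, and one must verify that it assembles into a genuine $\infty$-functor with all the higher simplicial operators. This is precisely what the cited $(\infty,2)$-categorical machinery delivers; once it is invoked, step two is essentially formal and the lemma reduces to a citation of the Drinfeld--Gaitsgory reference.
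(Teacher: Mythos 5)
The paper does not actually prove Lemma~\ref{libas}; it simply cites it as Proposition~1.7.5 of \cite{drinfeld2015compact}. Your sketch is a correct reconstruction of the standard argument that underlies that reference: one uses the equivalence $\mathrm{Pr}^L \simeq (\mathrm{Pr}^R)^{\mathrm{op}}$ (HTT Cor.~5.5.3.4) to pass from $\Psi$ to a coherent diagram $\Phi$ of right adjoints, and then uses the fact that limits in both $\mathrm{Pr}^L$ and $\mathrm{Pr}^R$ are computed in $\widehat{\mathrm{Cat}}_\infty$ to match the colimit of $\Psi$ with the limit of $\Phi$.

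Two minor points worth tightening. First, your parenthetical ``since continuous right adjoints create limits'' is not quite the right reason that limits in $\mathrm{Pr}^R$ are computed in the underlying $\infty$-category: the correct statement is that the inclusions $\mathrm{Pr}^L \hookrightarrow \widehat{\mathrm{Cat}}_\infty$ and $\mathrm{Pr}^R \hookrightarrow \widehat{\mathrm{Cat}}_\infty$ both preserve small limits (HTT 5.5.3.13 and 5.5.3.18), and it is precisely this that lets you identify the limit taken in $\mathrm{Pr}^R$ with the one taken in $\mathrm{Pr}^L$. Second, the continuity hypothesis on the $\Phi_{i,j}$ is doing real work that you gloss over slightly: the naked equivalence $\mathrm{Pr}^L \simeq (\mathrm{Pr}^R)^{\mathrm{op}}$ only produces $\Phi : I^{\mathrm{op}} \to \mathrm{Pr}^R$, and the continuity hypothesis is what guarantees that $\Phi$ factors through the (non-full) subcategory $\mathrm{DGCat}^{(2)}_{\mathrm{cont}}$ whose morphisms are colimit-preserving, so that the limit may be taken there. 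With these caveats noted, the approach is sound and amounts to the content of the cited result; you are not deviating from, but rather supplying, the proof the paper delegates to the reference.
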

\begin{lemma}
\label{libasi}
Let $I$ be a small $\infty$-category and let $\Phi: I \to \mathrm{DGCat^{(2)}_{small}}$ a functor. Consider the composition 
$$
\mathrm{Ind} (\Phi): I \stackrel{\Phi} \rightarrow 
\mathrm{DGCat^{(2)}_{small}} 
\stackrel{\mathrm{Ind}} \rightarrow \mathrm{DGCat^{(2)}_{cont}}
$$
Assume that for all $i$ the limit functor 
$ \, 
 \varprojlim_{I} \mathrm{Ind} ( \Phi )  \to \Phi(i)
$
 preserves compact objects. 
Then there is a fully faithful functor 
$$
\big ( \varprojlim_{I} \mathrm{Ind} ( \Phi ) \big )^\omega \to 
\varprojlim_{I}  \Phi   \text{\, $\in$ \,} \mathrm{DGCat^{(2)}_{small}}  
$$
Additionally, the functor  is an equivalence if $I$ is finite. 
 \end{lemma}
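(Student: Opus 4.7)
The plan is to construct the desired functor explicitly by assigning to a compact object of $\varprojlim_I \mathrm{Ind}(\Phi)$ the compatible system of its projections, regarded as objects of the $\Phi(i)$, and then to verify fully faithfulness in general and essential surjectivity in the finite case. To set up the functor, recall that for each $i$ the Yoneda embedding $\Phi(i)\hookrightarrow \mathrm{Ind}(\Phi(i))$ is fully faithful and factors through $\mathrm{Ind}(\Phi(i))^\omega$; up to the Morita-equivalent replacement in force throughout the paper (i.e.\ idempotent completion), this embedding identifies $\Phi(i)$ with $\mathrm{Ind}(\Phi(i))^\omega$. Given $x \in \big(\varprojlim_I \mathrm{Ind}(\Phi)\big)^{\omega}$, the standing hypothesis that every projection $\pi_i : \varprojlim_I \mathrm{Ind}(\Phi) \to \mathrm{Ind}(\Phi(i))$ preserves compact objects guarantees that $\pi_i(x)$ lies in $\mathrm{Ind}(\Phi(i))^\omega \simeq \Phi(i)$, and the coherence data of $x$ in the limit transports to a coherent system in $\varprojlim_I \Phi$. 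This defines the functor $F : \big(\varprojlim_I \mathrm{Ind}(\Phi)\big)^{\omega} \to \varprojlim_I \Phi$.

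For fully faithfulness I would compare hom-complexes. Limits of dg categories are computed on hom-complexes termwise, so for $x,y$ compact one has
\[
\mathrm{Hom}_{\varprojlim_I \mathrm{Ind}(\Phi)}(x,y) \simeq \varprojlim_I \mathrm{Hom}_{\mathrm{Ind}(\Phi(i))}(\pi_i x, \pi_i y).
\]
Since the Yoneda embedding $\Phi(i) \hookrightarrow \mathrm{Ind}(\Phi(i))$ is fully faithful, and $\pi_i x, \pi_i y$ lie in the essential image of this embedding, each term on the right equals $\mathrm{Hom}_{\Phi(i)}(\pi_i x, \pi_i y)$. Taking the limit over $I$ gives exactly $\mathrm{Hom}_{\varprojlim_I \Phi}(F(x),F(y))$, yielding the fully faithful claim.

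For essential surjectivity when $I$ is finite, I would start with an arbitrary compatible system $\{x_i\}$ in $\varprojlim_I \Phi$ and use the fully faithful embeddings $\Phi(i)\hookrightarrow \mathrm{Ind}(\Phi(i))$ to lift it to an object $\tilde{x} \in \varprojlim_I \mathrm{Ind}(\Phi)$ with $\pi_i \tilde x \simeq x_i$. The only nontrivial point is compactness of $\tilde{x}$. The projections $\pi_i$ are continuous (being structure maps of a limit computed in $\mathrm{DGCat}^{(2)}_{\mathrm{cont}}$), so filtered colimits in $\varprojlim_I \mathrm{Ind}(\Phi)$ are detected termwise. Hence
\[
\mathrm{Hom}_{\varprojlim_I \mathrm{Ind}(\Phi)}(\tilde x, \textstyle\colim_\alpha y_\alpha) \simeq \varprojlim_I \mathrm{Hom}_{\mathrm{Ind}(\Phi(i))}(x_i, \colim_\alpha \pi_i y_\alpha).
\]
Each $x_i$ is compact in $\mathrm{Ind}(\Phi(i))$, so the inner mapping complex commutes with the filtered colimit; since $I$ is finite, the finite limit $\varprojlim_I$ commutes with the filtered colimit as well (finite limits commute with filtered colimits in chain complexes). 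Pulling the colimit outside shows $\tilde{x}$ is compact, producing the required preimage.

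The principal obstacle I anticipate is the technical bookkeeping around idempotent completeness and the precise identification $\mathrm{Ind}(\Phi(i))^\omega \simeq \Phi(i)$, which only holds after Morita-completion by Thomason--Neeman; the main mathematical content, on the other hand, reduces cleanly to continuity of the projections and the exactness of finite limits against filtered colimits, which is why the equivalence degenerates in the infinite case.
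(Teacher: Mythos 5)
Your proof is correct and follows essentially the same route as the paper's terse argument: construct the functor via the universal property of the limit using that the projections preserve compacts and that (up to Morita completion) $\mathrm{Ind}(\Phi(i))^\omega \simeq \Phi(i)$, verify full faithfulness by comparing hom-complexes through the Yoneda embeddings, and handle the finite case by lifting a compatible system and showing the lift is compact. Your explicit verification of compactness — continuity of the $\pi_i$ plus commutation of finite limits with filtered colimits in complexes — is a careful unpacking of the paper's appeal to the fact that "a finite limit of compact objects in a triangulated dg category is also compact."
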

 \begin{proof}
 The existence of a functor $\big ( \varprojlim_{I} \mathrm{Ind} ( \Phi ) \big )^\omega \to  
\varprojlim_{I}  \Phi$ follows from the universal property of limits, and it is easy to check that it has to be fully-faithful. The last statement follows because a finite limit of compact objects in a triangulated dg category is also compact: this implies that when $I$ is finite we get a functor in the opposite direction  
$\varprojlim_{I}  \Phi  \to \big ( \varprojlim_{I} \mathrm{Ind} ( \Phi ) \big )^\omega$, which is an inverse.
 \end{proof}
 We conclude this section by fixing some notations for categories of matrix factorizations. The reader can find in Section 3.1.2 \cite{pascaleff2019topological} a quick primer on  matrix factorizations; we will make use of notations and results from  \cite{preygel2011thom} to which we refer the reader for more information.  Let $X$ be a separated Noetherian scheme with a flat map 
 $f: X  \to \mathbb{A}^1$. We denote by $\Qcoh^{(2)}(X)$ and $\Perf^{(2)}(X)$ the $\mathbb{Z}_2$-folding of the triangulated dg categories of quasi-coherent sheaves and perfect complexes on $X$.  
 We associate to the pair $(X,f)$ a category of matrix factorizations $\mathrm{MF}(X, f) \in \mathrm{DGCat^{(2)}_{small}}$. We denote by $\mathrm{MF}^\infty(X,f)$ its Ind-completion  
 $$\mathrm{MF}^\infty(X,f) := \mathrm{Ind}(\mathrm{MF}(X, f)) \in \mathrm{DGCat^{(2)}_{cont}}$$ Let $X_0$ the central fiber of $f$. There is an equivalence (\cite[Proposition 3.4.1]{preygel2011thom})
 $$
\mathrm{MF}(X, f) \simeq  \Dsing(X_0) := \mathrm{D^bCoh}(X_0)/\Perf(X_0)
$$
where $ \Dsing(X_0)$ is the \emph{singularity category} of $X_0$. We denote its Ind-completion by $\Dsing^\infty(X_0)$. Since the category of matrix factorizations and the singularity category encode the same data, throughout the paper we will switch freely between the two view-points. 
\subsubsection{Preliminaries on the Fukaya category}
\label{potfc}
We briefly recall some basic facts and conventions on the topological Fukaya category, following \cite{pascaleff2019topological}. Let $\Sigma$ a non-compact Riemann surface, not necessarily of finite type. It is a classical result of Behnke and Stein \cite{behnke1947entwicklung} that all non-compact Riemann surfaces are Stein. 
 Let $\Gamma \subset \Sigma$ be a skeleton: this is a ribbon graph locally of finite type.  
 \begin{enumerate}
\item The skeleton $\Gamma$ carries a canonical cosheaf of $\mathbb{Z}_2$-graded small triangulated dg categories, $\cF^{\mathrm{top}}(-)$
$$
U \subset \Gamma   \mapsto \cF^{\mathrm{top}}(U) \in \mathrm{DGCat}^{(2)}_{\mathrm{small}}
$$ 
If $U \subset V$ are open subsets, we denote the corestriction
$$
C: \cF^{\mathrm{top}}(V) \to \cF^{\mathrm{top}}(U)
$$
We call the global sections $\cF^{\mathrm{top}}( \Gamma)$ the \emph{topological Fukaya category} of $\Sigma$, and we denote it $\mathrm{\mathrm{Fuk}}^{\mathrm{top}}( \Sigma)$.  

\item Let 
$ \, 
\mathrm{Ind}: \mathrm{DGCat}^{(2)}_{\mathrm{small}} \to \mathrm{DGCat}^{(2)}_{\mathrm{cont}} \, 
$
be the Ind-completion functor. Since $\mathrm{Ind}(-)$ preserves small colimits, applying $\mathrm{Ind}$ section-wise to a cosheaf yields a cosheaf. We denote $\cF^{\mathrm{top}}_\infty$ the cosheaf of presentable $\infty$-categories
$$
U \subset \Gamma \mapsto \cF^{\mathrm{top}}_\infty(U):= \mathrm{Ind}(\cF^{\mathrm{top}}(U)) \in \mathrm{DGCat^{(2)}_{cont}} 
$$
If $U \subset V$ are open subsets, we denote the corestriction functor
$$
C_\infty = \mathrm{Ind}(C): \cF^{\mathrm{top}}_\infty(U) \to \cF^{\mathrm{top}}_\infty(V)
$$
We denote the global sections of $\cF^{\mathrm{top}}_\infty$ by $\mathrm{Fuk}_{\mathrm{top}}^\infty(\Sigma)$. Note that 
$$
\mathrm{Fuk}_{\mathrm{top}}^\infty(\Sigma) \simeq \mathrm{Ind}(\mathrm{\mathrm{Fuk}}^{\mathrm{top}}( \Sigma))
$$
\item The corestriction functors 
have \emph{continuous} right adjoints $ C_\infty \dashv R_\infty$
$$
U \subset V \subset \Gamma, \quad 
R_\infty: \cF^{\mathrm{top}}_\infty(V) \to \cF^{\mathrm{top}}_\infty(U).
$$
 It follows from Lemma \ref{libas} that $\cF^{\mathrm{top}}_\infty(-)$, equipped with $R_\infty$, defines a sheaf on $\Gamma$ with values in $\mathrm{DGCat}^{(2)}_{\mathrm{cont}}
$.  
\item The Ind-completed topological Fukaya category also carries another type or restriction functors, called \emph{exotic restrictions}  in \cite{pascaleff2019topological}, and considered also in \cite{dyckerhoff2017}. Namely, let $U \subset \widetilde{\Gamma}$ be an open subgraph without $1$-valent vertices, and let $Z \subset  \Gamma$ be its complement.

Since $Z$ is also a locally finite ribbon graph, we can meaningfully evaluate on it the topological Fukaya category $\cF^{\mathrm{top}}(-)$. There are restriction functors
$$
S: \cF^{\mathrm{top}}(\Gamma) \to \cF^{\mathrm{top}}(Z), \quad 
S_\infty=\mathrm{Ind}(S): \cF^{\mathrm{top}}_\infty(\Gamma) \to \cF^{\mathrm{top}}_\infty(Z)
$$
fitting into cofiber sequences 
$$
\cF^{\mathrm{top}}(U) \stackrel{C} \longrightarrow \cF^{\mathrm{top}}(\Gamma) 
\stackrel{S} \longrightarrow \cF^{\mathrm{top}}(Z), \quad 
\cF^{\mathrm{top}}_\infty(U) \stackrel{C_\infty} \longrightarrow \cF^{\mathrm{top}}_\infty(\Gamma) 
\stackrel{S_\infty} \longrightarrow \cF^{\mathrm{top}}_\infty(Z)
$$
We denote by $T_\infty: \cF^{\mathrm{top}}_\infty(Z) \to \cF^{\mathrm{top}}_\infty(\Gamma)$ the right adjoint of $S_\infty$, and refer to it as the exceptional corestriction functor. Since $S_\infty$ is a Verdier localization, its right adjoint $T_\infty$ is fully-faithful. The functor $T_\infty: \cF^{\mathrm{top}}_\infty(Z) \to \cF^{\mathrm{top}}_\infty(\Gamma)$ preserves colimits, and  is therefore an arrow in $\mathrm{DGCat}^{(2)}_{\mathrm{cont}}$.
\end{enumerate}
We also record the following simple observation.
\begin{remark} 
\label{luvg}
Let $U \subset V \subset \Gamma$. The symplectic collars of $U$ and $V$ are open sub-surfaces of $\Sigma$, let us denote them $\Sigma_U$ and $\Sigma_V$. Assume that $\Sigma_U$ and $\Sigma_V$ are \emph{fully stopped}: that is, the intersection of $U$ and $V$ with each connected component of the boundaries of $\Sigma_U$ and $\Sigma_V$, respectively, is non-empty. Then 
$R_\infty: \cF^{\mathrm{top}}_\infty(V) \to \cF^{\mathrm{top}}_\infty(U)$ restricts to compact objects, and in fact  $R_\infty= \mathrm{Ind}(R)$ where 
$R: \cF^{\mathrm{top}}(V) \to \cF^{\mathrm{top}}(U)$.

Let us briefly explain why this is the case. In the fully stopped setting, the topological Fukaya category is smooth and proper (see \cite{dyckerhoff2017}). Since  the corestriction
$C: \cF^{\mathrm{top}}(V) \to \cF^{\mathrm{top}}(U)$  is a functor between smooth and proper categories, it admits a right adjoint 
$$
R: \cF^{\mathrm{top}}(U) \to \cF^{\mathrm{top}}(V).
$$ 
Ind-completion preserves adjoints: that is, an adjunction $C \dashv R$ can be Ind-completed to an adjunction $\mathrm{Ind}(C) \dashv \mathrm{Ind}(R)$. The uniqueness of adjoints then implies that $\mathrm{Ind}(R) \simeq R_\infty$. 
\end{remark}
By \cite{haiden2017flat} there is an equivalence between the topological Fukaya category $\mathrm{\mathrm{Fuk}}^{\mathrm{top}}(\Sigma)$  and the wrapped Fukaya category of $\Sigma$
\begin{equation}
\label{GPS}
\mathrm{\mathrm{Fuk}}^{\mathrm{top}}(\Sigma) \simeq \mathrm{\mathrm{Fuk}}^{w}(\Sigma). 
\end{equation}
Ganatra--Pardon--Shende,  starting from \cite{ganatra2020covariantly}, establish this for more general Weinstein manifolds. 
Equivalence (\ref{GPS}) is key to our approach in this paper.  It  extends more generally to the partially wrapped setting. 
Let $\Gamma \subset \Sigma$ be a skeleton with non-compact edges, that is such $S=\Gamma \cap \partial \Sigma$ is non-empty. Let $\mathrm{\mathrm{Fuk}}^{w}(\Sigma, S)$ the partially wrapped Fukaya category relative to the set of stops given by $S$. Then their work gives an equivalence
\begin{equation}
\cF^{\mathrm{top}}(\Gamma) \simeq \mathrm{\mathrm{Fuk}}^{w}(\Sigma, S)
\end{equation}


 \section{Graphs and categories}
 \label{Grcat}
Let $G$ be a  graph. We assume throughout this section that $G$ has no loops and that all vertices of 
$G$ have valency $1$ or $3$.  We denote by
\begin{itemize}
\item $V_G$, the set of vertices of $G$
\item $H_G$, the set of half-edges of $G$
\item $E_G$, the set of edges of $G$
\end{itemize}
If $v$ is a trivalent vertex in $V_G$ we denote by $ x_{v,1}, x_{v,2}$ and $x_{v,3}$ the  half-edges incident to $v$; similarly, if $v$ has valency one we denote $x_v$ the half-edge incident to $v$. For every $v \in V_G$ we denote $H_v$ the set of half-edges incident to 
$v$. There is a map $\alpha: H_G \to E_G$ that associates to an half-edge the corresponding edge. We allow $G$ to have non-compact edges, i.e. edges that are  incident to a vertex only at one of their ends but not the other: $t \in E_G$ is  non-compact if and only if the preimage $\alpha^{-1}(t)$ is a singleton. We say that a half-edge $x$ is \emph{incident} to an edge $t$ if $x \in \alpha^{-1}(t)$.

Our graphs will also be decorated with the following data:
\begin{itemize}
\item \emph{Weights,} consisting of functions $\alpha: V_{G} \to \kappa^{\times}$ and $\beta: E_{G} \to \kappa^{\times}$ that assign invertible scalars to each vertex and edge of $G$.
\item \emph{Framings,} consisting of a cyclic ordering of the edges at each 3-valent vertex, and an orientation of each edge.
\end{itemize}

\begin{remark}
  The weights are present for two reasons that are intertwined.
  \begin{itemize}
  \item On the A-side, the weights take into account the fact that the Fukaya category depends on a parameter $q$ in the Novikov field $\Lambda$.\footnote{When considering noncompact surfaces, the dependence on $q$ may be trivialized, and when considering compact surfaces of genus $g \ge 2$, there are constructions that allow one to set $q = 1$; in these situations the weights will disappear, see Section \ref{csae}. However, the covering space argument in Section \ref{csae} requires one to work with categories where $q$ is present as a parameter since the $q$-dependence cannot be equivariantly trivialized.} When comparing the sheaf of categories we construct to the Fukaya category of a surface with a pants decomposition, we will take $\kappa = \Lambda$ and set $\alpha(v) = q^{-A(v)}$ and $\beta(t) = q^{B(t)}$ for some functions $A : V_{G}\to \R$ and $B: E_{G}\to \R$ that encode the areas of various parts of the pants decomposition.
  \item On the B-side, the weights take into account the fact that categories such as $\mathrm{MF}(X,f)$ and $\Dsing(X_{0})$ are $2$-periodic dg categories, meaning that there is a prescribed isomorphism from the double shift to the identity. Equivalently, one may regard these categories as being linear over $\kappa\rbu$ where $u$ has cohomological degree 2.\footnote{The Fukaya category may also be regarded this way, but we usually treat the Fukaya category as a $\Z/2\Z$-graded category, meaning that the double shift is strictly equal to identity.} Rescaling the potential $f$ leads to an equivalent category, but with a rescaled 2-periodicity structure.
  \end{itemize}
  We shall eventually show that, up to equivalence, the category of global sections of our sheaf depends only on the product $\prod_{v \in V_{G}} \alpha(v) \prod_{t\in E_{G}}\beta(t)$, and in many cases it does not even depend on that.
\end{remark}

\begin{remark}
  The framings are necessary in order to address a certain $\Z/2\Z$-grading ambiguity pointed out in \cite{segal2021line}. The fact that this ambiguity arises in the present construction is due to our choice to present the sheaf in terms of B-side categories. On the A-side there is a canonical way of resolving the ambiguity: in brief, this is because the Viterbo or Lee restriction functor preserves the orientations of Lagrangians (rather than reversing them). The framings are the data that allow us to transfer this preferred choice to the B-side. Up to equivalence, the category of global sections does not depend on these data; they are purely auxiliary.

From the HMS perspective, the framings are necessary for the following reason. Let $P$ be the pair of pants. Then there exist HMS equivalences of categories
  \begin{equation*}
    \mathrm{Fuk}(P) \cong \mathrm{MF}(\mathbb{A}^{3},x_{1}x_{2}x_{3}).
  \end{equation*}
  There are several such equivalences. The symmetric group $S_{3}$ acts on $P$ by symplectomorphisms that permute the three holes of $P$, and it acts on $\mathbb{A}^{3}$ by permuting the variables $x_{1},x_{2},x_{3}$. Thus $S_{3}$ acts on both $\mathrm{Fuk}(P)$ and $\mathrm{MF}(\mathbb{A}^{3},x_{1}x_{2}x_{3})$, and this induces an action of $S_{3}$ on the set of (homotopy classes of) equivalences between these categories. The key point is that this action is not trivial: even permutations act trivially, but odd permutations act by composing a given equivalence with the shift functor.

  Let $C$ be a cylinder. Then we have $\mathrm{Fuk}(C) \cong \Perf^{(2)}(\mathbb{G}_{m})$. Now $S_{2}$ acts on $C$ by permuting the holes, and it acts on $\mathbb{G}_{m}$ by the inversion morphism. Again, the action of $S_{2}$ on the set of such equivalences is not trivial: it acts by composition with a shift.
\end{remark}


\subsection{Local model for the restriction functors}
\label{sec:local-model-res}

Before setting up the sheaf of categories on graphs, we will present a detailed construction of the restriction functors in a local model consisting of a single trivalent vertex. This allows us to suppress some indices and make the essential points clearer.

So let $G$ consist of a single trivalent vertex $v$, and three edges. Equip $G$ with a cyclic ordering of the edges, and an orientation of each edge. Let us name the edges $t_{1},t_{2},t_{3}$ so that the chosen cyclic order of the edges is compatible with the standard cyclic order of the indices $1,2,3$. Then we may choose weights consisting of invertible scalars $\alpha = \alpha(v) \in \kappa^{\times}$ at the vertex $v$ and $\beta_{i} = \beta(t_{i}) \in \kappa^{\times}$ at the edges $t_{i}$.

The stalk at the vertex $v$ is
\begin{equation*}
  \mathcal{B}_{v} = \mathrm{MF}^{\infty}(\mathbb{A}^{3},\alpha x_{1}x_{2}x_{3}).
\end{equation*}
Note that the weight appears in the potential; the indexing of the variables is meant to reflect a bijection between these variables and the set of edges. For each edge $t_{i}$, construct the ring $\kappa[x_{i},\tilde{x}_{i}]/(x_{i}\tilde{x}_{i} - \beta_{i})$. Note that the weight appears in the relation. We write $\mathbb{G}_{m,x_{i},\tilde{x}_{i},\beta_{i}}$ for the spectrum of this ring. The stalk at the edge $t_{i}$ is
\begin{equation*}
  \mathcal{B}_{t_{i}} = \Qcoh^{(2)}(\mathbb{G}_{m,x_{i},\tilde{x}_{i},\beta_{i}}).
\end{equation*}
What we need to do is specify precisely a collection of restriction functors $\mathcal{B}_{v} \to \mathcal{B}_{t_{i}}$.
For each $i = 1,2,3$, we can consider a composition
\begin{equation*}
  R_{i}: \mathcal{B}_{v} = \mathrm{MF}^{\infty}(\mathbb{A}^{3},\alpha x_{1}x_{2}x_{3}) \to \mathrm{MF}^{\infty}(\mathbb{A}^{3} \setminus \{x_{i} = 0\},\alpha x_{1}x_{2}x_{3}) \to \Qcoh^{(2)}(\mathbb{G}_{m,x_{i},\tilde{x}_{i},\beta_{i}}) = \mathcal{B}_{t_{i}},
\end{equation*}
where the first functor is restriction to a Zariski open subset, and the second functor is a Kn\"{o}rrer periodicity equivalence. Two points deserve emphasis:
\begin{enumerate}
\item This functor is to be chosen so that the variable $x_{i}$ appearing in the notation for the source category matches the variable $x_{i}$ appearing in the notation for the target category. In particular, these two variables cannot be rescaled independently.
\item There are two choices for the Kn\"{o}rrer periodicity equivalence that differ from each other by a shift (note that our categories are $\Z/2\Z$-graded so the double shift is the identity). 
\end{enumerate}

The latter point means that we need to specify a preferred choice of Kn\"{o}rrer periodicity equivalence for each half-edge. We do this using the framing data. Recall that these data determine a cyclic ordering of the edges at $v$, and an orientation of each edge. For each \emph{unordered} pair of distinct indices $i,j \in \{1,2,3\}$, let $k \in \{1,2,3\}$ be the third index (so that $\{i,j\} \cup \{k\} = \{1,2,3\}$), and construct the matrix factorization
\begin{equation}
  \label{basic-factorizations}
  F_{ij} = \left(\xymatrix{ \cO_{\mathbb{A}^{3}} \ar@<2pt>[r]^{x_{i}x_{j}} & \ar@<2pt>[l]^{\alpha x_{k}}\cO_{\mathbb{A}^{3}} }\right)
\end{equation}
where the left-hand $\cO_{\mathbb{A}^{3}}$ is placed in even degree. When restricting to the $i$-th or $j$-th edge, the object $F_{ij}$ is mapped by Kn\"{o}rrer periodicity to an object isomorphic to either $\cO_{\mathbb{G}_{m}}$ or $\cO_{\mathbb{G}_{m}}[1]$. We stipulate the following rule:
\begin{itemize}
\item Suppose $j$ follows $i$ with respect to the given cyclic order at $v$, then:
\item If $t_{i}$ is oriented into $v$, we require $R_{i}(F_{ij}) \cong \cO_{\mathbb{G}_{m}}$.
\item If $t_{i}$ is oriented out of $v$, we require $R_{i}(F_{ij}) \cong \cO_{\mathbb{G}_{m}}[1]$.
\item If $t_{j}$ is oriented into $v$, we require $R_{j}(F_{ij}) \cong \cO_{\mathbb{G}_{m}}[1]$.
\item If $t_{j}$ is oriented out of $v$, we require $R_{j}(F_{ij}) \cong \cO_{\mathbb{G}_{m}}$.
\end{itemize}
For instance, the condition $R_{i}(F_{ij}) \cong \cO_{\mathbb{G}_{m}}$ obtains when the inverse functor
\begin{equation*}
   \Qcoh^{(2)}(\mathbb{G}_{m,x_{i},\tilde{x}_{i},\beta_{i}}) \to \mathrm{MF}^{\infty}(\mathbb{A}^{3} \setminus \{x_{i} = 0\},\alpha x_{1}x_{2}x_{3})
 \end{equation*}
 is given by tensoring with the restriction of $F_{ij}$. The reason why this convention is consistent is that if $\{i,j,k\}$ is an unordered triple of pairwise distinct indices, then $R_{j}(F_{ij}) = R_{j}(F_{jk})[1]$ with respect to \emph{any} choice of Kn\"{o}rrer periodicity equivalance at the edge $j$. It is evident that reversing the orientation of an edge modifies the corresponding restriction by a shift, and reversing the cyclic ordering at the vertex modifies all of the restrictions at that vertex by a shift.

\subsection{Dependence of the local model on the weights}
\label{sec:weight-dependence}

We now turn to the question: In what sense do the local models depend on the weights? One may argue that the categories $\mathrm{MF}(\mathbb{A}^{3},\alpha x_{1}x_{2}x_{3})$ are all equivalent as $\alpha$ varies, and the rings $\kappa[x,\tilde{x}]/(x\tilde{x} - \beta)$ are all isomorphic as $\beta$ varies, so it would seem that these weights could all be set to $1$ once and for all. However, this is not appropriate in our setting because, as mentioned above, we regard the coordinates $x_{1},x_{2},x_{3}$ as fixed; these coordinates serve to ``mark'' the categories and are used in the construction of the restriction functors.

The case of $\kappa[x,\tilde{x}]/(x\tilde{x}-\beta)$ is the more transparent one: it is clear that any automorphism of this ring that fixes the coordinates $x,\tilde{x}$ cannot change the value of $\beta$: indeed, the value of $\beta$ may be extracted as the product of the coordinates.

The case of $\mathrm{MF}(\mathbb{A}^{3},\alpha x_{1}x_{2}x_{3})$ is more subtle. A famous theorem of Orlov \cite{orlov2003triangulated} implies that there is an equivalence of categories\footnote{All our categories are idempotent complete.}
\begin{equation*}
  \mathrm{MF}(\mathbb{A}^{3},\alpha x_{1}x_{2}x_{3}) \cong \Dsing(\{x_{1}x_{2}x_{3}=0\}).
\end{equation*}
The right-hand side appears not to depend on $\alpha$, but in fact there is a finer structure that does depend on $\alpha$, namely the $2$-periodic or $\kappa\rbu$-linear structure.

The way this works can be seen using Preygel's approach to $\mathrm{MF}(X,f)$ \cite{preygel2011thom}. Preygel begins by considering the category of tuples $(\cE, d, B)$ where $(\cE, d)$ is a perfect complex on the total space $X$ and $B \in \hom^{-1}(\cE,\cE)$ is a nullhomotopy for the multiplication by $f$ such that $B^{2} = 0$; this is a dg model for $\Coh(X_{0})$. Next, tensor everything with $\kappa \sbu$, where $u$ has cohomological degree $2$, and replace $(\cE,d,B)$ with $(\cE, d+uB)$. Note that $(d+uB)^{2} = uf$, so the term ``matrix factorization'' is justified. Lastly, localize by inverting $u$ to obtain the $\kappa\rbu$-linear category $\mathrm{MF}(X,f)$. This category is $2$-periodic, with the periodicity given by multiplication by $u$. The formulae make clear the effect of rescaling $f$: if we rescale $f$ to $\alpha f$ for some $\alpha \in \kappa^{\times}$, then to get an equivalent category we must rescale $u$ to $\alpha^{-1}u$.

There is also a way to view this in terms Orlov-style $\Z/2\Z$-graded matrix factorizations. In this construction, we start with pairs
\begin{equation}
  F = \left(\xymatrix{ \cE_{0} \ar@<2pt>[r]^{t_{0}} & \ar@<2pt>[l]^{t_{1}}\cE_{1} }\right)
\end{equation}
such that $t_{1}t_{0} = f \cdot 1_{\cE_{0}}$ and $t_{0}t_{1} = f \cdot 1_{\cE_{1}}$. For two such objects $F,F'$, the differential $d$ on the complex $\hom(F,F')$ is the graded commutator with the internal differentials $t, t'$. Now suppose that $f$ is rescaled by the factor $c^{2}$ for some $c \in \kappa^{\times}$. There are several ways to convert a matrix factorization for $f$ into one for $c^{2}f$. The simplest is to rescale all internal differentials such as $t_{0},t_{1}$ by $c$. This has the effect of rescaling the differential $d$ on $\hom(F,F')$ by $c$, but it does not change the chain-level composition. Since the cocycles and coboundaries for $d$ and $cd$ are the same, there is a natural identification of the cohomology categories.

To see the effect of rescaling the differential in terms of $A_{\infty}$-structures, we use the homological perturbation method. This involves, for each hom complex, choosing a projection $\Pi$ onto its cohomology as well as a homotopy operator $P$ such that
\begin{equation*}
  Pd + dP = I - \Pi.
\end{equation*}
Because this equation is inhomogeneous, when we rescale $d$ by $c$ we should rescale $P$ by $c^{-1}$. The homological perturbation method then produces an $A_{\infty}$-structure on the cohomology category with vanishing differential. For each $d \geq 2$, the formula for the $d$-th operation $m_{d}$ uses the homotopy operator $(d-2)$ times, and hence the operator $m_{d}$ is rescaled by $c^{2-d}$. There is no canonical way to trivialize this dependence on $c$ in the $\Z/2\Z$-graded setting.

This shows that $\mathrm{MF}(X,f)$ and $\mathrm{MF}(X,\alpha f)$ are not canonically equivalent with their $2$-periodic structures, but of course there can be other ways to construct an equivalence between these categories. For instance, if $X$ carries a $\mathbb{G}_{m}$-action such that $f$ is homogeneous with nonzero weight, we can rescale the coordinates on $X$ to absorb the scaling factor $c^{2}$. This already applies to the case $(X,f)= (\mathbb{A}^{3},x_{1}x_{2}x_{3})$ (in several ways).

  Using the classification of $A_{\infty}$-structures presented in \cite{aaeko}, it is possible to show that any equivalence $\mathrm{MF}(\mathbb{A}^{3},\alpha x_{1}x_{2}x_{3}) \cong \mathrm{MF}(\mathbb{A}^{3},\alpha' x_{1}x_{2}x_{3})$ for $\alpha \neq \alpha'$ that is compatible with the $2$-periodic structure \emph{must} involve rescaling the coordinates $x_{1},x_{2},x_{3}$ in some way.


To make a precise statement, consider the matrix factorizations
\begin{equation}
  \label{basic-factorizations-alpha}
  F_{ij}(\sqrt{\alpha}) = \left(\xymatrix{ \cO_{\mathbb{A}^{3}} \ar@<2pt>[r]^{\sqrt{\alpha}x_{i}x_{j}} & \ar@<2pt>[l]^{\sqrt{\alpha}x_{k}}\cO_{\mathbb{A}^{3}} }\right)
\end{equation}
These objects generate $\mathrm{MF}(\bA^{3},\alpha x_{1}x_{2}x_{3})$, and they correspond to each other for various values of $\alpha$ according to our identifications. Furthermore, the coordinates $x_{i}$ actually appear in the cohomology category as generators of $H^{0}(\hom(F_{ij}(\sqrt{\alpha}),F_{ij}(\sqrt{\alpha})) \cong \kappa[x_{i},x_{j}]/(x_{i}x_{j})$. Recall that a \emph{strict homotopy equivalence} between to $A_{\infty}$-categories whose cohomologies have been identified is an $A_{\infty}$-equivalence that induces identity on the cohomology category. We use the term \emph{semi-strict homotopy equivalence} to refer to an $A_{\infty}$-equivalence that preserves the objects $F_{ij}(\sqrt{\alpha})$ and the morphisms denoted $x_{i}$, but which may behave arbitrarily on other morphisms. 



\begin{proposition}
  Suppose that $\alpha,\alpha'\in \kappa^{\times}$ are such that $\mathrm{MF}(\mathbb{A}^{3},\alpha x_{1}x_{2}x_{3})$ and $\mathrm{MF}(\mathbb{A}^{3},\alpha' x_{1}x_{2}x_{3})$ are semi-strictly homotopy equivalent as $\kappa\rbu$-linear categories. Then $\alpha = \alpha'$.

  Suppose that $\alpha'/\alpha = c^{2}$ for some $c \in \kappa^{\times}$. Let $\cC$ be the $A_{\infty}$-category obtained from the minimal model of $\mathrm{MF}(\bA^{3},\alpha x_{1}x_{2}x_{3})$ by rescaling $m_{d}$ by $c^{2-d}$. Then $\cC$ is semi-strictly homotopy equivalent to $\mathrm{MF}(\bA^{3},\alpha' x_{1}x_{2}x_{3})$.
\end{proposition}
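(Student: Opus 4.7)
My plan is to combine two ingredients: the explicit dependence of the minimal $A_{\infty}$-structure on $\alpha$ via homological perturbation (as sketched in the two paragraphs preceding the proposition), and the classification of $A_{\infty}$-structures on the pair-of-pants algebra from \cite{aaeko}. The idea is that, once the generators $F_{ij}(\sqrt{\alpha})$ and the marked degree-zero morphisms $x_{1},x_{2},x_{3}$ are fixed, the scaling factor $\sqrt{\alpha}$ can be recovered from the higher operations, and this eliminates any possibility of absorbing it into a gauge.

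For the rigidity statement, I would proceed as follows. The endomorphism algebra $A$ of the generator $\bigoplus_{ij}F_{ij}(\sqrt{\alpha})$ in cohomology is canonically identified for all $\alpha$, together with its distinguished classes $x_{i}$; this identification is the content of the phrase ``semi-strict.'' At the chain level, every internal differential in $F_{ij}(\sqrt{\alpha})$ carries a factor of $\sqrt{\alpha}$ relative to $\alpha=1$, so the Hom-complex differential $d$ rescales by $\sqrt{\alpha}$. Fixing a projection $\Pi$ and homotopy $P$ at $\alpha=1$, the equation $Pd+dP = I-\Pi$ forces $P$ to be rescaled by $(\sqrt{\alpha})^{-1}$ for general $\alpha$, and the homological perturbation formula (in which the $d$-th operation uses $P$ exactly $(d-2)$ times) gives
\begin{equation*}
   m_{d}^{\alpha} = (\sqrt{\alpha})^{\,2-d}\, m_{d}^{\alpha=1}.
\end{equation*}
Now by \cite{aaeko} the minimal $A_{\infty}$-structures on $A$ extending the given associative algebra are parametrized, up to the natural gauge freedom of rescaling the generators, by a single scalar in $\kappa^{\times}$, which one reads off as the coefficient of the leading nonvanishing higher product on specific words in the $x_{i}$. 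The key point is that the semi-strict condition precisely kills the rescaling freedom: one is not allowed to absorb a factor into the morphisms $x_{i}$. Hence semi-strict $\kappa\rbu$-linear equivalence forces equality of the coefficients, i.e.\ $(\sqrt{\alpha})^{2-d}=(\sqrt{\alpha'})^{2-d}$ for the relevant $d\geq 3$, which gives $\alpha=\alpha'$. The $\kappa\rbu$-linearity hypothesis is used here to exclude equivalences that rescale $u$, since such a rescaling of $u$ is the only alternative mechanism that could reabsorb the factor of $\sqrt{\alpha}$, as explained in the preceding discussion of the relation $(d+uB)^{2}=uf$.

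For the explicit construction, rescaling $m_{d}$ by $c^{\,2-d}$ on the minimal model of $\mathrm{MF}(\bA^{3},\alpha x_{1}x_{2}x_{3})$ transforms $(\sqrt{\alpha})^{\,2-d}$ into $(c\sqrt{\alpha})^{\,2-d}=(\sqrt{\alpha'})^{\,2-d}$, which is exactly the coefficient pattern produced by running the homological perturbation directly on $\mathrm{MF}(\bA^{3},\alpha' x_{1}x_{2}x_{3})$ while keeping $F_{ij}(\sqrt{\alpha'})$ and the $x_{i}$ marked. Comparing the two $A_{\infty}$-structures coefficient by coefficient yields the required semi-strict homotopy equivalence $\cC \simeq \mathrm{MF}(\bA^{3},\alpha' x_{1}x_{2}x_{3})$, and the $\kappa\rbu$-linear structures match because the simultaneous rescaling of all $m_{d}$ by $c^{\,2-d}$ is the chain-level shadow of the rescaling $u\mapsto c^{-2}u$ dictated by the change of potential. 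The principal technical hurdle is extracting from \cite{aaeko} a classification statement precise enough to pin down the scalar $\sqrt{\alpha}$, rather than just its equivalence class under rescaling generators; once the gauge is rigidified by fixing the $x_{i}$, both assertions reduce to the explicit homological perturbation computation above.
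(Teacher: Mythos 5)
Your overall strategy matches the paper's: both proofs appeal to the classification of $A_{\infty}$-structures on the pair-of-pants algebra from \cite{aaeko}, identify the distinguished morphisms $x_{i}$ as the marked data that rigidify the gauge, and read the dependence on $\alpha$ off the homological perturbation rescaling. However, there is a technical error in your rigidity step that happens not to destroy the conclusion but does reflect a misunderstanding of the gauge group.

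You write that ``the semi-strict condition precisely kills the rescaling freedom.'' This is too strong. The cohomology algebra is a quiver algebra in which the $x_{i}$ are \emph{compositions} of length two; the arrows of the quiver are the genuine degree-odd generators (denoted $u_{ij}$ and $v_{ji}$ in \cite{aaeko}). Fixing the $x_{i} = v_{ji}u_{ij}$ still allows one to rescale $u_{ij}\mapsto \lambda u_{ij}$, $v_{ji}\mapsto \lambda^{-1}v_{ji}$, so a one-parameter family of rescalings survives the semi-strict constraint. The classification in \cite{aaeko} is by \emph{two} scalars $(a,b)$ (extracted from two different $m_{3}$'s), the surviving gauge acts by $(a,b)\mapsto(\lambda a,\lambda^{-1}b)$, and the actual semi-strict invariant is the product $ab$. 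Your ``single scalar'' is correct only after one has identified this invariant — it is not the consequence of the gauge being killed, and your proof never justifies why the residual gauge cannot absorb the factor you want to detect. The salvage is that rescaling $m_{d}$ by $c^{2-d}$ sends $(a,b)\mapsto(c^{-1}a,c^{-1}b)$, so the invariant $ab$ is multiplied by $c^{-2}\neq 1$; this is the step the paper makes explicit, and it is the step your proposal elides by asserting all rescalings are forbidden. For the second statement, the same issue appears: ``comparing the two $A_{\infty}$-structures coefficient by coefficient'' would produce a \emph{strict} equivalence only if the coefficients literally agree, which they need not; what you actually have is agreement of the classifying data $(a,b)$ modulo the residual gauge, and you should invoke the classification theorem rather than coefficient-by-coefficient matching to conclude.
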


\begin{proof}
Our argument is based on the results of \cite{aaeko}.  Consider the generating objects $F_{ij}(\sqrt{\alpha})$ defined in \eqref{basic-factorizations-alpha}, which are preserved by a semi-strict homotopy equivalence. The cohomology category of these objects is a certain quiver algebra with relations which the authors of \cite{aaeko} denote by $A_{(p,q)}$, where $p$ and $q$ are certain grading data that are irrelevant in the $\Z/2\Z$-graded setting. The variables $x_{1},x_{2},x_{3}$ then serve as names for certain loops of length two in the quiver $A_{(p,q)}$. The $A_{\infty}$-structures on $A_{(p,q)}$ are classified up to strict homotopy equivalence by two scalars $a,b$, which are extracted from certain $m_{3}$ operations. These structures are denoted $m^{a,b}$. Rescaling $m_{d}$ by $c^{2-d}$ thus changes $m^{a,b}$ to $m^{c^{-1}a,c^{-1}b}$. It is demonstrated that the categories $\mathrm{MF}(\mathbb{A}^{3},\alpha x_{1}x_{2}x_{3})$ correspond to nonzero values of $a$ and $b$. By rescaling the arrows in $A_{p,q}$, all of the structures $m^{a,b}$ with $a \neq 0$ and $b \neq 0$ are equivalent to $m^{1,1}$.  However, such rescalings may modify the morphisms named $x_{1},x_{2},x_{3}$. If we only allow rescalings that do not modify these morphisms, then $m^{a,b}$ is only equivalent to $m^{\lambda a, \lambda^{-1}b}$, and so the product $ab$ of the two scalars is invariant with respect to this class of rescalings. Since $m^{a,b}$ and $m^{c^{-1}a,c^{-1}b}$ give different values for this product, they are not equivalent with respect to the class of rescalings that fix $x_{1},x_{2},x_{3}$.  
\end{proof}

\subsection{Relationship of the local model to the Fukaya category}
\label{sec:local-fukaya}

We shall now describe how the local model for the restriction defined in Section \ref{sec:local-model-res} is compatible with the theory of Fukaya categories. 

Let the notation be as in Section \ref{sec:local-model-res}: let $v$ be a trivalent vertex with incident edges $t_{1},t_{2},t_{3}$, with weights $\alpha$, $\beta_{1},\beta_{2},\beta_{3}$, and with orientations on the edges and a cyclic ordering at the vertex that is compatible with our indexing of the edges. We take $\kappa = \Lambda$, the Novikov field, and we assume that the weights are of the form $\alpha = q^{-A}$ and $\beta_{i}= q^{B_{i}}$ ($i=1,2,3$) for some real numbers $A,B_{1},B_{2},B_{3}$ to be determined. 

To the vertex $v$ we associate a pair of pants (sphere with three holes) $P$, whose boundary components $(\partial P)_{i}$ are in bijection with the edges $t_{i}$, and to each edge $t_{i}$ we associate a cylindrical neighborhood or collar $C_{i}$ around the corresponding boundary component of $P$. The weights $A,B_{1},B_{2},B_{3}$ will be related to the the size of the symplectic form on the pieces $P_{v}, C_{1}, C_{2}, C_{3}$ respectively, and the framing data are used to single out certain collections of generating objects for the associated Fukaya categories.

For each pair of boundary components of $P$, there is a Lagrangian arc that connects them. The cyclic ordering of the edges (part of the framing) induces a cyclic ordering of the boundary components, and we orient these arcs so as to be compatible with this cyclic ordering. We notate these objects as $L_{12}, L_{23}, L_{31}$, where $L_{ij}$ is an arc connecting boundary component $i$ to boundary component $j$ and oriented from $i$ to $j$. (Reversing the cyclic ordering of the edges at $v$ reverses the orientations on these objects, which categorically speaking is the same as applying the shift functor.)

For each $C_{i}$, we take an arc $K_{i}$ joining the two boundary components of $K_{i}$. We choose the orientation on $K_{i}$ to be induced from the orientation on the edge $t_{i}$: if the orientation on $t_{i}$ points into the vertex $v$, then the orientation on $K_{i}$ points toward the interior of the pair of pants, and vice versa. Note that this orientation may differ from the orientation of $L_{ij}\cap C_{i}$.


Next, form the (wrapped) Fukaya categories $\mathrm{Fuk}(P)$ and $\mathrm{Fuk}(C_{i})$ over $\Lambda$, and apply HMS \cite{aaeko}. There is an equivalence of categories
\begin{equation*}
  \Phi : \mathrm{Fuk}(P) \cong \mathrm{MF}(\mathbb{A}^{3},q^{-A}x_{1}x_{2}x_{3})
\end{equation*}
That sends generating objects $L_{ij}$ to $F_{ij}(q^{-A/2})$ as in \eqref{basic-factorizations-alpha}, where $A$ is the area of the pair of pants. Note that we have used the cyclic ordering. In terms of \cite{aaeko}, this is the model for the Fukaya category that is generated by odd morphisms $u_{ij}$ and $v_{ji}$ for cyclically ordered $i,j$ such that $m_{3}(u_{12},u_{23},u_{31})$ and $m_{3}(v_{13},v_{32},v_{21})$ have constant term $q^{A/2}$.

Now consider $\mathrm{Fuk}(C_{i})$. There is an equivalence of categories
\begin{equation*}
  \Psi_{i} : \mathrm{Fuk}(C_{i}) \cong \Perf^{(2)}(\mathbb{G}_{m,x_{i},\tilde{x}_{i},B_{i}}) 
\end{equation*}
where $B_{i}$ is the area of the annulus. $\mathrm{Fuk}(C_{i})$ is generated by the arc $K_{i}$ with a specific orientation determined by the edge orientation, and we require that it corresponds to the structure sheaf under $\Psi_{i}$. The variables $x_{i}$ and $\tilde{x}_{i}$ are to be interpreted as the basic Reeb chords that generate the endomorphism of this object.

We now explain the functor
\begin{equation*}
  R_{i} : \mathrm{Fuk}(P) \to \mathrm{Fuk}(C_{i})
\end{equation*}
which we will define as a variant of the Viterbo restriction functor. To define it, we need to make use of a Liouville one-form $\theta$ on $P$, which is a one form such that $d\theta = \omega$ is the symplectic form, and such that the corresponding Liouville vector field points outward along the boundary. The choice of $\theta$ is not unique, since two choices $\theta, \theta'$ differ by a closed one form that represents a class $[\theta' - \theta] \in H^{1}(P,\R)$. For each $i$, we choose a form $\theta_{i}$ such that $C_{i} \subset P$ forms a Liouville subdomain. The core or skeleton associated to $\theta_{i}$ will then consist of a circle contained in $C_{i}$ with some some other $1$-cells attached. It is not possible to choose a single form $\theta$ such that all three annuli $C_{1},C_{2},C_{3}$ form Liouville subdomains simultaneously, because $P$ does not retract onto its boundary plus a collection of $1$-cells; a $2$-cell is required. However, it is possible to choose a form such that two of the three annuli are Liouville subdomains (the skeleton is then a dumbbell graph).

Algebraically, the choice of $\theta_{i}$ trivializes the categories $\mathrm{Fuk}(P)$ and $\mathrm{Fuk}(C_{i})$ with respect to the Novikov parameter $q$. This is done be rescaling all morphisms by certain $q$-powers. Denote by $\Fex(P,\theta_{i})$ and $\Fex(C_{i},\theta_{i})$ the resulting categories over the ground field $k$. Geometrically, the existence of a Liouville embedding $C_{i} \to P$ gives a Viterbo restriction functor
\begin{equation*}
  R^{ex}_{i} : \Fex(P,\theta_{i}) \to \Fex(C_{i},\theta_{i}).
\end{equation*}
that induces a functor $R_{i} : \mathrm{Fuk}(P) \to \mathrm{Fuk}(C_{i})$ once we restore the trivialized $q$ parameter.

\begin{remark}
Even though $R_{i}$ is obtained from a functor over $k$ by base change to $\Lambda$, the diagram consisting of all three restrictions $R_{1},R_{2},R_{3}$ is \emph{not} the base change to $\Lambda$ of a diagram over $k$, precisely because we cannot choose a single Liouville form that works for all three annuli.
\end{remark}

It remains to verify that the corresponding functor between B-side categories does have the properties required in Section \ref{sec:local-model-res}. The first requirement is that the coordinate $x_{i}$ should correspond under $R_{i}$ to the coordinate of the same name. This holds because, when we trivialize the $q$ parameter using $\theta_{i}$, the morphisms in $\mathrm{Fuk}(P)$ and $\mathrm{Fuk}(C_{i})$ named by $x_{i}$ are rescaled in the same way.

The second requirement has to do with the shift ambiguity in the Kn\"{o}rrer periodicity equivalence. The key observation is that Viterbo restriction does not reverse the orientation of the Lagrangians. We spell out the details: Choose a pair of indices $ij$ so that $j$ follows $i$ in the cyclic order at $v$. Then the matrix factorization $F_{ij}$ is mapped by $\Phi^{-1}$ to $L_{ij}$, where $L_{ij}$ is the arc on $P$ that connects boundary $i$ to boundary $j$, and that is oriented into $P$ at $i$ and out of $P$ at $j$.
\begin{itemize}
\item If $t_{i}$ is oriented into $v$, the orientations on $K_{i}$ and $L_{ij}$ match, and $R_{i}(F_{ij}) \cong \cO_{\mathbb{G}_{m}}$.
\item If $t_{i}$ is oriented out of $v$, the orientations on $K_{i}$ and $L_{ij}$ differ, and $R_{i}(F_{ij}) \cong \cO_{\mathbb{G}_{m}}[1]$.
\item If $t_{j}$ is oriented into $v$, the orientations on $K_{j}$ and $L_{ij}$ differ, and $R_{j}(F_{ij}) \cong \cO_{\mathbb{G}_{m}}[1]$.
\item If $t_{j}$ is oriented out of $v$, the orientations on $K_{j}$ and $L_{ij}$ match, and $R_{j}(F_{ij}) \cong \cO_{\mathbb{G}_{m}}$.
\end{itemize}

 \subsection{Sheaves of categories on graphs}
 \label{pdac}
In Section 3.1.3 of \cite{pascaleff2019topological} we explained how to associate to a graph $G$ a locally constant sheaf of categories $\mathcal{B}(-)$, with values in $\mathrm{DGCat}^{(2)}_{\mathrm{cont}}$. In this section we enlarge slightly the set-up of   \cite{pascaleff2019topological} : 
in \cite{pascaleff2019topological}  we  required $G$ to be trivalent and planar, but in fact this is not necessary for the construction, and it will be useful to consider more general graphs. 

Note that specifying $\mathcal{B}(-)$ means more concretely giving the following  data:
 \begin{itemize}
 \item for each vertex $v$ of $G$, a category 
 $\mathcal{B}_v$, which is the stalk of $\mathcal{B}(-)$ at $v$
 \item for each edge $t$ of $G$, a category 
 $\mathcal{B}_t$, which is the stalk of $\mathcal{B}(-)$ at an interior point of $t$
\item if $t$ is incident to $v$, a restriction functor $\mathcal{B}_v \to \mathcal{B}_t$
 \end{itemize}
 The information can be encoded equivalently in terms of   the \emph{exit-path quiver} of $G$: the exit-path quiver has the as vertices the strata of $G$ (that is, vertices and edges), while the arrows keep track of inclusions of strata.  We find it convenient to work in fact with a small variation of the exit-path quiver of $G$, which is bicolored. Namely, we associate to $G$ a bipartite quiver $Q_G$ with black and white vertices:
\begin{enumerate}
\item The set of black vertices of $Q_G$ is given by $V_G$; the set of white vertices   is given by $E_G$
\item All the arrows in $Q_G$  go from black  to white vertices: if $v$ is in $V_G$ and $t$ is in $E_G$, the set of arrows between $v$ and $t$ is the set of half-edges $x$ incident to $e$ such that $\alpha(x) = t$
\end{enumerate}
Note that there are either one or no arrows between a black vertex $\bullet_v$ and a white vertex $\circ_t$; an arrow $\bullet_v \to \circ_t$ is naturally labeled   by an half-edge  incident to $v$. We will write $q \in Q_G$ to refer to any vertex $q$  in $Q_G$, without specifying the color. As we said, if we forget the coloring of the vertices of $Q_G$, we get back the usual exit path quiver of $G$.

The data defining the constructible sheaf of categories $\cB$ can be phrased in terms of a functor between $Q_G$ and $\mathrm{DGCat}^{(2)}_{\mathrm{cont}}$, which with small abuse of notation we keep denoting $\mathcal{B}(-)$. For clarity let us make a concrete example: if $t$ is an edge of $G$, then this corresponds to a white vertex $\circ_t$ of $Q_G$; the value of the  functor $\mathcal{B}(-)$ on the vertex $\circ_t$ coincides with the stalk $\cB_t$ at an interior point of $t$.   
   In the following we shall switch between these two perspectives according to what is more convenient in a given context.

 Before defining $\cB$ we need to fix some notations. Let $T$ be a finite set with $n$ elements: we will regard the   elements of $T$ as variables by working 
 with the free commutative algebra over $T$, denoted  $\kappa[t, t \in T]$, and  subalgebras spanned by subsets $T' \subset T$. 
 We set 
$$\mathbb{A}^n_{T}:= \Spec(\kappa[t, t \in T]), \quad \text{and} \quad   W_T := \, \times_{t \in T} t \, : \mathbb{A}^n_{T} \longrightarrow \mathbb{A}^1.$$ 
If $t$ is in $T$ we denote $D(t) = \mathbb{A}^n_{T} - \{t=0\}$  the corresponding principal open subset. We set $\mathbb{G}_{m,t} :=\Spec(\kappa[t, t^{-1}])$, and 
$\mathbb{A}^1_t =\Spec(\kappa[t])$. 
We denote $j: \mathbb{G}_{m,t} \to \mathbb{A}^1_t $ the inclusion.   
If $t, t' \in T$ and $\beta\in \kappa^{\times}$, we set $\mathbb{G}_{m,t, t',\beta} :=\Spec(\kappa[t, t']/tt'-\beta)$. We have inclusions 
$
\mathbb{A}^1_t  \stackrel{j}\leftarrow \mathbb{G}_{m,t, t',\beta} \stackrel{j}\rightarrow \mathbb{A}^1_{t'}. 
$
The canonical isomorphisms $\mathbb{G}_{m,t, t',\beta} \cong \mathbb{G}_{m,t'}$ and $\mathbb{G}_{m,t,t',\beta} \cong \mathbb{G}_{m,t}$ determine equivalences $$
\Qcoh^{(2)}(\mathbb{G}_{m,t'})  \simeq \Qcoh^{(2)}(\mathbb{G}_{m,t,t',\beta}) \quad \Qcoh^{(2)}(\mathbb{G}_{m,t})  \simeq \Qcoh^{(2)}(\mathbb{G}_{m,t,t',\beta})  
$$


Now we define the sheaf of categories $\cB$:

\begin{enumerate}
\item We define $\mathcal{B}(-)$ on the vertices of $Q_G$ as follows: 
\begin{itemize}
\item If $v$ is a trivalent vertex of $G$, 
$$
\cB_v=\mathcal{B}(\bullet_v) := \mathrm{MF}^\infty(\mathbb{A}^3_{H_v}, \alpha(v)W_{H_{v}})
$$ 
\item If $v$ has valency one, 
$$\mathcal{B}_v=\mathcal{B}(\bullet_v) := \Qcoh^{(2)}(\mathbb{A}^1_{x_v})$$ 
\item If $t$ is a non-compact edge of $G$, and $x$ is the unique half-edge incident to $t$, 
$$
\cB_t=\mathcal{B}(\circ_t) := \Qcoh^{(2)}(\mathbb{G}_{m,x})
$$
\item if $t$ is a compact edge of $G$, and $x$ and $y$ are the two half-edges incident to $t$, we set $\mathcal{B}(\circ_t) := \Qcoh^{(2)}(\mathbb{G}_{m,x,y,\beta(t)})$
\end{itemize}
\item We define  $\mathcal{B}(-)$ on the arrows of $Q_G$ as follows:
\begin{itemize}
\item If $v$ is trivalent and $\bullet_v \stackrel{x} \to \circ_t$ is an arrow,
  $$\cB(x) : \cB(\bullet_v) = \mathrm{MF}^\infty(\mathbb{A}^3_{H_v}, \alpha(v)W_{H_v})  \stackrel{\iota^* } \longrightarrow  \Qcoh^{(2)}(\mathbb{G}_{m,x}) \simeq  \cB(\circ_t)$$
  is the functor defined in Section \ref{sec:local-model-res}, which uses the framings to determine the Kn\"{o}rrer periodicity equivalence.
\item If $v$ has valency one and $\bullet_v \stackrel{x} \to \circ_t$ is an arrow,
$$\cB(x) : \cB(\bullet_v) = \Qcoh^{(2)}(\mathbb{A}^1_x)  \stackrel{j^*} \longrightarrow  \Qcoh^{(2)}(\mathbb{G}_{m,x}) \simeq   \cB(\circ_t)$$
\end{itemize}
\end{enumerate}

\begin{remark}
The quiver $Q_G$ is not actually a category. Formally speaking the source of $\mathcal{B}(-)$ is    
the $\infty$-category which the nerve of the 1-category associated with $Q_G$. However since as are no composable arrows in $Q_G$, to define such a functor it is enough to specify what it does on  vertices and arrows. 
\end{remark}

The global sections of $\mathcal{B}$ over $G$ can be computed in the usual way by taking a limit over the quiver $Q_G$
\begin{equation}
\label{dlgbag}
\mathcal{B}(G) \simeq \varprojlim_{q \in Q_G} \cB(q) \, \in \,  \mathrm{DGCat}^{(2)}_{\mathrm{cont}}
\end{equation}

\begin{remark}
\label{atavo}
Let us explain more concretely how to interpret (\ref{dlgbag}).  Assume for simplicity that all vertices of $G$ have valency 3. If $v_1, v_2$ are vertices of $G$  joined by an edge $t$ we have a diagram
$$
\xymatrix{
\cB_{v_1} \times  \cB_{v_2}
\ar@<-.5ex>[r]_-*!/d0.5pt/{\labelstyle}
\ar@<.5ex>[r]^ -*!/u0.7pt/{\labelstyle } 
& \cB_{t} 
}
$$
where the arrows are given by projection followed by restriction. Running over the vertices and edges of $G$ yields  a 
\v{C}ech diagram in $\mathrm{DGCat}^{(2)}_{\mathrm{cont}}$ and $\cB(G)$ is the following equalizer 
\begin{equation}
\label{vvv}
\xymatrix{
\mathcal{B}(G) \to[ \underset{v \in V_G} {\bigoplus} \mathcal{B}_v
\ar@<-.5ex>[r]_-*!/d0.5pt/{\labelstyle }
\ar@<.5ex>[r]^ -*!/u0.7pt/{\labelstyle } 
& \underset{t \in E_G}{\bigoplus} \mathcal{B}_t]
}
\end{equation}
Formula (\ref{dlgbag}) encodes the same information: it states that 
$\mathcal{B}(G)$ is equivalent to the equalizer  
\begin{equation}
\label{bbb}
\xymatrix{
\mathcal{B}(G) \to[ \underset{\bullet_v \in Q_G} {\bigoplus} \mathcal{B}(\bullet_v) 
\ar@<-.5ex>[r]_-*!/d0.5pt/{\labelstyle }
\ar@<.5ex>[r]^ -*!/u0.7pt/{\labelstyle } 
& \underset{\circ_t \in Q_G}{\bigoplus} \mathcal{B}(\circ_t) ]
}
\end{equation}
which is the same as (\ref{vvv}).  Throughout the paper we will switch freely  between the formalisms (\ref{vvv}) and (\ref{bbb}) to compute the global sections of $\cB$, depending on what is more convenient in a given situation. 
\end{remark}

The assignment above allow us to uniquely define the sheaf $\mathcal{B}$ over any graph with 1 or 3-valent vertices and no loops. In more technical terms, $\mathcal{B}$ defines a sheaf over the site made up by these graphs and open inclusions between them.   If $U \subset V$ is an open inclusion of graphs we denote the restrictions functor  
 $ 
 S_\cB: \mathcal{B}(V) \to \mathcal{B}(U). 
 $  
 
We record the following elementary fact.
\begin{lemma}
\label{ambi}
Let $U \subset V$ be an open inclusion of  graphs with vertices of valency one or three, and let $
S_\cB: \cB(V) \to \cB(U)
$ be the restriction functor. Then the left and the right adjoint of $S_\cB$ are naturally equivalent: that is, we have ambidexterous adjunctions  
$$
 T_\cB \dashv S_\cB \dashv T_\cB, \quad S_\cB \dashv T_\cB \dashv S_\cB
$$
\end{lemma}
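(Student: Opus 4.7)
The plan is to reduce to a local check using the description of $\cB(V)$ as a limit over the exit-path quiver, verify ambidexterity for the basic restriction functors at each stratum, and then assemble these into a global ambidextrous adjunction.

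First I would factor the open inclusion $U \subset V$ as a sequence of elementary open inclusions, each of which removes from the complement a single vertex together with an incident open edge, or a single edge. For each elementary step the restriction functor reduces locally, after suitable coordinate choices and Knörrer periodicity where needed, to one of two model cases: either $j^{*}: \Qcoh^{(2)}(\mathbb{A}^{1}) \to \Qcoh^{(2)}(\mathbb{G}_{m})$ at a univalent vertex, or, at a trivalent vertex, a restriction of the form $\iota^{*}: \mathrm{MF}^{\infty}(\mathbb{A}^{3}, \alpha x_{1}x_{2}x_{3}) \to \mathrm{MF}^{\infty}(D(x_{i}), \alpha x_{1}x_{2}x_{3})$ composed with a Knörrer periodicity equivalence. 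Since Knörrer periodicity is an equivalence in $\mathrm{DGCat}^{(2)}_{\mathrm{cont}}$, it is automatically ambidextrously adjoint to its inverse, so the content lies in establishing ambidexterity for $j^{*}$ and $\iota^{*}$, both of which are pullbacks along open immersions of smooth affine schemes.

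For any open immersion $j: U' \hookrightarrow X$ of smooth affine schemes, the standard adjunction $j^{*} \dashv j_{*}$ holds in the Ind-completed $\kappa \rbu$-linear setting. Moreover $j_{*}$ preserves colimits (since $j$ is affine), so by the adjoint functor theorem it admits a right adjoint $j^{!}$. A direct computation shows that $j^{!} \simeq j^{*}$ for an open immersion (the relative dualizing complex is trivial); in the matrix factorization setting the same identification holds because Zariski restriction is compatible with the matrix factorization structure for a fixed potential. Combining these yields the ambidextrous chain $j^{*} \dashv j_{*} \dashv j^{*}$, exhibiting $j_{*}$ as an ambidextrous adjoint of $j^{*}$ at each local stratum.

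Finally I would assemble these local adjunctions into the global statement. Since $\cB(V) \simeq \varprojlim_{Q_{V}} \cB$ and $S_{\cB}$ is induced by restriction along the inclusion $Q_{U} \hookrightarrow Q_{V}$, its left and right adjoints are computed by left and right Kan extension along this inclusion. The local ambidexterity at each stratum, together with the compatibility of these adjunctions with the arrows in $Q_{V}$, guarantees that the two Kan extensions coincide, producing a single functor $T_{\cB}: \cB(U) \to \cB(V)$ that is both left and right adjoint to $S_{\cB}$. The second chain $S_{\cB} \dashv T_{\cB} \dashv S_{\cB}$ is tautologically equivalent to the first once $T_{\cB}$ is both adjoints of $S_{\cB}$. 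The main obstacle is the verification of $j^{!} \simeq j^{*}$ in the $\Qcoh^{(2)}$ and $\mathrm{MF}^{\infty}$ variants rather than the classical IndCoh setting, but this can be checked on compact generators using the explicit description of the restriction functors in Section \ref{sec:local-model-res}.
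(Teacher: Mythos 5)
Your proof follows the same overall strategy as the paper's: reduce the global ambidexterity to a local check at each stratum, where the restriction is modelled on an open immersion, and invoke the well-known ambidexterity $j^{*} \dashv j_{*} \dashv j^{!} \simeq j^{*}$. The two proofs diverge in the local model. You keep the restriction at a trivalent vertex as $\iota^{*} : \mathrm{MF}^{\infty}(\mathbb{A}^{3}, \alpha x_{1}x_{2}x_{3}) \to \mathrm{MF}^{\infty}(D(x_{i}), \alpha x_{1}x_{2}x_{3})$ followed by Kn\"{o}rrer periodicity, and argue that the open-immersion ambidexterity carries over to the $\mathrm{MF}^{\infty}$ setting; the paper instead passes to the singular-locus picture via $\cB_{v} \simeq \mathrm{IndCoh}^{(2)}(X)$ with $X$ the nodal local model, where the corestriction becomes $i_{*}$ for the open inclusion $i : \mathbb{G}_{m} \hookrightarrow X$, and the ambidexterity is then quoted directly from Preygel's Appendix~A.2. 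The paper's route is more directly supported by a citation; your assertion that $j^{!} \simeq j^{*}$ persists in the $\mathrm{MF}^{\infty}$ variant is plausible (it follows by transporting through Preygel's equivalence with $\mathrm{IndCoh}$ of the zero fiber) but you only flag it as ``can be checked on compact generators,'' which is the one step I would ask you to flesh out. The assembly step also differs: the paper builds $T_{\cB}$ explicitly as a limit of three kinds of basic functors (identity, zero, local corestriction) and then checks adjointness, while you appeal to left/right Kan extensions along $Q_{U} \hookrightarrow Q_{V}$ and assert that the two Kan extensions coincide because of compatible pointwise ambidexterity. That assertion is correct in spirit but it is doing real work --- this is exactly the content of verifying that the Beck--Chevalley squares relating pushforward and restriction commute --- and it deserves at least a sentence of justification rather than being subsumed into the phrase ``compatibility with the arrows in $Q_{V}$.'' The factoring into elementary open inclusions that you begin with is not needed for the limit-based argument and adds bookkeeping; the paper avoids it entirely.
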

\begin{proof}
Clearly, it is sufficient to prove that we have  
$$
S_\cB \dashv T_\cB \dashv S_\cB. 
$$
By construction, $\cB(U)$ and $\cB(V)$ can be realized as limits of the categories associated to the vertices $\cB_v$ and the edges $\cB_t$. The corestriction functor $T_\cB$ can be naturally defined in terms  of this presentation. Indeed we can write $T_\cB$ as a limit of basic functors only involving the categories $\cB_v$ and $\cB_t$:  namely $\cB_v \stackrel{=} \to \cB_v$ if $v$ belongs to both $U$ and $V$;  the zero functor $0 \to \cB_v$, or the right adjoint $\cB_t \to \cB_v$  to the restriction  $\cB_v \to \cB_t$, when $v$ is a vertex which does not belong to $U$. 

 It is therefore sufficient to show that these three basic functors admit ambidexterous adjoints. This clearly holds for the identity and the zero functor. So we only need to check  the corestriction $\cB_t  \to \cB_v$. We will explain the case when $v$ is 3-valent, the 1-valent case being similar. Let $X = Spec(\kappa[x, y]/xy)$. Note that we can rewrite this as $$
 \xymatrix{
 \cB_t \ar[r] \ar[d]_-\simeq & \cB_v \ar[d]^-\simeq \\
   \mathrm{IndCoh^{(2)}(\mathbb{G}_m)} 
 \ar[r]^-{i_*} &  \mathrm{IndCoh^{(2)}(X)}}
 $$
 where $i:\mathbb{G}_m \to X$ is the open inclusion of an axis minus the origin. Then the fact that we have an ambidexterous adjunction 
 $ \, 
 i^* \dashv i_* \dashv i^! \simeq i^*
 $ 
 is well-known; a reference is for instance Appendix A.2 of \cite{preygel2011thom} (see the proof of Proposition A.2.3).
 \end{proof}
 
 \begin{remark}
 \label{iwab}
 It will also be useful for us to work with small categories. We will  consider the constructible pre-sheaf of small categories 
 $\mathcal{B}^\omega$ defined as follows  \begin{itemize}
 \item the stalk of $\mathcal{B}^\omega$ on a vertex $v$ of $G$ is given by $\mathcal{B}_v^\omega$ 
 \item the stalk of $\mathcal{B}^\omega$  on an edge $t$ of $G$ is given by $\mathcal{B}_t^\omega$
 \item the restrictions are given by $S_\mathcal{B}^\omega: \mathcal{B}_v^\omega \to \mathcal{B}_t^\omega$ 
 \end{itemize}
 The pre-sheaf $\mathcal{B}^\omega$  is well defined,  since the restrictions functors preserve compact objects: e.g. if $v$ is trivalent  we can rewrite $S_\mathcal{B}^\omega$ in terms of   matrix factorization categories as 
 $$
 \xymatrix{
 \mathcal{B}_{v}^\omega  \ar[d]_\simeq \ar[r] ^-{S_\cB^\omega}  &  \mathcal{B}_{e}^\omega  \ar[d]^\simeq   \\
 \mathrm{MF}   (\mathbb{A}^3_{x,y,z}, W=xyz  )  \ar[r]^-{i^*}   &  
 \Perf^{(2)} (\mathbb{G}_{m}) 
}
 $$
 If $G$ is a graph, we will denote by  $\mathcal{B}^\omega(G)$ the sections of $\mathcal{B}^\omega$ on $G$. Note that, as we did with $\cB$, we can encode $\cB^\omega$  as a functor $Q_G \to \mathrm{DGCat^{(2)}_{small}} $.

Let us make two  observations on the differences between $\mathcal{B}$ and 
$\mathcal{B}^\omega$. First, $\mathcal{B}^\omega$ is actually a sheaf as long as 
$G$ is finite, but it is only a presheaf if $G$ is not finite. Second, the corestrictions $T_\mathcal{B}$ do not preserve compact objects and therefore   are not well defined in the setting of $\mathcal{B}^\omega$.
\end{remark}

\subsection{Dependence of $\cB(G)$ on the weights and framings}
\label{sec:overall-dependence}
Let $G$ be a graph as above with weights $\alpha : V_{G} \to \kappa^{\times}$ and $\beta : E_{G}\to \kappa^{\times}$ and framings $f$.
Denote by $\cB(G,\alpha,\beta,f)$ the category of global sections constructed above.
\begin{proposition}
  Suppose $G$ is connected. Up to $\kappa\rbu$-linear equivalence, the category $\cB(G,\alpha,\beta,f)$ depends only on the underlying graph $G$ and the product of the weights $\gamma = \prod_{v \in V_{G}} \alpha(v) \prod_{t \in E_{G}}\beta(t)$. If the graph $G$ has a noncompact edge or a $1$-valent vertex, this category does not even depend on the weights.
\end{proposition}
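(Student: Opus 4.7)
The plan is, for each tuple $c = (c_x)_{x \in H_G} \in (\kappa^\times)^{H_G}$, to use the coordinate rescaling $x \mapsto c_x x$ at every half-edge to produce a $\kappa\rbu$-linear autoequivalence of the sheaf $\cB$ carrying a given weight pair $(\alpha, \beta)$ to a rescaled pair $(\alpha', \beta')$, and then to identify the orbits of the resulting action. At a trivalent vertex $v$, pullback along the automorphism of $\mathbb{A}^3_{H_v}$ that multiplies each coordinate $x \in H_v$ by $c_x$ replaces the potential $\alpha(v) W_{H_v}$ by $(\alpha(v) \prod_{x \in H_v} c_x) W_{H_v}$; since this is a literal change of coordinates, no rescaling of $u$ is required and we obtain a $\kappa\rbu$-linear equivalence. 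At a compact edge $t$ with half-edges $x, \tilde x$, the analogous rescaling identifies $\mathbb{G}_{m,x,\tilde x,\beta(t)}$ with $\mathbb{G}_{m,x,\tilde x,\beta(t)/(c_x c_{\tilde x})}$; at non-compact edges and $1$-valent vertex stalks it is simply an automorphism of the underlying scheme. Because the restriction functors $\cB_v \to \cB_t$ of Section~\ref{sec:local-model-res} are built using \emph{matched} coordinate names, these local equivalences commute with restriction and assemble to a global $\kappa\rbu$-linear equivalence
\begin{equation*}
\cB(G, \alpha, \beta, f) \;\simeq\; \cB(G, \alpha', \beta', f), \quad \alpha'(v) = \alpha(v)\prod_{x \in H_v}c_x, \;\; \beta'(t) = \beta(t)\prod_{x \in \alpha^{-1}(t)}c_x^{-1}.
\end{equation*}

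Writing $V_3 \subseteq V_G$ and $E_c \subseteq E_G$ for the sets of trivalent vertices and of compact edges, this induces a group homomorphism $L : (\kappa^\times)^{H_G} \to (\kappa^\times)^{V_3} \times (\kappa^\times)^{E_c}$. A direct computation gives $\gamma(\alpha', \beta') = \gamma(\alpha, \beta) \cdot \prod_{x \in H_G} c_x^{n(x)}$ with $n(x) = [v(x) \in V_3] - [t(x) \in E_c]$. In the all-compact trivalent case every $n(x)$ vanishes, so $\mathrm{image}(L) \subseteq \ker\gamma$. In the presence of a non-compact edge or a $1$-valent vertex (excluding the degenerate graph with $V_3 \cup E_c = \emptyset$, for which the statement is vacuous), connectedness of $G$ produces a half-edge $x_0$ with $n(x_0) = \pm 1$, either at a trivalent vertex on a non-compact edge or at a $1$-valent vertex on a compact edge; hence $\gamma \circ L$ is non-trivial.

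It remains to compute $\mathrm{rank}(L)$. A linear dependency $\sum_{v}\lambda_v \mathbf{1}_{H_v} + \sum_{t}\mu_t \mathbf{1}_{\alpha^{-1}(t)} = 0$ among the rows of the constraint matrix reads at a trivalent-vertex-on-compact-edge half-edge as $\lambda_v + \mu_t = 0$; at a trivalent-vertex-on-non-compact-edge half-edge as $\lambda_v = 0$; and at a $1$-valent-vertex-on-compact-edge half-edge as $\mu_t = 0$. The sub-graph of $G$ on $V_3$ with edges the compact edges joining two trivalent vertices is connected: any simple path in $G$ between two trivalent vertices must avoid $1$-valent vertices (which are leaves) and compact edges with a $1$-valent end (which cannot be traversed twice). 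Propagation along this sub-graph shows that in the all-compact trivalent case the only non-trivial dependency is $\lambda \equiv c$, $\mu \equiv -c$, giving $\mathrm{rank}(L) = V_3 + E_c - 1 = \dim\ker\gamma$ and hence $\mathrm{image}(L) = \ker\gamma$. When $G$ has a non-compact edge or a $1$-valent vertex, the extra vanishing $\lambda_v = 0$ or $\mu_t = 0$ breaks this one-parameter family and no non-trivial dependency remains, so $\mathrm{rank}(L) = V_3 + E_c$ and $L$ is surjective. The rescaling group therefore acts transitively on weight tuples within the appropriate level set of $\gamma$ (respectively, on the full space of weights), and the proposition follows. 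The main subtlety is the connectedness of the type-a sub-graph used for propagation, which is the step I expect to be the technical heart of the argument.
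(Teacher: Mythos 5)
Your proposal correctly captures the mechanism behind the weight dependence---rescaling coordinates at half-edges, matched across the restriction functors so that the local equivalences glue---and this is the same core idea as the paper. However, there are two issues worth flagging.

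First and most importantly, \textbf{you never address the dependence on the framings $f$}. The proposition asserts that $\cB(G,\alpha,\beta,f)$ depends only on $G$ and $\gamma$; since you keep $f$ fixed on both sides of your equivalence $\cB(G,\alpha,\beta,f)\simeq\cB(G,\alpha',\beta',f)$, you have only shown weight independence for a fixed choice of framing. The paper dispatches the framing dependence first: any two framings differ by reversing some edge orientations and some cyclic orderings at vertices, and each such move conjugates the defining diagram by a shift $[1]$ at the corresponding $\cB(\circ_t)$ or $\cB(\bullet_v)$, which is an equivalence. This step is genuinely needed and is missing from your argument.

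Second, your surjectivity argument for the rescaling map $L$ is phrased as a rank computation (``$\mathrm{rank}(L)=\dim\ker\gamma$ hence $\mathrm{image}(L)=\ker\gamma$''). This inference is valid over a field but not over an arbitrary abelian group such as $\kappa^\times$, which may have torsion or fail to be divisible. Your argument can be repaired by observing that in the all-trivalent/compact case your matrix $L$ is literally (up to sign) the boundary map $\partial\colon C_1(Q_G,\Z)\to C_0(Q_G,\Z)$ of the connected bipartite graph $Q_G$, whose cokernel is $\Z$ by $H_0(Q_G,\Z)\cong\Z$; the same statement $\mathrm{im}(\partial)=\ker(\varepsilon)$ then holds with any coefficient group $A$ in place of $\Z$, in particular $A=\kappa^\times$. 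The paper phrases the entire computation in exactly these terms from the outset---weights live in $C_0(Q_G,\kappa^\times)$, rescalings live in $C_1(Q_G,\kappa^\times)$, and $H_0(Q_G,\kappa^\times)\cong\kappa^\times$ for connected $G$---which avoids the combinatorial case-checking you run through (e.g., the connectedness of your ``type-a sub-graph'', which indeed holds but is unnecessary to isolate). Your observation about which half-edges have nonzero net exponent $n(x)$, and that such a half-edge exists exactly when $G$ has a noncompact edge or $1$-valent vertex, is a correct and explicit alternative to the paper's remark that one may concentrate all weights at such a position and then discard them.
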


\begin{proof}
  First we show that $\cB(G,\alpha,\beta,f)$ does not depend on the framings $f$. Any two framings $f$, $f'$ may be obtained from one another by a sequence of moves where the orientation of an edge is reversed, or the cyclic ordering at a trivalent vertex is reversed.
  \begin{itemize}
  \item If $f$ and $f'$ differ only in the orientation of the edge $t$, then the diagrams computing $\cB(G,\alpha,\beta,f)$ and $\cB(G,\alpha,\beta,f')$ are equivalent via the functor that acts on $\cB(\circ_{t})$ by the shift $[1]$ and the identity elsewhere.
  \item If $f$ and $f'$ differ only in the cyclic ordering at the vertex $v$, then the diagrams computing $\cB(G,\alpha,\beta,f)$ and $\cB(G,\alpha,\beta,f')$ are equivalent via the functor that acts on $\cB(\bullet_{v})$ by the shift $[1]$ and the identity elsewhere.
  \end{itemize}

  Next we address the dependence on the weights. This has to do with rescaling the coordinates in the presentation. Recall the local model for the restriction
  \begin{equation*}
    R_{i}: \mathrm{MF}^{\infty}(\bA^{3},\alpha(v)x_{1}x_{2}x_{3}) \to \Qcoh^{(2)}(\Spec \kappa[x_{i},\tilde{x}_{i}]/(x_{i}\tilde{x}_{i} - \beta(t_{i}))
  \end{equation*}
  If the variable $x_{i}$ is rescaled to $x_{i}' = \lambda x_{i}$, then $\alpha(v)$ changes to $\alpha'(v) = \lambda^{-1}\alpha(v)$, while $\beta(t_{i})$ becomes $\beta'(t_{i}) = \lambda \beta(t_{i})$. Thus the product of all weights is unchanged. In essence, the weights define a $0$-chain on the quiver $Q_{G}$ with values in $\kappa^{\times}$, $(\alpha,\beta) \in C_{0}(Q_{G},\kappa^{\times})$, while the set of possible rescalings $\lambda$ is the set of $1$-chains on $Q_{G}$, $\lambda \in C_{1}(Q_{G},\kappa^{\times})$. The effect of performing a rescaling is to add the boundary of $\lambda$: $(\alpha',\beta') = (\alpha,\beta) + \partial \lambda$. Thus any two sets of weights that define the same class in $H_{0}(Q_{G},\kappa^{\times}) = \kappa^{\times}$ lead to equivalent categories.

  Moreover, this argument shows that by rescaling, all weights can be set to $1$ except for a single one, which may be at a vertex or an edge. In the case where $G$ has a noncompact edge or a $1$-valent vertex, the weights may be concentrated at the corresponding edge, and it amounts to a specific choice of coordinate along that edge, which may be disregarded after the limit has been taken.
\end{proof}

\begin{definition}
  \label{def:graphcat}
  For a graph $G$ and a weight $\gamma \in \kappa^{\times}$, we shall use the abbreviated notation $\cB(G,\gamma)$ to denote $\cB(G,\alpha,\beta,f)$ where $\alpha$ and $\beta$ are a set of weights whose product is $\gamma$, and $f$ is any choice of framings. The previous proposition implies that this category is well-defined up to $\kappa\rbu$-linear equivalence.
\end{definition}

When comparing $\cB(G,\gamma)$ with the Fukaya category of surface, we take $\alpha(v) = q^{-A(v)}$ and $\beta(t) = q^{B(t)}$ where $A(v)$ is the area of the pair of pants at $v$ and $B(t)$ is the area of the annulus at $t$. We then have
\begin{equation*}
  -\val_{q}(\gamma) = - \val_{q}\left(\prod_{v \in V_{G}} \alpha(v) \prod_{t \in E_{G}}\beta(t)\right) = \sum_{v \in V_{G}} A(v) - \sum_{t\in E_{G}}B(t),
\end{equation*}
which is the total area of the surface: the sum of the areas of the pairs of pants double-counts the area of each annulus.

When comparing $\cB(G,\gamma)$ with categories of matrix factorizations, it is useful to think of the parameter $\gamma$ as twisting the $2$-periodicity structure. That is to say, the categories $\cB(G,\gamma)$ are not necessarily equivalent as $\kappa\rbu$-linear categories ($\deg u = 2$), but they are equivalent as $\kappa$-linear categories. More precisely, $\cB(G,\gamma)$ is equivalent to $\cB(G,1)$ with the substitution $u \mapsto \gamma u$.

\begin{remark}
  \label{rem:segal-twist}
  As shown in \cite{segal2021line}, it is possible to twist the construction of $\cB(G)$ using a $\Z/2\Z$-valued cocycle $\gamma \in C^{1}(G,\Z/2\Z)$. The effect is to modify the gluing along the edge $t$ to incorporate a shift $[\gamma(t)]$. The result depends up to equivalence on the cohomology class $[\gamma] \in H^{1}(G,\Z/2\Z)$. We refer the reader to \cite{segal2021line} for further discussion of these twists, as well as the $\Z$-graded case where these twists have a more fundamental importance.
\end{remark}

\section{Topological Fukaya category  and pants decomposition}
The main technical result of \cite{pascaleff2019topological} was a description of the topological Fukaya category of a punctured Riemann surface of finite type in terms of pants decomposition. In this section we revisit that result, and extend it to some non-finite type Riemann surfaces arising as covers of compact Riemann surfaces. In the non-finite type setting the kind of Fukaya category which will be most relevant for us is the \emph{locally finite} Fukaya category, which will be introduced in Section \ref{lffc}. We remark that not all the results in this section will actually be needed for the later sections. However we believe that giving an account of the theory of \cite{pascaleff2019topological} in the more general setting of non-finite type Riemann surfaces might be of some independent interest. 

In this section all of the weights are set to $1$. When dealing with topological Fukaya categories of open Riemann surfaces, this does not entail a loss of generality.

\subsection{Riemann surfaces and maximal tropical covers}
\label{rsamtc}
Let $\Sigma$ be a compact Riemann surface of genus $g \geq 2$, equipped with a pants decomposition $\cP$. We can encode the pants decomposition $P$ into a trivalent  graph $G$. The graph $G$ is the dual intersection complex of the pants decomposition: that is, vertices of $G$ are in bijection with the pants making up the decomposition, and the half-edges incident to a vertex 
$v$ correspond to  the boundary components of the corresponding pair of pants $\cP_v$; two vertices $v_1$, $v_2$ are connected by an edge if the pants $\cP_{v_1}$ and $\cP_{v_2}$ are glued along the corresponding boundary components.  For simplicity, we will assume that no pairs-of-pant in  $\mathcal{P}$ self-intersects, i.e. $G$ contains no loops.  The graph $G$ is a kind of combinatorial tropicalization of $\Sigma$. In particular we can define a map of topological spaces $\Sigma \to G$, such that the preimages of the vertices of $G$ are homeomorphic to pairs-of-pants, and the preimages of points in the interior of the edges of $G$ are $S^1$.

We will work with an infinite-sheeted unramified cover $\widetilde{\Sigma}$ of $\Sigma$, which we define next.  Let $\Sigma \to G$ be the tropicalization map. Let 
$\widetilde{G}$ be the maximal abelian cover of $G$, and let $q:\widetilde{G} \to G$ be the covering map. Then $\widetilde{\Sigma}$ can be defined as the fiber product in spaces 
$$
\xymatrix{
\widetilde{\Sigma} \ar[r]^-\pi \ar[d] & \Sigma \ar[d] \\
\widetilde{G} \ar[r]^-{q} & G}
$$
Equivalently, we can consider the natural homomorphism $\pi_1(\Sigma) \to 
\pi_1(G)$: then the regular covering space $\widetilde{\Sigma}$ corresponds to quotient of $\pi_1(\Sigma)$ given by 
$$
\pi_1(\Sigma) \to 
\pi_1(G) \to \pi_1(G)/[\pi_1(G), \pi_1(G)] \cong \mathbb{Z}^g
$$

\begin{definition}
We call $\pi: \widetilde{\Sigma} \to \Sigma$ a \emph{maximal tropical  cover} of $\Sigma$.
\end{definition}

By construction $\widetilde{\Sigma}$ is an infinite type Riemann surface that comes equipped with a regular covering map
$
\pi: \widetilde{\Sigma} \to \Sigma
$ 
with group of deck transformation  given by  $O_\pi:=\mathbb{Z}^{g}.$ A pants 
decomposition of $\Sigma$ induces a pants decomposition $\widetilde{\mathcal{P}}$ of 
$\widetilde{\Sigma}$. The trivalent graph $\widetilde{G}$ encoding the pants decomposition $\widetilde{\mathcal{P}}$ is   $\widetilde{G}$. The action by the group of deck transformations  $\mathbb{Z}^{g}$ preserves the pants decomposition.

Since 
$\widetilde{\Sigma}$ is an infinite-genus Riemann surface, it is in particular non-compact. By a classical result of Behnke and Stein \cite{behnke1947entwicklung}  all non-compact Riemann surfaces are Stein. As a consequence, we can study the Fukaya category of $\widetilde{\Sigma}$ via the topological models which become available in the Stein setting. In particular $\widetilde{\Sigma}$ can be equipped with a skeleton $\widetilde{\Gamma}$, which is an infinite but locally finite ribbon graph:  $\widetilde{\Gamma}$ is homotopy equivalent to an enumerable wedge of circles. 



\subsubsection{Covers by finite type subsurfaces}
\label{cbfts}
 Let $\mathcal{U} = \{ \widetilde{\Sigma}_i\}_{i \in \mathbb{N}}$ be an cover of $\widetilde{\Sigma}$ by open subsurfaces of finite type having the following properties:
\begin{enumerate}
\item for all $i$, $\widetilde{\Sigma}_i \subset \widetilde{\Sigma}_{i+1}$
\item $\widetilde{\Sigma} = \bigcup_{i \in \mathbb{N}} \widetilde{\Sigma}_i$
\item for all $i$, $\widetilde{\Sigma}_i$ is a connected open Riemann surface of finite genus, which is the interior of a union of pants $P$ in 
$\widetilde{\mathcal{P}}$.  
\end{enumerate}
We say that $\mathcal{U}$ is \emph{compatible} with $\widetilde{\mathcal{P}}$. 

\begin{definition}
Let $\widetilde{\Gamma}$ be a skeleton of $\widetilde{\Sigma}$. We say that $\widetilde{\Gamma}$ is \emph{adapted to $\mathcal{U}$} if for all $i \in \mathbb{N}$, $\widetilde{\Gamma}$ intersects transversely the boundary of $\widetilde{\Sigma}_i$ and 
$$
\Gamma_i:= \widetilde{\Sigma}_i \cap \widetilde{\Gamma}
$$ 
is a skeleton for $\widetilde{\Sigma}_i$, possibly with \emph{stops}: i.e. edges  incident to  the boundary $\partial \widetilde{\Sigma}_i$. 
\end{definition}

\begin{lemma}
Given an open cover $\mathcal{U}$ of $\widetilde{\Sigma}$ satisfying properties $(1), (2)$ and $(3)$ above there exist skeleta adapted to $\mathcal{U}$. 
\end{lemma}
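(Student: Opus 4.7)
The plan is to construct $\widetilde{\Gamma}$ pant-by-pant: choose a standard local skeleton on each pair of pants of $\widetilde{\mathcal{P}}$ with one stop on each of its three boundary circles, and arrange these local choices to match along the circles of $\widetilde{\mathcal{P}}$ where neighboring pants are glued. Because each $\widetilde{\Sigma}_i$ is, by hypothesis, the interior of a union of whole pants from $\widetilde{\mathcal{P}}$, the resulting global graph will restrict correctly to each $\widetilde{\Sigma}_i$.

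Concretely, for every $P \in \widetilde{\mathcal{P}}$ I would fix a ribbon subgraph $\Gamma_P \subset P$ consisting of a theta graph in the interior of $P$ together with three external edges running out to a single chosen point on each of the three boundary circles of $P$. Each $\Gamma_P$ is a ribbon graph that serves as a skeleton of the pair of pants with three stops (note that $\chi(\Gamma_P) = -1 = \chi(P)$ and $H_1(\Gamma_P) = \mathbb{Z}^2 = H_1(P)$), and $P$ deformation retracts onto $\Gamma_P$ rel the three boundary points. Next, for each circle $C$ in $\widetilde{\mathcal{P}}$ separating two pants $P_1, P_2$, I identify the endpoint of $\Gamma_{P_1}$ on $C$ with the endpoint of $\Gamma_{P_2}$ on $C$ (one is free to pick any common point on $C$). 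Setting $\widetilde{\Gamma} := \bigcup_{P \in \widetilde{\mathcal{P}}} \Gamma_P$ then yields a locally finite ribbon graph in $\widetilde{\Sigma}$, and the local retractions glue (compatibly along the identified points) to a deformation retraction of $\widetilde{\Sigma}$ onto $\widetilde{\Gamma}$, so $\widetilde{\Gamma}$ is a skeleton of $\widetilde{\Sigma}$.

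Adaptedness then follows by direct inspection. For each $i$, the boundary $\partial \widetilde{\Sigma}_i$ is a disjoint union of circles from $\widetilde{\mathcal{P}}$, namely those $C$ such that one of the two adjacent pants lies in $\widetilde{\Sigma}_i$ and the other does not. By construction $\widetilde{\Gamma}$ meets each such $C$ transversely in the single chosen point, and the intersection $\Gamma_i = \widetilde{\Gamma} \cap \widetilde{\Sigma}_i$ is exactly $\bigcup_{P \subset \widetilde{\Sigma}_i} \Gamma_P$. The stops of $\Gamma_i$ (edges incident to $\partial \widetilde{\Sigma}_i$) are precisely the chosen endpoints on the circles of $\partial \widetilde{\Sigma}_i$, so $\Gamma_i$ is a skeleton of $\widetilde{\Sigma}_i$ with stops, as required.

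The only mildly delicate point is to arrange that the local retractions $P \to \Gamma_P$ are compatible across neighboring pants, so that they assemble into a genuine deformation retraction of $\widetilde{\Sigma}$. This is routine: one takes each local retraction to equal the identity in a small collar of each boundary circle of $P$, so matching points on shared circles automatically guarantee compatibility. Local finiteness of $\widetilde{\Gamma}$ also comes for free, because each compact subset of $\widetilde{\Sigma}$ meets only finitely many pants of $\widetilde{\mathcal{P}}$, each contributing a finite piece to $\widetilde{\Gamma}$.
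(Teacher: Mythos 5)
Your pant‑by‑pant construction produces a graph that does not have the same homotopy type as $\widetilde{\Sigma}$, so $\widetilde{\Sigma}$ cannot deformation retract onto it; the proposed $\widetilde{\Gamma}$ is not a skeleton.

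The problem shows up already for two pants $P_1,P_2$ glued along a single circle $C$, yielding a four-holed sphere $S$ with $\chi(S)=-2$. Your graph there (discarding the free spokes to the four outer boundary circles, which do not affect the homotopy type) is $\Theta_1 \cup e_1 \cup \{p\} \cup e_2 \cup \Theta_2$: two theta graphs joined by a length-two path through the chosen point $p\in C$. This has $5$ vertices, $8$ edges, $\chi = -3$, hence $b_1 = 4$, whereas $b_1(S)=3$. The discrepancy is exactly the $H_1$ of the gluing circle: by Mayer--Vietoris, $b_1(S) = b_1(P_1)+b_1(P_2)-b_1(C) = 2+2-1=3$, but your joining piece (a tree) has $b_1 = 0$ and so cannot absorb the extra loop. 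The mismatch only gets worse as more pants are attached. Because a skeleton is by definition a deformation retract, a graph with strictly larger $b_1$ than the surface can never be a skeleton, and no care about collars or the parametrization of the retraction can rescue this.

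This is precisely why your ``routine'' compatibility argument fails: the retraction of $P_1$ onto $\Gamma_{P_1}$ rel the marked point must move $C \setminus \{p\}$ into the interior of $P_1$, while the retraction of $P_2$ must simultaneously move $C\setminus\{p\}$ into the interior of $P_2$ --- these two demands are incompatible. Requiring the retraction to be the identity on a collar of $\partial P$ would mean the image contains the collar, so it is no longer a retraction onto $\Gamma_P$. The paper sidesteps this by a genuinely different construction: the separating circles of $\widetilde{\mathcal{P}}$ are themselves made part of the skeleton (the explicit $g=2$ model takes $\Gamma_i = (T\setminus B)\cup \partial B$, i.e.\ all the circles $\partial B$ together with a spanning system of tree arcs joining them), and the two halves of $\widetilde{\Sigma}$ are glued along those circles, so the circles --- not single points --- are shared between neighboring pants. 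That is what makes the Euler characteristic bookkeeping close up. To repair your approach you would need the local pieces $\Gamma_P$ to share a full circle (or, more generally, a piece with $b_1=1$) across each gluing, not merely a point.
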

\begin{proof}
For concreteness we will briefly explain the case $g=2$, where we work with the minimal pants decomposition. The general case can be dealt with similarly. We make first a preliminary observation. Let $\widetilde{\Sigma}$ be a non-compact Riemann surface such that can be written as 
$ 
\widetilde{\Sigma} = \widetilde{\Sigma}_1 \coprod_Z \widetilde{\Sigma}_2
$ 
where 
\begin{enumerate}
\item $\widetilde{\Sigma}_1$ and $\widetilde{\Sigma}_2$ are Riemann surfaces with boundary, and $Z \subset \partial \widetilde{\Sigma}_1 \cap \partial \widetilde{\Sigma}_2$
\item $\widetilde{\Sigma}_1$ and $\widetilde{\Sigma}_2$ are equipped with skeleta 
$\Gamma_1$ and $\Gamma_2$ such that $Z \subset \Gamma_1 \cap \Gamma_2$. \end{enumerate}
Then 
$ 
\Gamma:=\Gamma_1 \coprod_Z \Gamma_2
$ 
is a is a skeleton for $\widetilde{\Sigma}$.

Now let $\Sigma$ be a compact 
Riemann surface of genus $2$. Let $\widetilde{\Sigma} \to \Sigma$ be  its maximal tropical cover. There is unique pants decomposition of $\Sigma$ made up of two pairs of pants which do not self-intersect: we denote these pairs-of-pants $\Sigma_1$ and $\Sigma_2$ and write $\Sigma=\Sigma_1 \cup_D  \Sigma_2$, where $D \cong S^1 \amalg S^1 \amalg S^1$. 
The maximal abelian cover of $\Sigma_i$ can be described explicitly as follows.

Consider the equilateral simplicial tessellation of $\mathbb{R}^2$. We can view its boundary as a locally finite ribbon graph $\Delta$ embedded in $\mathbb{R}^2$: let $T$ be a spanning tree for this graph. Let $B$ be the union of the open balls $B(x, \epsilon)$ with radius $\epsilon << 1$, where $x$ ranges over the vertices of $\Delta$. Let $\partial B$ be the boundary of $B$: this is a enumerable union of circles with centers in the vertices of $\Delta$.  Then $\widetilde{\Sigma}_i$ can be realized as 
$\mathbb{R}^2 - B$. A skeleton for $\widetilde{\Sigma}_i$ is given by 
$$
\Gamma_i=(T-D) \bigcup \partial B. 
$$
In words, $\Gamma_i$ is a connected ribbon graph obtained by drawing edges between the components of $\partial B$, but in such a way that no new loops are created. 

The covering $\widetilde{\Sigma} \to \Sigma$ can be realized via the identification 
$$
\widetilde{\Sigma} \cong \widetilde{\Sigma}_1 \coprod_{\partial B} \widetilde{\Sigma}_1 
\longrightarrow \Sigma_1 \coprod_{D} \Sigma_2 \cong \Sigma. 
$$
Note that $\widetilde{\Sigma}$ carries a natural pants decomposition 
$\widetilde{\mathcal{P}}$. Also $\widetilde{\Sigma}$ inherits a skeleton $\widetilde{\Gamma} = \Gamma_1 \coprod_{\partial B} \Gamma_2$. Then the skeleton $\widetilde{\Gamma} $ is adapted to $\mathcal{U}$, for any open cover $\mathcal{U}$ compatible with $\widetilde{\mathcal{P}}$. 

Indeed, let $U$ be an open subset of $\widetilde{\Sigma}$  which is the interior of a union of pants. Then $U \cap \Gamma_i$ is a skeleton (possibly with non-compact edges) of $U \cap \widetilde{\Sigma_i}$: indeed $U \cap \widetilde{\Sigma_i}$ retracts onto a bouquet of $N$ circles for some $N$, and  $\Gamma_i \cap \widetilde{\Sigma_i}$ has the same homotopy type, which can be easily checked by inspection. Then it follows that 
$$
U \cong U \cap \widetilde{\Sigma_1} \coprod_{U \cap \partial B} U \cap \widetilde{\Sigma_2}
 \quad \text{retracts onto} \quad 
U \cap \Gamma = U \cap \Gamma_1 \coprod_{U \cap \partial B}  U \cap \Gamma_2$$ which is what we needed to prove. 
\end{proof}
\begin{remark}
\label{ntim}
If $\mathcal{U} = \{\widetilde{\Sigma}_i\}_{i \in \mathbb{N}}$ is an open cover as above, and $\widetilde{\Gamma}$ is a skeleton adapted to $\mathcal{U}$, then $\widetilde{\Gamma}$ will have at least one stop on each  boundary component of $\widetilde{\Sigma}_i$, for all $i$. 
That is,  for each connected component 
$B$ of $\partial \widetilde{\Sigma}_i$, the intersection $\Gamma_i \cap B$ is non-empty.
\end{remark}

 \subsection{The topological Fukaya category of $\widetilde{\Sigma}$}
 \label{tfc}
 In this Section we explain how to compute the Fukaya category of $\widetilde{\Sigma}$ via the sheaf of categories on graphs introduced in Section \ref{pdac}. This follows closely \cite{pascaleff2019topological} except for the fact that, since $\widetilde{\Sigma}$ is not of finite type, some extra argument is needed. 
 
 We fix once and for all a covering $\mathcal{U} = \{\widetilde{\Sigma}_i\}_{i \in \mathbb{N}}$ compatible with $\cP$, and a skeleton $\Gamma$ adapted to $\mathcal{U}$. 
It follows  immediately  from the definition that 
$$
\mathrm{\mathrm{Fuk}}^{\mathrm{top}}(\widetilde{\Sigma}) = \varinjlim_i \cF^{\mathrm{top}}(\Gamma_i), \quad 
\mathrm{\mathrm{Fuk}}^{\mathrm{top}}_\infty(\widetilde{\Sigma}) = \varinjlim_i \cF^{\mathrm{top}}_\infty(\Gamma_i). 
$$

By definition $\Gamma_i$ is a skeleton for $\widetilde{\Sigma}_i$, possibly with stops. The closure of $\widetilde{\Sigma}_i$ is a Riemann surface with boundary equipped with a pants decomposition. Let $G_i \subset \widetilde{G}$ be the open sub-graph encoding the pants decomposition of $\widetilde{\Sigma}_i$. We can evaluate the sheaf $\mathcal{B}(-)$ on $G_i$. By Theorem 8.3 of \cite{pascaleff2019topological} we have that $\mathcal{B}(G_i)$ is equivalent to the Ind-completion of the topological Fukaya category of $\widetilde{\Sigma}_i$
$$
\mathcal{B}(G_i) \simeq \mathrm{Ind}(\mathrm{\mathrm{Fuk}}^{\mathrm{top}}(\widetilde{\Sigma}_i))
$$
Now let  
$\Gamma_i'$ be the compact skeleton of $\widetilde{\Sigma}_i$ obtained by removing the non-compact edges of $\Gamma_i$. Since the topological Fukaya category of a surface can be computed by evaluating $\cF^{\mathrm{top}}$ on a compact skeleton we have that $\cB(G_i) \simeq \cF^{\mathrm{top}}_{\infty}(\Gamma_i')$.

We have inclusions 
$$
\Gamma_i' \subset \Gamma_i, \quad \Gamma_i' \subset \Gamma_{i+1}', \quad \Gamma_i \subset \Gamma_{i+1}' 
$$ 
where $\Gamma_i'$ is a closed subgraph of $\Gamma_i$ and of $\Gamma_{i+1}'$, while $\Gamma_i$ is an open subgraph of $\Gamma_{i+1}'$. This yields functors 
$$
T_\infty: \cB(G_i) \simeq \mathrm{\mathrm{Fuk}}^{\mathrm{top}}_\infty(\widetilde{\Sigma}_i) \longrightarrow \cF^{\mathrm{top}}_\infty(\Gamma_i), \quad T_\infty: \cB(G_i) \simeq \mathrm{\mathrm{Fuk}}^{\mathrm{top}}_\infty(\widetilde{\Sigma}_i) \longrightarrow \cB(G_{i+1}) \simeq \mathrm{\mathrm{Fuk}}^{\mathrm{top}}_\infty(\widetilde{\Sigma}_{i+1}), 
$$
$$
C_\infty: \cF^{\mathrm{top}}_\infty(\Gamma_i) \longrightarrow \cB(G_{i+1}) \simeq \mathrm{\mathrm{Fuk}}^{\mathrm{top}}_\infty(\widetilde{\Sigma}_{i+1})
$$
Note that $S_\infty: \cB(G_{i+1}) \to \cB(G_i)$ coincides with the restriction functor $S_\cB$ of the sheaf $\cB$ discussed in Section \ref{pdac}. Thus $T_\infty: \cB(G_i) \to \cB(G_{i+1})$ is the same as the corestriction functor $T_\cB$ introduced in Lemma \ref{ambi}.  
For all $i$, we have factorizations
$$
\xymatrix{
\cF^{\mathrm{top}}_\infty(\Gamma_i) \ar[r]^-{C_\infty}   \ar@/^15pt/@{->}[rr]^{C_\infty}   & \cB(G_{i+1}) \ar[r]^-{T_\infty} & \cF^{\mathrm{top}}_\infty(\Gamma_{i+1})
}
\quad 
\xymatrix{
\cB(G_i) \ar[r]^-{T_\infty}   \ar@/^15pt/@{->}[rr]^{T_\infty}   & \cF^{\mathrm{top}}_\infty(\Gamma_{i}) \ar[r]^-{C_\infty} & \cB(G_{i+1})
}
$$
 \begin{proposition}
 \label{tiae}
There is an equivalence 
$$ 
\mathrm{\mathrm{Fuk}}^{\mathrm{top}}_\infty(\widetilde{\Sigma}) \simeq \varinjlim_i \cF^{\mathrm{top}}_\infty(\Gamma_i) \simeq \varinjlim_i \cB(G_{i})
$$ 
\end{proposition}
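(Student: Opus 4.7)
The first equivalence $\mathrm{Fuk}^{\mathrm{top}}_\infty(\widetilde{\Sigma}) \simeq \varinjlim_i \cF^{\mathrm{top}}_\infty(\Gamma_i)$ is essentially tautological given the setup: we have already written $\mathrm{Fuk}^{\mathrm{top}}(\widetilde{\Sigma}) = \varinjlim_i \cF^{\mathrm{top}}(\Gamma_i)$ from the cover $\mathcal{U}$, and since the Ind-completion functor preserves small colimits (the first cited lemma from Gaitsgory), applying $\mathrm{Ind}$ sectionwise converts this into a colimit of cocomplete categories. So the substantive content is the second equivalence $\varinjlim_i \cF^{\mathrm{top}}_\infty(\Gamma_i) \simeq \varinjlim_i \cB(G_i)$.

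The plan is to exhibit both of these sequences as cofinal subsequences of a single interleaved sequence, then invoke the fact that a colimit along a countable sequential diagram is unchanged under passage to a cofinal subsequence. Concretely, using the identification $\cB(G_i) \simeq \cF^{\mathrm{top}}_\infty(\Gamma_i')$ (where $\Gamma_i'$ is the compact skeleton of $\widetilde{\Sigma}_i$, coming from Theorem 8.3 of our previous paper) and the inclusions $\Gamma_i' \subset \Gamma_i \subset \Gamma_{i+1}'$ recorded above the statement, we assemble the zigzag
\[
\cB(G_1) \xrightarrow{T_\infty} \cF^{\mathrm{top}}_\infty(\Gamma_1) \xrightarrow{C_\infty} \cB(G_2) \xrightarrow{T_\infty} \cF^{\mathrm{top}}_\infty(\Gamma_2) \xrightarrow{C_\infty} \cB(G_3) \to \cdots
\]
whose composites along two consecutive arrows are exactly the structure maps $C_\infty : \cF^{\mathrm{top}}_\infty(\Gamma_i) \to \cF^{\mathrm{top}}_\infty(\Gamma_{i+1})$ and $T_\infty : \cB(G_i) \to \cB(G_{i+1})$, by the two factorization diagrams displayed just before the proposition.

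Both $\{\cB(G_i)\}_i$ and $\{\cF^{\mathrm{top}}_\infty(\Gamma_i)\}_i$ are cofinal subdiagrams of this interleaved $\mathbb{N}$-indexed diagram (each is obtained by discarding every other term, which is trivially cofinal in a sequential diagram indexed by $\mathbb{N}$). Hence their colimits in $\mathrm{DGCat}^{(2)}_{\mathrm{cont}}$ agree with the colimit of the full interleaved diagram, and therefore with each other. Composing with the first equivalence gives the claim.

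The only real thing to check is that the two factorization squares commute coherently — i.e. that the map $T_\infty \circ C_\infty : \cF^{\mathrm{top}}_\infty(\Gamma_i) \to \cF^{\mathrm{top}}_\infty(\Gamma_{i+1})$ is genuinely homotopic to $C_\infty$ along $\Gamma_i \subset \Gamma_{i+1}'$, and dually for $C_\infty \circ T_\infty$. This is essentially bookkeeping about corestrictions from the sheaf and exceptional corestrictions across the compactification step $\Gamma_i' \subset \Gamma_i$, using the ambidexterity of Lemma \ref{ambi} and the structure of the cosheaf $\cF^{\mathrm{top}}$ from Section \ref{potfc}; I expect this to be the main obstacle to a fully rigorous argument, but it is not conceptually deep. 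Once the two squares commute, the cofinality argument above concludes the proof.
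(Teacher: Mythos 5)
Your proposal is correct and reproduces the paper's own argument: the paper also interleaves the two diagrams into a single $\mathbb{N}$-indexed system $\cB(G_1) \to \cF^{\mathrm{top}}_\infty(\Gamma_1) \to \cB(G_2) \to \cdots$ and invokes cofinality of both subsequences, relying on the factorization squares displayed immediately before the statement. The only (minor) difference is that you flag the coherence of those factorizations as a point to be checked, whereas the paper treats them as established by the preceding discussion of the inclusions $\Gamma_i' \subset \Gamma_i \subset \Gamma_{i+1}'$ and the identifications $\cB(G_i) \simeq \cF^{\mathrm{top}}_\infty(\Gamma_i')$.
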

\begin{proof}
Consider the directed system in $\mathrm{DGCat}^{(2)}_{\mathrm{cont}}$ 
$$
C_1:=\mathcal{B}(G_1) \stackrel{T_\infty} \to C_2 :=  \cF^{\mathrm{top}}_\infty(\Gamma_1) \stackrel{C_\infty} \to 
C_3:=\mathcal{B}(G_2) \stackrel{T_\infty} \to C_4:=\cF^{\mathrm{top}}_\infty(\Gamma_2) \stackrel{C_\infty} \to \ldots
$$
Then by cofinality
$$
\varinjlim_i \cB(G_i) \simeq \varinjlim_i C_i \simeq \varinjlim_i \cF^{\mathrm{top}}_\infty(\Gamma_i) \simeq \mathrm{\mathrm{Fuk}}^{\mathrm{top}}_\infty(\widetilde{\Sigma}).
$$
\end{proof}
Now consider the inductive system in $\mathrm{DGCat}^{(2)}_{\mathrm{cont}}$ 
$$
\cB(G_{1}) \stackrel{S_\infty}\leftarrow \cB(G_{2}) \stackrel{S_\infty}\leftarrow \cB(G_{3}) \stackrel{S_\infty}\leftarrow   \ldots
$$
\begin{proposition}
\label{tiasf}
There is an equivalence 
$$ 
\mathrm{\mathrm{Fuk}}^{\mathrm{top}}_\infty(\widetilde{\Sigma}) \simeq \varprojlim_i \cB(G_{i}) \simeq \cB(\widetilde{G})$$
\end{proposition}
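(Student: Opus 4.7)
The plan is to prove the two equivalences separately. The first equivalence $\mathrm{Fuk}^{\mathrm{top}}_\infty(\widetilde{\Sigma}) \simeq \varprojlim_i \cB(G_i)$ is obtained by converting the colimit presentation from Proposition \ref{tiae} into a limit along right adjoints. Explicitly, Proposition \ref{tiae} gives $\mathrm{Fuk}^{\mathrm{top}}_\infty(\widetilde{\Sigma}) \simeq \varinjlim_i \cB(G_i)$ where the transition functors are the corestrictions $T_\infty = T_\cB$. By Lemma \ref{ambi}, each $T_\cB$ sits in an ambidextrous adjunction $S_\cB \dashv T_\cB \dashv S_\cB$, so in particular it admits a \emph{continuous} right adjoint, namely the restriction functor $S_\infty = S_\cB$. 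Hence the hypotheses of Lemma \ref{libas} are satisfied, and it yields the desired canonical equivalence
$$\varinjlim_i \cB(G_i) \simeq \varprojlim_i \cB(G_i),$$
where the inverse system on the right is the one appearing in the statement.

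For the second equivalence $\varprojlim_i \cB(G_i) \simeq \cB(\widetilde{G})$, the strategy is to use the limit presentation \eqref{dlgbag} of $\cB$ on the (infinite) graph $\widetilde{G}$ together with the fact that $\{G_i\}$ is an exhausting filtration by open subgraphs. Concretely, every vertex and every edge of $\widetilde{G}$ eventually lies in some $G_i$, so the exit-path quiver satisfies $Q_{\widetilde{G}} = \varinjlim_i Q_{G_i}$ as a filtered colimit of subquivers. Using commutation of limits and the presentation $\cB(G) \simeq \varprojlim_{q \in Q_G} \cB(q)$, we get
$$\cB(\widetilde{G}) \simeq \varprojlim_{q \in Q_{\widetilde{G}}} \cB(q) \simeq \varprojlim_i \varprojlim_{q \in Q_{G_i}}\cB(q) \simeq \varprojlim_i \cB(G_i).$$
One checks that the transition functors produced by this rewriting are precisely the restrictions $S_\cB$, so the two inverse systems coincide.

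The main technical step is the first one: ensuring that Lemma \ref{libas} applies, which hinges essentially on the ambidextrous adjunction of Lemma \ref{ambi}. Once that is in place, the second equivalence is a formal consequence of the definition of $\cB$ as a limit over the exit-path quiver. In this sense, Lemma \ref{ambi} is the key input that allows the topological Fukaya category of the non-finite type surface $\widetilde{\Sigma}$, which naturally appears as a colimit of finite-type pieces, to be computed as a global section of the sheaf $\cB$ on the infinite graph $\widetilde{G}$.
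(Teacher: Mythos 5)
Your proof is correct and follows essentially the same route as the paper: both pass from the colimit presentation of Proposition \ref{tiae} to a limit via Lemma \ref{libas}, using Lemma \ref{ambi} to supply the continuous right adjoints, and then identify $\varprojlim_i \cB(G_i)$ with $\cB(\widetilde{G})$ by sheaf descent over the open cover $\{G_i\}$. Your treatment of the second step via exit-path quivers simply spells out the descent argument that the paper states more briefly.
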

\begin{proof}
%
By Lemma \ref{ambi}, $S_\infty$ is also the right adjoint of $T_\infty$.  By Proposition \ref{tiae}, we have that  
 $\mathrm{\mathrm{Fuk}}^{\mathrm{top}}_\infty(\widetilde{\Sigma}) \simeq  \varinjlim_i \cB(G_{i})$, 
where the colimit is computed in $\mathrm{DGCat}^{(2)}_{\mathrm{cont}}$. By Lemma \ref{libas} taking right adjoint yields   
$$
\mathrm{\mathrm{Fuk}}^{\mathrm{top}}_\infty(\widetilde{\Sigma}) \simeq \varprojlim_i \cB(G_{i}) \, \, \text{in} \, \, \mathrm{DGCat}^{(2)}_{\mathrm{cont}}. 
$$
Now, since $\{G_i\}_{i \in \mathbb{N}}$ is an open cover of $\widetilde{G}$, the limit $\varprojlim_i \cB(G_{i})$ computes the global sections of $\cB$, and is therefore equivalent to 
$\cB(\widetilde{G})$. 
\end{proof}

As a consequence of Proposition \ref{tiasf} we can rewrite 
$\mathrm{\mathrm{Fuk}}^{\mathrm{top}}_\infty(\widetilde{\Sigma})$ as a limit of a diagram in $\mathrm{DGCat}^{(2)}_{\mathrm{cont}}$ having
\begin{enumerate}
\item as vertices, the categories $\cB_v$ and $\cB_e$, where $v$ and $e$ vary  over the vertices and edges of $\widetilde{G}$
\item as structure morphisms, the restrictions 
$\cB_v \to \cB_e$, whenever $e$ is incident to $v$. 
\end{enumerate}
More precisely, let us  denote by $\widetilde{L}$ be the poset of strata of $\widetilde{G}$, where $\widetilde{G}$ is stratified by its vertices and edges.  Since 
$\mathcal{B}$ is a constructible sheaf we can compute its global sections $\cB(\widetilde{G})$ by taking a limit over its stalks on strata: this limit is naturally indexed by the poset $\widetilde{L}$. We can express this by writing
$$
\mathrm{\mathrm{Fuk}}^{\mathrm{top}}_\infty(\widetilde{\Sigma}) \simeq \cB(\widetilde{G}) \simeq 
\varprojlim_{l \in \widetilde{L}} \cB_l
$$
where $l \in \widetilde{L}$ correspond to the strata of $\widetilde{G}$ (i.e. its vertices and edges), and $\cB_l$ denotes the stalk of $\cB(-)$ on the stratum $l$. 

The group $O_\pi$ of deck transformations of $\pi:\widetilde{\Sigma} \to \Sigma$ acts on $\mathrm{\mathrm{Fuk}}^{\mathrm{top}}_\infty(\widetilde{\Sigma})$. We will use the presentation of $\mathrm{\mathrm{Fuk}}^{\mathrm{top}}_\infty(\widetilde{\Sigma})$ we have just given  to compute the $O_\pi$-equivariant category $\mathrm{\mathrm{Fuk}}^{\mathrm{top}}_\infty(\widetilde{\Sigma})^{O_\pi}$. Recall that 
$O_\pi$ is also the group of deck transformation of the maximal abelian cover $q:\widetilde{G} \to G$.
\begin{proposition}
\label{fiwso}
There is an equivalence 
$$
\mathrm{\mathrm{Fuk}}^{\mathrm{top}}_\infty(\widetilde{\Sigma})^{O_\pi} \simeq \cB(\widetilde{G})^{O_\pi} \simeq \cB(G)
$$
\end{proposition}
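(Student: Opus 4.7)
The plan is to prove the two equivalences separately. The first equivalence $\mathrm{Fuk}^{\mathrm{top}}_\infty(\widetilde{\Sigma})^{O_\pi} \simeq \cB(\widetilde{G})^{O_\pi}$ is essentially formal once one verifies that the equivalence of Proposition \ref{tiasf} is $O_\pi$-equivariant. The group $O_\pi$ acts on $\widetilde{\Sigma}$ preserving $\widetilde{\cP}$, and hence acts on any skeleton $\widetilde{\Gamma}$ adapted to an $O_\pi$-invariant cover $\mathcal{U}$ (one can always arrange this by saturating a chosen cover under the $O_\pi$-action). The directed system of finite type subsurfaces $\widetilde{\Sigma}_i$ and associated categories $\cB(G_i)$ carries a compatible $O_\pi$-action, and the colimit/limit presentations of Propositions \ref{tiae} and \ref{tiasf} are $O_\pi$-equivariant by functoriality. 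Passing to homotopy fixed points then gives the first equivalence.

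The real content is the second equivalence $\cB(\widetilde{G})^{O_\pi} \simeq \cB(G)$. First I would observe that the sheaf $\cB$ on $\widetilde{G}$ is the pullback of $\cB$ on $G$ along the covering map $q : \widetilde{G} \to G$: indeed the local data $\cB_v$, $\cB_t$ and the restriction functors are determined only by the combinatorial type of a stratum (vertex valency, half-edges incident) together with the framings and weights, and by choosing a $O_\pi$-invariant framing/weighting on $\widetilde{G}$ lifted from one on $G$, the stalks and morphisms on $\widetilde{G}$ are literally pullbacks of those on $G$. On the level of exit-path quivers this means $Q_{\widetilde{G}}$ is a free $O_\pi$-set with quotient $Q_G$, and the functor $\cB: Q_{\widetilde{G}} \to \mathrm{DGCat}^{(2)}_{\mathrm{cont}}$ is $O_\pi$-equivariant and obtained by pullback from $\cB : Q_G \to \mathrm{DGCat}^{(2)}_{\mathrm{cont}}$.

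Once this is set up, I would compute both sides as limits. On the one hand,
\[
\cB(G) \simeq \varprojlim_{q \in Q_G} \cB(q).
\]
On the other hand,
\[
\cB(\widetilde{G})^{O_\pi} \simeq \Big(\varprojlim_{\tilde{q} \in Q_{\widetilde{G}}} \cB(\tilde{q})\Big)^{O_\pi}.
\]
Since $O_\pi$ acts freely on $Q_{\widetilde{G}}$ with quotient $Q_G$, and the diagram is $O_\pi$-equivariant and pulled back from $Q_G$, the standard descent/equivariance identity for homotopy limits gives
\[
\Big(\varprojlim_{\tilde{q} \in Q_{\widetilde{G}}} \cB(\tilde{q})\Big)^{O_\pi} \simeq \varprojlim_{q \in Q_G} \cB(q).
\]
Combining these produces the desired equivalence.

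The main obstacle, and the step I would treat carefully, is justifying the last displayed formula; it is the homotopy-coherent version of the statement ``$G$-invariants of a limit over a free $G$-set equal the limit over the quotient''. The cleanest way is to observe that the functor $Q_{\widetilde{G}} \to Q_G$ is a coCartesian fibration associated to the $O_\pi$-action, hence equivalently a left Kan extension presentation, so limits along $Q_{\widetilde{G}}$ of the pulled-back diagram are computed fiberwise as products over $O_\pi$-orbits, and further taking $O_\pi$-homotopy fixed points collapses each orbit product back to the single value at the base stratum. One must also verify that the $O_\pi$-equivariant structure on the pulled-back diagram coming from Proposition \ref{tiasf} agrees with the tautological one coming from the description of $\cB$ as a pullback sheaf; this is straightforward because both are determined by the geometric action of $O_\pi$ on $\widetilde{G}$.
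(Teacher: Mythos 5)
Your approach is essentially the same as the paper's: present $\mathrm{Fuk}^{\mathrm{top}}_\infty(\widetilde{\Sigma})$ as a limit of the stalks $\cB_l$ over the strata of $\widetilde{G}$ (equivalently over the exit-path quiver $Q_{\widetilde{G}}$), identify the $O_\pi$-action on the left with the one induced from the action on the indexing diagram, and then use that $O_\pi$ acts freely on the strata with quotient $L \cong Q_G$ to commute passage to $O_\pi$-invariants with the limit. The conclusion $\cB(\widetilde{G})^{O_\pi} \simeq \cB(G)$ is the same, and your unpacking of the last step in terms of the right Kan extension along $Q_{\widetilde{G}} \to Q_G$ (a fiberwise product over free $O_\pi$-orbits, collapsing to a single factor after taking invariants) is sound, though that map is a covering --- a Kan fibration with discrete fibers --- not a coCartesian fibration.

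The one place your sketch goes astray is the parenthetical claim that the exhaustion $\mathcal{U}=\{\widetilde{\Sigma}_i\}$ can be made $O_\pi$-invariant by saturating. Each $\widetilde{\Sigma}_i$ is required to be of finite type, and saturating a finite-type subsurface under the infinite group $O_\pi \cong \mathbb{Z}^g$ yields all of $\widetilde{\Sigma}$; nor can the nested family $\{\widetilde{\Sigma}_i\}$ be preserved as a set by an infinite group. So an $O_\pi$-invariant exhaustion of this form does not exist. The object that can and should be chosen $O_\pi$-invariantly is the skeleton $\widetilde{\Gamma}$ together with the pants decomposition $\widetilde{\cP}$; the equivalence $\mathrm{Fuk}^{\mathrm{top}}_\infty(\widetilde{\Sigma}) \simeq \cB(\widetilde{G})$ of Proposition \ref{tiasf} is then $O_\pi$-equivariant because the sheaf $\cB$ and the comparison with $\cF^{\mathrm{top}}_\infty$ are built canonically from these invariant data, even though the auxiliary exhaustion used to compute the limit is not itself $O_\pi$-invariant. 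This is a small fix that does not disturb the rest of your argument.
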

\begin{proof}
As we explained, $\mathrm{\mathrm{Fuk}}^{\mathrm{top}}_\infty(\widetilde{\Sigma})$ can be realized as the limit of a diagram of categories in $\mathrm{DGCat}^{(2)}_{\mathrm{cont}}$ indexed by  the poset of strata of $\widetilde{G}$, which we denoted $\widetilde{L}$. Additionally, the $O_\pi$-action 
on $\mathrm{\mathrm{Fuk}}^{\mathrm{top}}_\infty(\widetilde{\Sigma})$ can be easily understood via this presentation. Namely the group $O_\pi$ acts on $\widetilde{L}$, and therefore on the right-hand side of 
$$
\mathrm{\mathrm{Fuk}}^{\mathrm{top}}_\infty(\widetilde{\Sigma})  \simeq 
\varprojlim_{l \in \widetilde{L}} \cB_l
$$
and this induces the action of $O_\pi$ on $\mathrm{\mathrm{Fuk}}^{\mathrm{top}}_\infty(\widetilde{\Sigma})$.
Note a that clearly $\widetilde{L}/G \cong L$, where $L$ is the poset of strata of $G$. Note also that 
$$
\cB(G) \simeq \varprojlim_{l \in  L} \cB_l
$$
Since passing to the invariant category is a limit, and limits commute with limits, we can write  
$$
\cB(\widetilde{G})^{O_{\pi}} \simeq \big (\varprojlim_{l \in \widetilde{L}} \cB_l \big )^{O_{\pi}} \simeq 
\varprojlim_{l \in (\widetilde{L}/O_{\pi}) } \cB_l \simeq \cB(G)
$$
and this concludes the proof.
\end{proof}

\subsection{The locally finite topological Fukaya category}
\label{lffc}
The account of the topological Fukaya category of non-finite type Riemann surfaces which we have given so far focused on the  Ind-completed Fukaya category. However for the applications we have in mind it will be  useful to work with small  categories. It turns out that for our purposes the topological Fukaya category of $\widetilde{\Sigma}$ 
$$
\mathrm{\mathrm{Fuk}}^{\mathrm{top}}(\widetilde{\Sigma}) = \mathrm{\mathrm{Fuk}}^{\mathrm{top}}_\infty(\widetilde{\Sigma})^\omega  \simeq \cB(\widetilde{G})^\omega
$$
 is not quite the right category, it is too small. In particular, although the $O_{\pi}$-action restricts to $\mathrm{\mathrm{Fuk}}^{\mathrm{top}}(\widetilde{\Sigma})$,  the $O_{\pi}$-equivariant category $\mathrm{\mathrm{Fuk}}^{\mathrm{top}}(\widetilde{\Sigma})^{O_{\pi}}$ is the zero-category.

 The key to resolving the issue is the following observation: whereas $\cF^{\mathrm{top}}_\infty(-)$ with restrictions   $R_\infty$ defines a sheaf with values in 
$\mathrm{DGCat^{(2)}_{cont}}$ (see Section \ref{potfc} (3)), this is no longer true after restricting to compact objects. To fix this we need to sheafify the assignment 
\begin{equation}
\label{presheaf}
U \subset \widetilde{\Gamma}\mapsto (\cF_\infty(U))^\omega \in \mathrm{DGCat^{(2)}_{small}}. 
\end{equation} 
This yields a sheaf $\cF^{\mathrm{lf}}(-)$ with values in 
$\mathrm{DGCat^{(2)}_{small}}$:  the superscript \emph{lf} stands for \emph{locally finite}, meaning that locally on the skeleton the sections of $\cF^{\mathrm{lf}}(-)$ are the compact objects inside  $\cF^{\mathrm{top}}_{\infty}(-)$. 

\begin{definition}
\label{wctgs}
We call the global sections of the sheaf $\cF^{\mathrm{lf}}(-)$ the \emph{locally finite} topological Fukaya category of $\widetilde{\Sigma}$, and we denote it $
\mathrm{\mathrm{Fuk}}^{\mathrm{top}, \mathrm{lf}}(\widetilde{\Sigma})$. 
\end{definition}

\begin{remark}
The category $
\mathrm{\mathrm{Fuk}}^{\mathrm{top}, \mathrm{lf}}(\widetilde{\Sigma})$ is, by construction, the Morita dual of $\mathrm{\mathrm{Fuk}}^{\mathrm{top}}(\widetilde{\Sigma})$: that is 
$$
\mathrm{\mathrm{Fuk}}^{\mathrm{top}, \mathrm{lf}}(\widetilde{\Sigma}) \simeq \mathrm{Fun}_{\mathrm{DGCat^{(2)}_{small}}}(\mathrm{\mathrm{Fuk}}^{\mathrm{top}}(\widetilde{\Sigma}), \Perf^{(2)}(k)). 
$$
As such it was considered in \cite{sibilla2014ribbon} and \cite{dyckerhoff2018triangulated} as a model for the compact, or \emph{infinitesimally wrapped} Fukaya category of a punctured Riemann surface.
\end{remark}

We are going to prove for $
\mathrm{\mathrm{Fuk}}^{\mathrm{top}, \mathrm{lf}}(\widetilde{\Sigma})$ some of the structure results we have established for $
\mathrm{\mathrm{Fuk}}^{\mathrm{top}}_\infty(\widetilde{\Sigma})$ in the previous section. Since the arguments are very similar for the two cases, we will limit ourselves to a somewhat abbreviated treatment.

Recall from Remark \ref{iwab} that  $\cB^\omega$ is 
the presheaf of small categories obtained from $\mathcal{B}$ by taking compact objects section-wise.
 \begin{proposition}
\label{flws}
There is an equivalence
$$
\mathrm{\mathrm{Fuk}}^{\mathrm{top}, \mathrm{lf}}(\widetilde{\Sigma})  \simeq \varprojlim_i \cB^\omega (G_{i}) \simeq \varprojlim_{i} \mathrm{Fuk}^{\mathrm{top}}(\widetilde{\Sigma}_i)
$$
\end{proposition}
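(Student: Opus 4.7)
The plan is to prove the two equivalences separately, leveraging the nested cover $\{\Gamma_{i}\}$ of $\widetilde{\Gamma}$ and reducing the infinite-type situation to the finite-type results of \cite{pascaleff2019topological}.

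For the second equivalence $\varprojlim_i \cB^\omega(G_i) \simeq \varprojlim_i \mathrm{Fuk}^{\mathrm{top}}(\widetilde{\Sigma}_i)$: recall from the discussion preceding Proposition \ref{tiae} that $\cB(G_i) \simeq \cF^{\mathrm{top}}_\infty(\Gamma_i')$, where $\Gamma_i'$ is the compact skeleton of the finite-type surface $\widetilde{\Sigma}_i$; this is an application of Theorem 8.3 of \cite{pascaleff2019topological}. Passing to compact objects yields $\cB^\omega(G_i) \simeq \mathrm{Fuk}^{\mathrm{top}}(\widetilde{\Sigma}_i)$. One then verifies that the restriction functors $S_\cB : \cB^\omega(G_{i+1}) \to \cB^\omega(G_i)$ match the corresponding topological Fukaya restrictions under these identifications; this compatibility is built into the cover-by-pants construction of Theorem 8.3.

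For the first equivalence, the key observation is that the presheaf $U \mapsto \cF^{\mathrm{top}}_\infty(U)^{\omega}$ underlying $\cF^{\mathrm{lf}}$ already admits well-defined restriction maps along the nested inclusions $\Gamma_i \hookrightarrow \Gamma_{i+1}$. Indeed, by Remark \ref{ntim}, the skeleton $\widetilde{\Gamma}$ is adapted to the cover, so each subsurface $\widetilde{\Sigma}_i$ meets every relevant boundary component in at least one stop of $\Gamma_i$. Remark \ref{luvg} then guarantees that the restriction $R_\infty : \cF^{\mathrm{top}}_\infty(\Gamma_{i+1}) \to \cF^{\mathrm{top}}_\infty(\Gamma_i)$ satisfies $R_\infty = \mathrm{Ind}(R)$ for the corresponding restriction $R$ of the small topological Fukaya category, and in particular preserves compact objects. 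Therefore $\cF^{\mathrm{lf}}(\Gamma_i) \simeq \cF^{\mathrm{top}}(\Gamma_i)$ on each finite piece, with the restriction maps in the diagram $\{\cF^{\mathrm{lf}}(\Gamma_i)\}_{i}$ being precisely those described above.

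Since $\{\Gamma_i\}$ is a linearly ordered open cover of $\widetilde{\Gamma}$ (so that the \v{C}ech diagram collapses to an inverse limit over restriction), the sheaf property for $\cF^{\mathrm{lf}}$ gives
\[
\mathrm{Fuk}^{\mathrm{top},\mathrm{lf}}(\widetilde{\Sigma}) \;=\; \cF^{\mathrm{lf}}(\widetilde{\Gamma}) \;\simeq\; \varprojlim_i \cF^{\mathrm{lf}}(\Gamma_i) \;\simeq\; \varprojlim_i \cB^\omega(G_i),
\]
combining the two steps. The main obstacle is conceptual rather than computational: one must be careful that, although the naive presheaf $U\mapsto \cF^{\mathrm{top}}_\infty(U)^\omega$ fails to be a sheaf on all of $\widetilde{\Gamma}$ (which is precisely why sheafification is needed and why the definition of $\mathrm{Fuk}^{\mathrm{top},\mathrm{lf}}(\widetilde{\Sigma})$ is not circular), its restriction to the cofinal family of finite subgraphs $\Gamma_i$ \emph{is} already sheafy, and the sheafification is computed by taking the inverse limit of values on this cofinal family. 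This is the analogue, at the level of compact objects, of the limit computation carried out in Proposition \ref{tiasf} for $\mathrm{Fuk}^{\mathrm{top}}_\infty(\widetilde{\Sigma})$, where one invokes Lemma \ref{libas} to trade a colimit in $\mathrm{DGCat}^{(2)}_{\mathrm{cont}}$ for a limit; here, since we are already working with the right-adjoint (restriction) direction, the inverse limit is directly the natural object.
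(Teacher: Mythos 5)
Your plan is in the same spirit as the paper's proof --- identify the finite pieces via Theorem~8.3 of \cite{pascaleff2019topological} and exhaust $\widetilde{\Gamma}$ by nested finite subgraphs --- but there is a genuine gap in the final assembly. The problem is that $\Gamma_i = \widetilde{\Gamma}\cap\widetilde{\Sigma}_i$ has non-compact edges (stops), so the category $\cF^{\mathrm{lf}}(\Gamma_i)\simeq\cF^{\mathrm{top}}(\Gamma_i)$ appearing in your second paragraph is the \emph{partially wrapped} Fukaya category of $\widetilde{\Sigma}_i$, whereas $\cB^\omega(G_i)\simeq\cF^{\mathrm{top}}(\Gamma_i')\simeq\mathrm{Fuk}^{\mathrm{top}}(\widetilde{\Sigma}_i)$ (with $\Gamma_i'$ the compact skeleton) is the \emph{fully} wrapped version. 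These are different categories, connected by the stop-removal localization $S$, and they sit in two different inverse systems. Writing ``combining the two steps'' does not by itself give $\varprojlim_i\cF^{\mathrm{top}}(\Gamma_i)\simeq\varprojlim_i\cB^\omega(G_i)$; that identification is precisely what still needs to be proved.

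The paper closes this gap by interleaving the two systems and appealing to cofinality, exactly as in Proposition \ref{tiae}. The restriction $R:\cF^{\mathrm{top}}(\Gamma_{i+1})\to\cF^{\mathrm{top}}(\Gamma_i)$ factors as $\cF^{\mathrm{top}}(\Gamma_{i+1})\xrightarrow{\,S\,}\cB^\omega(G_{i+1})\xrightarrow{\,R\,}\cF^{\mathrm{top}}(\Gamma_i)$, while $S_\cB^\omega:\cB^\omega(G_{i+1})\to\cB^\omega(G_i)$ factors as $\cB^\omega(G_{i+1})\xrightarrow{\,R\,}\cF^{\mathrm{top}}(\Gamma_i)\xrightarrow{\,S\,}\cB^\omega(G_i)$, giving a single inverse system
$$
\cB^\omega(G_1)\xleftarrow{\,S\,}\cF^{\mathrm{top}}(\Gamma_1)\xleftarrow{\,R\,}\cB^\omega(G_2)\xleftarrow{\,S\,}\cF^{\mathrm{top}}(\Gamma_2)\xleftarrow{\,R\,}\cdots
$$
in which $\{\cB^\omega(G_i)\}$ and $\{\cF^{\mathrm{top}}(\Gamma_i)\}$ are both cofinal subsystems and hence have the same limit. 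You already have every ingredient for this (Remark \ref{ntim}, Remark \ref{luvg}, and the identification $\cB^\omega(G_i)\simeq\cF^{\mathrm{top}}(\Gamma_i')$); the missing step is exactly this interleaving and cofinality argument.
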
 
\begin{proof}
By Remark \ref{ntim} and Remark \ref{luvg}, for all $i$ the category 
$\cF^{\mathrm{top}}(\Gamma_i)$ is a smooth and proper, and  the restriction functor  
$R_\infty: \cF^{\mathrm{top}}_\infty(\Gamma_{i+1}) \to \cF^{\mathrm{top}}_\infty(\Gamma_{i+1})$ restricts to compact objects, that is we can write 
$R_\infty = \mathrm{Ind}(R)$. 
By definition we can write 
$$
\mathrm{\mathrm{Fuk}}^{\mathrm{top}, \mathrm{lf}}(\widetilde{\Sigma}) \simeq \varprojlim_i \cF^{\mathrm{top}}(\Gamma_i). 
$$

As we discussed, there is an equivalence 
$ \cB^\omega(G_{i}) \simeq \cF^{\mathrm{top}}(\Gamma_{i}')$.  
This yields factorizations $$
\xymatrix{
\cF^{\mathrm{top}}(\Gamma_i)     & \cB(G_{i+1}) \ar[l]_-{R}  & 
\cF^{\mathrm{top}}(\Gamma_{i+1}) \ar[l]_-{S} 
 \ar@/_18pt/@{->}[ll]_{R}
}
\quad 
\xymatrix{
\cB(G_i)^\omega   & \cF^{\mathrm{top}}(\Gamma_{i})  \ar[l]_-{S}  & \cB(G_{i+1})^\omega   \ar[l]_-{R} \ar@/_18pt/@{->}[ll]_{S_{\mathcal{B}} ^\omega} 
}
$$
Then we can conclude by a cofinality argument exactly as in the proof of Proposition \ref{tiae}, by looking at the inverse system
$$
 \mathcal{B}(G_1)^\omega \stackrel{S} \leftarrow  \cF^{\mathrm{top}}(\Gamma_1) \stackrel{R} \leftarrow 
 \mathcal{B}(G_2)^\omega \stackrel{S} \leftarrow  \cF^{\mathrm{top}}(\Gamma_2) \stackrel{R} \leftarrow \ldots
$$
\end{proof}
\begin{remark}
  \label{rem:lf-as-projlim}
Note that there are fully-faithful embeddings
$$
\mathrm{\mathrm{Fuk}}^{\mathrm{top}}(\widetilde{\Sigma}) \subset \mathrm{\mathrm{Fuk}}^{\mathrm{top}, \mathrm{lf}}(\widetilde{\Sigma}) \subset \mathrm{\mathrm{Fuk}}^{\mathrm{top}}_{\infty}(\widetilde{\Sigma}). 
$$
The second inclusion is an immediate consequence of Proposition \ref{flws}. As for the first inclusion, recall that by Proposition \ref{tiasf} $
\mathrm{\mathrm{Fuk}}^{\mathrm{top}}(\widetilde{\Sigma})  \simeq  \big ( \varprojlim \cB(G_{i}) \big )^\omega$. 
Then by Lemma \ref{libasi}, 
$$
\mathrm{\mathrm{Fuk}}^{\mathrm{top}}(\widetilde{\Sigma}) \subset  \varprojlim \cB(G_{i})^\omega \simeq \mathrm{\mathrm{Fuk}}^{\mathrm{top}, \mathrm{lf}}(\widetilde{\Sigma}). 
$$
\end{remark}

\begin{proposition}
\label{toamrt}
The $O_\pi$-action on $\mathrm{\mathrm{Fuk}}^{\mathrm{top}}_{\infty}(\widetilde{\Sigma})$ restricts to $\mathrm{\mathrm{Fuk}}^{\mathrm{top}, \mathrm{lf}}(\widetilde{\Sigma})$ and there is an equivalence 
$$
\mathrm{\mathrm{Fuk}}^{\mathrm{top}, \mathrm{lf}}(\widetilde{\Sigma})^{O_\pi} \simeq \cB^\omega(G)
$$
\end{proposition}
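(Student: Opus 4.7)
The proof should follow the same template as Proposition \ref{fiwso}, with $\cB$ replaced by $\cB^\omega$ and $\mathrm{Fuk}^{\mathrm{top}}_{\infty}(\widetilde{\Sigma})$ replaced by $\mathrm{Fuk}^{\mathrm{top},\mathrm{lf}}(\widetilde{\Sigma})$. The main content is (a) checking that the $O_\pi$-action on the ambient category $\mathrm{Fuk}^{\mathrm{top}}_{\infty}(\widetilde{\Sigma})$ preserves the locally finite subcategory, and (b) identifying the invariants using that limits commute with limits.

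For part (a), I would observe that each deck transformation $g \in O_\pi$ acts on $\widetilde{\Sigma}$ by a biholomorphism that preserves the pants decomposition $\widetilde{\cP}$ and carries the adapted skeleton $\widetilde{\Gamma}$ to itself (after possibly refining the chosen exhaustion $\mathcal{U}$ to a $g$-translate, which does not affect the resulting category). Thus $g$ induces an equivalence of the cosheaf $\cF^{\mathrm{top}}$ on $\widetilde{\Gamma}$ covering the action of $g$ on the underlying poset of strata. Since any equivalence of cocomplete dg categories automatically preserves compact objects, the sheafification $\cF^{\lf}$ (whose sections are stalkwise compact objects in $\cF^{\mathrm{top}}_\infty$) is also preserved. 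Hence the $O_\pi$-action on $\mathrm{Fuk}^{\mathrm{top}}_{\infty}(\widetilde{\Sigma})$ restricts to the subcategory $\mathrm{Fuk}^{\mathrm{top},\mathrm{lf}}(\widetilde{\Sigma})$.

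For part (b), I would combine Proposition \ref{flws} with the fact that each $\cB^\omega(G_i)$ is itself computed as a finite limit over the poset of strata of $G_i$, to rewrite
$$
\mathrm{Fuk}^{\mathrm{top},\mathrm{lf}}(\widetilde{\Sigma}) \simeq \varprojlim_{i} \cB^\omega(G_i) \simeq \varprojlim_{l \in \widetilde{L}} \cB^\omega_l,
$$
where $\widetilde{L}$ is the poset of strata of $\widetilde{G}$. Under this identification, the $O_\pi$-action on the left corresponds to the action of $O_\pi$ on the indexing poset $\widetilde{L}$, coming from the action of $O_\pi$ on $\widetilde{G}$ by deck transformations of the maximal abelian cover $q:\widetilde{G}\to G$. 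Since passing to $O_\pi$-invariants is itself a limit, and limits commute with limits, and since $\widetilde{L}/O_\pi \cong L$ (the poset of strata of $G$),
$$
\mathrm{Fuk}^{\mathrm{top},\mathrm{lf}}(\widetilde{\Sigma})^{O_\pi} \simeq \bigl(\varprojlim_{l \in \widetilde{L}} \cB^\omega_l\bigr)^{O_\pi} \simeq \varprojlim_{l \in \widetilde{L}/O_\pi} \cB^\omega_l \simeq \varprojlim_{l \in L} \cB^\omega_l \simeq \cB^\omega(G).
$$

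The one subtlety to watch out for is that $\cB^\omega$ is only a presheaf, not a sheaf, on $\widetilde{G}$, so one cannot literally reuse the formula $\cB(\widetilde{G}) \simeq \varprojlim_{\widetilde{L}} \cB_l$ from Proposition \ref{fiwso}. However, for our purposes this is harmless: Proposition \ref{flws} supplies the required presentation $\mathrm{Fuk}^{\mathrm{top},\mathrm{lf}}(\widetilde{\Sigma}) \simeq \varprojlim_i \cB^\omega(G_i)$, and on each finite subgraph $G_i$, $\cB^\omega$ does agree with the sheaf-theoretic limit of its stalks (Remark \ref{iwab}, second paragraph). Cofinality then lets us reorganize the double limit as a single limit over $\widetilde{L}$, and the $O_\pi$-equivariance argument proceeds without change. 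This cofinality bookkeeping is the only real obstacle; the rest is a formal repetition of the argument of Proposition \ref{fiwso}.
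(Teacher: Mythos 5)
Your proposal is correct and takes essentially the same route as the paper: both reduce to the argument of Proposition \ref{fiwso} by rewriting Proposition \ref{flws} as a limit over the stratification poset $\widetilde{L}$, compute invariants via $\widetilde{L}/O_\pi \cong L$, and flag the presheaf-versus-sheaf subtlety for $\cB^\omega$ (resolved by $G$ being finite, Remark \ref{iwab}). Your explicit justification that the $O_\pi$-action restricts to the locally finite subcategory—via equivalences of cocomplete dg categories preserving compact objects stalkwise—is a useful detail that the paper leaves implicit.
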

\begin{proof}
The proof is  the same as the proof of Proposition \ref{fiwso}, from which we import notations. 
First note that we can rewrite the equivalence from Proposition 
\ref{flws} as
$$
\mathrm{\mathrm{Fuk}}^{\mathrm{top}, \mathrm{lf}}(\widetilde{\Sigma})  \simeq   \varprojlim_{l \in \widetilde{L}} (\cB_l)^\omega
$$
As in the proof of  Proposition \ref{fiwso}, this allows us to understand very concretely the $O_\pi$-action: indeed the group $O_\pi$ acts on $\widetilde{L}$, and therefore on the limit   $\varprojlim_{l \in \widetilde{L}} \cB_l$. Now $\widetilde{L}/G \cong L$, where $L$ is the poset of strata of $G$, and we  conclude  that 
$$
\mathrm{\mathrm{Fuk}}^{\mathrm{top}, \mathrm{lf}}(\widetilde{\Sigma})^{O_\pi} \simeq 
\varprojlim_{l \in (\widetilde{L}/O_{\pi}) } (\cB_l)^\omega \stackrel{(*)}\simeq \cB^\omega(G). 
$$
Let us comment briefly on equivalence $(*)$: since $\cB^\omega$ is in general just a pre-sheaf, it is not a priori clear that its sections on $G$ should satisfy descent, and therefore could be expressed as $\varprojlim_{l \in L } (\cB_l)^\omega $. However  $\cB^\omega$ is indeed a sheaf when  $G$ is finite, as in our case: see Remark \ref{iwab}. 
\end{proof}

\section{Singularity categories and graphs}
\label{mfat}
In this section we explain how to express the singularity category of a 
normal crossing surface  in terms of the combinatorial sheaves of categories on graphs introduced in Section \ref{pdac}. This is a key ingredient in the proof of HMS for compact surfaces which will be carried out in Section \ref{hms}.
\subsection{Singularity categories of normal crossing surfaces}
\label{sconcs}
Let $T$ be a smooth variety of dimension $3$ and let $X \subset T$ be a simple normal crossing divisor of the form $X = f^{-1}(0)$, where $f: T \to \bA^{1}$ is a morphism. Denote by $Z$ the singular locus of $X$, and by $S$ the singular locus of $Z$. 
We make the following assumptions
\begin{enumerate}
\item The irreducible component of $Z$ are rational curves  isomorphic to either $\mathbb{G}_m$, $\mathbb{A}^1$ or $\mathbb{P}^1$
\item Let $C$ be a irreducible component of $Z$.   
Then the intersection between 
$C$ and $S$ is empty if $C \cong \mathbb{G}_m$, has cardinality at most one if $C \cong \mathbb{A}^1$, and has cardinality at most two if $C \cong \mathbb{P}^1$
\end{enumerate}
\begin{definition}
If these assumptions are satisfied we say that $X$ has a \emph{graph-like} singular locus. 
\end{definition}
\begin{remark}
\label{toricnti}
Note that if $X \subset Y$ is a toric divisor inside a smooth toric 3-fold, then $X$ has graph-like singular locus. 
\end{remark}
We fix once and for all parametrizations of  the  irreducible components of $Z$  compatible, in an appropriate sense, with the stratification of $Z$ given by $S$:  
\begin{itemize}
\item Let $C$ be a component of $Z$ isomorphic to $\mathbb{A}^1$, then we fix a parametrization  $\phi: \mathbb{A}^1 \stackrel{\cong}\to C$ such that $\phi(\mathbb{A}^1 - \{0\}) \cap S = \varnothing$
\item  Let $C$ be a component of $Z$ isomorphic to $\mathbb{P}^1$ then we fix  a parametrization  $\psi:\mathbb{P}^1 \stackrel{\cong}\to C$ such that 
$\psi(\mathbb{P}^1-\{0, \infty\}) \cap S = \varnothing$
\end{itemize}
Now let $S'$ be the subset of $Z$ obtained as the union of the images  $\phi(\{0\})$, and $\psi(\{0, \infty\})$, as $\phi$ and $\psi$ run over the fixed parametrizations of the components of $Z$. 
Note that $S \subset S'$. 

\begin{definition}
We will refer to the points in $S'$ as  \emph{marked points}. 
\end{definition}

\begin{example}
\label{beilt}
\begin{enumerate}
\item Let $T = \mathbb{P}^1 \times \mathbb{A}^2$, and let $X = X_1 \cup X_2$ where 
$X_1 = \mathbb{P}^1 \times \mathbb{A}^1 \times \{ 0 \}$ and 
$X_2 = \mathbb{P}^1 \times \{ 0 \} \times  \mathbb{A}^1$. Then $Z\cong\mathbb{P}^1$,  $S = \varnothing$, and $S' = \{(0,0,0) \, , \, (\infty,0,0)\} \in Z$. 
\item Let $T = T^*\mathbb{P}^1 \times \mathbb{A}^1$, and let $X = X_1 \cup X_2 \cup X_3$ where $X_1 = T^*_0\mathbb{P}^1 \times \mathbb{A}^1$, 
$X_2 =  \mathbb{P}^1 \times \mathbb{A}^1$ and 
$X_1 = T^*_\infty\mathbb{P}^1 \times \mathbb{A}^1$. Then $Z$ has five irreducible components, and $S = S'$  has cardinality two and is given by $(0,0),  (\infty,0) \in \mathbb{P}^1 \times \mathbb{A}^1 \subset T^*\mathbb{P}^1 \times \mathbb{A}^1$.
\end{enumerate}
\end{example}

Let $G(X)$ be the graph having as vertices the points of $S'$, and as edges the irreducible components of $Z$: if $C$ is an irreducible component of $Z$ and 
$s \in S'$, then the edge $t_C$ is is incident to the vertex $v_s$ if and only if 
$s$ lies on C. Note that $G(X)$ has no loops and the vertices of $G(X)$ have valency equal to either three or one. 

Note that the graph $G(X)$ does not depend on the choices of parametrization of the irreducible components of $Z$: any such choice gives rise to equivalent graphs.

\begin{example}
Let us follow up on Example \ref{beilt}. In case $(1)$, $G(X)$ has two 1-valent vertices and one edge joining them. In case $(2)$, $G(X)$ has   two 3-valent vertices,  one compact edge joining them, and four non-compact ones.
\end{example}
Observe that the dual intersection complex of the simple normal crossing divisor $X$ is naturally a triangulated real two-dimensional topological manifold.

The following theorem is the main result of this section.
\begin{theorem}
  \label{matrixgraph}
  Suppose that the dual intersection complex of $X$ is orientable.
Then there is an $\kappa$-linear equivalence of categories
$$
\Dsing^\infty(X) \simeq \mathcal{B}(G(X))
$$
\end{theorem}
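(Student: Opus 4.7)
The strategy is to apply étale (indeed Zariski) descent for $\mathrm{MF}^{\infty}(-,-)$, via the Preygel equivalence $\mathrm{MF}^{\infty}(T,f)\simeq \Dsing^{\infty}(X)$, to an open cover of $(T,f)$ whose combinatorial shape matches the exit-path quiver $Q_{G(X)}$ from Section \ref{pdac}. If the pieces of this cover can be matched with the stalks $\mathcal{B}_{v}$ and $\mathcal{B}_{t}$, then $\Dsing^{\infty}(X)$ will be realized as exactly the limit that defines $\mathcal{B}(G(X))$.

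Concretely, I would first build a Zariski open cover $\{T_{q}\}_{q\in Q_{G(X)}}$ of a neighborhood of the singular locus $Z\subset T$ as follows. To a vertex $v\in V_{G(X)}$, i.e.\ a marked point $p_{v}\in S'$, associate a small affine open $T_{v}\subset T$ containing $p_{v}$ and no other marked point. To an edge $t\in E_{G(X)}$, corresponding to an irreducible component $C_{t}\subset Z$, associate an open $T_{t}\subset T$ that is a tubular neighborhood of $C_{t}$ minus its marked points and that meets $T_{v}$ exactly when $v$ is an endpoint of $t$. Off $Z$ the map $f$ is smooth and contributes nothing to $\mathrm{MF}^{\infty}$, so after shrinking we may regard these opens as covering $(T,f)$ from the matrix-factorization point of view. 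Zariski descent for $\mathrm{MF}^{\infty}$ of zero-fiber presentations would then yield
\begin{equation*}
\mathrm{MF}^{\infty}(T,f)\simeq \varprojlim_{q\in Q_{G(X)}} \mathrm{MF}^{\infty}(T_{q},f|_{T_{q}}).
\end{equation*}

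The next step is to identify each stalk with the corresponding $\mathcal{B}(q)$. At a trivalent vertex $v$, the graph-like hypothesis together with the smoothness of $T$ forces $(T_{v},f)$ to be étale-equivalent to $(\mathbb{A}^{3},\alpha(v)x_{1}x_{2}x_{3})$ for a scalar $\alpha(v)\in \kappa^{\times}$ reading off the leading term of $f$ in coordinates adapted to the three branches of $X$ at $p_{v}$, giving $\mathrm{MF}^{\infty}(T_{v},f|_{T_{v}})\simeq \mathcal{B}_{v}$. At an edge $t$, $(T_{t},f)$ becomes étale a product of a smooth one-dimensional factor with $(\mathbb{A}^{2},\beta(t)xy)$, and a single application of Kn\"orrer periodicity produces $\Qcoh^{(2)}(\mathbb{G}_{m,x,y,\beta(t)})=\mathcal{B}_{t}$, with $\beta(t)$ extracted from the normal form of $f$ along $C_{t}$. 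A 1-valent vertex yields the analogous simpler local model $\Qcoh^{(2)}(\mathbb{A}^{1}_{x_{v}})=\mathcal{B}_{v}$. Under these identifications the restriction induced by $T_{v}\cap T_{t}\hookrightarrow T_{v}$ is precisely Zariski localization followed by a Kn\"orrer equivalence, i.e.\ the shape of the structure map $\mathcal{B}(\bullet_{v})\to \mathcal{B}(\circ_{t})$ from Section \ref{sec:local-model-res}.

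The main obstacle, and the reason for the orientability hypothesis, is the shift ambiguity of the Kn\"orrer equivalences emphasized in Sections \ref{sec:local-model-res}--\ref{sec:local-fukaya}. Each local choice is determined only up to $[1]$, and pinning down these shifts consistently across all vertices and edges of $Q_{G(X)}$ demands globally compatible framing data, that is, cyclic orderings at the trivalent vertices together with orientations of the edges. The compatibility constraints that arise when one chases restrictions along adjacent vertex--edge--vertex strings amount exactly to an orientation of the dual intersection complex of $X$, so the orientability hypothesis is precisely what allows the bookkeeping to close up. Once such framings are chosen, the Zariski-descent diagram and the diagram defining $\mathcal{B}(G(X))$ agree on the nose, and passing to limits completes the proof. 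I expect the hardest part of the writeup to be this last step of checking that the framings induced by a chosen orientation really do match the Kn\"orrer normalizations globally.
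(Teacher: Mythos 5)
Your overall strategy — Zariski descent for $\Dsing^\infty(-)$, identification of local pieces with the standard normal-crossing models, and the use of the orientability hypothesis to pin down the Kn\"orrer shift ambiguities via framings — is the same as the paper's, and the role you assign to orientability is exactly right. But two substantive steps are asserted without support, and the paper handles both with nontrivial arguments.

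First, the local identification. You write that graph-likeness plus smoothness of $T$ ``forces $(T_v,f)$ to be \'etale-equivalent to $(\mathbb{A}^3,\alpha(v)x_1x_2x_3)$.'' This is not a statement you can wave at: the SNC hypothesis gives local coordinates in which $X=\{x_1x_2x_3=0\}$, but $f$ only generates the ideal up to a unit, and the unit cannot in general be absorbed into the weight by a coordinate change that also matches up across all patches. The paper instead covers $X$ itself by opens $U_s=X-Z_s^c$ (one per marked point $s\in S'$, so the \v{C}ech nerve of the cover is literally $Q_{G(X)}$ with overlaps indexed by edges and empty triple overlaps), and then invokes Orlov's theorem that $\Dsing^\infty$ depends only on the formal neighborhood of the singular locus together with Lemma \ref{lxbas} (via \cite{camacho2003neighborhoods}) to compare that formal neighborhood with the model normal-crossing configuration in $\mathbb{A}^3$. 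This replaces your unjustified normal form with a rigorous formal-neighborhood comparison, and it is where the hypothesis that $X$ is a \emph{fiber} of a morphism quietly enters. Your proposal to cover $T$ by both vertex-opens and edge-opens also produces a \v{C}ech nerve that does not match $Q_{G(X)}$ on the nose; with care it computes the same limit, but the paper's choice avoids the issue.

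Second, the compatibility of the equivalences across overlaps. You conclude that ``once such framings are chosen, the Zariski-descent diagram and the diagram defining $\mathcal{B}(G(X))$ agree on the nose.'' They do not, yet. After choosing framings one still has, for each pair of adjacent vertices $s,s'$ sharing an edge $t$, two \emph{a priori different} equivalences $\sigma_s,\sigma_{s'}:\Dsing^\infty(U_s\cap U_{s'})\to\mathcal{B}(\circ_t)$ coming from the two sides, and the composite $\sigma_s\circ\sigma_{s'}^{-1}$ is an autoequivalence of $\Qcoh^{(2)}(\mathbb{G}_m)$ that need not be the identity. The paper identifies it as pullback by a scalar $\lambda\in\mathbb{G}_m$ and then uses the $(\mathbb{G}_m)^3$-action on each local model $\Dsing^\infty(U_{s'})\simeq\Dsing^\infty(H)$ to rescale coordinates component-by-component and propagate compatible choices by induction on the vertices. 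Your proposal builds the weights $\alpha(v),\beta(t)$ in from the start, which helps, but it still does not address why the two induced trivializations at a shared edge agree; without the rescaling argument the diagrams only have the same \emph{shape}, not the same \emph{functors}, and the two limits are not yet identified.

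So: right approach, correct identification of the role of orientability, but the local normal-form step and the global coordinate-matching step are both genuine gaps that the paper fills with specific lemmas you would need to reproduce or replace.
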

We   prove Theorem \ref{matrixgraph} in Section \ref{sec:completions} below.

\begin{remark}
Before proceeding, we should clarify that in the statement of Theorem \ref{matrixgraph}, the $2$-periodicity of $\Dsing^{\infty}(X)$ is regarded as a \emph{property} rather than a \emph{structure}. If we choose a way of presenting $X$ as the zero fiber of a map $f : T \to \bA^{1}$, then, via the equivalence $\mathrm{MF}^{\infty}(T,f) \simeq \Dsing^{\infty}(X)$, the singularity category is endowed with a $2$-periodic \emph{structure}. The version of Theorem \ref{matrixgraph} that takes this structure into account is a $\kappa\rbu$-linear equivalence
  \begin{equation*}
    \mathrm{MF}^{\infty}(T,f) \simeq \cB(G(X),\gamma)
  \end{equation*}
  where $\gamma\in \kappa^{\times}$ accounts for a possible rescaling of the $2$-periodic structure (See Section \ref{sec:overall-dependence}). In the proof of Theorem \ref{matrixgraph} below, the weight $\gamma$ arises because we are rescaling the coordinates in the local models.
\end{remark}

\begin{remark}
In the case where the dual intersection complex of $X$ is not orientable, the first Stiefel-Whitney class of this $2$-manifold restricts to a class $w_{1} \in H^{1}(G,\Z/2\Z)$. We expect that the analog of Theorem \ref{matrixgraph} holds when the category $\cB(G(X))$ is twisted by $w_{1}$ as in \cite{segal2021line} (See Remark \ref{rem:segal-twist} above).
\end{remark}

\subsubsection{Matrix Factorizations and completions} 
\label{sec:completions} 
Let $H \subset \mathbb{A}^n$ be the union of the coordinate hyperplanes. 
We fix an open subset $Y \subset \mathbb{A}^n$. 
 The intersection $H \cap Y$ is a normal crossing divisor in $Y$, which with small abuse of notation we keep denoting  $H$.  

\begin{lemma}
\label{lxbas}
Let $T$ be a scheme of dimension $n$ and let $X \subset T$ be a normal crossing divisor. 
Let $Z_X \subset X$ be the singular locus of $X$. Assume that $Z_X$ is isomorphic to  the singular locus 
$Z_{H_X}$ of a normal crossing divisor $H_X \subset Y$ given by a subset of the components of $H$. Then the completion of $Z_X$ in $X$ is  isomorphic to the completion of $Z_{H_X}$ in $H_X$. 
\end{lemma}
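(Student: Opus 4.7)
The plan is to show that the formal completion of a normal crossing divisor along its singular locus is determined (up to isomorphism) by the abstract singular scheme together with the combinatorial pattern of components meeting along it, and that the hypothesis $Z_X \cong Z_{H_X}$ already carries this combinatorial data since $H_X$ is a union of coordinate hyperplanes in $Y\subset \mathbb{A}^n$ whose incidence pattern is read off from $Z_{H_X}$.

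First I would work étale-locally on $X$ and $H_X$. By the definition of a simple normal crossing divisor, every point of $X$ admits an étale neighborhood in $T$ isomorphic to an étale neighborhood in $(\mathbb{A}^n, H')$ where $H'$ is a union of coordinate hyperplanes; the same holds trivially for $H_X$ in $Y\subset \mathbb{A}^n$. At a point $p\in Z_X$, the number of irreducible branches of $X$ through $p$, together with their intersection pattern, is encoded in the étale-local structure of $(X,Z_X)$ at $p$, and under the given isomorphism $Z_X\simeq Z_{H_X}$ this pattern matches the one at the corresponding point $p'\in Z_{H_X}$ (otherwise the singular loci could not be isomorphic as schemes, since the local ring of the singular locus remembers, e.g., the number of smooth branches meeting). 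Consequently, for every $p\in Z_X$ there is an étale neighborhood $U_p\to T$ and a compatible étale neighborhood $U_{p'}\to Y$ with a commutative diagram identifying $(X\cap U_p, Z_X\cap U_p)\simeq (H_X\cap U_{p'}, Z_{H_X}\cap U_{p'})$.

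Next I would pass to formal completions. The formal completion $\widehat{X}_{Z_X}$ depends only on étale-local data along $Z_X$ (formal completion is insensitive to étale base change), so the local étale isomorphisms above induce isomorphisms of the formal completions of $X$ and $H_X$ along the corresponding open subsets of $Z_X\cong Z_{H_X}$. The heart of the argument is to show these local formal isomorphisms glue. This is where I expect the main technical issue: a priori one has a choice of étale local isomorphism at each point, and these need to be adjusted so that they agree on overlaps. The clean way to handle this is to observe that along the smooth (double-curve) part of $Z_X$, the formal neighborhood is a locally trivial $\Spec k[[x,y]]/(xy)$-bundle over $Z_X^{sm}$, so the formal neighborhood is classified by an $\mathrm{Aut}(k[[x,y]]/(xy))$-torsor on $Z_X^{sm}$; at triple points it is formally rigid (isomorphic to $\Spec k[[x,y,z]]/(xyz)$ with no moduli once $Z_X$ is identified). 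Since $Z_X\cong Z_{H_X}$ matches the stratifications, the two torsors are determined by the same data, and hence they agree.

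The cleanest formulation is via the Artin/Grothendieck algebraization and étale descent: one forms the étale sheaf whose sections on $U\to Z_X$ are isomorphisms of the formal completions $\widehat{X}_{U}\simeq \widehat{H_X}_{U}$ compatible with the identification of $U$ inside both singular loci, and checks that it is a nonempty torsor which is locally trivialized by the étale local models. The main obstacle, as indicated, will be verifying that these local trivializations can be chosen coherently; one handles this by choosing, once and for all, coordinate presentations adapted to the parametrizations fixed for the irreducible components of $Z_X$ (the parametrizations recorded in Section \ref{sconcs}), which kills the remaining automorphism ambiguity. Once the gluing is carried out, the resulting global isomorphism $\widehat{X}_{Z_X}\simeq \widehat{H_X}_{Z_{H_X}}$ is the desired conclusion.
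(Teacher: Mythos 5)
The gluing step that you flag as the ``main technical issue'' is indeed where your argument fails, and your proposed fix does not work. You observe, correctly, that formal completion is étale-local and that the local models of $(X,Z_X)$ and $(H_X,Z_{H_X})$ along corresponding points of $Z_X\cong Z_{H_X}$ agree; the whole content of the lemma is whether these local identifications can be chosen compatibly. Your answer is that the formal neighborhood along the double-curve locus is classified by a torsor for the automorphism sheaf of the local model, that the two torsors are ``determined by the same data,'' and that a fixed coordinate parametrization of the components of $Z_X$ kills any residual ambiguity. None of that is a proof. Two torsors for the same sheaf of groups that are both locally trivial need not be isomorphic --- that is the whole point of torsors --- and ``determined by the same data'' is precisely what has to be shown, not what can be assumed. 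Fixing parametrizations of the components of $Z_X$ (a choice made in Section \ref{sconcs} for an unrelated bookkeeping purpose) does not trivialize an automorphism torsor of a formal thickening; it normalizes coordinates on $Z_X$ but says nothing about the transverse directions where the actual gluing ambiguity lives.

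What is missing is an actual obstruction-vanishing input, and the paper supplies one: $Z_X$ is affine. The paper's argument is structured quite differently from yours. It first compares the two \emph{ambient} formal neighborhoods $\widehat{Z_X}^{T}$ and $\widehat{Z_X}^{Y}$ (rather than attacking the divisor completions directly): one writes a commutative diagram of exact sequences expressing the normal sheaves $N_{Z_X/T}$ and $N_{Z_X/Y}$ as cokernels of the same map from $\bigoplus_i\iota_{i,*}T_{Z_i}$ into a trivial bundle $\cO_{Z_X}^n$, concludes $N_{Z_X/T}\cong N_{Z_X/Y}$, and then invokes Theorem~1.5 of \cite{camacho2003neighborhoods} together with the vanishing of higher cohomology of quasi-coherent sheaves on the affine scheme $Z_X$ to obtain $\widehat{Z_X}^{T}\cong\widehat{Z_X}^{Y}$. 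Only afterwards does it restrict to the divisor by observing that the local defining equations of the components of $X$ transport, under this isomorphism and after a coordinate change, to the defining equations of the components of $H_X$, giving $\widehat{Z_X}^{X}\cong\widehat{Z_X}^{H_X}$. The affineness of $Z_X$ is precisely what kills the cohomological obstructions to extending the normal-bundle isomorphism order by order; your proposal never uses it, and without it the gluing cannot be carried out.
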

\begin{proof}
The key input will be 
Theorem 1.5 of \cite{camacho2003neighborhoods},   which gives a criterion comparing the 
infinitesimal neighborhoods of a scheme in different ambient spaces.\footnote{The reference \cite{camacho2003neighborhoods} is about complex analytic varieties and therefore also Theorem 1.5 is formulated in  that generality: but the proof relies entirely on general sheaf cohomology techniques, which apply without variations to schemes over any ground field $\kappa$.}

Consider the  two embeddings 
$$
j_1: Z_X \subset X \subset T,  \quad j_2:Z_X \cong Z_{H_X} \subset H_X \subset Y
$$
Let $N_{Z_X/T}$ and $N_{Z_X/Y}$ be the normal sheaves relative to the embedding of  
$Z_X$ in $T$ and in $Y$. Let $Z_1, \ldots, Z_l$ be the irreducible components of $Z_X$, and denote $\iota_i:Z_i \to Z_X$ the embedding of the $i$-th irreducible component in $Z_X$. 
There is a commutative diagram of exact sequences 
$$
\xymatrix{
\bigoplus_i \iota_{i,*}  T_{Z_i} \ar[r] \ar[d]_\cong & j_1^*T_T \cong \cO_{Z_X}^n 
\ar[r] \ar[d]^\cong & N_{Z_X/T} \ar[r] & 0 \\
\bigoplus_i  \iota_{i,*} T_{Z_i} \ar[r]   & j_2^*T_Y \cong \cO_{Z_X}^n 
\ar[r]  & N_{Z_X/Y} \ar[r] & 0
}
$$
where $T_T$ and $T_Y$ are the tangent sheaves over $T$ and over $Y$ respectively. This yields an isomorphism of normal sheaves $N_{Z_X/T} \simeq N_{Z_X/Y}$. Since $Z_X$ is affine, higher cohomology of quasi-coherent sheaves vanish: thus \cite[Theorem 1.5]{camacho2003neighborhoods} implies that the completions of $T$ and $Y$ along $Z_X$ are isomorphic  
\begin{equation}
\label{cong}
 \widehat{Z_X}^{T} \cong \widehat{Z_X}^{Y}. 
\end{equation}

Now consider the restriction to $\widehat{Z_X}^{T}$ of  local functions cutting out the irreducible components of $X$. Under isomorphism (\ref{cong}), they correspond to  functions on $\widehat{Z_X}^{Y}$. Further, up to a change of coordinates on $\widehat{Z_X}^{Y}$, these coincide with the  functions on $Y$ cutting out the components of $H_X$. Thus we deduce that (\ref{cong}) restricts to an equivalence 
$$
\widehat{Z_X}^{X} \simeq \widehat{Z_X}^{T} \times_T X \cong \widehat{Z_X}^{Y} \times_{Y} H_X \simeq 
\widehat{Z_X}^{H_X}
$$
and this concludes the proof.  
 \end{proof}

 We are now ready to prove Theorem \ref{matrixgraph}. 
 
\begin{proof}[The proof of Theorem \ref{matrixgraph}] 
We will make use of two key properties of $\Dsing^\infty(-)$. For reference, we state them below 
\begin{itemize}
\item Let $U$ be a quasi-projective scheme.  By  Theorem 2.10 \cite{orlov2011formal}, $\Dsing^\infty(U)$ only depends on the formal neighborhood of the singular locus of  $U$.\footnote{In fact, Orlov proves that this is true only up to idempotent completion: however   throughout  the paper we work with categories up to Morita equivalence, and so in particular up to idempotent completion.}
\item  By Proposition A.3.1 of \cite{preygel2011thom} 
$\Dsing^\infty(-)$ satisfies \'etale (and thus Zariski) descent with respect to varieties that are presented as the zero fiber of a morphism.
\end{itemize}

We will use the notations of Section \ref{sconcs}: $X$ is a 2-dimensional simple normal crossing divisor, $Z$ is its singular locus, $S$ is the singular locus of $Z$, and $S'$ is the set of marked points on $Z$.  We fix a parametrization of the components of $Z$. Note that if $S'$ is empty then the statement is trivial and, if $S$ is empty, the statement just follows from Zariski descent for $\Qcoh^{(2)}(-)$. We will assume that $S = S' \neq \varnothing$, the  case when $S \subsetneq S'$ follows in a similar way.

Let $s \in S$. There are exactly three components of $Z$ that are incident to $s$, we denote them $Z_{s,1}, Z_{s,2}, Z_{s,3}$ and set  $Z_s=\cup_{i=1}^3 Z_{s,i}$. We denote by $Z_s^c$ the union of the components of $Z$ that are not incident to $s$, $Z_s^c = \overline{Z-Z_s}$.  The fixed parametrization of the components of $Z$ yields an isomorphism 
between $Z-Z_s^c$ and the  coordinate lines in $\mathbb{A}^3$ which sends $s$ to $(0,0,0) \in \mathbb{A}^3$. 
If $s$ and $s'$ are adjacent vertices in $G(X)$, then there is exactly one irreducible component $Z_{s, s'}$ of $Z$ such that 
$s, s' \in Z_{s,s'}$. 

For every $s \in S$, let $U_s = X-Z_s^c$. Then $\mathcal{U} = \{U_i\}_{s \in S}$ is an open cover of $X$. Note that  $\mathcal{U}$ does not have any non-trivial triple overlaps. Since $\Dsing^\infty(-)$ satisfies Zariski descent we can realize $\Dsing^\infty(X)$ as a limit in   $\mathrm{DGCat^{(2)}_{cont}}$
$$
\xymatrix{
\Dsing^\infty(X) \to[ \underset{s\in S}{\bigoplus} \Dsing^\infty(U_s)
\ar@<-.5ex>[r]_-*!/d0.5pt/{\labelstyle }
\ar@<.5ex>[r]^ -*!/u0.7pt/{\labelstyle } 
& \underset{s, s' \in S}{\bigoplus} \Dsing^\infty(U_s \cap U_{s'})  ]
}
$$
We make the following observations: 
\begin{enumerate}
\item[(a)] $U_s$ is a normal crossing surface, with singular locus $Z_s$. By Lemma 
\ref{lxbas}  the formal neighborhood of $Z_s$ inside $U_s$ is isomorphic to the formal neighborhood of the  coordinate axes inside the coordinate hyperplanes $H \subset \mathbb{A}^3$. Orlov's results from  \cite{orlov2011formal}  imply $$
\Dsing^\infty(U_s) \simeq \Dsing^\infty(H) \simeq \MF^\infty(\mathbb{A}^3, x_1x_2x_3)
$$
\item[(b)] If $s \neq s'$ and $s$ and $s'$ are adjacent in $G(X)$, then 
$U_s \cap U_{s'} = X-(Z_s^c \cup Z_{s'}^c)$. The singular locus of 
 $U_s \cap U_{s'}$ is isomorphic to $\mathbb{G}_m$. Using again Lemma \ref{lxbas}, and Orlov's Theorem we deduce that
$$
 \Dsing^\infty(U_s \cap U_{s'}) \simeq \Qcoh^{(2)}(\mathbb{G}_m)
$$
 \item[(c)]  If $s \neq s'$, and 
 $s$ and $s'$ are not adjacent vertices in $G(X)$, then 
$U_s \cap U_{s'} = X-Z$. Since $X-Z$ is smooth 
$$\Dsing^\infty(U_s \cap U_{s'}) \simeq 0$$
\end{enumerate}
Using these observations we can rewrite, in terms of the quiver $Q_{G(X)}$, the equalizer that computes $\Dsing^\infty(X)$ as 
\begin{equation}
\label{xditub}
\xymatrix{
\Dsing^\infty(X) \to[ \underset{\bullet_v \in Q_{G(X)}} {\bigoplus} \mathcal{B}(\bullet_v) 
\ar@<-.5ex>[r]_-*!/d0.5pt/{\labelstyle }
\ar@<.5ex>[r]^ -*!/u0.7pt/{\labelstyle } 
& \underset{\circ_t \in Q_{G(X)}}{\bigoplus} \mathcal{B}(\circ_t) ]
}
\end{equation}
Now,  diagram (\ref{xditub}) has the same shape as   diagram (\ref{bbb}) from Remark \ref{atavo} which  computes $\mathcal{B}(G(X))$ (see Remark \ref{atavo}). As the last step of the proof, we need to show  that we can choose the equivalences from our observations (a) and (b) above,  in such a way that the functors appearing in (\ref{xditub}) match the ones appearing in   (\ref{bbb}).

The first issue to address is whether these functors are compatible with the conventions for resolving the shift ambiguity of the Kn\"{o}rrer periodicity that we set down in Section \ref{sec:local-model-res}. This is where we use the hypothesis that the dual intersection complex $C(X)$ of $X$ is orientable. Choose an orientation of $C(X)$. Since the graph $G$ embeds into $C(X)$, the orientation of $C(X)$ induces a cyclic ordering of the edges at each vertex of $G$. Taking these cyclic orderings together with arbitrarily chosen orientations of each edge, we obtain framing data for $G$ as in Section \ref{sec:local-model-res}.

Consider now an irreducible component $D$ of $X$; the structure sheaf $\cO_{D}$ gives rise to an object of $\Dsing^{\infty}(X)$. This object therefore gives rise to a compatible system of objects in the diagram \eqref{xditub}. For an adjacent pair of vertices $v,v'$ that lie on the boundary of $D$, and using the notation of Section \ref{sec:local-model-res}, we see that $\cO_{D}$ maps to objects of form $F_{ij}$ with $i,j$ cyclically ordered (with respect to the chosen framing data) at both $v$ and $v'$. These must therefore map to the same object over the edge $t$ joining $v$ to $v'$, and this is consistent with the convention set out in Section \ref{sec:local-model-res}.\footnote{This argument shows moreover that the convention of Section \ref{sec:local-model-res} is essentially determined by the requirement that $\cO_{D}$ should gives rise to an object of $\cB(G(X))$ in the present setting.}

It remains to show that we can rescale the local coordinates on each patch so that the functors in the diagram \eqref{xditub} match the diagram defining $\cB(G(X))$. Let $s \in S$, and let $Z_{s,1}, Z_{s,2}$ and $Z_{s,3}$ be the three components 
of $Z$ incident to $s$. Let us assume, for ease of exposition, that $Z_{s,1}, Z_{s,2}$ and $Z_{s,3}$ are all isomorphic to $\mathbb{P}^1$: this implies that, in addition to $s$, each component carries a second marked point. Let us name  these points $s_1$, $s_2$ and $s_3$.\footnote{The general case where some or all of the $Z_{s_i}$ are isomorphic to $\mathbb{A}^1$ is proved similarly.}  
For all $i=1, 2, 3$ we denote 
$$
j^*: \Dsing^\infty(U_s) \to \Dsing^\infty(U_s \cap U_{s_i}) 
$$
the pull-back functor. 

Let us recall  the dictionary translating between  $X$ and the quiver $Q_{G(X)}$. The marked point $s$ corresponds to a black vertex $\bullet_{v}$ and the  components   $Z_{s,1}, Z_{s,2}$ and $Z_{s,3}$ correspond to  white vertices  
$\circ_{t_1}$, $\circ_{t_2}$ and $\circ_{t_3}$. Additionally we have three arrows 
$$
\xymatrix{
& \bullet_{v_s} \ar[dr]^-{x_3} \ar[d]^-{x_2} \ar[dl]_-{x_1} &\\
\circ_{t_1} & \circ_{t_2} & \circ_{t_3} 
}
$$
It is easy to see that we can make compatible choices of equivalences \emph{locally} around each vertex $s$ of $G(X)$. Namely for every $s \in S$ we can choose equivalences 
$$
\rho_s: \Dsing^\infty(U_s) \simeq \mathcal{B}(\bullet_{v_s}) \quad  \sigma_s: \Dsing^\infty(U_s \cap U_{s_i}) \simeq \mathcal{B}(\circ_{t_i})
$$
in such a way that for all $i$ we obtain a commutative diagram 
$$
\xymatrix{
\Dsing^\infty(U_s) \ar[r]^-{\rho_s} \ar[d]_-{j^*} & \mathcal{B}(\bullet_{v_s}) \ar[d]^-{\mathcal{B}(x_i)} \\
\Dsing^\infty(U_s \cap U_{s_i}) \ar[r]^-{\sigma_s} & \mathcal{B}(\circ_{t_i})
}
$$
Indeed, by definition, the functors $j^*$ and $\mathcal{B}(x_i)$ are both given by restriction to the $i$-th component of the singular locus.  

The key point 
  is that we can actually make a \emph{global} choice of compatible equivalences.  
   Let us take two adjacent vertices in $G(X)$, $s$ and $s'$. They both lie on a component $C$ of $Z$, $C \cap S = \{s, s'\}$. The points $s, s'$ and the curve $C$ give rise to a subquiver of $Q_{G(X)}$ of the form 
$$
\bullet_{v_s} \stackrel{x} \to \circ_{t_C} \stackrel{y} \leftarrow \bullet_{v_{s'}}
$$
This corresponds to a commutative diagram of categories 
$$
\xymatrix{
\Dsing^\infty(U_s) \ar[r]^-{\rho_s} \ar[d]_-{j^*} & \mathcal{B}(\bullet_{v_s}) \ar[dr]^-{\mathcal{B}(x)} & & \mathcal{B}(\bullet_{v_{s'}}) \ar[dl]_-{\mathcal{B}(y)} & \Dsing^\infty(U_{s'}) \ar[l]_-{\rho_{s'}} \ar[d]^-{j^*} \\
\Dsing^\infty(U_s \cap U_{s'}) \ar[rr]^-{\sigma_s} && \mathcal{B}(\circ_{t_C}) && 
\Dsing^\infty(U_s \cap U_{s'}) \ar[ll]_-{\sigma_{s'}}
}
$$
For clarity, let us assume that we have numbered the components of the singular locus of $U_{s'}$ and that 
$$
j^*: \Dsing^\infty(U_{s'}) \longrightarrow 
\Dsing^\infty(U_s \cap U_{s'})
$$
corresponds to restriction to the \emph{first component}. The compatibility of the equivalences $ \rho_s, \sigma_s $ with  $ \rho_{s'}, \sigma_{s'} $ amounts to the condition that 
$$ 
\sigma_s \circ \sigma_{s'}^{-1} = \mathrm{Id}_{\Dsing^\infty(U_s \cap U_{s'})}
$$ 

Note that the composition $\sigma_s \circ \sigma_{s'}^{-1}$ is an auto-equivalence of 
$\Dsing^\infty(U_s \cap U_{s'}) \simeq \Qcoh^{(2)}(\mathbb{G}_m)$. We can be more precise than that: 
since $\sigma_s \circ \sigma_{s'}^{-1}$ is a composition of pull-backs along isomorphisms of schemes we  have that 
 $$\sigma_s \circ \sigma_{s'}^{-1} = \lambda^*$$ 
 where we denote by $\lambda: \mathbb{G}_m \to \mathbb{G}_m$ the multiplication by $\lambda \in \bG_m$. We will see below how we can   set $\lambda$  equal to $1$ by modifying 
 $\rho_{s'}$ and $\sigma_{s'}$ by an appropriate auto-equivalence  of $\Dsing^\infty(U_{s'})$. 
 
The natural action of $(\mathbb{G}_m)^3$ on $\mathbb{A}^3$ restricts to $H$, and via pull-back induces an action on 
 $\Dsing^\infty({H})$. We denote
 $$
 (\lambda_1, \lambda_2, \lambda_3)^*: \Dsing^\infty({H}) \to \Dsing^\infty({H})
 $$
 the pull-back along the multiplication by $(\lambda_1, \lambda_2, \lambda_3) \in (\mathbb{G}_m)^3$. For all $i=1,2,3$ there is a commutative diagram 
 $$
 \xymatrix{
 \Dsing^\infty({H}) \ar[r]^-{\iota_i^*} \ar[d]_-{(\lambda_1, \lambda_2, \lambda_3)^*} & \Qcoh^{(2)}(\mathbb{G}_m) \ar[d]^-{\lambda_i^*} \\
  \Dsing^\infty({H}) \ar[r]^-{\iota_i^*}  & \Qcoh^{(2)}(\mathbb{G}_m)
 }
 $$
 where $\iota_i^*$ is the restriction to the $i$-th component of the singular locus. 
 
The equivalence $\Dsing^\infty(U_{s'}) \simeq \Dsing^\infty({H})$ induces a $(\mathbb{G}_m)^3$-action on $\Dsing^\infty(U_{s'})$. Recall  that  we stipulated that 
$ 
j^*: \Dsing^\infty(U_{s'}) \longrightarrow
\Dsing^\infty(U_s \cap U_{s'}) 
$ 
is the restriction to the first component of the singular locus of $U_{s'}$. Then it is enough to set  
$$
\rho_{s'}:= \rho_{s'} \circ (1, \lambda^{-1}, 1)^* \quad , \quad \sigma_{s'}:=\sigma_{s'} \circ (\lambda^{-1})^* 
$$
to obtain that $  
\sigma_s \circ \sigma_{s'}^{-1} = \mathrm{Id}_{\Dsing^\infty(U_s \cap U_{s'})}
$, as desired. 

Acting with $\mathbb{G}_m^3$ on $\Dsing^\infty(U_{s'})$ allows us to modify, by scalar multiplication, the restriction functors to the three components of the singular locus of $U_{s'}$. In this way, the equivalences $\rho_{s'}$ and $\sigma_{s'}$ at the vertex $s'$ can be made compatible with the equivalences $\rho_s$ and $\sigma_s$ coming from any neighboring vertex $s$. An induction on the cardinality of $S$ shows that there are \emph{global} choices of compatible equivalences, and this concludes the proof. 
 \end{proof}

\begin{remark}
Theorem \ref{matrixgraph} has the remarkable consequence that, as long as $X$ satisfies the stated requirements (it is the fiber of a morphism from a smooth variety to $\bA^{1}$, it has graph-like singular locus, and the dual intersection complex is orientable), the category
$\Dsing^\infty(X)$ does not actually depend on the infinitesimal neighborhood of the singular locus $Z$ of $X$, but only on the combinatorics of  $G(X)$. We must remark, however, that the requirement that $X$ arises as the fiber of a morphism $f: T\to \bA^{1}$ with $T$ smooth does entail a topological restriction on the infinitesimal neighborhood.


We stress that the singularity categories  consider in Theorem \ref{matrixgraph} are exceptional in this respect. 
In general the singular locus $Z$   is not  by itself sufficient to determine $\Dsing^\infty(-)$: one does need information on the infinitesimal neighborhood of $Z$.  As an example consider a quadratic bundle $E$ over a scheme $X$. Note that $E$ carries a natural superpotential $W$ given by the quadratic form $q$. The singular locus of $W$ is always $X$, independently on the quadratic form; however by Theorem 9.3.4 of \cite{preygel2011thom} the category of matrix factorizations $\MF(E, W)$ is equivalent to modules over a sheaf  of Clifford algebras which does depend on $q$. 
 \end{remark}
\section{Covering spaces and equivariance}
\label{csae}

In this section we set up the theory of the Fukaya category of a compact Riemann surface and develop the connection between covering spaces, anchored Lagrangians, and some notions of equivariance for categories.

\subsection{Variants of the Fukaya category for finite type surfaces}
\label{variantsFukaya}
We shall now recall several different variants of the Fukaya category for compact and noncompact surfaces. All of the versions are closely related, but having different ways of presenting the Fukaya category can make some arguments easier.

We fix a ground field $k$ of characteristic $0$. Let $\Sigma$ be surface of finite type, meaning that $\Sigma$ is either compact or it embeds into a compact surface, with symplectic form $\omega$. (We shall consider non-finite type surfaces below.) Recall the universal (over $\C$) Novikov field $\Lambda$,
\begin{equation}
  \Lambda = \left\{ \sum_{i = 0}^{\infty} a_{i} q^{\lambda_{i}} \mid a_{i} \in \C,\  \lambda_{i}\in \R,\  \lim_{i\to \infty} \lambda_{i}= \infty \right\},
\end{equation}
the field of formal series with real exponents such that there are only finitely many nonzero terms with exponent less than any given threshold. The most canonical version of the Fukaya category of $\Sigma$ is a $\Z/2$-graded triangulated split-closed $A_{\infty}$-category defined over $\Lambda$. We will denote it simply by $\mathrm{Fuk}(\Sigma)$. The generating objects are unobstructed immersed Lagrangians equipped with orientations and spin structures; an immersed Lagrangian is unobstructed when it is not null-homotopic and does not bound any teardrops.\footnote{This condition is equivalent to the condition that the pullback of the Lagrangian under the universal covering of $\Sigma$ is embedded. In the case where $\Sigma$ is a sphere, $\mathrm{Fuk}(\Sigma)$ as defined here is the zero category: it is well-known that in order to have a good Fukaya category for the sphere one must include \emph{weakly unobstructed} Lagrangians.} In the case where $\Sigma$ is noncompact, we also include noncompact properly embedded arcs that are treated in the manner of wrapped Floer cohomology. Most importantly, the $A_{\infty}$ structure maps count pseudoholomorphic polygons weighted by $q^{\text{Area}}$; it is the possibility of having infinitely many terms that necessitates using the Novikov field $\Lambda$.

In certain situations, it is possible to define versions of the Fukaya category over smaller fields such as $\C$.

First we introduce the balanced Fukaya category as follows. Let $\pi: S(T\Sigma) \to \Sigma$ be the circle bundle associated to the tangent bundle of $\Sigma$, defined as the set of oriented real lines in the tangent spaces. The form $\pi^{*}(\omega)$ defines a cohomology class in $H^{2}(S(T\Sigma),\R)$. This class vanishes provided that either
\begin{itemize}
\item $\Sigma$ is compact of genus $g \neq 1$, or
\item $\Sigma$ is not compact.
\end{itemize}
Suppose we are in such a case, so that we may choose a one-form $\theta$ on $S(T\Sigma)$ such that $d\theta = \pi^{*}\omega$. Given a oriented connected Lagrangian submanifold $L \subset \Sigma$, which is to say an oriented simple closed curve, we may form a tangent lift $\sigma : L \to S(TM)|_{L}$. We say $L$ is \emph{balanced} if $\int_{L} \sigma^{*}(\theta) = 0$. Contractible curves are never balanced, and every isotopy class of noncontractible simple closed curves contains a balanced representative, which is unique up to Hamiltonian isotopy. The generating objects we consider consist of a balanced curve equipped with an orientation and spin structure, and in the case were $\Sigma$ is non compact, we also include noncompact properly embedded arcs as before. The morphism complexes are defined over $\C$, and the $A_{\infty}$ structure maps count pseudoholomorphic polygons \emph{without} weighting by the area. We enlarge this category to a $\Z/2$-graded, triangulated, split-closed $A_{\infty}$-category over $\C$, which we call the \emph{balanced Fukaya category} $\Fbal(\Sigma,\theta)$.

The construction of $\Fbal(\Sigma,\theta)$ depends on $\theta$ modulo exact one-forms on $S(T\Sigma)$. Given two choices $\theta$ and $\theta'$, there is a class $[\theta' - \theta] \in H^{1}(S(T\Sigma),\R)$ measuring the difference. If $\Sigma$ is compact with $g \neq 1$, then $H^{1}(S(T\Sigma),\R) \cong H^{1}(\Sigma,\R)$, and after moving $\theta'$ by a symplectomorphism of $\Sigma$ with appropriate flux, we can ensure that this class vanishes. This symplectomorphism then induces an equivalence of categories $\Fbal(\Sigma,\theta) \cong \Fbal(\Sigma,\theta')$. On the other hand, if $\Sigma$ is not compact, then $H^{1}(S(T\Sigma),\R) \cong H^{1}(\Sigma, \R) \oplus \R$, and it is not necessarily possible to make the difference class vanish using a symplectomorphism of $\Sigma$ or its completion. 

The exact Fukaya category is another variant defined over $\C$ that is only defined when $\Sigma$ is noncompact. Here we choose a one-form $\lambda$ on $\Sigma$ such that $d\lambda = \omega$. We say a Lagrangian $L$ is \emph{exact} if $\lambda|_{L}$ is an exact one-form. This is the same thing as saying that $L$ is balanced with respect to the form $\theta = \pi^{*}\lambda$, and the construction of the \emph{exact Fukaya category} $\Fex(\Sigma,\lambda)$ follows exactly the same lines as the balanced Fukaya category in that case. Note that once again this is a ``wrapped'' variant of the Fukaya category. This construction depends on $\lambda$ modulo exact one-forms on $\Sigma$, and it is again possible to relate any two choices by a symplectomorphism of $\Sigma$ or its completion.

For noncompact $\Sigma$, all of the categories introduced so far are ``wrapped'' versions; if we wish to restrict to objects supported on compact Lagrangians we use the subscript ``c'', so we have $\Fc(\Sigma)$ (over $\Lambda$), $\Fbalc(\Sigma,\theta)$, and $\Fexc(\Sigma,\lambda)$ (over $\C$).

All variants of the Fukaya category are closely related to one another. The relationship between the balanced Fukaya category $\Fbal(\Sigma,\theta)$ and $\mathrm{Fuk}(\Sigma)$ works as follows. Let $\cC(\Sigma,\theta)$ be the full subcategory of $\mathrm{Fuk}(\Sigma)$ whose objects are branes supported on balanced curves, so that $\cC$ is a $\Lambda$-linear category where the operations weight curves by area. It is known that $\cC(\Sigma,\theta)$ split generates $\mathrm{Fuk}(\Sigma)$ over $\Lambda$ (for example, by combining Proposition 2.15 of \cite{auroux2020fukaya} with the observation that the curves used in that proposition can be taken to be balanced, since every isotopy class has a balanced representative). The condition that the curves are balanced means that all series appearing in the operations are finite, so that it is possible to set $q = 1$ in all operations, obtaining a $\C$-linear category $\cC_{q=1}(\Sigma,\theta)$, and this category embeds into $\Fbal(\Sigma,\theta)$ and split generates the latter category over $\C$. Because of these relationships, $\Fbal(\Sigma,\theta)$ can be thought of as giving a model of $\mathrm{Fuk}(\Sigma)$ over the smaller field $\C$.

Since the exact Fukaya category $\Fex(\Sigma,\lambda)$ is a special case of the balanced Fukaya category (with $\theta = \pi^{*}\lambda)$, everything in the previous paragraph is still valid. However in this case there is also a $\C$-linear embedding of $\Fex(\Sigma,\lambda)$ into $\mathrm{Fuk}(\Sigma)$. This functor is identity on objects and rescales every morphism by $t$ to its Floer action. Since this rescaling becomes trivial upon setting $q = 1$, this embedding followed by specialization recovers the identity on $\Fex(\Sigma,\lambda)$. Since $\Fex(\Sigma,\lambda)$ generates $\mathrm{Fuk}(\Sigma)$ over $\Lambda$, we conclude there $\mathrm{Fuk}(\Sigma)$ is equivalent to $\Fex(\Sigma,\lambda) \otimes_{\C} \Lambda$.

\subsection{Balanced versus exact categories for noncompact surfaces}

We now turn to the relationship between the various balanced and exact categories for noncompact surfaces. We have already seen that for compact surfaces, only the balanced category is defined, and it is independent of the choice of balancing form $\theta$ up to equivalence, since any two choices $\theta$ and $\theta'$ may be related by a symplectic isotopy of $\Sigma$. For noncompact surfaces the situation is a bit more subtle.

Recall that the difference between $\theta$ and $\theta'$ defines a class $[\theta' - \theta] \in H^{1}(S(T\Sigma),\R)$. Since $\Sigma$ is assumed noncompact, we may choose a trivialization $\eta : S(T\Sigma) \to \Sigma \times S^{1}$. Then $\eta$ induces a splitting $H^{1}(S(T\Sigma),\R) \cong H^{1}(\Sigma,\R)\oplus H^{1}(S^{1},\R)$. By applying a symplectic isotopy of $\Sigma$ to $\theta'$, say, it is possible to eliminate the component in $H^{1}(\Sigma,\R)$, so that the difference class lies in the subspace $H^{1}(S^{1},\R)$, but we cannot necessarily eliminate this latter component by the same method.

Let $\theta$ be given, and chose some $\lambda \in \Omega^{1}(\Sigma)$ that $d\lambda = \omega$, and consider the class $[\theta - \pi^{*}\lambda] \in H^{1}(\Sigma,\R)\oplus H^{1}(S^{1},\R)$. By adding a closed $1$-form to $\lambda$, we may assume that this class lies in $H^{1}(S^{1},\R)$. Suppose this class is $[\alpha \phi]$, where $\phi$ is the standard angular form on $S^{1}$ such that $\int_{S^{1}} \phi= 1$. Using the trivialization $\eta$, the form $\alpha \phi$ can be regarded as a form on $S(T\Sigma)$. Then we have that $[\theta - (\pi^{*}\lambda + \alpha \phi)] = 0$. As a result of this, we conclude that $\Fbal(\Sigma,\theta)$ is equivalent to $\Fbal(\Sigma,\pi^{*}\lambda + \alpha \phi)$ for some Liouville form $\lambda$ and some $\alpha \in \R$. Of course $\Fex(\Sigma,\lambda) = \Fbal(\Sigma,\pi^{*}\lambda)$, so we now pose the problem of relating the categories $\Fbal(\Sigma,\pi^{*}\lambda)$ and $\Fbal(\Sigma,\pi^{*}\lambda + \alpha \phi)$.

A compact Lagrangian $L$ gives rise to an object of $\Fbal(\Sigma,\pi^{*}\lambda)$ when $\int_{L} \lambda = 0$, while it gives rise to an object of $\Fbal(\Sigma,\pi^{*}\lambda + \alpha\phi)$ when $\int_{L}\lambda + \alpha \int_{L}\phi = 0$. Observe that $\int_{L}\phi$ is nothing but the rotation number of the curve $L$ with respect to the trivialization $\eta$. The trivialization $\eta$ (which is essentially a vector field on $\Sigma$) determines a line field $\Sigma$, and this rotation number is one-half of the Maslov class of $L$ computed with respect to that line field: we write $\int_{L}\phi = \mu_{L}/2$. In summary the condition for a compact Lagrangian to be balanced with respect to $\pi^{*}\lambda + \alpha \phi$ is the relation
\begin{equation}
  \int_{L}\lambda + (\alpha/2)\mu_{L} = 0.
\end{equation}
In particular, an $\eta$-gradable ($\mu_{L} = 0$) Lagrangian is balanced if and only if it is exact. Conversely, any non-gradable exact Lagrangian must be deformed by an amount proportional to $\mu_{L}$ in order to become balanced.

 Let $L_{0}$ be an exact Lagrangian, and let $L_{1}$ be the balanced Lagrangian obtained from $L_{0}$ by an isotopy with flux equal to $(-\alpha/2)\mu_{L}$. Then as modules over the HKK generators, $L_{1}$ (with the trivial local system) is equivalent to $L_{0}$ equipped with a local system whose holonomy is $q^{(-\alpha/2)\mu_{L}}$. In other words, every balanced object can be interpreted as an exact object with a local system that depends only on $\mu_{L}$. Upon setting $q = 1$, these local systems become trivial, and so $\Fbal(\Sigma,\pi^{*}\lambda + \alpha \phi) \cong \Fex(\Sigma,\lambda)$.

\subsection{Restriction functors}

A key element in our analysis of HMS for noncompact Riemann surfaces is the Viterbo restriction functor on exact categories. For a Liouville embedding $(\Sigma,\lambda) \to (\Sigma',\lambda')$, there is functor $r : \Fex(\Sigma',\lambda') \to \Fex(\Sigma,\lambda)$, which roughly speaking takes an exact Lagrangian in $\Sigma'$ to its intersection with the subsurface $\Sigma$. By extending scalars to $\Lambda$, we also obtain a restriction functor $r : \mathrm{Fuk}(\Sigma') \to \mathrm{Fuk}(\Sigma)$.

We remark that there is a bit of subtlety in computing the functor $r$ on a nonexact object of $\mathrm{Fuk}(\Sigma')$: it is not necessarily true that a Lagrangian $L$ in $\Sigma'$ disjoint from $\Sigma$ maps to the zero object, since after replacing $L$ with an isomorphic complex of exact Lagrangians, those Lagrangians may very well intersect $\Sigma$.

We may assume that $L$ is in minimal position with respect to the boundary components of $\Sigma$ ($L$ and the boundary components do not bound a disk), since this may always be achieved by a Hamiltonian isotopy. To be more precise about how restriction works, we distinguish two cases: either the contracting Liouville flow eventually pushes $L \subset \Sigma'$ so as to lie entirely within $\Sigma$, or not. Objects for which the contracting Liouville flow push $L$ entirely into $\Sigma$ include objects contained in $\Sigma$, and also curves in $\Sigma'$ that are parallel to a boundary component of $\Sigma$. Objects for which the contracting Liouville flow never pushes $L$ entirely into $\Sigma$ are either taken to zero or to some collection of arcs lying in $\Sigma$.

\begin{lemma}
  \label{geometricrestriction}
  Assume that $L \subset \Sigma'$ is in minimal position with respect to the boundary components of $\Sigma$.
  \begin{enumerate}
  \item If the contracting Liouville flow eventually pushes $L$ entirely into $\Sigma$, then restriction to $\Sigma$ maps $L$ to the same object thought of as an object in the Liouville completion of $\Sigma$.
  \item otherwise, restriction takes $L$ to an object isomorphic to one supported on $L \cap \Sigma$.
  \end{enumerate}
\end{lemma}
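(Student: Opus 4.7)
The plan is to reduce both assertions to the combinatorics of the cosheaf $\cF^{\mathrm{top}}(-)$ via the equivalence (\ref{GPS}). First I would choose a skeleton $\Gamma' \subset \Sigma'$ adapted to the Liouville subdomain $\Sigma$, meaning that $\Gamma := \Gamma' \cap \Sigma$ is a skeleton of $\Sigma$ with stops along $\partial \Sigma$, and the closure of $\Gamma' \setminus \Gamma$ is a skeleton for $\overline{\Sigma' \setminus \Sigma}$. Under the identification $\cF^{\mathrm{top}}(\Gamma') \simeq \mathrm{Fuk}^{w}(\Sigma')$, the Viterbo restriction $r$ corresponds to the cosheaf corestriction $C : \cF^{\mathrm{top}}(\Gamma') \to \cF^{\mathrm{top}}(\Gamma)$, which is the content of the Ganatra--Pardon--Shende program in the surface setting. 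With this dictionary, the problem becomes one of identifying the image of the graded curve $L$ under $C$.

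For part (1), the trajectory of $L$ under the contracting Liouville flow of $\Sigma$ (applied inside the completion of $\Sigma'$) is a Hamiltonian isotopy that, by hypothesis, eventually deposits $L$ into $\Sigma$ as some curve $L'$. Since Hamiltonian isotopic Lagrangians are isomorphic in $\mathrm{Fuk}(\Sigma')$, we may replace $L$ by $L'$. Then $L'$ is already an object of $\mathrm{Fuk}(\Sigma)$, and $r$ is tautologically the identity: at the cosheaf level this is the statement that corestriction fixes graded curves supported on the smaller skeleton $\Gamma \subset \Gamma'$.

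For part (2), the minimal-position hypothesis guarantees that $L \cap \partial \Sigma$ consists of essential transverse intersection points, and hence $L \cap \Sigma = \alpha_{1} \sqcup \cdots \sqcup \alpha_{m}$ is a nonempty disjoint union of properly embedded arcs in $\Sigma$. I would then compute $r(L)$ geometrically by performing iterated surgery on $L$ at each point of $L \cap \partial \Sigma$: each surgery expresses $L$ (up to a mapping cone) as a pair consisting of an arc in $\Sigma$ and an arc in $\overline{\Sigma' \setminus \Sigma}$. After all the surgeries are performed, $L$ is presented as an iterated cone built from the $\alpha_{i}$ together with complementary arcs lying entirely outside $\Sigma$. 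The complementary arcs are sent to zero by $C$ (their underlying curves do not meet $\Gamma$), so $r(L)$ is an iterated cone on the $\alpha_{i}$, hence supported on $L \cap \Sigma$ as claimed.

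The main obstacle is justifying the surgery decomposition at the level of the Fukaya category, i.e.\ producing the explicit cone relations expressing $L$ in terms of its arc constituents, and checking that the complementary arcs really do die under $C$. One route is to invoke the Haiden--Katzarkov--Kontsevich classification \cite{haiden2017flat} of objects as formal sums of arcs with local systems, together with the explicit description of morphism complexes between arcs and closed curves at transverse intersections; a second route is to adapt the Hamiltonian surgery arguments of Lee and Auroux--Smith mentioned in the Remark after Theorem \ref{mainintro}. Either option requires genuine Floer-theoretic input, since the cone decomposition depends on the existence of nontrivial structure maps at the points of $L \cap \partial \Sigma$; the purely cosheaf-theoretic formalism is not by itself enough, because it only directly sees objects already transverse to the skeleton.
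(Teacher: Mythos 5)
Your proposal takes a genuinely different route from the paper's, and the gap you flag at the end is exactly where the paper's argument is shorter. The paper proves part (2) directly from the geometric definition of Viterbo restriction: apply the contracting Liouville flow $\phi_{t}$ to $L$, collapsing it onto the core of $\Sigma'$, and then intersect with $\Sigma$. Minimal position ensures that the original components of $L\cap\Sigma$ do not merge during the contraction. New components of $\phi_{t}(L)\cap\Sigma$ may be created as portions of $L$ are pushed across $\partial\Sigma$, but any such new component bounds a disk with $\partial\Sigma$ and therefore represents the zero object in $\mathrm{Fuk}(\Sigma)$. That is the whole argument; part (1) is treated as immediate.

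Your route instead passes through the cosheaf dictionary and a proposed surgery (cone) decomposition of $L$ at the points $L\cap\partial\Sigma$. As you acknowledge, the cone decomposition is precisely what you cannot supply: it is unclear what ``surgery on $L$ at a point of $L\cap\partial\Sigma$'' means categorically, since $\partial\Sigma$ is not itself a Lagrangian object whose surgeries produce cones in the Fukaya category. Making this rigorous would likely require expressing $L$ as a twisted complex in the HKK arc model for $\mathrm{Fuk}(\Sigma')$ and tracking which arcs survive corestriction, a substantial auxiliary project, plus the nontrivial identification of Viterbo restriction with cosheaf corestriction. The paper sidesteps all of this by observing that the only ``extra'' Lagrangian pieces the Liouville flow ever produces inside $\Sigma$ are boundary-parallel bigons, which vanish for elementary reasons; no surgery or HKK input is needed. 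One further wrinkle in your write-up: in part (2) you assert $L\cap\Sigma$ is a \emph{nonempty} union of arcs, but the second alternative also includes the case $L\cap\Sigma=\varnothing$, where the conclusion is that $r(L)\simeq 0$. The paper's argument covers this case uniformly, since then every component of $\phi_{t}(L)\cap\Sigma$ is created by the flow and hence bounds a disk with $\partial\Sigma$; your decomposition argument would need to treat it separately.
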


\begin{proof}
   We prove the second part. We express $L$ in terms of exact Lagrangians by applying contracting Liouville flow $\phi_{t}$ to collapse $L$ onto the core of $\Sigma'$, and then intersect with $\Sigma$. In the contraction process, new components of the intersection $\phi_{t}(L) \cap \Sigma$ may be created, due to portions of $L$ being pushed in to $\Sigma$. (If $L$ were not assumed to be in minimal position with respect to $\partial \Sigma$, it would also be possible for components of $\phi_{t}(L) \cap \Sigma$ to merge together.) However, any components of $\phi_{t}(L) \cap \Sigma$ that get created will bound a disk with $\partial \Sigma$, and hence will represent the zero object in $\mathrm{Fuk}(\Sigma)$.
\end{proof}

\subsection{Locally finite Fukaya category for non-finite-type Riemann surfaces}
\label{sec:loc-fin} 
Let $\widetilde{\Sigma}$ be a connected Riemann surface not of finite type. Choose an exhaustion of $\widetilde{\Sigma}$ by open subsurfaces of finite type,
\begin{equation}
   \widetilde{\Sigma} = \bigcup_{N=1}^{\infty} \widetilde{\Sigma}_{N}, \quad \widetilde{\Sigma}_{1} \subset \widetilde{\Sigma}_{2} \subset \dots \subset \widetilde{\Sigma}.
 \end{equation}
 Since each subsurface $\widetilde{\Sigma}_{N}$ is a punctured Riemann surface, and in particular an exact symplectic manifold, we may associate to it a Fukaya category, namely the wrapped Fukaya category $\Fw(\widetilde{\Sigma}_{N})$. Recall that this category contains both exact compact Lagrangian branes as well as properly embedded arcs, and that the morphism complexes are computed by wrapping around the punctures of $\widetilde{\Sigma}_{N}$. Next, for a pair of indices $N \leq M$, we have an embedding $\widetilde{\Sigma}_{N} \to \widetilde{\Sigma}_{M}$ (which may be taken to be an exact embedding), so there is a restriction functor,
 \begin{equation}
   r_{M,N} : \Fw(\widetilde{\Sigma}_{M}) \to \Fw(\widetilde{\Sigma}_{N}),
 \end{equation}
 known as the Viterbo restriction (constructed by Abouzaid-Seidel). As $N$ tends to infinity, the categories $\Fw(\widetilde{\Sigma}_{N})$ with the functors $r_{M,N}$ form an inverse system of $A_{\infty}$-categories, and we may pass to the homotopy limit, which we denote as
 \begin{equation}
   \Flf(\widetilde{\Sigma}) = \varprojlim_{N\to \infty} \Fw(\widetilde{\Sigma}_{N}).
 \end{equation}
 We call $\Flf(\widetilde{\Sigma})$ the \emph{locally finite Fukaya category} of $\widetilde{\Sigma}$. It is potentially different from other Fukaya categories one might reasonably associate to $\widetilde{\Sigma}$: for instance, an object of $\Flf(\widetilde{\Sigma})$ may be supported on a Lagrangian submanifold with infinitely many connected components. 

We remark that it is not difficult to show that $\Flf(\widetilde{\Sigma})$ does not depend up to equivalence on the choice of exhaustion.
 
For our purposes   
$\widetilde{\Sigma}$ will always be an infinite-sheeted covering of a compact Riemann surface 
$\Sigma$: namely, we take $\widetilde{\Sigma}$ to be the \emph{maximal tropical cover} of $\Sigma$ as defined in Section \ref{rsamtc}. Also we take the exhaustion by open subsurface of $\widetilde{\Sigma}$ to be compatible with the pants decomposition $\widetilde{P}$ of $\widetilde{\Sigma}$ in the sense specified in Section \ref{cbfts}. In this setting we have defined also the topological locally finite Fukaya category of $\widetilde{\Sigma}$. Recall that \cite{haiden2017flat} gives a dictionary between the wrapped and the topological Fukaya categories of the finite type subsurfaces. Then, using Proposition \ref{flws},   
we obtain also an equivalence between the locally finite Fukaya category and its topological counterpart. We record this simple observation in the following Proposition.
\begin{proposition}
\label{lftopo}
There is an equivalence 
$$
\Flf(\widetilde{\Sigma})  \simeq \mathrm{Fuk}^{\mathrm{top}, \mathrm{lf}}(\widetilde{\Sigma})
$$
where the Liouville structure of $\widetilde{\Sigma}$ is used to trivialize the dependence of the Fukaya category on the Novikov parameter.
\end{proposition}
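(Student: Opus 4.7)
The plan is to compare the two sides term-by-term in the inverse limits defining them, and to verify that the structure maps match under the Haiden--Katzarkov--Kontsevich equivalence. By definition we have
$$\Flf(\widetilde{\Sigma}) \;=\; \varprojlim_{N} \Fw(\widetilde{\Sigma}_N),$$
with transition functors given by the Viterbo restriction functors $r_{M,N}: \Fw(\widetilde{\Sigma}_M) \to \Fw(\widetilde{\Sigma}_N)$. On the other hand, Proposition \ref{flws} gives
$$\mathrm{Fuk}^{\mathrm{top},\mathrm{lf}}(\widetilde{\Sigma}) \;\simeq\; \varprojlim_N \mathrm{Fuk}^{\mathrm{top}}(\widetilde{\Sigma}_N),$$
where the transition functors are the restriction functors $R$ associated to the inclusion of the compact skeleton $\Gamma_N' \subset \Gamma_{N+1}'$ (equivalently, the right adjoints of the corestrictions $T_\infty$ at the level of Ind-completions, which preserve compacts by Remark \ref{luvg} since each $\widetilde{\Sigma}_N$ is fully stopped).

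First I would reduce the two sides to compatible inverse systems. For each finite type subsurface $\widetilde{\Sigma}_N$, equivalence (\ref{GPS}) of \cite{haiden2017flat} gives
$$\Phi_N: \mathrm{Fuk}^{\mathrm{top}}(\widetilde{\Sigma}_N) \;\simeq\; \Fw(\widetilde{\Sigma}_N),$$
after absorbing the Novikov parameter into the choice of Liouville primitive on $\widetilde{\Sigma}_N$ (which is possible since each $\widetilde{\Sigma}_N$ is Stein, hence exact; this is what is meant by ``the Liouville structure of $\widetilde{\Sigma}$ is used to trivialize the dependence on the Novikov parameter''). The exhaustion is chosen compatibly with $\widetilde{\cP}$ and with a nested family of Liouville forms $\lambda_N$ so that each inclusion $\widetilde{\Sigma}_N \hookrightarrow \widetilde{\Sigma}_{N+1}$ is a Liouville embedding.

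The key step is naturality: I would check that the diagrams
$$
\xymatrix{
\mathrm{Fuk}^{\mathrm{top}}(\widetilde{\Sigma}_{N+1}) \ar[r]^-{\Phi_{N+1}} \ar[d]_-{R} & \Fw(\widetilde{\Sigma}_{N+1}) \ar[d]^-{r_{N+1,N}} \\
\mathrm{Fuk}^{\mathrm{top}}(\widetilde{\Sigma}_N) \ar[r]^-{\Phi_N} & \Fw(\widetilde{\Sigma}_N)
}
$$
commute up to coherent homotopy. On generating objects this can be verified geometrically: the topological restriction $R$ acts on an arc or simple closed curve essentially by intersecting with the subsurface and deleting components that are boundary-parallel or disappear, while by Lemma \ref{geometricrestriction} the Viterbo restriction $r_{N+1,N}$ does exactly the same thing on exact Lagrangians in minimal position. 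Extending this match from generators to all morphisms uses that both functors are uniquely determined by their values on the HKK-generators together with their Ind-continuous right-adjoint structure (which matches under HKK by the uniqueness of adjoints, as in Remark \ref{luvg}).

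The main obstacle will be organizing this into a coherent map of inverse systems of dg categories rather than just a pointwise check; once that is done, passing to the homotopy limit $\varprojlim_N$ in $\mathrm{DGCat}^{(2)}_{\mathrm{small}}$ converts the compatible family $\{\Phi_N\}$ into the desired equivalence $\Flf(\widetilde{\Sigma}) \simeq \mathrm{Fuk}^{\mathrm{top},\mathrm{lf}}(\widetilde{\Sigma})$, and independence of the result on the choice of exhaustion follows by cofinality (any two exhaustions compatible with $\widetilde{\cP}$ admit a common refinement of the same type).
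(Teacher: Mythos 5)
Your proposal follows essentially the same route as the paper's proof: write $\Flf(\widetilde{\Sigma})$ and $\mathrm{Fuk}^{\mathrm{top},\mathrm{lf}}(\widetilde{\Sigma})$ as inverse limits over the exhaustion, apply HKK termwise, and invoke Proposition \ref{flws} for the topological side. The difference is one of emphasis: the paper's proof is a terse chain of equivalences that does not spell out why the termwise HKK equivalences $\Phi_N$ assemble into an equivalence of inverse systems, whereas you correctly flag the naturality of $\Phi_N$ with respect to Viterbo versus topological restriction (via Lemma \ref{geometricrestriction} and Remark \ref{luvg}) and the need to promote this to a coherent map of diagrams before passing to $\varprojlim$ — a point the paper leaves implicit.
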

\begin{proof}
The statement follows from the chain of equivalences 
$$
\Flf(\widetilde{\Sigma}) = \varprojlim_{N\to \infty} \Fw(\widetilde{\Sigma}_{N}) \simeq  \varprojlim_{N\to \infty} \mathrm{Fuk}^{\mathrm{top}}(\widetilde{\Sigma}_{N}) \stackrel{(*)}\simeq \mathrm{Fuk}^{\mathrm{top}, \mathrm{lf}}(\widetilde{\Sigma})
$$
where equivalence $(*)$ is given by Proposition \ref{flws}. 
\end{proof}

\subsection{Pullback for \'{e}tale maps}

We wish to understand how Fukaya categories pullback along $\pi: \widetilde{\Sigma}\to \Sigma$. Since $\widetilde{\Sigma}$ is not of finite-type, we consider an exhaustion $\widetilde{\Sigma} = \bigcup_{N=1}^{\infty} \widetilde{\Sigma}_{N}$ by open subsurfaces $\widetilde{\Sigma}_{N}$ that are of finite-type. Then the covering map $\pi : \widetilde{\Sigma} \to \Sigma$ restricts to maps $\pi_{N}: \widetilde{\Sigma}_{N} \to \Sigma$; $\pi_{N}$ is a local diffeomorphism (an \'{e}tale map) but not a covering map. Given a Lagrangian brane $L$, we construct an object $\pi^{-1}_{N}(L)$ in $\Fw(\widetilde{\Sigma}_{N})$ by pulling back the Lagrangian and brane structure under $\pi_{N}$. By construction, the objects $\pi^{-1}_{N}(L)$ are mapped to one another under the functors $r_{M,N}$, and so this system of objects gives rise to an object of the limit $\Flf(\widetilde{\Sigma})$. In remains to show that this object-by-object construction can be extended to a pullback functor, and to understand compatibility with the restriction functors.

In order to show that this assignment is functorial, we need to recall how morphism complexes and $A_{\infty}$ operations are computed in a wrapped category like $\Fw(\widetilde{\Sigma}_{N})$. Given two objects $L_{0}$ and $L_{1}$, the wrapped Floer complex $CW(L_{0},L_{1})$ is generated by chords of a Hamiltonian flow generated by a function $H$. In order to qualify as a ``wrapping Hamiltonian'', the function $H$ must be chosen so that it has convex growth on the cylindrical ends of $\widetilde{\Sigma}_{N}$; we also assume for convenience that $H$ is $C^{2}$-small away from the cylindrical ends. With this setup, the chords that generate $CW(L_{0},L_{1})$ naturally fall into two classes: the \emph{boundary chords} that live in one of the cylindrical ends of $\widetilde{\Sigma}_{N}$, and the rest, which are called \emph{interior chords}. The distinction between boundary chords and interior chords is not intrinsic to $\Fw(\widetilde{\Sigma}_{N})$ as a category; it something having to do with the specific geometrically-defined cochain complexes we use to construct the wrapped Fukaya category using Floer theory. This distinction allows us to make arguments at chain level thanks to the following lemma about Floer-theoretic operations:
\begin{lemma}
  If all of the inputs to an $A_{\infty}$ operation are interior chords, then the output is also an interior chord. 
\end{lemma}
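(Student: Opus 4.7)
The plan is to prove this via the integrated maximum principle, in the spirit of Abouzaid--Seidel, applied to the cylindrical ends of $\widetilde{\Sigma}_{N}$. The key geometric input is that the wrapping Hamiltonian $H$ is, by construction, of the form $H = h(\rho)$ with $h'' \geq 0$ on each cylindrical end (where $\rho$ denotes the Liouville radial coordinate), is $C^{2}$-small in the interior, and that the Lagrangian branes representing objects of $\mathrm{Fuk}^{w}(\widetilde{\Sigma}_{N})$ are cylindrical at infinity (arcs are properly embedded and radial in each end; compact Lagrangians stay in the interior).

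First I would recall the setup: a Floer polygon contributing to an $A_{\infty}$ operation is a map $u : S \to \widetilde{\Sigma}_{N}$ from a disk with boundary punctures, with Lagrangian boundary conditions and with asymptotic conditions at the punctures given by Hamiltonian chords. A chord is \emph{interior} precisely when it is contained in the sublevel set $\{\rho < \rho_{0}\}$ for an appropriate threshold $\rho_{0}$ chosen so that the cylindrical structure is in effect above $\rho_{0}$. Thus one has the dichotomy between chords where $\rho$ stays small and those localized at some $\rho \gg \rho_{0}$ on a cylindrical end.

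Next I would apply the integrated maximum principle to the function $\rho \circ u$ restricted to the preimage of $\{\rho \geq \rho_{0}\}$. The perturbed Cauchy--Riemann equation, together with the choice of a cylindrical almost complex structure on the ends, the radial character of the Lagrangian boundary conditions, and the convexity $h'' \geq 0$, forces $\rho \circ u$ to be subharmonic (with Neumann-type behavior along Lagrangian boundary). Consequently $\rho \circ u$ attains its maximum on the preimage of $\{\rho \geq \rho_{0}\}$ only at punctures. By hypothesis every input puncture converges to an interior chord, so at those punctures $\rho \circ u < \rho_{0}$. Hence the preimage of $\{\rho \geq \rho_{0}\}$ is empty, and in particular the output chord lies in $\{\rho < \rho_{0}\}$, i.e.\ it is interior.

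The main technical point to check is that the maximum principle really applies at the boundary of the polygon (the Lagrangian boundary condition is not closed, so one needs the boundary contribution to $\rho \circ u$ to be non-positive in the outward normal direction). This is guaranteed by the cylindrical-at-infinity condition on the Lagrangians together with the convexity of $h$; once this boundary analysis is carried out uniformly for the finite collection of objects involved in the operation, the rest of the argument is formal. No other part of the construction is needed, so the result is actually a chain-level statement that holds for any compatible choice of perturbation data used to define the $A_{\infty}$ structure on $\mathrm{Fuk}^{w}(\widetilde{\Sigma}_{N})$.
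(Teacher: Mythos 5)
Your approach matches the paper's exactly: both invoke the maximum principle for the radial coordinate to show that a Floer polygon whose input asymptotics lie below $\rho_{0}$ cannot enter the cylindrical ends. The paper's proof is even terser than yours — just two sentences saying the maximum principle applies.

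One point worth tightening, since your sketch is more explicit than the paper's: you write that the maximum of $\rho\circ u$ on $u^{-1}(\{\rho\geq\rho_{0}\})$ is attained only at punctures, and then conclude this region is empty because the \emph{input} punctures sit at $\rho<\rho_{0}$. As stated this does not dispose of the output puncture — if the output chord itself lay at $\rho\geq\rho_{0}$, the pointwise maximum principle would only locate the maximum there, which is not yet a contradiction. The correct way to close this is precisely the \emph{integrated} maximum principle you name at the outset (Abouzaid--Seidel, Lemma 7.2 style): one applies Stokes to $u^{-1}(\{\rho\geq\rho_{0}\})$, and the sign of the boundary contribution at a negative (output) end, together with the non-positivity of the contributions along $\{\rho=\rho_{0}\}$ and the Lagrangian arcs, forces the geometric energy on this region to vanish, hence the region is empty. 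You invoke the integrated version but then slide into pointwise language; the step "hence the preimage is empty" needs the Stokes/energy bookkeeping to handle the output end rather than the pointwise location-of-maximum statement.
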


\begin{proof}
  Given a pseudo-holomorphic curve contributing to such an operation, where all inputs are interior chords, the maximum principle applies to the radial coordinates on the cylindrical ends. This demonstrates that such curves cannot enter the cylindrical ends.
\end{proof}

Now we consider the problem of extending the association $L \mapsto \pi^{-1}_{N}(L)$ to an $A_{\infty}$-functor. Let $L_{0}$ and $L_{1}$ be objects of $\mathrm{Fuk}(\Sigma)$. The question is to relate $CF(L_{0},L_{1})$ with $CW(\lambda_{N}(L_{0}), \lambda_{N}(L_{1}))$. Now $CF(L_{0}, L_{1})$ is generated by the chords of some Hamiltonian function $H_{0}$ defined on $\Sigma$. When we pull back $H_{0}$ to $\widetilde{\Sigma}_{N}$, we do not get a wrapping Hamiltonian, since $H_{0}$ remains bounded on the ends; to remedy this we add a wrapping Hamiltonian $H_{1}$ that is zero away from the ends. This has the neat property that the interior chords of $(H_{0}\circ \pi_{N}) + H_{1}$ on $\widetilde{\Sigma}_{N}$ are nothing but the lifts with respect to $\pi_{N}$ of chords of $H_{0}$ on $\Sigma$. 

\begin{proposition}
  The association $L \mapsto \pi_{N}^{-1}(L)$ may be extended to an $A_{\infty}$-functor $\lambda^{\text{naive}}_{N} : \mathrm{Fuk}(\Sigma) \to \Fw(\widetilde{\Sigma}_{N})$.\end{proposition}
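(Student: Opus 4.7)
My approach is to construct $\lambda^{\text{naive}}_N$ as a strict $A_\infty$-functor, i.e.\ with $\lambda^{d}_N = 0$ for all $d \geq 2$, whose first-order term is defined by summing over lifts of chords. Given a generator $c$ of $CF(L_0,L_1)$, namely a chord of $H_0$ from $L_0$ to $L_1$ on $\Sigma$, define
\[
  \lambda^1_N(c) = \sum_{\tilde{c}} \tilde{c},
\]
where $\tilde{c}$ runs over those lifts of $c$ via the étale map $\pi_N$ that remain entirely inside $\widetilde{\Sigma}_N$. Since $\pi_N$ is a local diffeomorphism, each preimage of $c(0)$ in $\pi_N^{-1}(L_0)$ determines at most one such lift, and by the setup preceding the statement every such lift is an interior chord of $(H_0 \circ \pi_N) + H_1$. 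The sum is locally finite because only finitely many sheets of $\widetilde{\Sigma} \to \Sigma$ meet the compact support of $c$.

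The next ingredient is a compatible choice of Floer data. I would fix domain-dependent Hamiltonian perturbations and almost complex structures on $\Sigma$ defining the operations $\mu^d$ of $\mathrm{Fuk}(\Sigma)$, and pull them back $\pi_N$-equivariantly to $\widetilde{\Sigma}_N$, augmenting only by the wrapping Hamiltonian $H_1$, which vanishes away from the cylindrical ends. With this choice, the Floer equation on $\widetilde{\Sigma}_N$ literally pulls back the one on $\Sigma$ in the interior region. Then, given a pseudoholomorphic polygon $u : D \to \Sigma$ contributing to $\mu^d(c_1,\dots,c_d)$, simple connectivity of $D$ lets us lift $u$ uniquely to $\widetilde{\Sigma}$ once a lift of its preferred corner is chosen; the lemma quoted immediately before the statement (the maximum principle for polygons with interior chord inputs) forces this lift to remain in $\widetilde{\Sigma}_N$, and its boundary then automatically lies on uniquely determined lifts of $L_0,\dots,L_d$. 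Conversely, every polygon in $\widetilde{\Sigma}_N$ whose inputs are interior chords projects under $\pi_N$ to a polygon in $\Sigma$ with the projected chord inputs. This yields a bijection of moduli spaces that respects the output chords, and translated through $\lambda^1_N$ it precisely says that the coefficient of $\tilde{c}_0$ in $\mu^d(\lambda^1_N(c_1)\otimes\cdots\otimes\lambda^1_N(c_d))$ equals the coefficient of $c_0 = \pi_N(\tilde{c}_0)$ in $\mu^d(c_1,\dots,c_d)$. This is exactly the strict $A_\infty$-functor relation with all higher components set to zero.

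The main obstacle is the transversality and coherence package: one must ensure the pulled-back interior Floer data admit enough perturbations to be regular for the moduli of interior-chord polygons while remaining $\pi_N$-compatible, and that the additional data imposed on the cylindrical ends (needed for regularity of moduli involving boundary chords) do not destroy the interior matching used above. Because $\pi_N$ is a local diffeomorphism, $\pi_N$-equivariant perturbations of interior data suffice for transversality of interior-chord moduli, while the boundary-chord moduli are dealt with by generic wrapping data on the ends and play no role in the relations needed for $\lambda^1_N$. A parallel care is required for spins and orientations: $\pi_N^{-1}(L)$ inherits its orientation and spin structure from $L$ through the étale map $\pi_N$, and the linearised Cauchy--Riemann operator pulls back isomorphically, so the sign contributions attached to lifted polygons match those on $\Sigma$. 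Once these checks are in place, the strict functor $\lambda^{\text{naive}}_N$ defined above extends the object-level assignment $L \mapsto \pi_N^{-1}(L)$ to an $A_\infty$-functor $\mathrm{Fuk}(\Sigma) \to \Fw(\widetilde{\Sigma}_N)$.
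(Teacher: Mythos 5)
Your proposal is correct and follows essentially the same approach as the paper's proof: define $\lambda^{\text{naive}}_N$ on morphisms by summing lifts of chords, observe these are interior chords, pull back the Floer data along $\pi_N$, and use the simple connectivity of the domain together with the maximum-principle lemma to identify the relevant moduli spaces, yielding a strict $A_\infty$-functor. You add slightly more detail on transversality, orientations, and spin structures, which the paper leaves implicit, but the core argument is identical. (One minor slip: the finiteness of the sum $\sum_{\tilde{c}} \tilde{c}$ is due to $\widetilde{\Sigma}_N$ being of finite type, not because finitely many sheets of $\widetilde{\Sigma} \to \Sigma$ meet a compact set — $\Sigma$ itself is compact, so all sheets do.)
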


\begin{proof}
  The functor on objects is $L \mapsto \pi_{N}^{-1}(L)$. On morphism complexes, $\lambda^{\text{naive}}_{N}$ takes a chord $x \in CF(L_{0},L_{1})$ to the sum of all its lifts under $\pi_{N}$. By the discussion above, this is a well-defined element of $CW(\lambda_{N}(L_{0}),\lambda_{N}(L_{1}))$. Observe that $\lambda^{\text{naive}}_{N}(x)$ is always a sum of interior chords.

  It remains to show that the mapping $x \mapsto \lambda_{N}(x)$ directly matches all $A_{\infty}$ operations, so that $\lambda_{N}$ is an $A_{\infty}$-functor with vanishing higher-order components. To show this claim, observe that any $A_{\infty}$-operation contributing with inputs of the form $\lambda^{\text{naive}}_{N}(x)$ counts pseudoholomorphic curves whose images stay in the interior region (away from the ends of $\widetilde{\Sigma}_{N}$). In that region, all of the perturbations are pulled back under $\pi_{N}$, so these curves are nothing but the lifts of the pseudoholomorphic curves in $\Sigma$ that contribute to the corresponding operation in $\mathrm{Fuk}(\Sigma)$. (Note that every such curve does indeed lift, since its domain is simply connected, being a disk.) This establishes the existence of the functor $\lambda^{\text{naive}}_{N} : \mathrm{Fuk}(\Sigma) \to \Fw(\widetilde{\Sigma}_{N})$.
\end{proof}

We now introduce some notation having to do with the covering $\pi: \widetilde{\Sigma} \to \Sigma$. Recall that this is the normal covering space of $\Sigma$ associated to the kernel of the homomorphism
\begin{equation}
  \pi_{1}(\Sigma) \to \pi_{1}(G) \mapsto H_{1}(G) = O_{\pi} \cong \Z^{g},
\end{equation}
where $G$ is the graph associated to a pants decomposition. Now $G$ is a trivalent graph with first Betti number $g$; if we choose a spanning tree of $G$ and collapse it to a point, we obtain a graph $G'$ that is a wedge of $g$ circles. The map $G \to G'$ is a homotopy equivalence, and we have a map $\Sigma \to G \to G'$, so we may as well regard $\pi$ as the covering associated to the kernel of
\begin{equation}
  \pi_{1}(\Sigma) \to H_{1}(G') \cong O_{\pi}.
\end{equation}
Furthermore, the edges of $G'$ give us a natural basis for the lattice $O_{\pi}$. We may also consider the maximal abelian covering of $G'$, call it $\widetilde{G}'$; the graph $\widetilde{G}'$ is the Cayley graph of $O_{\pi}$ with respect to the chosen basis, so it is a $g$-dimensional rectangular lattice. Because $\pi : \widetilde{\Sigma} \to \Sigma$ is isomorphic to the pullback of $\widetilde{G}'\to G'$ along $\Sigma \to G'$, we have a map
\begin{equation}
  \psi : \widetilde{\Sigma} \to \widetilde{G}'
\end{equation}
that will be useful.

In terms of the topology of $\Sigma$, the choice of a spanning tree in $G$, and hence the choice of $G'$, corresponds to the choice of a \emph{cut system} on $\Sigma$. A cut system is a collection of $g$ simple closed curves such that cutting along these circles reduces $\Sigma$ to a $2g$-punctured sphere. From a spanning tree $T$ in $G$, we obtain a cut system by taking, for each edge in $G\setminus T$, the corresponding circle in $\Sigma$ from the pants decomposition. Conversely, any cut system contained in the given pants decomposition will determine a spanning tree in $G$.

The picture of $\widetilde{\Sigma}$ that follows from these considerations is as follows: We think of $\widetilde{\Sigma}$ as consisting of a collection of $2g$-punctured spheres arranged at the sites of a $g$-dimensional rectangular lattice $\widetilde{G}'$, and glued together according to that lattice. We shall assume that our exhaustion $\widetilde{\Sigma}_{N}$ of $\widetilde{\Sigma}$ consists of those punctured spheres at the sites whose coordinates lie in the cube $[-N,N]^{g}$.

Now we return to the construction of the pullback functor. 
  Let $L_{1}$ and $L_{2}$ be two Lagrangian branes in $\mathrm{Fuk}$. In order to understand the compatibility of the functors $\lambda^{\text{naive}}_{N}$ with restriction, as well as to compute morphism complexes in the limit category $\Flf(\widetilde{\Sigma})$, we want to understand geometrically the restriction map, for $M > N$,
  \begin{equation}
    r_{M,N} : CW(\pi^{-1}_{M}(L_{1}),\pi^{-1}_{M}(L_{2})) \to CW(\pi^{-1}_{N}(L_{1}),\pi^{-1}_{N}(L_{2})).
  \end{equation}

  Each closed curve in $\Sigma$ has a corresponding class in $O_{\pi}$. We distinguish two types of Lagrangians $L$ in $\Sigma$: Either
  \begin{enumerate}
  \item The class of $L$ in $O_{\pi}$ is zero, in which case every connected component of $\pi^{-1}(L)$ is a compact curve, or,
  \item The class of $L$ in $O_{\pi}$ is nonzero, in which case every connected component of $\pi^{-1}(L)$ is a noncompact arc.
  \end{enumerate}

  Given a pair of Lagrangians $L_{1}$ and $L_{2}$, we distinguish the following cases:
  \begin{enumerate}
  \item The class of at least one of $L_{1}$ or $L_{2}$ in $O_{\pi}$ is zero,
  \item The classes of both $L_{1}$ and $L_{2}$ in $O_{\pi}$ are nonzero, and distinct,
  \item The classes of $L_{1}$ and $L_{2}$ in $O_{\pi}$ are nonzero and equal, but $L_{1}$ is not isotopic to $L_{2}$,
  \item $L_{1}$ is isotopic to $L_{2}$, and they represent a nonzero class in $O_{\pi}$.
  \end{enumerate}

  In each case, we wish to understand whether two components $C_{i}$ of $\pi^{-1}(L_{i})$ may intersect or support boundary chords in $\widetilde{\Sigma}_{N}$. Whenever there is such an intersection point or chord, the images $\psi(C_{1})$ and $\psi(C_{2})$ must intersect: intersection points correspond to the situation where the images $\psi(C_{i})$ touch the same vertex, while chords correspond to the situation where they touch the same edge.
  
  \begin{lemma}
    \label{lem:compactprojection}
    Suppose that $L_{1}$ and $L_{2}$ are such that there is a component $C_{1}$ of $\pi^{-1}(L_{1})$ and a component $C_{2}$ of $\pi^{-1}(L_{2})$ such that $\psi(C_{1})\cap \psi(C_{2}) \subset \widetilde{G}'$ is not compact. Then the classes of $L_{1}$ and $L_{2}$ in $O_{\pi}$ are nonzero and equal.
  \end{lemma}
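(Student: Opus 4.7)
The plan is to analyze $\psi(C_i)$ based on the class $[L_i]\in O_\pi$, and to rule out each of the cases (1), (2), (3) enumerated just before the lemma by showing that the intersection $\psi(C_1)\cap\psi(C_2)$ is compact in each of those cases, leaving only case (4), where $[L_1]=[L_2]\neq 0$.

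First I would record a structural description of $\psi(C_i)$. Because $\pi:\widetilde{\Sigma}\to\Sigma$ is the normal cover corresponding to the kernel of $\pi_1(\Sigma)\to H_1(G')\cong O_\pi$ and $\psi$ is the pullback of the $O_\pi$-equivariant tropicalization $\widetilde{G}'\to G'$, the stabilizer of the component $C_i$ under the deck action is the cyclic subgroup $\langle[L_i]\rangle\subset O_\pi$. Hence $C_i$ is compact (a circle, when $[L_i]=0$) or an arc diffeomorphic to $\R$ (when $[L_i]\neq 0$). By equivariance, $\psi(C_i)\subset \widetilde{G}'$ is invariant under translation by $[L_i]$, and a single fundamental domain is the image in $G'$ of the loop $L_i$, hence a finite subgraph. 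In the nonzero case, $\psi(C_i)$ therefore lies in a bounded neighborhood of some affine line $p_i+\R\cdot[L_i]$ inside $\widetilde{G}'\otimes\R\cong\R^g$, and in the zero case $\psi(C_i)$ is a finite subgraph.

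I would then argue by cases. In case (1), where some $[L_i]=0$, the corresponding $\psi(C_i)$ is finite, so $\psi(C_1)\cap\psi(C_2)$ is finite and hence compact. In case (2), where $[L_1]$ and $[L_2]$ are nonzero but $\Q$-linearly independent in $O_\pi\otimes\Q$, the affine lines $p_1+\R[L_1]$ and $p_2+\R[L_2]$ have bounded intersection in $\R^g$ with any fixed tubular neighborhoods, so the locally finite set $\psi(C_1)\cap\psi(C_2)$ is again compact. It remains to dispose of case (3): $[L_1]$ and $[L_2]$ generate the same rational line in $O_\pi\otimes\Q$, but are not equal.

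The hard step is case (3), where I need to upgrade ``parallel'' to ``equal''. My approach is to observe that since $L_1$ and $L_2$ are simple closed curves on $\Sigma$ coming from the pants decomposition setup (and in particular give classes in $O_\pi$ that record the advance per traversal of $\pi(C_i)$), the pair $(\psi(C_i),[L_i])$ remembers both the immersed bi-infinite path and its ``period''. Any non-compact intersection $\psi(C_1)\cap\psi(C_2)$ is invariant under $\langle[L_1]\rangle\cap\langle[L_2]\rangle$, and non-compactness forces this subgroup to have rank one, so $[L_1]$ and $[L_2]$ are commensurable; one then matches the translation periods by comparing the finite combinatorial ``fundamental arcs'' $\pi(C_i)\to G'$ that tile the bi-infinite paths in $\widetilde{G}'$, showing they must agree, hence $[L_1]=[L_2]$. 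This period-matching step, where I use the specific combinatorial structure of the tropicalization $\Sigma\to G'$ rather than the mere coarse asymptotics of $\psi(C_i)$, is the main obstacle and the place where the hypothesis that $L_1,L_2$ are honest simple closed curves (as opposed to, say, curves wrapping multiply) must be invoked.
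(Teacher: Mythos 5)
Your case analysis is finer than the paper's own: for a vanishing class you argue as the paper does, and for nonzero $\Q$-independent classes you give a concrete reason (bounded neighborhoods of affine lines with independent directions have bounded intersection) where the paper only asserts ``eventually diverge.'' You also correctly flag case (3), nonzero parallel distinct classes, as the one case that coarse asymptotics does not cover. But the resolution you sketch there does not close the gap, because it conflates the deck period $[L_i]$ of the parametrized arc $C_i$ with the (possibly larger) group of translations preserving the image \emph{set} $\psi(C_i)\subset\widetilde{G}'$; these can differ. Concretely, in genus $2$ with cut system $\{c_1,c_2\}$, let $L_1$ be a simple closed curve meeting $c_1$ once and $c_2$ not at all, so $[L_1]=e_1$ and the walk $L_1\to G'$ is a single loop around the first circle; let $L_2$ be a simple closed curve meeting $c_1$ twice with the same sign and $c_2$ not at all, so $[L_2]=2e_1$ and the walk $L_2\to G'$ traverses the first circle twice. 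Both bi-infinite lifts have image the whole axis $\Z e_1\subset\widetilde{G}'$, so one can choose components with $\psi(C_1)=\psi(C_2)=\Z e_1$, a non-compact intersection, yet $[L_1]\neq [L_2]$. The ``period-matching'' you propose is therefore unavailable: the image sets can coincide even when the periods do not.

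The same example indicates that the lemma as stated fails in case (3), and the paper's own one-line justification (``distinct classes eventually diverge'') has the same defect, so neither proof can succeed there. Your instinct that (3) is ``the main obstacle'' was exactly right, but the appropriate repair is not a period-matching argument. Rather, the parallel-but-distinct case is absorbed downstream by the strip-to-isotopy argument in the proof of Lemma \ref{lem:nonisotopic}, which only uses that $L_1$ and $L_2$ are not isotopic (automatic here, since their classes in $O_\pi$ differ); so the right move is to weaken the conclusion of Lemma \ref{lem:compactprojection} and route case (3) through Lemma \ref{lem:nonisotopic} instead of trying to prove the stronger claim.
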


  \begin{proof}
    If the class of either $L_{1}$ or $L_{2}$ in $O_{\pi}$ is zero, then one of the components $C_{1}$ or $C_{2}$ is compact, so $\psi(C_{1}) \cap \psi(C_{2})$ must be compact as well.

    If the classes of $L_{1}$ and $L_{2}$ in $O_{\pi}$ are nonzero, then $C_{1}$ and $C_{2}$ are noncompact arcs. If the classes are distinct, then $\psi(C_{1})$ and $\psi(C_{2})$ eventually diverge from one another in the lattice $\widetilde{G}'$, so $\psi(C_{1})\cap \psi(C_{2})$ must again be compact.
  \end{proof}

  \begin{lemma}
    \label{lem:nonisotopic}
    Consider components $C_{i}$ of $\pi^{-1}(L_{i})$ ($i=1,2$) and their intersections $C_{i,N}$ with $\widetilde{\Sigma}_{N}$. Suppose that $L_{1}$ is not isotopic to $L_{2}$. Then for every $N$ and for sufficiently large $M > N$, the restriction map
    \begin{equation}
      r_{M,N}: CW(C_{1,M},C_{2,M}) \to CW(C_{1,N},C_{2,N})
    \end{equation}
    vanishes on boundary chords.
  \end{lemma}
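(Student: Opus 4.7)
The plan is to split into cases based on the homology classes of $L_1$ and $L_2$ in $O_\pi$ and exploit the projection $\psi : \widetilde{\Sigma} \to \widetilde{G}'$ to control where boundary chords can live. The starting observation is that a boundary chord in $CW(C_{1,M}, C_{2,M})$ is associated to a boundary cylinder $B$ of $\widetilde{\Sigma}_M$, equivalently to an edge of $\widetilde{G}'$ on the boundary of the cube $[-M,M]^g$; existence of such a chord requires that both $\psi(C_1)$ and $\psi(C_2)$ touch this edge.

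\textbf{Case A (distinct classes in $O_\pi$, or at least one zero).} Lemma \ref{lem:compactprojection} gives that $\psi(C_1)\cap \psi(C_2) \subset \widetilde{G}'$ is compact. For $M$ sufficiently large, this compact set lies in the interior of $[-N,N]^g$, so no boundary edge of $[-M,M]^g$ is touched by both $\psi(C_i)$. Consequently $CW(C_{1,M},C_{2,M})$ contains \emph{no} boundary chords at all, and the statement is vacuously true.

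\textbf{Case B (equal nonzero classes, but $L_1\not\simeq L_2$).} Here $\psi(C_1)\cap \psi(C_2)$ may be unbounded, so boundary chords persist for arbitrarily large $M$, and the vanishing must come from the restriction map itself. A boundary chord $x$ on a cylinder $B \subset \widetilde{\Sigma}_M \setminus \widetilde{\Sigma}_N$ is sent by $r_{M,N}$ to a sum over pseudoholomorphic strips $u:[0,1]\times \R \to \widetilde{\Sigma}_M$ with boundary on $C_1 \cup C_2$, one end at $x$ and the other at an interior chord $y$ of $CW(C_{1,N},C_{2,N})$. The image of such a strip is a topological disk stretching from $B$ into $\widetilde{\Sigma}_N$, bounded by an arc of $C_1$ and an arc of $C_2$. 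By applying a Hamiltonian isotopy one may put $L_1$ and $L_2$ into minimal position in $\Sigma$, so that they realize the geometric intersection number; minimal position lifts to $\widetilde{\Sigma}$ and rules out all bigons between $C_1$ and $C_2$. A strip as above, once the cylinder $B$ is compactified by collapsing its Reeb orbit, is precisely such a bigon, so no contributing strip can exist and the restriction of $x$ vanishes.

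The main obstacle is the geometric argument in Case B: converting a ``strip from a boundary chord to an interior chord'' into an honest bigon that can be excluded by minimal position. One way to make this rigorous is to perform the analysis in the Liouville completion $\widehat{\widetilde{\Sigma}_M}$, where a boundary chord corresponds to an honest pair of asymptotic intersections at the cylindrical end, so that a contributing strip produces a genuine bigon between (lifts of) $L_1$ and $L_2$. An alternative, more algebraic route is to observe that Viterbo restriction factors through the localization of $\Fw(\widetilde{\Sigma}_M)$ at the boundary chords of cylinders \emph{outside} $\widetilde{\Sigma}_N$, which are thereby sent to isomorphisms; a telescoping/cofinality argument then shows that their kernel, which contains exactly those boundary chords, vanishes in the restricted category. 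Either way, the non-isotopy of $L_1$ and $L_2$ enters precisely through the exclusion of bigons.
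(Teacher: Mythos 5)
Your Case A is exactly the paper's opening observation: Lemma \ref{lem:compactprojection} implies that if the classes of $L_1$, $L_2$ in $O_\pi$ are distinct (or at least one is zero), then for $M$ large the set of boundary chords is empty and the statement is vacuous. That part is fine.

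In Case B your route diverges from the paper's, and the divergence is where the gap lies. You propose to put $L_1, L_2$ into minimal position, lift that to $\widetilde{\Sigma}$, and then argue that any strip contributing to $r_{M,N}$ on a boundary chord would be a bigon, which minimal position forbids. The problem is the step you yourself flag as ``the main obstacle'': a strip from a Reeb chord on the cylinder $B$ to an interior intersection point is \emph{not} a bigon between the embedded curves $C_1$ and $C_2$, and ``collapsing the Reeb orbit'' does not turn it into one in any way that the no-bigon lemma applies to. Your two proposed fixes don't close this: the Liouville-completion picture still requires identifying a wrapping chord with an honest transverse intersection, which is not automatic, and the localization argument is too schematic to check. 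A further warning sign is that your Case B argument never uses the hypothesis ``$M$ sufficiently large'' — it would prove vanishing for \emph{all} $M>N$, which is a strictly stronger statement than the lemma claims and is not obviously true.

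The paper's argument for Case B avoids all of this. It also starts from the observation that nonvanishing of $r_{M,N}$ on a boundary chord forces the existence of a strip with boundary on $C_1\cup C_2$ joining that chord to an interior generator in $\widetilde{\Sigma}_N$. But instead of trying to exclude this strip by local intersection theory, it pushes the strip down under $\psi:\widetilde{\Sigma}\to \widetilde{G}'$ and observes that as $M$ grows (with $N$ fixed) the boundary arcs of the strip must traverse longer and longer portions of $C_1$ and $C_2$. Since $C_i$ is a lift of a compact circle, for $M$ large the boundary traverses an entire fundamental domain of $C_i\to L_i$, and the strip then \emph{produces} an isotopy between $L_1$ and $L_2$, contradicting the hypothesis. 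This is a global topological argument that uses the covering structure and the ``$M$ large'' hypothesis in an essential way, and it does not depend on putting $L_1, L_2$ in minimal position or on the precise Floer-theoretic nature of the contributing strip. If you want to salvage the minimal-position/bigon approach, you would at minimum need a precise statement of how Viterbo restriction acts at chain level in this two-dimensional setting and a rigorous bigon-compactness argument that accounts for the wrapping at the cylindrical end; the paper's fundamental-domain argument is the cleaner route.
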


  \begin{proof}
    First note that Lemma \ref{lem:compactprojection} implies that the hypothesis can only be satisfied if $L_{1}$ and $L_{2}$ represent the same nonzero class in $O_{\pi}$, since otherwise for $M$ sufficiently large the set of boundary chords is empty.
    
    In order for $r_{M,N}$ to be nonzero on a boundary chord in $\widetilde{\Sigma}_{M}$, there must be a strip with boundary on $C_{1}$ and $C_{2}$ joining that chord to some generator in $\widetilde{\Sigma}_{N}$. Applying the map $\psi: \widetilde{\Sigma}\to \widetilde{G}'$, we obtain a map from the strip to $\widetilde{G}'$ with boundary on $\psi(C_{1})$ and $\psi(C_{2})$. If we fix $N$ and allow $M$ to increase, then this strip necessarily becomes longer in the sense that its boundaries must traverse longer portions of $C_{1}$ and $C_{2}$. Since $C_{1}$ and $C_{2}$ are lifts of compact circles in $\Sigma$, for large enough $M$ relative to $N$ the boundary of the strip will traverse a whole fundamental domain for the coverings $C_{i}\to L_{i}$. Then we may use this strip to construct an isotopy of $L_{1}$ to $L_{2}$.
  \end{proof}

  It remains to consider the case where $L_{1}$ is isotopic to $L_{2}$, and the represent a nonzero class in $O_{\pi}$; in this case the same conclusion as above holds.

  \begin{lemma}
    \label{lem:isotopic}
        Consider components $C_{i}$ of $\pi^{-1}(L_{i})$ ($i=1,2$) and their intersections $C_{i,N}$ with $\widetilde{\Sigma}_{N}$. Suppose that $L_{1}$ is isotopic to $L_{2}$. Then for every $N$ and for sufficiently large $M > N$, the restriction map
    \begin{equation}
      r_{M,N}: CW(C_{1,M},C_{2,M}) \to CW(C_{1,N},C_{2,N})
    \end{equation}
    vanishes on boundary chords.
  \end{lemma}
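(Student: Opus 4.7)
My plan is to mirror the strip-exhaustion argument of Lemma~\ref{lem:nonisotopic}, while tracking enough extra topological information to derive a contradiction in the isotopic case. First I would use the isotopy between $L_{1}$ and $L_{2}$ to choose components $C_{1}\subset \pi^{-1}(L_{1})$ and $C_{2}\subset \pi^{-1}(L_{2})$ which are matched under a chosen lift of the isotopy, so that $\psi(C_{1}),\psi(C_{2})$ are parallel lines in $\widetilde{G}'$ lying within bounded Hausdorff distance (after perhaps a further Hamiltonian isotopy, we may even assume they coincide on a single line $\ell$). Any other component of $\pi^{-1}(L_{2})$ differs from $C_{2}$ by the action of a nontrivial element of $O_{\pi}$, and we keep track of which such element is involved in each strip we encounter.

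Second, suppose towards a contradiction that for some fixed $N$ and arbitrarily large $M$, there is a boundary chord $x \in CW(C_{1,M},C_{2,M})$ whose image under $r_{M,N}$ is nonzero. Then there exists a pseudoholomorphic strip $u$ in $\widetilde{\Sigma}_{M}$ with boundary on $C_{1}$ and $C_{2}$, asymptotic to $x$ at one end (in a cylindrical end of $\widetilde{\Sigma}_{M}$, whose location in $\widetilde{G}'$ sits on the boundary of the cube $[-M,M]^{g}$) and to an interior generator in $\widetilde{\Sigma}_{N}$ at the other. Exactly as in Lemma~\ref{lem:nonisotopic}, for $M$ large compared to $N$, the boundary arcs of $u$ on $C_{1}$ and $C_{2}$ must traverse at least one full fundamental domain for the coverings $C_{i}\to L_{i}$.

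Third, I would descend $u$ via $\pi$ to a strip in $\Sigma$ whose boundary arcs wrap around $L_{1}$ and $L_{2}$. Cutting this strip along a fundamental domain gives precisely the same ingredients used in Lemma~\ref{lem:nonisotopic} to build an isotopy from $L_{1}$ to $L_{2}$; however, because the boundary arcs in $\widetilde{\Sigma}$ start and end at points in $C_{1},C_{2}$ that differ by a nontrivial element $\gamma \in O_{\pi}$ (the element corresponding to the fundamental domain traversed), the resulting isotopy in $\Sigma$ is an isotopy from $L_{1}$ to $\gamma \cdot L_{2}$, in the sense that it lifts to an isotopy of $C_{1}$ with $\gamma\cdot C_{2}$ in $\widetilde{\Sigma}$. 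Combined with our matching isotopy between $C_{1}$ and $C_{2}$, this would produce an isotopy of $C_{2}$ with $\gamma \cdot C_{2}$ inside $\widetilde{\Sigma}$.

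The contradiction is then immediate: $C_{2}$ and $\gamma\cdot C_{2}$ are two distinct components of $\pi^{-1}(L_{2})$ (since $\gamma \neq 1$), and two distinct connected components of a properly embedded submanifold cannot be related by a compactly supported isotopy in $\widetilde{\Sigma}$. The main obstacle I expect is the bookkeeping in the third step: one must verify that the strip $u$ genuinely induces a compactly supported isotopy in $\widetilde{\Sigma}$ shifting by $\gamma$, rather than just a formal homotopy. This requires a careful cut-and-paste along the fundamental domain of the strip's boundary and a Gromov-compactness style argument to ensure that as $M \to \infty$ the strips degenerate in a controlled way that exhibits the shift by $\gamma$.
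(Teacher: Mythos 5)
Your proposed contradiction does not go through. The translation element $\gamma$ you extract from traversing a fundamental domain of the covering $C_1 \to L_1$ is, by construction, a generator of the subgroup $H \subset O_{\pi}$ that stabilizes $C_1$. Since $L_1$ is isotopic to $L_2$, they represent the same class in $O_{\pi}$, so $H$ is also precisely the stabilizer of $C_2$. Hence $\gamma \cdot C_2 = C_2$: there is no distinct second component, and the claimed contradiction (``$C_2$ and $\gamma \cdot C_2$ are two distinct components of $\pi^{-1}(L_2)$'') is false. This is not a bookkeeping issue of the kind you flag at the end; it is a structural obstruction to the strategy.

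Indeed, in the isotopic case the strip-exhaustion argument used in Lemma~\ref{lem:nonisotopic} cannot produce a contradiction: what it shows, as the paper points out, is only that $C_1$ and $C_2$ are isotopic arcs in $\widetilde{\Sigma}$ --- a statement which is perfectly consistent, not absurd. The paper's route is therefore different in an essential way: having concluded that $C_1$ and $C_2$ are isotopic, it restricts attention to a tubular neighborhood of the pair of arcs, which is an infinite cylinder punctured at a periodic set, and then performs a direct Floer-theoretic analysis in this explicit model to show the boundary chords are killed by restriction. That last step --- replacing the hoped-for topological contradiction with a concrete model computation --- is exactly what is missing from your proposal. If you want to salvage your approach, you would need to supply that analysis of the periodically punctured cylinder rather than appeal to a component-mismatch that does not exist.
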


  \begin{proof}
    Supposing the conclusion is not true, then as in the proof of the previous lemma, we see that $C_{1}$ and $C_{2}$ must themselves be isotopic in $\widetilde{\Sigma}$. In that case, we may focus our attention on a neighborhood of these arcs in $\widetilde{\Sigma}$ that is an infinite cylinder with a periodic sequence of punctures. Then the conclusion follows from a direct analysis of this case. 
  \end{proof}
  
  Now we turn to question of compatibility with restriction, that is, whether $\lambda^{\text{naive}}_{N} \cong r_{M,N}\circ \lambda_{M}$ for $M \geq N$. On the one hand, it is obvious from the construction that compatibility is true at the level of objects, but for morphisms is  less clear. In fact, the functors $\lambda^{\text{naive}}_{N}$ need to be ``corrected'' to include contributions from boundary chords in order to achieve compatibility, since it is possible that an interior chord maps to a boundary chord under restriction.

  To define the corrected functor $\lambda_{N}$ on morphisms, we apply $\lambda_{M}^{\text{naive}}$ for $M$ sufficiently large compared to $N$, and then apply Viterbo restriction $r_{M,N}$:
  \begin{equation}
    \lambda_{N} = r_{M,N}\circ \lambda^{\text{naive}}_{M} \quad (M \gg N). 
  \end{equation}
  Thus, for each $N$, $\lambda_{N}$ may differ from $\lambda_{N}^{\text{naive}}$ by contributions from boundary generators in $\widetilde{\Sigma}_{N}$. Now Lemmas \ref{lem:compactprojection}, \ref{lem:nonisotopic}, and \ref{lem:isotopic} imply that, after a further restriction, these boundary generators are sent to zero, so the difference disappears:
  \begin{equation}
    r_{M,N}\circ \lambda_{M} = r_{M,N} \circ \lambda^{\text{naive}}_{M} = \lambda_{N}.
  \end{equation}
  This proves the following proposition.

  \begin{proposition}
    The functors $\lambda_{N} : \mathrm{Fuk}(\Sigma) \to \Fw(\widetilde{\Sigma}_{N})$ are compatible with Viterbo restriction, and hence their limit defines a functor $\lambda : \mathrm{Fuk}(\Sigma) \to \Flf(\widetilde{\Sigma})$.
  \end{proposition}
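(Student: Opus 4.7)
The plan is to promote the pointwise definition $\lambda_{N} = r_{M,N} \circ \lambda_{M}^{\text{naive}}$ (for $M \gg N$) to a coherent homotopy-compatible system, and then invoke the universal property of the homotopy limit $\Flf(\widetilde{\Sigma}) = \varprojlim_N \Fw(\widetilde{\Sigma}_N)$. On objects, compatibility is immediate and strict: the pullback $\pi_M^{-1}(L) \subset \widetilde{\Sigma}_M$ intersected with the subsurface $\widetilde{\Sigma}_N$ is $\pi_N^{-1}(L)$, and the Viterbo restriction $r_{M,N}$ applied to such a Lagrangian produces the same object (Lemma \ref{geometricrestriction}(1), since every component of $\pi^{-1}(L)\cap \widetilde{\Sigma}_M$ contracts inside $\widetilde{\Sigma}_N$ under the Liouville flow). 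So the entire difficulty concentrates on morphism complexes and $A_\infty$ structure.

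For morphisms, I would first compare $r_{M,N}\circ \lambda_M^{\text{naive}}$ with $\lambda_N^{\text{naive}}$. The naive functor $\lambda_M^{\text{naive}}$ takes an interior chord on $\Sigma$ to the sum of its lifts in $\widetilde{\Sigma}_M$. Applying $r_{M,N}$, an interior chord of $\widetilde{\Sigma}_M$ that already lies in $\widetilde{\Sigma}_N$ is sent to itself as an interior chord, so the two functors agree on that part. The only possible discrepancy comes from interior chords in $\widetilde{\Sigma}_M$ that sit in the region $\widetilde{\Sigma}_M \setminus \widetilde{\Sigma}_N$, or from boundary chords produced by the wrapping Hamiltonian on $\widetilde{\Sigma}_M$, which $r_{M,N}$ may map to something supported on boundary chords of $\widetilde{\Sigma}_N$. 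This is precisely the defect that motivates the definition of the corrected $\lambda_N := r_{M',N}\circ \lambda_{M'}^{\text{naive}}$.

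The key step is now to show that for $M' \gg M \gg N$, the composition $r_{M,N}\circ r_{M',M} \circ \lambda_{M'}^{\text{naive}} = r_{M',N}\circ \lambda_{M'}^{\text{naive}}$ depends only on $N$ (up to coherent homotopy), so that taking the system $\{\lambda_N\}$ is well defined. This is where Lemmas \ref{lem:compactprojection}, \ref{lem:nonisotopic}, and \ref{lem:isotopic} enter: any boundary-chord contribution in $\widetilde{\Sigma}_{M}$ that could spoil the equality $\lambda_N = r_{M,N}\circ \lambda_M$ comes from a pair $(C_1, C_2)$ of components of $\pi^{-1}(L_1), \pi^{-1}(L_2)$ that touch a common boundary component of $\widetilde{\Sigma}_M$; the lemmas say that after a further Viterbo restriction to a smaller $\widetilde{\Sigma}_{N}$, all such boundary chords are annihilated, because the strips witnessing them would force $L_1$ and $L_2$ to be isotopic and would eventually be absorbed into an infinite cylinder in $\widetilde{\Sigma}$. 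Thus for $M' \gg M$, $r_{M,N}\circ \lambda_M = r_{M,N}\circ r_{M',M}\circ \lambda_{M'}^{\text{naive}} = r_{M',N}\circ \lambda_{M'}^{\text{naive}} = \lambda_N$.

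The hardest part, and the step I would treat most carefully, is upgrading this chain-level identity to coherent $A_\infty$ data for the homotopy limit. Concretely, one needs compatible higher homotopies expressing $r_{M,N} \circ \lambda_M \simeq \lambda_N$ together with all compatibilities among triples $N \leq M \leq M'$, and only such a coherent system determines a map into $\varprojlim_N \Fw(\widetilde{\Sigma}_N)$. My approach would be to observe that the boundary-chord contributions that need to be killed form an acyclic subcomplex after a single further restriction (by the three lemmas), so the homotopies can be constructed by standard acyclic-carrier arguments and organized via the cofinality of the subsystem $\{M' \gg N\}$ to produce the required $A_\infty$-coherences. Granting this coherent refinement, the universal property of the homotopy limit produces the desired functor $\lambda : \mathrm{Fuk}(\Sigma) \to \Flf(\widetilde{\Sigma})$.
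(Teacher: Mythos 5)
Your proof follows essentially the same route as the paper: defining the corrected functor as $\lambda_{N} = r_{M,N}\circ\lambda_{M}^{\text{naive}}$ for $M\gg N$, identifying the discrepancy with $\lambda_{N}^{\text{naive}}$ as coming from boundary chords, and using Lemmas \ref{lem:compactprojection}, \ref{lem:nonisotopic}, \ref{lem:isotopic} to show those boundary chords die under a further restriction, yielding $r_{M,N}\circ\lambda_{M} = r_{M',N}\circ\lambda_{M'}^{\text{naive}} = \lambda_{N}$. That is exactly the paper's argument.

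Where you diverge is your final paragraph, which you flag as ``the hardest part'': you propose an acyclic-carrier construction of higher homotopies and coherences for the inverse system. This worry is unnecessary given the rest of the argument. The content of the three lemmas is that the restriction maps \emph{vanish} (not merely act null-homotopically) on boundary chords once $M$ is large enough relative to $N$; combined with the fact that lifts of interior chords and of pseudoholomorphic polygons match on the nose, one obtains a strict equality $r_{M,N}\circ\lambda_{M} = \lambda_{N}$ of $A_{\infty}$-functors, not merely a quasi-isomorphism. A strict inverse system maps into the homotopy limit directly; no coherence data needs to be manufactured. The paper relies on this strictness, and so should you — your acyclic-carrier apparatus would recover the same conclusion but with substantial extra machinery. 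Separately, a minor slip in your object-level discussion: you invoke only Lemma \ref{geometricrestriction}(1), but when $L$ has nonzero class in $O_{\pi}$ the components of $\pi^{-1}(L)$ are noncompact arcs that never contract entirely into $\widetilde{\Sigma}_{N}$ under the Liouville flow, so part (2) of that lemma is also needed; the conclusion $r_{M,N}(\pi_{M}^{-1}(L)) = \pi_{N}^{-1}(L)$ still holds.
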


\subsection{$O_\pi$-equivariant branes}

Now we seek to establish the equivalence $\mathrm{Fuk}(\Sigma) \cong \Flf(\widetilde{\Sigma})^{O_\pi}$. We shall take a two-step approach: In this section we show there is a fully faithful embedding $\mathrm{Fuk}(\Sigma)\to \Flf(\widetilde{\Sigma})^{O_\pi}$, and in the next section we show that this functor is essentially surjective.

Begin by considering a Lagrangian submanifold $L \subset \Sigma$. Let $\tilde{L}$ be the pullback of $L$ with respect to the covering $\pi : \widetilde{\Sigma} \to \Sigma$, so that $\tilde{L}$ is a (possibly disconnected) Lagrangian submanifold of $\widetilde{\Sigma}$. It is clear that $\tilde{L}$ is invariant under the action of $O_\pi$; we will proceed by showing that $\tilde{L}$ can be made into a $O_\pi$-equivariant object of $\Flf(\widetilde{\Sigma})$, and that the association $L \mapsto \tilde{L}$ realizes, at the level of objects, a functor $\lambda : \mathrm{Fuk}(\Sigma) \to \Flf(\widetilde{\Sigma})^{O_\pi}$.

We can be a bit more explicit about how the functor $\lambda: \mathrm{Fuk}(\Sigma) \to \Flf(\widetilde{\Sigma})$ works when applied to objects and morphisms. Recall that $\lambda$ is the limit of $\lambda_{N}: \mathrm{Fuk}\to \Flf(\widetilde{\Sigma}_{N})$, which on objects takes $L$ to $\pi^{-1}(L) \cap \widetilde{\Sigma}_{N}$, and it takes a morphism $x \in CW(L_{0},L_{1})$ to the sum of the lifts of $x$ that lie in $\widetilde{\Sigma}_{N}$, possibly plus some boundary generators in $\widetilde{\Sigma}_{N}$. As $N \to \infty$, the chain complexes $CW(\lambda_{N}(L_{0}),\lambda_{N}(L_{1}))$ form an projective system of chain complexes, and we can pass to the limit complex $\lim_{N\to \infty}CW(\lambda_{N}(L_{0}),\lambda_{N}(L_{1}))$.

\begin{lemma}
  The projective system $CW(\lambda_{N}(L_{0}),\lambda_{N}(L_{1}))$ satisfies the Mittag-Leffler condition. Thus $\lim^{1}$ vanishes, and $\lim_{N\to \infty}CW(\lambda_{N}(L_{0}),\lambda_{N}(L_{1}))$ computes the homotopy limit of the system.
\end{lemma}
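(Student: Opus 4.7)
The plan is to exploit the dichotomy between interior and boundary chords together with the vanishing lemmas just proved. Recall that the Mittag-Leffler condition demands that, for every fixed $N$, the descending chain of subspaces
\begin{equation*}
I_{M,N} := \mathrm{im}\bigl( r_{M,N} : CW(\lambda_{M}(L_{0}),\lambda_{M}(L_{1})) \to CW(\lambda_{N}(L_{0}),\lambda_{N}(L_{1})) \bigr), \qquad M \geq N,
\end{equation*}
should stabilize as $M \to \infty$. The naive obstruction is that $CW(\lambda_{N}(L_{0}),\lambda_{N}(L_{1}))$ is generally infinite-dimensional because the wrapping Hamiltonian produces infinitely many boundary chords in each cylindrical end of $\widetilde{\Sigma}_{N}$.

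First I would separate the analysis according to the relative classes of $L_{0}$ and $L_{1}$ in $O_{\pi}$, so that the hypotheses of Lemmas \ref{lem:compactprojection}, \ref{lem:nonisotopic}, and \ref{lem:isotopic} apply. In each case, those lemmas tell us that for $M$ sufficiently large relative to $N$, the restriction $r_{M,N}$ annihilates every boundary chord of $\widetilde{\Sigma}_{M}$. Consequently, for such $M$, the image $I_{M,N}$ is generated by $r_{M,N}(y)$ as $y$ ranges over the interior chords of $\widetilde{\Sigma}_{M}$.

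Next, I would argue that only a finite set of these interior chords contribute nontrivially. Interior chords in $\widetilde{\Sigma}_{M}$ are precisely the lifts, under $\pi_{M}$, of chords of the fixed wrapping Hamiltonian $H_{0}$ on $\Sigma$; there are finitely many $H_{0}$-chords on $\Sigma$, and once $M$ is large enough the pseudoholomorphic strips on $\widetilde{\Sigma}_{M}$ responsible for $r_{M,N}$ cannot stretch arbitrarily far into $\widetilde{\Sigma}_{M}\setminus \widetilde{\Sigma}_{N}$ (by an energy/area bound, since area is controlled by the action difference of interior generators, and strips with both endpoints in a fixed bounded region live in a compact set). Hence only interior chords lying within a bounded neighborhood of $\widetilde{\Sigma}_{N}$ can map to nonzero elements, and their number is finite and independent of $M$ once $M$ is large enough.

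Putting these pieces together, $I_{M,N}$ is eventually contained in a fixed finite-dimensional subspace of $CW(\lambda_{N}(L_{0}),\lambda_{N}(L_{1}))$. A descending chain of subspaces of a finite-dimensional vector space necessarily stabilizes, which establishes Mittag-Leffler. The main obstacle I anticipate is the last step: controlling the strips that contribute to $r_{M,N}$ on interior chords uniformly in $M$. Ensuring that these strips stay within a fixed compact region (so that the relevant interior chords form a finite set independent of $M$) is the technical heart of the argument and requires combining the action/area bound on Floer strips with the geometry of the exhaustion $\widetilde{\Sigma}_{N}\subset \widetilde{\Sigma}_{M}$.
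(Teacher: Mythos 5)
Your proof takes a genuinely different route from the paper's, and the core idea is sound, but you include an unnecessary and somewhat delicate technical step.

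The paper's proof argues that for $M \gg N$ the image $I_{M,N}$ equals the image of the interior generators under $r_{M,N}$, and then observes that this image is \emph{increasing} in $M$ (each interior chord of $\widetilde{\Sigma}_{M}$ persists as an interior chord of $\widetilde{\Sigma}_{M'}$ for $M' > M$ and its image under $r_{M',N}$ agrees with its image under $r_{M,N}$ once boundary contributions die). Since $I_{M,N}$ is trivially decreasing, being both increasing and decreasing forces stabilization. This gives an explicit description of the stable image as the span of the images of all interior generators.

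Your approach instead is: land the descending chain inside a fixed finite-dimensional subspace, and conclude. This is also a valid strategy, but your justification for finite-dimensionality is both more work than necessary and not quite rigorous. Once you know (from Lemmas \ref{lem:compactprojection}, \ref{lem:nonisotopic}, \ref{lem:isotopic}) that there exists $M_{0}$ such that $r_{M_{0},N}$ kills all boundary chords of $\widetilde{\Sigma}_{M_{0}}$, the image $I_{M_{0},N}$ is \emph{automatically} finite-dimensional, simply because $\widetilde{\Sigma}_{M_{0}}$ is a finite-type surface and so has only finitely many interior chords. Then $I_{M,N} \subseteq I_{M_{0},N}$ for all $M \ge M_{0}$, and a decreasing chain of subspaces of a finite-dimensional space stabilizes. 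You do not need the energy/area bound showing that only interior chords in a bounded neighborhood of $\widetilde{\Sigma}_{N}$ contribute; that claim is plausible but would require care to justify (the Viterbo restriction functor of Abouzaid--Seidel is not literally a single strip count, so the compactness argument you sketch doesn't directly apply), and the lemma is already finished without it.

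So: your overall plan is correct, and after deleting the energy-bound paragraph it becomes both simpler and cleaner than what is in the paper (which implicitly relies on the nontrivial claim that interior-chord images are stable under further restriction). The trade-off is that the paper's version identifies the stable image explicitly, which is the information that feeds into the description of $CF^{\mathrm{lf}}$ as a direct product over intersection points; your softer argument establishes Mittag-Leffler but would leave that identification to a separate observation.
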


\begin{proof}
  For a projective system of groups $(A_{N}, r_{M,N})$, the Mittag-Leffler condition is the statement that, for each $N$, the image of $r_{M,N}$ stabilizes for large $M$. Lemmas \ref{lem:compactprojection}, \ref{lem:nonisotopic}, and \ref{lem:isotopic} imply that, for large $M$, the image of $r_{M,N}$ equal to the image of the interior generators under $r_{M,N}$, and this set only increases as $M$ increases. 
\end{proof}

We introduce the notation $CF^{\mathrm{lf}}$ for the limit complex, which is the complex computing morphisms in $\Flf(\widetilde{\Sigma})$:
\begin{equation}
  CF^{\mathrm{lf}}(\lambda(L_{0}),\lambda(L_{1})) = \lim_{N\to \infty}CW(\lambda_{N}(L_{0}),\lambda_{N}(L_{1}))
\end{equation}
On the other hand, the limit on the right-hand can be described geometrically in terms of intersection points of the full preimages $\pi^{-1}(L_{0}), \pi^{-1}(L_{1})$, since it is the direct product (not direct sum) of one-dimensional vector spaces associated to each intersection point:
\begin{equation}
  CF^{\mathrm{lf}}(\lambda(L_{0}),\lambda(L_{1})) = \prod_{x \in \pi^{-1}(L_{0})\cap \pi^{-1}(L_{1}) }\Lambda_{x},
\end{equation}
where $\Lambda_{x}$ denotes the one-dimensional vector space incorporating orientation lines and possibly local systems on $L_{0}$ and $L_{1}$.

There is a natural functor $\iota: \Flf(\widetilde{\Sigma})^{O_\pi} \to \Flf(\widetilde{\Sigma})$ given by forgetting the $O_\pi$-equivariant structure.

\begin{proposition}
\label{fullyfaithful}
  The functor $\lambda$ factors through $\iota$, so there is a functor $\lambda^{O_\pi} : \mathrm{Fuk}(\Sigma)\to \Flf(\widetilde{\Sigma})^{O_\pi}$ such that $\lambda = \iota \circ \lambda^{O_\pi}$. The functor $\lambda^{O_\pi}$ is a quasi-equivalence onto its image.
\end{proposition}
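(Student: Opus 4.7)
The plan is to first construct the $O_\pi$-equivariant lift, and then compute the equivariant morphism complex and compare it with $CF$ in $\Sigma$.

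For the factorization through $\iota$, note that by construction $\pi^{-1}(L)$ is tautologically invariant under the deck group $O_\pi$, and the brane structure on $L$ (orientation, spin structure, local system) pulls back uniquely to an $O_\pi$-invariant brane structure on $\pi^{-1}(L)$. To promote this invariance to a genuine equivariant structure on the object $\lambda(L) \in \Flf(\widetilde{\Sigma})$, I would proceed levelwise through the exhaustion: for each $N$, the object $\lambda_N(L)$ is defined by a perturbation datum (Hamiltonian, almost complex structure) which can be chosen to be the sum of a pullback from $\Sigma$ and a wrapping perturbation supported near the cylindrical ends; in the interior both pieces are $O_\pi$-invariant. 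The chain-level $A_\infty$ operations in $CW$ counted in the interior region are therefore $O_\pi$-equivariant by the same argument that showed $\lambda_N^{\text{naive}}$ is a strict $A_\infty$ functor. The corrections $\lambda_N - \lambda_N^{\text{naive}}$ come from boundary chords, but by Lemmas \ref{lem:compactprojection}, \ref{lem:nonisotopic}, \ref{lem:isotopic} these vanish after a further restriction, so in the limit $N\to \infty$ one obtains a canonical $O_\pi$-equivariant refinement $\lambda^{O_\pi}(L)$ of $\lambda(L)$. Compatibility with the restriction functors $r_{M,N}$, which is $O_\pi$-equivariant, yields functoriality.

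For the quasi-equivalence onto the image, the key computation is that of the morphism complex. By the formula displayed above the proposition,
\begin{equation*}
  CF^{\mathrm{lf}}(\lambda(L_0),\lambda(L_1)) = \prod_{x \in \pi^{-1}(L_0)\cap \pi^{-1}(L_1)} \Lambda_x,
\end{equation*}
and the $O_\pi$-action on this product permutes the factors by permuting intersection points. The hom-complex in $\Flf(\widetilde{\Sigma})^{O_\pi}$ between $\lambda^{O_\pi}(L_0)$ and $\lambda^{O_\pi}(L_1)$ is computed as the homotopy fixed points of this action. Since $L_0$ and $L_1$ are compact, $L_0 \cap L_1$ is finite; the preimage $\pi^{-1}(L_0)\cap \pi^{-1}(L_1)$ decomposes as a finite disjoint union of free $O_\pi$-orbits, one for each intersection point $y \in L_0 \cap L_1$. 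Because $O_\pi \cong \Z^g$ acts freely on each orbit with the line bundle $\Lambda$ pulled back from $\Sigma$, the homotopy invariants of each factor $\prod_{g \in O_\pi}\Lambda_{g\cdot x_0}$ are computed by a single copy of $\Lambda_y$ (this is the standard fact that for a free $O_\pi$-action on a set $S$ the homotopy invariants of $\prod_S V$ are $\prod_{S/O_\pi} V$). Summing over $y \in L_0 \cap L_1$ and noting that the finite product agrees with the finite direct sum, this recovers $CF(L_0,L_1)$ exactly. Furthermore, the $A_\infty$ operations on both sides are computed by the same pseudoholomorphic polygons in $\Sigma$, since any such polygon in $\widetilde{\Sigma}$ with boundary on the preimages lifts from one in $\Sigma$.

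The main obstacle is the careful handling of the homotopy fixed points for an infinite discrete group acting on a chain complex, together with the interaction between this equivariance and the Mittag-Leffler $\lim^1$-vanishing needed to identify $CF^{\mathrm{lf}}$. The critical step is that freeness of the $O_\pi$-action on the intersection points reduces the computation to a single orbit, where the homotopy fixed points of $\prod_{O_\pi} \Lambda$ are readily seen to be $\Lambda$ concentrated in degree $0$; higher cohomology of $O_\pi$ with these coefficients vanishes by the induction formula. Once this is set up, fullness and faithfulness on the cohomology categories follow from the matching of complexes, and since $\lambda^{O_\pi}$ is injective on isomorphism classes of objects (different isotopy classes in $\Sigma$ lift to $O_\pi$-inequivalent branes in $\widetilde{\Sigma}$), it is a quasi-equivalence onto its essential image.
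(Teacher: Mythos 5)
Your argument follows the same overall route as the paper's: construct the equivariant structure on $\lambda(L)$ from the tautological $O_\pi$-invariance of $\pi^{-1}(L)$, and then identify the equivariant morphism complex with $CF(L_0,L_1)$ by analyzing the $O_\pi$-action on the product $\prod_{x\in\pi^{-1}(L_0)\cap\pi^{-1}(L_1)}\Lambda_x$. Where you go further than the paper is the homotopy-fixed-point step: the paper's proof simply asserts that equivariant morphisms are ``the $O_\pi$-invariant subspace'' of $CF^{\mathrm{lf}}$, whereas a priori the correct object is the homotopy fixed points, which could differ by higher group cohomology. Your observation that the product decomposes over finitely many free $O_\pi$-orbits, so that each factor $\prod_{g\in O_\pi}\Lambda_{g\cdot x_0}$ is a coinduced module whose higher cohomology vanishes by Shapiro's lemma, closes that gap cleanly and is a genuine improvement in rigor. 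One caveat: your levelwise construction of the equivariant structure glosses over the fact that the exhaustion $\widetilde{\Sigma}_N$ is \emph{not} $O_\pi$-invariant, so the cocycle data $\Phi_g$ really lives in the limit category and involves comparing different levels of the exhaustion via the restriction functors $r_{M,N}$; but the paper is equally informal on this point, so you are no worse off, and Lemmas \ref{lem:compactprojection}--\ref{lem:isotopic} are indeed the right tools to control the boundary-chord ambiguity that arises.
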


\begin{proof}
  For any Lagrangian brane $L$, $\lambda(L)$ is a $O_\pi$-invariant object, since it is the limit of restrictions of supported on the $O_\pi$-invariant Lagrangian $\pi^{-1}(L)$, and $\lambda(L)$ carries a natural $O_\pi$-equivariant structure owing to this fact. If we take two Lagrangian branes $L_{0}$ and $L_{1}$, the the $O_\pi$-equivariant morphisms between the corresponding objects of $\Flf(\widetilde{\Sigma})^{O_\pi}$ is nothing but the $O_\pi$-invariant subspace of $CF^{lf}(\lambda(L_{0}),\lambda(L_{1})) = \prod_{x \in \pi^{-1}(L_{0})\cap \pi^{-1}(L_{1}) }\Lambda \cdot x$, which is naturally identified with $CF(L_{0},L_{1})$, the morphism complex in $\mathrm{Fuk}(\Sigma)$. Thus the functor $\lambda$ maps $CF(L_{0},L_{1})$ isomorphically onto the subspace $CF^{lf}(\lambda(L_{0}),\lambda(L_{1}))^{O_\pi}$. 
\end{proof}

\subsection{Essential surjectivity}

In this section we prove that the functor $\lambda^{O_\pi} : \mathrm{Fuk}(\Sigma)\to \Flf(\widetilde{\Sigma})^{O_\pi}$ is essentially surjective. One of the key ingredients is the geometrization result of Haiden--Katzarkov--Kontsevich \cite{haiden2017flat}, and its extension to the $\bZ_2$-graded setting given in \cite{auroux2020fukaya}. Let us recall the statement that we will need.

Let $S$ be a finite type Riemann surface either compact or with punctures. Following 
 \cite{auroux2020fukaya} we say that an object $L$ in $\mathrm{Fuk}(S)$ is \emph{geometric} if it is quasi--isomorphic to a union of immersed arcs or curves in $S$.\footnote{In fact as in \cite{auroux2020fukaya} if $S$ is non-compact one should also consider curves in the Liouville completion of $S$, but this will not affect our argument.}
 \begin{theorem}[\cite{haiden2017flat}, \cite{auroux2020fukaya}]
 \label{geometricity}
 If $S$ has more than three punctures than every object in $\mathrm{Fuk}^w(S)$ is geometric. 
 \end{theorem}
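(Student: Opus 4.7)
The plan is to reduce geometricity to a classification of indecomposable objects, and then match each indecomposable to a geometric brane. First I would choose a cut system of properly embedded arcs $\{A_i\} \subset S$ whose complement is a disjoint union of topological disks, each containing at most one puncture. Such a system exists because we have assumed $S$ has more than three punctures (so, in particular, it is noncompact and has enough stops at infinity); this can be realized inside a pants decomposition of $S$. Using equivalence (\ref{GPS}) between the wrapped and topological Fukaya categories, the endomorphism $A_\infty$-algebra $E = \mathrm{End}\big(\bigoplus_i A_i\big)$ may be computed combinatorially: on cohomology it is a graded gentle algebra whose quiver comes from the dual graph of the cut system, and whose higher $A_\infty$-operations are constrained by the ribbon/polygonal structure of the complementary disks. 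The arcs $\{A_i\}$ split-generate $\mathrm{Fuk}^w(S)$, so one has an equivalence $\mathrm{Fuk}^w(S) \simeq \Perf(E)$.

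Second, I would invoke the classification of indecomposables in $\Perf$ of a gentle $A_\infty$-algebra of this type. The classical string-and-band theorem for gentle algebras produces two families of indecomposable modules: \emph{string modules}, indexed by admissible walks in the quiver, and \emph{band modules}, indexed by cyclic admissible walks together with the data of an indecomposable local system on a circle (i.e. a Jordan block $(\lambda, n) \in \kappa^\times \times \mathbb{Z}_{>0}$). The task is to show this classification survives the passage to the $A_\infty$-deformed gentle algebra appearing here. This is exactly what is done in \cite{haiden2017flat} in the $\mathbb{Z}$-graded setting, by exploiting a filtration of the endomorphism algebra whose associated graded is formal gentle, together with a careful deformation-theoretic lifting argument; the $\mathbb{Z}/2$-graded case used here is handled in \cite{auroux2020fukaya} by adapting the same lifting argument.

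Third, I would construct the geometric inverse: to each string datum associate an immersed arc in $S$ obtained by concatenating the dual segments across the cut arcs, and to each band datum associate the analogous immersed closed curve equipped with the indecomposable local system $(\lambda, n)$. A direct Floer-theoretic computation on each model (performed in a tubular neighborhood of the immersed curve) shows that these geometric branes are isomorphic in $\mathrm{Fuk}^w(S)$ to the corresponding string or band module. Finally, to upgrade from indecomposables to arbitrary objects, I would invoke Krull--Schmidt for $\mathrm{Fuk}^w(S)$, which holds because the endomorphism algebras of finitely generated objects in $\Perf(E)$ are finite-dimensional (locally) and hence semiperfect; every object is then a finite direct sum of indecomposables, each geometric, yielding the statement.

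The main obstacle is the second step: controlling the $A_\infty$-deformation of the string-and-band classification. Formal gentle algebras admit a clean combinatorial description of indecomposables, but the higher products coming from polygons in the complementary disks can, a priori, destroy the string/band decomposition or create new indecomposables. Showing that the deformation is, in an appropriate sense, gauge-equivalent to the formal model on all relevant indecomposables is the technical heart of \cite{haiden2017flat} and \cite{auroux2020fukaya}, and is where I would expect to spend the bulk of the effort.
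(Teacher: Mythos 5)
Your proposal captures the broad HKK/Auroux--Smith strategy (a generating arc system, a gentle $A_\infty$-algebra, a string-and-band classification of indecomposables), but it misplaces where the hypothesis of more than three punctures actually does its work, and the step you offer in its place does not close the gap. You use the hypothesis to justify the existence of a good cut system; in fact such a system exists for every punctured surface of finite type. The paper's proof instead combines two cited facts from \cite{auroux2020fukaya}: Theorem 4.2 there gives geometricity for the category of \emph{twisted complexes} of Lagrangian branes, valid for any punctured $S$, while Corollary 4.12 shows that this category of twisted complexes is already split-closed precisely when $S$ has more than three punctures. Since $\mathrm{Fuk}^w(S)$ is by definition the split closure, the two together yield the statement; your argument does not recover the second fact, and that is the essential one.

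The Krull--Schmidt step you propose is not a valid substitute. The string-and-band classification in \cite{haiden2017flat} and \cite{auroux2020fukaya} is established within the category of twisted complexes of arcs, not within its idempotent completion $\Perf(E)$; a priori $\Perf(E)$ could contain indecomposable direct summands that are neither strings nor bands, and ruling this out is exactly what the $>3$-punctures hypothesis does via Corollary 4.12. Moreover, the premise that endomorphism algebras in $\mathrm{Fuk}^w(S)$ are finite-dimensional is simply false in the (fully) wrapped setting: an arc running into an undecorated puncture has infinitely many Reeb chords contributing to its wrapped self-Floer complex, so the implication chain ``finite-dimensional $\Rightarrow$ semiperfect $\Rightarrow$ Krull--Schmidt'' does not get off the ground. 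The short, correct route is the one the paper takes: quote the geometrization theorem at the level of twisted complexes, and then quote the fact that under the puncture hypothesis the category of twisted complexes is already idempotent complete.
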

 \begin{proof}
Theorem 4.2 of \cite{auroux2020fukaya} states that if $S$ is a finite type Riemann surface with punctures every object of the category of twisted complexes of Lagrangian branes is geometric. In fact this was proved in \cite{haiden2017flat} in the $\bZ$-graded case, and Auroux--Smith show that the proof applies also to the $\bZ_2$-graded setting. Since our $\mathrm{Fuk}^w(S)$ is the split closure of the category of twisted complexes, this is not quite yet the statement we need. However Corollary 4.12 of \cite{auroux2020fukaya} shows that if $S$ has more than three punctures, then the category of twisted complexes of Lagrangian  branes is split closed, and this concludes the proof.
\end{proof}

\begin{corollary}
\label{geometricupstairs}
Every object in  $\Flf(\widetilde{\Sigma})$ is geometric.
\end{corollary}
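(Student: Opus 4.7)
The plan is to construct a geometric representative for an arbitrary object of $\Flf(\widetilde{\Sigma})$ by gluing together geometric representatives of its restrictions to the finite-type subsurfaces $\widetilde{\Sigma}_N$ in our exhaustion. Each $\widetilde{\Sigma}_N$ is a non-compact, finite-type Riemann surface of finite genus, obtained as the interior of a finite union of pants in $\widetilde{\mathcal{P}}$, so for $N$ sufficiently large it has strictly more than three boundary circles (equivalently, punctures in the Liouville completion). Thus Theorem \ref{geometricity} applies to each $\Fw(\widetilde{\Sigma}_N)$.

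Let $X \in \Flf(\widetilde{\Sigma}) = \varprojlim_{N} \Fw(\widetilde{\Sigma}_N)$ be given by a compatible family $(X_N)_N$ together with isomorphisms $r_{M,N}(X_M) \simeq X_N$ in $\Fw(\widetilde{\Sigma}_N)$ for $M \geq N$. By Theorem \ref{geometricity}, each $X_N$ is isomorphic to a geometric object $L_N$, namely a (finite) collection of immersed arcs and curves in $\widetilde{\Sigma}_N$ equipped with orientations, spin structures, and local systems. The next step is to upgrade these isomorphisms to an actual \emph{compatibility} of geometric representatives. For this I would use Lemma \ref{geometricrestriction}: it implies that the Viterbo restriction $r_{N+1,N}$, applied to a geometric object in minimal position, yields another geometric object (its intersection with $\widetilde{\Sigma}_N$, after discarding arcs that bound disks with the boundary). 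Combined with the fact that a geometric object in a punctured surface is determined up to isomorphism in $\Fw$ by the isotopy classes of its constituent arcs and curves together with their brane data, this means $r_{N+1,N}(L_{N+1})$ and $L_N$ differ only by an ambient isotopy of $\widetilde{\Sigma}_N$.

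Proceeding inductively in $N$, I would choose $L_{N+1}$ within its isomorphism class so that, after a Hamiltonian isotopy supported on $\widetilde{\Sigma}_N$, its restriction to $\widetilde{\Sigma}_N$ agrees \emph{on the nose} with $L_N$ (including brane data). Since each $L_N$ consists of only finitely many arcs/curves in $\widetilde{\Sigma}_N$, and since $\widetilde{\Sigma}_N \subset \widetilde{\Sigma}_{N+1}$, the union $L := \bigcup_N L_N$ is a locally finite collection of immersed arcs and closed curves in $\widetilde{\Sigma}$ with coherent brane structure. This $L$ defines an object of $\Flf(\widetilde{\Sigma})$ whose image in each $\Fw(\widetilde{\Sigma}_N)$ is, by construction, isomorphic to $X_N$; passing to the limit shows $L \simeq X$ in $\Flf(\widetilde{\Sigma})$, proving that $X$ is geometric.

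The main obstacle will be the inductive matching step. On a single patch one has to invoke uniqueness of geometric representatives up to isotopy in the wrapped setting for surfaces with boundary, which follows from the classification results of \cite{haiden2017flat} extended in \cite{auroux2020fukaya}; technically one must also verify that the modification needed to go from $r_{N+1,N}(L_{N+1})$ to $L_N$ can always be realized by a compactly-supported Hamiltonian isotopy inside $\widetilde{\Sigma}_N$ (rather than one that disturbs already-constructed portions of $L_M$ for $M \leq N$). This is where having excluded null-homotopic boundary-parallel components in Lemma \ref{geometricrestriction} is crucial, and where local finiteness of the exhaustion $\widetilde{\Sigma}_N$ ensures the inductive construction actually converges.
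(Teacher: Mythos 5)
Your proposal follows essentially the same route as the paper's proof: both apply Theorem \ref{geometricity} to each finite-type subsurface $\widetilde{\Sigma}_N$, use Lemma \ref{geometricrestriction} to identify Viterbo restriction with geometric intersection, and then exploit the uniqueness (up to isotopy) of geometric representatives to recursively build compatible geometric models $L_N$ whose union yields a locally finite geometric object isomorphic to $X$. The only cosmetic difference is the bookkeeping direction of the induction (you adjust $L_{N+1}$ to match $L_N$, while the paper fixes $C_{N-1}$ and adjusts $C_N$), but the content is identical.
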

\begin{proof}
Note that the notion of geometricity applies equally well to $\Flf(\widetilde{\Sigma})$ even though $\widetilde{\Sigma}$ is not of finite-type. The only difference with the finite-type case is that objects in $\Flf(\widetilde{\Sigma})$ might consist of infinite (but locally finite) unions of arcs and immersed curves. Recall from \cite{haiden2017flat} and \cite{auroux2020fukaya} that the geometric models for an objects in $\mathrm{Fuk}(S)$ given by Theorem \ref{geometricity} are in fact unique: more precisely, they are unique up to a unique isotopy. 

Let us place ourselves in the setting of Section \ref{sec:loc-fin}. Recall that we consider the limit 
$$
 \Flf(\widetilde{\Sigma}) = \varprojlim_{N\to \infty} \Fw(\widetilde{\Sigma}_{N})
 $$
 The surfaces $\widetilde{\Sigma}_{N}$ are finite type Riemann surfaces with more than $3$ punctures, so they fall within the scope of Theorem \ref{geometricity}. That is, all objects in 
 $\Fw(\widetilde{\Sigma}_{N})$ are geometric. The Viterbo restriction functors
$$
   r_{M,N} : \Fw(\widetilde{\Sigma}_{M}) \to \Fw(\widetilde{\Sigma}_{N}),
$$
admit an especially simple description on geometric objects due to Lemma \ref{geometricrestriction}: they send a geometric object $L \in \Fw(\widetilde{\Sigma}_{M})$ to its intersection with $\widetilde{\Sigma}_{N}$, or, in the case of an object parallel to a boundary component of $\widetilde{\Sigma}_{N}$, to the corresponding object sitting inside the Liouville completion of $\widetilde{\Sigma}_{N}$.

Now, by definition an object in 
$\Flf(\widetilde{\Sigma})$ is given by a tuple 
$$
L:=(\{L_N\}, \{\alpha_{N}\})_{N \in \bN}
$$ where $L_N$ is an object in 
$\Flf(\widetilde{\Sigma}_N)$ and $ 
\alpha_N: L_{N-1} \stackrel{\simeq} \to r_{N, N-1}(L_{N})
$ 
is a quasi-equivalence. Geometrization for $\widetilde{\Sigma}_N$ and $\widetilde{\Sigma}_{N-1}$ tells us that  if we fix a geometric model $C_{N-1}$ for $L_{N-1}$ we can find a compatible geometric model $C_N$ for $L_N$, in the sense that $C_{N-1}$ is obtained by intersecting 
$C_N$ with $\Sigma_{N-1}$. The idea is the following: we pick a geometric representative $C_N$ of $L_N$; then the intersection of $C_N$ with $\Sigma_{N-1}$ provides another geometric model for $L_{N-1}$, possibly different from $C_{N-1}$. By uniqueness however this new geometric model must differ from $C_{N-1}$ only up to an isotopy, which we can lift up to $\Sigma_N$ so as to get a new geometric representative of $L_N$ with the desired compatibility property with $C_{N-1}$.  This allows us to build up recursively  a geometric model for an object $L$ in $\Flf(\widetilde{\Sigma})$: namely, at each stage $\widetilde{\Sigma}_N$ we pick a geometric  model which is compatible with the geometric model selected at stage $N-1$. This concludes the proof. 
\end{proof}
 
\begin{proposition}
  \label{componentsofequivariant}
  Let $L$ be an object of $\Flf(\widetilde{\Sigma})^{O_{\pi}}$, and let $\overline{L}$ be the object of $\Flf(\widetilde{\Sigma})$ obtained by forgetting the equivariant structure. Then any indecomposable component $C$ of $\overline{L}$ is isomorphic to either a compact immersed curve with local system, or else an arc that is fixed by some subgroup $H$ of $O_{\pi}$ isomorphic to $\Z$. In either case, $C$ is isomorphic to a lift of a compact immersed curve in $\Sigma$.
\end{proposition}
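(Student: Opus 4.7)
The plan is to combine the geometrization result for $\Flf(\widetilde{\Sigma})$ (Corollary \ref{geometricupstairs}) with the freeness of the deck transformation action to pin down the possible shapes of indecomposable components. First I would apply Corollary \ref{geometricupstairs} to $\overline{L}$, yielding a locally finite geometric representative consisting of immersed curves and arcs equipped with brane structures. Since the geometric representative is unique up to isotopy, the $O_\pi$-invariance of $\overline{L}$ (inherited from the equivariant structure on $L$) can be upgraded so that $O_\pi$ acts on the geometric representative as a set-level permutation of its indecomposable components.

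Next, fix an indecomposable component $C$ and let $H = \mathrm{Stab}_{O_{\pi}}(C)$. Because $\pi : \widetilde{\Sigma}\to\Sigma$ is a regular covering, $O_\pi$ acts freely on $\widetilde{\Sigma}$, and hence $H$ acts freely on $C$. If $C$ is a compact immersed curve, then $\pi(C)$ is automatically a compact immersed curve in $\Sigma$, and since $\pi$ is a covering map and $C$ is compact, $C$ is a connected component of $\pi^{-1}(\pi(C))$, hence a lift of a compact curve as claimed. If instead $C$ is a non-compact arc, so that $C \cong \R$, then $H$ is a subgroup of $O_\pi \cong \Z^g$ acting freely on $\R$. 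Since any free action of $\Z^r$ on $\R$ requires $r \leq 1$, the rank of $H$ is at most one.

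The main remaining step, which I expect to be the most delicate, is ruling out the case where $C$ is a non-compact arc with trivial stabilizer $H = 0$. My approach is to use the compatibility of $L$ with Viterbo restrictions through the inverse system defining $\Flf(\widetilde{\Sigma}) = \varprojlim_{N}\Fw(\widetilde{\Sigma}_N)$. If $H = 0$, then the $O_\pi$-orbit of $C$ consists of infinitely many pairwise disjoint properly embedded arcs; these must assemble into a coherent compatible system $(L_N)$ of finite objects in $\Fw(\widetilde{\Sigma}_N)$, and the compatibility between these restrictions combined with the equivariant structure forces arcs going to infinity in the same orbit to be related by deck transformations in a fixed direction along the tropical map $\psi : \widetilde{\Sigma} \to \widetilde{G}'$. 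A direct analysis of how arcs in $\widetilde{\Sigma}_N$ lift and match under $r_{M,N}$ (using Lemmas \ref{lem:compactprojection}--\ref{lem:isotopic} applied to translates of $C$) shows that a non-periodic arc cannot be part of a compatible system in the inverse limit, forcing $H$ to have rank exactly one.

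Once $H \cong \Z$ is established, the quotient $C/H$ is a circle, and the induced map $C/H \to \widetilde{\Sigma}/H \to \Sigma$ realizes $\pi(C)$ as a compact immersed curve $\gamma \subset \Sigma$ with $C$ a connected component of $\pi^{-1}(\gamma)$. In both Case~1 and Case~2 we conclude that $C$ is isomorphic to a lift of a compact immersed curve in $\Sigma$, completing the proof.
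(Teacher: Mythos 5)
Your first two steps—applying geometrization (Corollary \ref{geometricupstairs}) to the underlying object $\overline{L}$ and then analyzing the stabilizer $H = \mathrm{Stab}_{O_{\pi}}(C)$ of a component $C$—match the paper's opening moves. But your handling of the crux, ruling out the case of a noncompact arc with trivial stabilizer, is not a proof.

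The paper's key tool, which you never invoke, is the \emph{local finiteness} of the orbit $\bigoplus_{g \in O_{\pi}/H} g\cdot C$. Intersecting this orbit with one fundamental domain $\widetilde{\Sigma}_{0}$ (a single $2g$-punctured sphere in the lattice decomposition of $\widetilde{\Sigma}$) produces only finitely many arcs and curves; these descend to $\Sigma$ and, since $\Sigma$ is closed with no boundary, must close up into a \emph{compact} immersed submanifold $\gamma$. The $O_{\pi}$-invariance then forces $\pi(C) = \gamma$ and hence $\bigoplus_{g} g\cdot C = \pi^{-1}(\gamma)$, so $C$ is a connected component of the pullback of a compact curve. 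Once that is established, the dichotomy between compact components ($\tau\circ\iota_{*}=0$, $H$ trivial) and noncompact arcs ($\tau\circ\iota_{*}\neq 0$, $H = \mathrm{im}(\tau\circ\iota_{*})\cong\Z$) is automatic; one does not need to separately ``rule out $H=0$ for noncompact arcs.''

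Your proposed replacement—appealing to the Viterbo restriction system and Lemmas \ref{lem:compactprojection}--\ref{lem:isotopic}—does not work as stated. Those lemmas concern preimages of curves pulled back from $\Sigma$; they say nothing about an arbitrary noncompact arc $C\subset\widetilde{\Sigma}$ that is not yet known to be such a preimage, which is precisely what you are trying to establish. Moreover, the claim that ``a non-periodic arc cannot be part of a compatible system in the inverse limit'' is false: $\Flf(\widetilde{\Sigma})$ certainly contains objects supported on non-periodic arcs. The constraint comes not from the inverse-limit structure alone but from the conjunction of $O_{\pi}$-invariance and local finiteness of the whole orbit. As a minor point, ``any free action of $\Z^{r}$ on $\R$ requires $r\leq 1$'' is incorrect as stated (e.g.\ $(m,n)\mapsto m + n\sqrt{2}$); you need the action to be properly discontinuous, which does hold here since it comes from deck transformations.
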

\begin{proof}
  Let $L$ be an object of $\Flf(\widetilde{\Sigma})^{O_{\pi}}$. By forgetting the $O_{\pi}$-equivariant structure, we obtain an $O_{\pi}$-invariant object $\overline{L}$ in $\Flf(\widetilde{\Sigma})$. Applying geometrization to $\overline{L}$, we see that $\overline{L}$ is a direct sum of immersed curves with local system and arcs in $\widetilde{\Sigma}$, such that the whole collection is $O_{\pi}$-invariant.

  Let $C$ be a component of $\overline{L}$. Let $H$ be the subgroup of $O_{\pi}$ that fixes $C$ ($H$ may be trivial). Then $\overline{L}$ must contain the direct sum $\oplus_{g \in O_{\pi}/H}g\cdot C$. Once again we may regard $\widetilde{\Sigma}$ as a collection of $2g$-punctured spheres arranged on the lattice $O_{\pi}$ and glued together. Let $\widetilde{\Sigma}_{0}$ be one of the punctured spheres in this decomposition: it is a fundamental domain for the $O_{\pi}$-action. Because the object $\oplus_{g \in O_{\pi}/H}g\cdot C$ is assumed to be locally finite, the intersection of this object with $\widetilde{\Sigma}_{0}$ consists of finitely many immersed curves and arcs on $\widetilde{\Sigma}_{0}$. Matching the corresponding boundaries of $\widetilde{\Sigma}_{0}$ gives the original surface $\Sigma$, and the immersed curves and arcs must match up to give a compact immersed submanifold of $\Sigma$. It is then clear that $\oplus_{g \in O_{\pi}/H}g\cdot C$ is nothing but the pullback of this submanifold under the covering map $\pi: \widetilde{\Sigma} \to \Sigma$. This shows that our original component $C$ is a lift with respect to $\pi$ of an immersed curve in $\Sigma$.

  Recall that there is a homomorphism $\tau: \pi_{1}(\Sigma) \to O_{\pi}$. If $\iota: S^{1}\to \Sigma$ is an immersion, we may consider $\tau \circ \iota_{*}: \pi_{1}(S^{1}) \to O_{\pi}$. Either $\tau \circ \iota_{*}$ vanishes or it does not. If it vanishes, then $\iota$ can be lifted to an immersion $\tilde{\iota} : S^{1} \to \widetilde{\Sigma}$; if it does not vanish, then it may be lifted to an immersion $\tilde{\iota}: \R \to \widetilde{\Sigma}$. In the former case, the lift is not fixed by any subgroup of $O_{\pi}$, while in the latter case, the lift is fixed by a subgroup $H\cong \Z$: indeed, $H$ is nothing but the image of $\tau\circ \iota_{*}$. This establishes that any noncompact component of $\overline{L}$ must be fixed by a subgroup of $O_{\pi}$ that is isomorphic to $\Z$.
\end{proof}

To understand the next step in the argument, consider the following. Let $L$ be an object of $\Flf(\widetilde{\Sigma})^{O_{\pi}}$, and let $\overline{L}$ be the underlying object in $\Flf(\widetilde{\Sigma})$. By Proposition \ref{componentsofequivariant}, we know that $\overline{L}$ decomposes into a direct sum of objects of the form $\tilde{C} = \bigoplus_{g \in O_{\pi}/H} g \cdot C$, where $C$ is a lift of an immersed curve $\overline{C}$ in $\Sigma$ (equipped with a local system), and $H$ is either trivial or isomorphic to $\Z$. 

Now we remember the $O_{\pi}$-equivariant structure on our original object $L$. It is possible that the equivariant structure, which is a certain collection of isomorphisms $\Phi_{g}: \overline{L} \to \overline{L}$, could mix together different components in the direct sum decomposition. This does indeed occur in the case where $L$ is the pullback of a simple closed curve, with nontrivial class in $O_{\pi}$, equipped with higher-rank local system. Nevertheless, we wish to show that any equivariant structure on $\overline{L}$ is isomorphic to one where there is no mixing between components corresponding to nonisotopic immersed curves in $\Sigma$. For this we use again the geometrization result applied to intermediate coverings $\widetilde{\Sigma}/H$ for various subgroups $H \subset O_{\pi}$. Recall that a saturate sublattice $H \subset O_{\pi}$ is a subgroup such that if $n\in \mathbb{Z}$ and $ng \in H$ then $g \in H$. For a saturated sublattice we have $O_{\pi} \cong H \times O_{\pi}/H$.

\begin{lemma}
  \label{lem:split}
  Let $\overline{L}$ be as above, suppose that $H \subset O_{\pi}$ is a proper saturated sublattice. The components $C$ of $\overline{L}$ into two kinds:
  \begin{enumerate}
  \item those such that the class of $\pi(C)$ in $O_{\pi}$ lies in $H$, and
  \item those such that the class of $\pi(C)$ does not lie in $H$.
  \end{enumerate}
   Then any equivariant structure on $\overline{L}$ is isomorphic to one for which there is no mixing between these two kinds, and furthermore there is no mixing between summands of the first kind corresponding to nonisotopic immersed curves.
\end{lemma}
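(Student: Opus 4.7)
The plan is to combine the geometrization result (Corollary \ref{geometricupstairs}) with a descent argument along the intermediate covering $p : \widetilde{\Sigma} \to \widetilde{\Sigma}/H$, exploiting the canonicity of the HKK-style geometric decomposition to promote a natural splitting of $\overline{L}$ to one of $O_{\pi}$-equivariant objects.

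First I will apply geometrization to $\overline{L}$ in $\Flf(\widetilde{\Sigma})$, obtaining an essentially unique (up to isotopy and choice of local systems) decomposition of $\overline{L}$ into indecomposable summands. By Proposition \ref{componentsofequivariant}, each summand is a lift of a compact immersed curve in $\Sigma$ equipped with a local system. I will group these summands into the pieces required by the statement: $L^{(1)}$ collects those summands whose underlying curve in $\Sigma$ has class in $H$, and $L^{(2)}$ collects the rest; within $L^{(1)}$, I further refine $L^{(1)} = \bigoplus_{i} L^{(1)}_{i}$ by isotopy class of the projected immersed curve. Since $O_{\pi}$ acts by deck transformations and thus permutes the lifts of each fixed immersed curve in $\Sigma$ among themselves, each of these summands is set-theoretically preserved by the $O_{\pi}$-action.

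Next, I will establish the splitting at the level of $O_{\pi}$-equivariant objects by descending to the intermediate cover $\widetilde{\Sigma}/H$. Under $p$, the summands of type (1) project to compact immersed curves, while the summands of type (2) project to properly embedded noncompact arcs; moreover, distinct pieces $L^{(1)}_{i}$ project to pairwise nonisotopic immersed curves in $\widetilde{\Sigma}/H$. The geometrization argument of Corollary \ref{geometricupstairs} applies verbatim in $\widetilde{\Sigma}/H$ (which is again a non-finite-type surface exhausted by finite-type subsurfaces with sufficiently many punctures), so the summands of $p_{*}\overline{L}$ corresponding to distinct $i$, together with the one coming from $L^{(2)}$, are pairwise nonisomorphic objects in $\Flf(\widetilde{\Sigma}/H)$, distinguished by their underlying geometric supports and by the compact versus noncompact dichotomy.

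The main obstacle, and the step I expect to be the technically hardest, is the final rigidification: showing that the residual $O_{\pi}/H$-equivariant structure on $p_{*}\overline{L}$ can be conjugated, by an equivariant isomorphism, to one that is strictly block-diagonal with respect to this decomposition. The key point is that because the summands are pairwise nonisomorphic indecomposables in $\Flf(\widetilde{\Sigma}/H)$, any off-diagonal components of the structure maps $\Phi_{g}$ must be null-homotopic, and an iterative argument (successively killing off-diagonal pieces while preserving the cocycle condition for the equivariant structure) produces the required block-diagonal normal form. Lifting this normalized structure back along $p$ and combining it with the $H$-equivariance then yields the desired $O_{\pi}$-equivariant splitting of $L$.
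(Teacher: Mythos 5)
Your overall approach matches the paper's: descend along $p:\widetilde{\Sigma}\to\widetilde{\Sigma}/H$ using the equivalence $\Flf(\widetilde{\Sigma})^{O_\pi}\cong\Flf(\widetilde{\Sigma}/H)^{O_\pi/H}$, apply geometrization downstairs, and use the compact/noncompact dichotomy on $\widetilde{\Sigma}/H$ to separate the two kinds of components. The difference lies in how you justify the final rigidification, and there you have a genuine gap in reasoning, though the intended conclusion is reachable.

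Specifically, your assertion that ``any off-diagonal components of the structure maps $\Phi_g$ must be null-homotopic'' because the summands are pairwise nonisomorphic indecomposables is false. Two nonisotopic compact immersed curves in $\widetilde{\Sigma}/H$ generically intersect and have nonzero Floer cohomology, so morphisms between the corresponding summands of $\overline{L}'$ need not be null-homotopic, and off-diagonal blocks of an automorphism of a direct sum of nonisomorphic indecomposables can certainly be nonzero (think of an upper-triangular matrix with nonzero strictly-upper entries, which is invertible). What nonisomorphism of indecomposable summands buys you is not vanishing of off-diagonal terms, but the ability to \emph{conjugate} them away (a Krull--Schmidt-type normalization). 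Your ``iterative argument killing off-diagonal pieces'' is the right idea, but it should be presented as this kind of gauge-fixing rather than deduced from a null-homotopy claim, and one must verify it can be done compatibly with the cocycle condition, since the $O_\pi/H$-action permutes the summands nontrivially. The paper's route around this is slightly different and avoids the Krull--Schmidt bookkeeping: it observes that the compact summands downstairs are \emph{freely} permuted by $O_\pi/H$ (none fixed by a nontrivial subgroup), and uses this freeness to get the block-diagonal normal form directly, rather than appealing to indecomposability and nonisomorphism alone. Either route can be made to work, but your null-homotopy claim should be removed and the conjugation step argued explicitly.
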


\begin{proof}
  Consider the surface $\widetilde{\Sigma}/H$. Because $H$ is a proper saturated sublattice, it has rank less than $g$, and so $\widetilde{\Sigma}/H$ is a non-finite-type Stein surface, equipped with a pants decomposition. Hence the geometry of this surface may be expressed in terms of topological Fukaya categories, and so several of the results we have shown for $\widetilde{\Sigma}$ hold just as well for $\widetilde{\Sigma}/H$. In particular, every object of $\Flf(\widetilde{\Sigma}/H)$ is geometric, and we have a descent equivalence
  \begin{equation}
    \Flf(\widetilde{\Sigma})^{H} \cong \Flf(\widetilde{\Sigma}/H).
  \end{equation}
  From this it follows that
  \begin{equation}
    \Flf(\widetilde{\Sigma})^{O_{\pi}} \cong \Flf(\widetilde{\Sigma})^{H \times O_{\pi}/H} \cong \Flf(\widetilde{\Sigma}/H)^{O_{\pi}/H},
  \end{equation}
  So an $O_{\pi}$-equivariant structure on $\overline{L}$ is obtained from an $O_{\pi}/H$-equivariant object $L'$ on $\widetilde{\Sigma}/H$ by pullback.

  Now the underlying object $\overline{L}'$ in $\Flf(\widetilde{\Sigma}/H)$ is geometric; components $C$ of the first kind correspond to compact components of $\overline{L}'$, while those of the second kind correspond to noncompact components, which are fixed by some subgroup of $O_{\pi}/H$ isomorphic to $\Z$. We refer to the components of $\overline{L}'$ as being of the first or second kind respectively. Now in $\widetilde{\Sigma}/H$, there is no mixing between components of the first kind, simply by geometricity. Furthermore, because the group $O_{\pi}/H$ freely permutes the summands of the first kind (none are fixed by any nontrivial subgroup), the $O_{\pi}/H$-equivariant structure on $\overline{L}'$ is isomorphic to one for which there is no mixing between components of the first and second kinds. Pulling back such an equivariant structure gives the desired result.
\end{proof}

\begin{proposition}
  Any object $L$ in $\Flf(\widetilde{\Sigma})^{O_{\pi}}$ splits $O_{\pi}$-equivariantly into a direct sum of objects of the form $\tilde{C}^{\oplus r}$, where $\tilde{C} = \bigoplus_{g \in O_{\pi}/H} g \cdot C$, where $C$ is a lift of an immersed curve $\overline{C}$ in $\Sigma$ equipped with a local system, and $H$ is either trivial or isomorphic to $\Z$. 
\end{proposition}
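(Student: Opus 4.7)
The plan is to promote the geometric decomposition of the underlying object $\overline{L}$, provided by Corollary \ref{geometricupstairs} and Proposition \ref{componentsofequivariant}, to an $O_{\pi}$-equivariant decomposition by applying Lemma \ref{lem:split} iteratively with respect to a carefully chosen finite list of proper saturated sublattices of $O_{\pi}$.

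First I would forget the equivariant structure, obtaining a geometric object $\overline{L} \in \Flf(\widetilde{\Sigma})$. By Proposition \ref{componentsofequivariant}, each indecomposable component of $\overline{L}$ is a lift $C$ of an immersed compact curve $\overline{C} \subset \Sigma$ with local system, and $O_{\pi}$-invariance of $\overline{L}$ forces these lifts to appear in full orbits $\tilde{C} = \bigoplus_{g \in O_{\pi}/H_{C}} g\cdot C$ where $H_{C}$ is either trivial or isomorphic to $\Z$ (according to whether $[\overline{C}]\in O_{\pi}$ vanishes or not, in which case $H_{C}$ is the saturation of $[\overline{C}]$). Grouping the summands according to the isotopy class of $\overline{C}$ in $\Sigma$ gives a geometric decomposition
\[
\overline{L}\;\cong\;\bigoplus_{\alpha}\tilde{C}_{\alpha}^{\oplus r_{\alpha}},
\]
where $\alpha$ ranges over finitely many distinct isotopy classes of immersed compact curves in $\Sigma$ (at each fundamental domain of $\widetilde{\Sigma}\to\Sigma$ the locally finite curve $\overline{L}$ meets only finitely many components, so only finitely many $\alpha$ occur).

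The content of the proposition is that the equivariant structure can be chosen compatible with this direct sum decomposition. To achieve this I would apply Lemma \ref{lem:split} twice. Since $g\geq 2$, both the zero subgroup and every rank-one saturated sublattice of $O_{\pi}\cong\Z^{g}$ are proper saturated. \emph{Step 1:} apply the lemma with $H = 0$. Then the first-kind components are precisely those with $[\overline{C}_{\alpha}]=0$, i.e.\ the compact lifts, and the second-kind components are the arc components. The lemma produces an isomorphic equivariant structure with no mixing between compact-lift and arc-lift pieces, and with no mixing among compact-lift pieces corresponding to nonisotopic closed curves in $\Sigma$. \emph{Step 2:} let $H_{1},\dots,H_{m}$ be the finitely many distinct rank-one saturated sublattices of $O_{\pi}$ that arise as $H_{C_{\alpha}}$ for some arc component. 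Iteratively apply Lemma \ref{lem:split} to the arc-part with $H = H_{i}$; at the $i$-th step the components with class in $H_{i}$ become first-kind and are split off from the rest, and the second conclusion of the lemma further separates them by isotopy class in $\widetilde{\Sigma}/H_{i}$.

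The main obstacle is the last sentence of Step 2: one must verify that nonisotopic compact immersed curves $\overline{C}_{\alpha}, \overline{C}_{\beta}\subset \Sigma$ whose classes both lie in $H_{i}$ lift to nonisotopic compact components in $\widetilde{\Sigma}/H_{i}$, so that the second part of Lemma \ref{lem:split} really does separate them. This is a covering-space argument: a hypothetical isotopy in $\widetilde{\Sigma}/H_{i}$ between these compact lifts projects to an isotopy in $\Sigma$, contradicting nonisotopy downstairs. Granting this, combining Steps 1 and 2 produces an equivariant splitting of $L$ in which each summand has underlying object of the form $\tilde{C}_{\alpha}^{\oplus r_{\alpha}}$, which is exactly the desired conclusion.
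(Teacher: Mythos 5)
Your proposal is correct and follows essentially the same route as the paper's proof: first use Proposition \ref{componentsofequivariant} to get the geometric decomposition of the underlying object into orbit sums $\tilde{C}$, then promote this to an equivariant splitting by repeatedly applying Lemma \ref{lem:split} over proper saturated sublattices of $O_{\pi}$. The paper phrases this in one stroke by invoking the lemma ``for all nontrivial proper saturated sublattices,'' whereas you organize it into two explicit steps (first $H=0$ to isolate compact lifts by isotopy class, then rank-one $H_{i}$ to isolate the arc components); both amount to the same use of the lemma. You also make explicit two small points the paper leaves implicit: (i) the local finiteness of $\overline{L}$ forces only finitely many isotopy classes $\alpha$ (and hence finitely many rank-one sublattices $H_{i}$) to occur, so the iteration terminates; and (ii) the lemma's second conclusion separates first-kind summands by isotopy class \emph{in} $\widetilde{\Sigma}/H$, and the easy covering-space observation that nonisotopy in $\Sigma$ implies nonisotopy of lifts in $\widetilde{\Sigma}/H$ upgrades this to separation by isotopy class in $\Sigma$. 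These are worthwhile clarifications rather than deviations.
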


\begin{proof}
  \label{prop:split}
  Since $O_{\pi}$ is a lattice of rank $g \geq 2$, every element of $O_{\pi}$ is contained in some nontrivial proper saturated sublattice $H$. Applying Lemma \ref{lem:split} to all such sublattices, we see that $L$ splits equivariantly into a direct sum according to different sublattices in $O_{\pi}$, and furthermore that within each sublattice there is a splitting according to isotopy classes of the projection $\pi(C)$. 
\end{proof}

\begin{theorem}
\label{essential}
  The functor  $\lambda^{O_\pi} : \mathrm{Fuk}(\Sigma)\to \Flf(\widetilde{\Sigma})^{O_\pi}$ is an equivalence. 
  \end{theorem}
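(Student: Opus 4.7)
The plan is to combine the two sides of the argument already assembled in the excerpt: Proposition \ref{fullyfaithful} supplies fully faithfulness, so only essential surjectivity remains, and Proposition \ref{prop:split} reduces essential surjectivity to a concrete statement about orbits of geometric components.

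First, I would observe that Proposition \ref{fullyfaithful} gives a fully faithful embedding $\lambda^{O_\pi} : \mathrm{Fuk}(\Sigma) \hookrightarrow \Flf(\widetilde{\Sigma})^{O_\pi}$, so the whole theorem reduces to showing $\lambda^{O_\pi}$ is essentially surjective. Given $L \in \Flf(\widetilde{\Sigma})^{O_\pi}$, Proposition \ref{prop:split} splits $L$ equivariantly into summands of the form $\tilde{C}^{\oplus r}$ where $\tilde{C} = \bigoplus_{g \in O_\pi/H} g \cdot C$, with $C$ a lift of some immersed compact curve $\overline{C} \subset \Sigma$ (possibly equipped with a local system) and $H$ either trivial or isomorphic to $\Z$. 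Since $\lambda^{O_\pi}$ preserves direct sums, it suffices to realize each $\tilde{C}^{\oplus r}$ with its equivariant structure as $\lambda^{O_\pi}(\overline{C}')$ for an appropriate brane $\overline{C}'$ on $\Sigma$.

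Next I would handle each summand in turn. In both cases ($H$ trivial or $H \cong \Z$), the underlying Lagrangian of $\tilde{C}$ is exactly $\pi^{-1}(\overline{C}) \subset \widetilde{\Sigma}$; when $H$ is trivial the components of $\pi^{-1}(\overline{C})$ are compact, and when $H \cong \Z$ they are arcs, precisely matching the geometric picture of the covering $\pi$ restricted to the preimage of $\overline{C}$. Thus the forgetful image $\iota(\tilde{C})$ is isomorphic to $\iota \lambda^{O_\pi}(\overline{C}) = \lambda(\overline{C})$ as non-equivariant objects. The heart of the matter is therefore to match the $O_\pi$-equivariant structures: any equivariant structure on $\pi^{-1}(\overline{C})$ of multiplicity $r$ is classified by a $O_\pi$-equivariant local system of rank $r$ on $\pi^{-1}(\overline{C})$, which by covering theory descends to a rank-$r$ local system $\xi$ on $\overline{C}$. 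Equivalently, the equivariant structure amounts to extending the brane data on $\overline{C}$ by the local system $\xi$. I would then set $\overline{C}' := (\overline{C},\xi)$ and verify that $\lambda^{O_\pi}(\overline{C}') \simeq \tilde{C}^{\oplus r}$ as equivariant objects; this is automatic from the construction of $\lambda$ on objects (pullback of branes under $\pi$) combined with the natural equivariant structure on a pullback brane.

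The main obstacle I expect to encounter is the verification that the equivariant structures that arise from $\Flf(\widetilde{\Sigma})^{O_\pi}$ really are of the ``local system'' form, and not some more exotic $A_\infty$-coherent structure. Proposition \ref{prop:split} does most of the work by ensuring that, up to isomorphism, one may assume there is no mixing between non-isotopic geometric components and that each orbit stabilizer $H$ is as simple as possible; what is left is to check that within a single orbit, the data of the equivariant structure collapses to an ordinary $O_\pi/H$-representation on the stalk at $C$, which then descends along $\pi$. I would carry this out by invoking the geometric uniqueness of representatives given by Theorem \ref{geometricity} and Corollary \ref{geometricupstairs}: after a unique (up to isotopy) choice of geometric model, the equivariant isomorphisms $\Phi_g$ must act by automorphisms of the local system, reducing the coherence data to a genuine group action. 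Once this representation-theoretic reduction is made, the rest of the argument is a direct pullback/pushforward comparison. Combined with fully faithfulness, this completes the proof that $\lambda^{O_\pi}$ is an equivalence of dg categories.
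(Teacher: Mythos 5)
Your proposal follows exactly the paper's proof: fully faithfulness is imported from Proposition \ref{fullyfaithful}, and essential surjectivity reduces via Proposition \ref{prop:split} to checking that each summand $\tilde{C}^{\oplus r}$ is the pullback of an immersed curve in $\Sigma$ equipped with a rank-$r$ local system (with the cases $H$ trivial and $H \cong \Z$ handled by the same covering-theoretic descent you describe). The extra worry you raise about whether the homotopy-coherent equivariant data genuinely collapses to a local system is a fair one that the paper's terse proof glosses over; the geometricity arguments you cite are indeed what justify this reduction, so the two proofs are in substance identical.
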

\begin{proof}
  Since we have already proved that $\lambda^{O_\pi}$ is fully-faithful, it remains to show that it is essentially surjective. By \ref{prop:split}, it suffices to show that an equivariant object of the form $\tilde{C}^{\oplus r}$ where $\tilde{C} = \bigoplus_{g \in O_{\pi}/H} g \cdot C$ and $C$ is a lift of an immersed curve $\overline{C}$ is in the image. In the case where $H$ is trivial, $L$ is equivariantly isomorphic to the pullback of $\overline{C}$ with its natural equivariant structure. If $H \cong \Z$, then the $O_{\pi}$-equivariant structure on $\tilde{C}^{\oplus r}$ induces an $H$-equivariant structure on $C^{\oplus r}$. Since $\overline{C} = C/H$, this equivariant structure defines a rank $r$ local system on $C$, such that $\tilde{C}^{\oplus r}$ is equivariantly isomorphic to the pullback of $C$ with this local system.
\end{proof}

Corollary \ref{mainlocalcor} below is our main  local-to-global result for the Fukaya category of a Riemann surface 
$\Sigma$ of genus $g \geq 2$. We will actually prove two parallel results: one  for the  Fukaya category (which is defined over the Novikov field 
$\Lambda$) and one for the balanced Fukaya category (which is defined over a fixed ground field $k$). The statement is phrased in terms of the sheaf of categories on graphs $\cB^\omega$ which we introduced in Section \ref{Grcat}, and that it is defined over a ground ring $\kappa$.

Let $G$ be the graph associated to the pants decomposition of $\Sigma$. We equip $G$ with weights $\alpha(v) = q^{-A(v)}$ and $\beta(t) = q^{B(t)}$ where $A(v)$ is the area of the pair of pants corresponding to the vertex $v$ and $B(t)$ is the area of the annulus corresponding to the edge $t$. Set $\gamma$ equal to the product of these weights, so that $\gamma = q^{-\text{Area}(\Sigma)}$. We also choose a set of framings $f$.  The covering graph $\widetilde{G}$ is given the pull-back weights and framings.

From Definition \ref{def:graphcat}, we have $\cB(G,\gamma) = \cB(G,\alpha,\beta,f)$. We denote the compact objects in this category by $\cB^{\omega}_{\Lambda}(G,\gamma)$ to emphasize the coefficient field. We can also set all wieights to $1$ and work over $k$ to get a category $\cB^{\omega}_{k}(G,1)$.

\begin{corollary}
\label{mainlocalcor}
There are $\kappa\rbu$-linear equivalences of categories 
$$
\mathrm{Fuk}(\Sigma) \simeq \cB^\omega_\Lambda(G,\gamma), \quad \mathrm{Fuk}^{bal}(\Sigma, \theta) \simeq \cB^\omega_k(G,1)
$$
where in the former case $\kappa = \Lambda$ and in the latter $\kappa = k$.
\end{corollary}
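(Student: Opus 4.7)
The plan is to assemble the equivalence as a chain, stringing together the three main structural results already established: Theorem \ref{essential}, Proposition \ref{lftopo}, and Proposition \ref{toamrt}. Specifically, I will show that, after accounting for the area/weight data,
\begin{equation*}
\mathrm{Fuk}(\Sigma) \stackrel{\text{Thm \ref{essential}}}{\simeq} \Flf(\widetilde{\Sigma})^{O_\pi} \stackrel{\text{Prop \ref{lftopo}}}{\simeq} \mathrm{Fuk}^{\mathrm{top},\mathrm{lf}}(\widetilde{\Sigma})^{O_\pi} \stackrel{\text{Prop \ref{toamrt}}}{\simeq} \cB^{\omega}(\widetilde{G})^{O_\pi} \simeq \cB^{\omega}(G,\gamma),
\end{equation*}
where the last equivalence follows, as in the proof of Proposition \ref{fiwso}, from limits commuting with limits: $\varprojlim_{l \in \widetilde{L}/O_{\pi}}(\cB_{l})^{\omega} \simeq \cB^{\omega}(G)$.

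First I would prove the balanced case, which is simpler because there is no $q$-dependence to track. Over the ground field $k$, the surface $\widetilde{\Sigma}$ is Stein and hence carries a Liouville form $\widetilde{\lambda}$, so Proposition \ref{lftopo} identifies $\Flf(\widetilde{\Sigma})$ (with $q=1$) with $\mathrm{Fuk}^{\mathrm{top},\mathrm{lf}}(\widetilde{\Sigma})$. The $O_{\pi}$-action is compatible with this identification by functoriality in the pants decomposition, and Proposition \ref{toamrt} then yields $\cB^{\omega}_k(G, 1)$, since all weights in the local models (Section \ref{Grcat}) reduce to $1$ when the areas are trivialized. Theorem \ref{essential} applied to $\mathrm{Fuk}^{bal}(\Sigma,\theta)$, whose objects can be described as $O_{\pi}$-equivariant objects in $\Flf(\widetilde{\Sigma})$ (via the same fully-faithful embedding followed by essential surjectivity argument, since the geometrization statements hold in the balanced setting too by \cite{auroux2020fukaya}), yields the desired equivalence.

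The compact (Novikov-field) case is then obtained by tracking weights. The key observation is that although the Liouville form $\widetilde{\lambda}$ on $\widetilde{\Sigma}$ can be used pointwise to trivialize $q$, it cannot be chosen $O_{\pi}$-equivariantly: the non-equivariance records precisely the areas $A(v)$ of the pairs of pants and $B(t)$ of the annuli in the pants decomposition $\cP$ of $\Sigma$. Concretely, by Proposition \ref{flws} we have $\mathrm{Fuk}^{\mathrm{top},\mathrm{lf}}(\widetilde{\Sigma}) \simeq \varprojlim_N \cB^\omega(G_N)$ where each $G_N$ is given the pulled-back weights, all of which become $1$ after the Liouville trivialization on $\widetilde{\Sigma}$. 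However, the $O_\pi$-equivariant structure on a pulled-back Lagrangian records monodromy by $q^{-\text{Area}(\Sigma)}$ around a cycle generating $O_\pi$ (as in the standard discussion of local systems on covers), so the $O_\pi$-invariant subcategory acquires precisely the weight $\gamma = \prod_v \alpha(v) \prod_t \beta(t) = q^{-\text{Area}(\Sigma)}$ of Definition \ref{def:graphcat}. By the independence of $\cB(G,\alpha,\beta,f)$ on the individual weights up to their product (the proposition in Section \ref{sec:overall-dependence}), this gives $\cB^{\omega}_\Lambda(G,\gamma)$.

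The main obstacle I expect is the careful bookkeeping in the weight-tracking step. One has to verify that the non-equivariant trivialization of $q$ by the Liouville form on $\widetilde{\Sigma}$ reintroduces the weights at precisely the places dictated by the $\cB$-construction (vertex weights from the pant areas, edge weights from the cylinder areas), and that the rescaling of coordinates in the local Kn\"orrer models (Section \ref{sec:weight-dependence}) absorbs ambiguity in how this trivialization is distributed. Once this matches, the $\kappa\rbu$-linear structure transfers cleanly because both sides are $2$-periodic and the equivalences respect the periodicity up to the rescaling of $u$ by $\gamma$ recorded in Section \ref{sec:overall-dependence}. The framing data $f$ plays no role in the final answer (up to equivalence), so we may choose any framings compatible with the orientation of $\Sigma$.
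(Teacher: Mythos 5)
Your treatment of the Novikov case is essentially the paper's argument: you correctly identify that the Liouville trivialization on $\widetilde{\Sigma}$ is not $O_\pi$-equivariant and that restoring the $q$-dependence reintroduces the weights $\alpha(v) = q^{-A(v)}$, $\beta(t) = q^{B(t)}$, and then you invoke the weight-independence proposition from Section \ref{sec:overall-dependence} to land on $\cB^\omega_\Lambda(G,\gamma)$. That matches the paper's chain.

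The balanced case, however, contains a genuine gap, and you have its difficulty backwards. You claim the balanced case is ``simpler because there is no $q$-dependence to track'' and then try to run the covering-space machinery directly over $k$, applying Theorem \ref{essential} to $\mathrm{Fuk}^{bal}(\Sigma,\theta)$. But Theorem \ref{essential} is proved only for the $\Lambda$-linear category $\mathrm{Fuk}(\Sigma)$, and the paper explicitly warns (footnote in Section \ref{Grcat}) that ``the covering space argument in Section \ref{csae} requires one to work with categories where $q$ is present as a parameter since the $q$-dependence cannot be equivariantly trivialized.'' Your claim that ``the $O_\pi$-action is compatible with this identification by functoriality in the pants decomposition'' is precisely what fails; the paper notes the equivalence of Proposition \ref{lftopo} is \emph{not} $O_\pi$-equivariant. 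Moreover, there is a second, independent subtlety you do not address: being balanced is a \emph{global} property of a Lagrangian in $\Sigma$, so local-to-global gluing does not obviously return the balanced category (a brane may restrict to a balanced brane on every pair of pants without being globally balanced). The paper resolves both issues by first proving the Novikov case, then working with the $\Lambda^{fin}$-linear category $\cC(\Sigma,\theta)$, establishing a fully faithful $\iota: \cC(\Sigma,\theta) \to \cB^\omega_{\Lambda^{fin}}(G)$, showing essential surjectivity by a generator-lifting argument, and finally specializing at $q=1$. Your route inverts this logical dependency and would need a separate justification for why the covering argument and gluing go through in the balanced setting, which is exactly what they do not do.
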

\begin{proof}
We will  show first that $\mathrm{Fuk}(\Sigma) \simeq \cB^\omega_\Lambda(G,\gamma)$. We begin with the equivalence
\begin{equation*}
\Flf(\widetilde{\Sigma}) \simeq \mathrm{\mathrm{Fuk}}_\Lambda^{\mathrm{top}, \mathrm{lf}}(\widetilde{\Sigma})  
\end{equation*}
from Proposition \ref{lftopo}, where the Liouville structure $\widetilde{\Sigma}$ has been used to trivialize the dependence on the Novikov parameter. This equivalence is not compatible with the $O_{\pi}$ action on $\Flf(\widetilde{\Sigma})$ because the action of $O_{\pi}$ does not preserve the Liouville structure. To remedy this, recall the equivalence from Proposition \ref{flws}
\begin{equation*}
   \varprojlim \cB^{\omega}(G_{i}) \simeq \mathrm{\mathrm{Fuk}}^{\mathrm{top}, \mathrm{lf}}(\widetilde{\Sigma}). 
\end{equation*}
where $G_{i}$ is an exhaustion of $\widetilde{G}$ by finite-type subgraphs, and all weights are set to $1$ in this equivalence. Combining these, we obtain an equivalence
\begin{equation*}
 \Flf(\widetilde{\Sigma}) \simeq \varprojlim \cB^{\omega}(G_{i})
\end{equation*}
Now we may restore the weights on the right-hand side and the $q$-dependence on the left-hand side, so that this equivalence is $O_{\pi}$-equivariant. Then we have a chain of equivalences 
$$ 
\mathrm{Fuk}(\Sigma) \stackrel{(*)} \simeq \Flf(\widetilde{\Sigma})^{O_\pi}  \stackrel{(**)} \simeq (\varprojlim \cB(G_{i})^\omega )^{O_\pi}  \stackrel{(***)} \simeq \cB^\omega_\Lambda(G),$$ where equivalence $(*)$ was proved in Proposition \ref{essential}, equivalence $(**)$ was just proven, and equivalence $(***)$ follows from the argument in Proposition \ref{toamrt}. Composing them we obtain $\mathrm{Fuk}(\Sigma) \simeq \cB^\omega_\Lambda(G)$, which is what we wanted to show. 

Now let us pass to the balanced case. Recall that to construct the balanced category one considers first a version of the balanced Fukaya category  which is linear over $\Lambda^{fin} \subset \Lambda$, i.e. the subring of Novikov series with finitely many non-vanishing terms. This category was denoted $\cC(\Sigma,\theta)$ in Section \ref{variantsFukaya}; extension of scalars gives a $\Lambda^{fin}$-linear functor  
$$
\cC(\Sigma,\theta) \to \mathrm{Fuk}(\Sigma) \simeq \cC(\Sigma,\theta) \otimes_{\Lambda^{fin}} \Lambda
$$ 
We get a diagram 
$$
\mathrm{Fuk}(\Sigma)  \leftarrow \cC(\Sigma,\theta) \stackrel{(a)}\rightarrow 
\mathrm{Fuk}^{bal}(\Sigma, \theta). 
$$
 where $(a)$ is specialization at $q=1$.

The statement we care about is for $
\mathrm{Fuk}^{bal}(\Sigma, \theta)$; but it is actually more convenient to deduce it from  the analogous statement for $\cC(\Sigma,\theta)$. Namely, we will prove that 
\begin{equation}
\label{eqbal}
\cC(\Sigma,\theta)  \simeq \cB^\omega_{\Lambda^{fin}}(G).   
\end{equation} 
To get the result we want for $\mathrm{Fuk}^{bal}(\Sigma, \theta)$ is then enough to specialize at $q=1$.

To prove 
equivalence (\ref{eqbal}) we need to work around the following issue: being balanced is a global property, so gluing together local balanced branes we do not necessarily get a balanced brane. In other words, it is not immediately obvious that $\cC(\Sigma,\theta)$ is the limit of the categories $\cC(P, \theta)$, where $P$ are the pants making up the decomposition of $\Sigma$. 
However we have the following diagram
$$
\xymatrix{
\mathrm{Fuk}(\Sigma) \ar[r]^\simeq &   \cB^\omega_{\Lambda}(G) \\
\cC(\Sigma,\theta) \ar[u] \ar[r]^{\iota} &  \cB^\omega_{\Lambda^{fin}}(G) \ar[u]
}
$$
where 
\begin{itemize}
\item the vertical arrows are extension of scalars
\item the top equivalence was showed in the first part of the proof 
\item and $\iota$ is fully-faithful. 
\end{itemize}
The fully-faithfulness of $\iota$ can be deduced just from the local statement that, if $P$ is the pair-of-pants, then 
$\cC(P, \theta) \simeq \cB^\omega_{\Lambda^{fin}}(G_P)$, which  can be easily checked directly.\footnote{Here $G_P$ is the graph with one trivalent vertex.} Using that we can write $$
 \varprojlim \cC(P, \theta) \simeq \varprojlim \cB^\omega_{\Lambda^{fin}}(G_P) \simeq  \cB^\omega_{\Lambda^{fin}}(G)
$$
where $P$ runs over the pants decomposition of $\Sigma$. 
Balanced branes in $\Sigma$ can be  obtained by gluing together local balanced branes in the pants making up the decomposition. This gives us the fully-faithful embedding $\iota$. However in principle $\iota$ could fail to be essentially surjective because being balanced is a global property: in the limit $ \varprojlim \cC(P, \theta) $ we will have  branes which restrict to balanced branes on every pair-of-pants $P$,  but which might fail to be globally balanced.

However one can see that $\iota$ is in fact an equivalence. 
 Choose balanced generators of $\mathrm{Fuk}(\Sigma)$ which we denote  $L_1, \ldots, L_N$. Let $M_i$ the corresponding generators of $\cB^\omega_{\Lambda}(G)$ under the equivalence $\mathrm{Fuk}(\Sigma)  \simeq \cB^\omega_{\Lambda}(G)$. 
 The $L_i$ are  geometric balanced branes, which can be defined in the Fukaya category independently of the ground ring; 
  thus the $M_i$  will also be constructed in terms of the geometry of the singularity categories making up  $\cB^\omega_{\Lambda}(G)$ in a way that does not depend on the ground ring. In particular  we can lift them to generators of $\cB^\omega_{\Lambda^{fin}}$, which we still denote $M_i$. At the same time, by construction, the $L_i$ can be lifted  to  objects in $\cC(\Sigma,\theta)$, and clearly under $\iota$ the $L_i$ are sent to the $M_i$
  $$
  L_i \in \cC(\Sigma,\theta) \mapsto \iota(L_i) \simeq M_i \in \cB^\omega_{\Lambda^{fin}}.
  $$
  So $\iota$ is a fully-faithful embedding such that its image contains a collections of generators of $\cB^\omega_{\Lambda^{fin}}$. Thus is has to be an equivalence, which is what we needed to show. 
 \end{proof}

We record a remarkable consequence of Proposition \ref{essential} and its proof, namely that all objects in the Fukaya category of a closed Riemann surface of genus $g \geq 2$ are geometric. Corollary \ref{geometric} provides an answer to the second open Problem in Section 7 of \cite{haiden2017flat}.

The geometric idea is simple. The objects in $\Flf(\widetilde{\Sigma})^{O_\pi}$ can be viewed as $O_\pi$-invariant branes in $\Flf(\widetilde{\Sigma})$ equipped with an equivariant structure. In particular, by the geometricity of $\widetilde{\Sigma}$ they can be viewed as immersed Lagrangians in $\widetilde{\Sigma}$. It is easy to see that $(\lambda^{O_\pi})^{-1}$ has a very simple definition on geometric objects: it is just the push-forward of the immersed Lagrangian along $\pi: \widetilde{\Sigma} \to \Sigma$; then we use the equivariant structure to equip the resulting immersed Lagrangian in $\Sigma$ with a brane structure. This shows that, using Proposition \ref{essential}, geometricity for $\widetilde{\Sigma}$ readily implies geometricity for $\Sigma$.

\begin{corollary}
\label{geometric}
Let $S$ a compact Riemann surface of genus $g \geq 2$. Then every object in $\mathrm{Fuk}(S)$ is geometric. 
\end{corollary}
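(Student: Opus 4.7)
The plan is to deduce geometricity on the compact surface $S$ by transferring geometricity upstairs along the maximal tropical cover $\pi: \widetilde{S}\to S$, using the equivalence $\lambda^{O_\pi}: \mathrm{Fuk}(S) \xrightarrow{\simeq} \Flf(\widetilde{S})^{O_\pi}$ from Theorem \ref{essential}. The whole point is that almost all of the work has already been carried out in the proofs of Corollary \ref{geometricupstairs} and Proposition \ref{prop:split}; what remains is to interpret the covering-space construction as a geometrization statement.

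First, given an object $L \in \mathrm{Fuk}(S)$, apply $\lambda^{O_\pi}$ to obtain an $O_\pi$-equivariant object $\widetilde{L} \in \Flf(\widetilde{S})^{O_\pi}$, and let $\overline{L}$ denote the underlying non-equivariant object in $\Flf(\widetilde{S})$. By Proposition \ref{prop:split}, $\widetilde{L}$ splits $O_\pi$-equivariantly as a direct sum of pieces of the form $\widetilde{C}^{\oplus r}$, where $\widetilde{C} = \bigoplus_{g \in O_\pi/H} g\cdot C$ for $C$ an indecomposable geometric object lifting some compact immersed curve $\overline{C} \subset S$, and $H$ is either trivial or isomorphic to $\Z$.

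Next, I would construct, for each such summand, an \emph{a priori} geometric object of $\mathrm{Fuk}(S)$ whose image under $\lambda^{O_\pi}$ recovers that summand. This is exactly what the essential-surjectivity argument at the end of the proof of Theorem \ref{essential} already does: if $H$ is trivial, the summand $\widetilde{C}^{\oplus r}$ is $O_\pi$-equivariantly isomorphic to the pullback of $\overline{C}^{\oplus r}$ equipped with its tautological equivariant structure; if $H \cong \Z$, the $O_\pi$-equivariant structure on $\widetilde{C}^{\oplus r}$ determines (and is determined by) a rank-$r$ local system on $\overline{C}$ so that the summand is the pullback of $(\overline{C}, \mathrm{local\ system})$. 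In both cases the resulting downstairs object is a compact immersed curve on $S$ with brane structure, i.e.\ a geometric object of $\mathrm{Fuk}(S)$.

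Finally, since $\lambda^{O_\pi}$ is fully faithful (Proposition \ref{fullyfaithful}), the direct sum of these geometric downstairs objects is isomorphic to $L$ itself in $\mathrm{Fuk}(S)$. Hence $L$ is geometric, proving the corollary. The only substantive bookkeeping is checking that brane data (orientation, spin structure, local system, and $\Z/2\Z$-grading) pulled back from $S$ and then $O_\pi$-averaged indeed recover the equivariant data on $\widetilde{C}^{\oplus r}$; but this is built into the constructions of the pullback functor $\lambda$ and of Proposition \ref{componentsofequivariant}, and is not a new difficulty. In effect, the geometric representative of $L$ is simply the image under the covering map $\pi$ of the $O_\pi$-invariant geometric representative of $\overline{L}$ supplied by Corollary \ref{geometricupstairs}, decorated with the local system read off from the $O_\pi$-equivariant structure.
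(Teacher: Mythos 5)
Your proposal is correct and follows the same strategy as the paper's own proof: apply the equivalence $\lambda^{O_\pi}$ from Theorem \ref{essential}, invoke the equivariant geometric decomposition from Proposition \ref{prop:split}, and observe that the essential-surjectivity argument in Theorem \ref{essential} already produces, for each such summand, a geometric preimage in $\mathrm{Fuk}(S)$; full-faithfulness then closes the loop. The paper states this more tersely by simply remarking that the proof of essential surjectivity constructs geometric representatives $L'$ with $\lambda^{O_\pi}(L')\simeq L$, but the content is the same.
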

\begin{proof}
Let us give a more formal argument based on the proof of Proposition \ref{essential}. There we proved that the functor $\lambda^{O_\pi} : \mathrm{Fuk}(\Sigma)\to \Flf(\widetilde{\Sigma})^{O_\pi}$ is essentially surjective. In fact we showed a  stronger result: namely for every object $L$ in $\Flf(\widetilde{\Sigma})^{O_\pi}$ we constructed a geometric object $L'$ in $\mathrm{Fuk}(\Sigma)$ such that $\lambda^{O_\pi}(L') \simeq L$. Now since $\lambda^{O_\pi}$ is an equivalence, this immediately implies that every object in $\mathrm{Fuk}(\Sigma)$ is equivalent to a geometric object, which is what we wanted to show. 
\end{proof}

\section{HMS for compact surfaces}
\label{hms}
Our description of the Fukaya category of a compact Riemann surface of genus $g \geq 2$ has  significant applications to HMS. In particular, we recover  Seidel and Efimov's HMS  results  for curves of genus $g \geq 2$ \cite{seidel2011homological} \cite{efimov2012homological}. Our method of proof  however is different,  and can help clarify why these mirror constructions actually work. 
%
Surfaces have actually  many different geometrically meaningful mirrors. The techniques developed in this paper allow us to easily check that these different models indeed give rise to an HMS equivalence, alternative to Seidel's and Efimov's. We focus on two constructions:  
\begin{enumerate}
\item Hori--Vafa mirror symmetry matches a toric CY 3-folds $X_\Sigma$,  equipped with a superpotential $W_\Sigma$ to a punctured surface $\Sigma$ equipped with a pants decomposition. We show that the Hori--Vafa picture holds in complete generality for all toric 3-folds, regardless of the CY assumption. 
\item We show that the Mumford degeneration of  the Jacobian provides a natural mirror LG model for  Riemann surfaces of genus two. The same construction appears, for the other direction of HMS, in beautiful recent work of Cannizzo \cite{cannizzo2020categorical}.
\end{enumerate}

One prefatory remark about the results of this section is in order. We consider the Fukaya category either over $\kappa = \Lambda$, or the balanced Fukaya category over $\kappa = k$; in either case it is considered as a $2$-periodic, that is $\kappa\rbu$-linear, category. The equivalences described below can be interpreted either as $\kappa$-linear equivalences, or they can be upgraded to $\kappa \rbu$-linear equivalences with the caveat that the $\kappa\rbu$-linear structure on the B-side is only determined up to a rescaling of $u$. We do not pin down this rescaling factor precisely, though in principle this can be accomplished with a closer analysis of the categories of matrix factorizations. We also remark that this scaling factor can be trivialized in the case of open surfaces and their mirrors.

  Let $G$ be a finite graph with no loops, and with vertices of valency $1$ or $3$. Let $G^\circ$ be the graph obtained from $G$ by removing the $1$-valent vertices. Up to symplectomorphism, there is a unique Riemann surface  $\Sigma_G$ (possibly with boundary) such that $G^\circ$ is the dual intersection complex of a pants decomposition of $\Sigma$. The $1$-valent vertices of 
  $G$ correspond naturally to  a subset of the components of $\partial \Sigma_G$: we decorate each of these components with a stop; let 
  $S_G \subset \partial \Sigma_G$  be the collection of stops obtained in this way.

    \begin{theorem}
    \label{main}
    Let $T$ be a smooth variety of dimension $3$ and let $X \subset T$ be a simple normal crossing divisor of the form $X = f^{-1}(0)$ for some morphism $f : T \to \bA^{1}$. Assume that $X$ has graph-like singular locus, and that the dual intersection complex of $X$ is orientable. Then there is an equivalence of categories
    $$
   \mathrm{Fuk}(\Sigma_{G(X)}, S_{G(X)}) \simeq 
    \Dsing(X) 
    $$
  \end{theorem}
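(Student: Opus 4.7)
The plan is to identify both sides of the claimed equivalence with the category of compact sections $\cB^{\omega}(G(X))$ of the sheaf of categories constructed in Section \ref{Grcat}. The graph $G(X)$ is trivalent with one-valent vertices corresponding to the stops $S_{G(X)}$, and all the framing and orientability hypotheses on $X$ are precisely what is needed to feed the B-side into the machinery of Section \ref{sec:completions}.

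For the B-side, Theorem \ref{matrixgraph} gives a $\kappa$-linear equivalence
\begin{equation*}
  \Dsing^{\infty}(X) \simeq \cB(G(X))
\end{equation*}
under the stated hypotheses (simple normal crossings, graph-like singular locus, orientable dual intersection complex). Passing to compact objects on both sides yields $\Dsing(X) \simeq \cB^{\omega}(G(X))$ in $\mathrm{DGCat}^{(2)}_{\mathrm{small}}$. The weight $\gamma$ appears here only as a twist of the $\kappa\rbu$-linear structure, which under Remark \ref{rem:segal-twist} and the discussion in Section \ref{sec:overall-dependence} can be absorbed (or left implicit) according to taste.

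For the A-side, I would split into two cases. If $G(X)$ has no one-valent vertices, then $\Sigma_{G(X)}$ is closed and, since $G(X)$ must have first Betti number at least 2 for $X$ to be of the required type, it has genus $g \geq 2$; here Corollary \ref{mainlocalcor} directly supplies the equivalence $\mathrm{Fuk}(\Sigma_{G(X)}) \simeq \cB^{\omega}(G(X))$, with $S_{G(X)} = \varnothing$. If $G(X)$ has some one-valent vertices, then $\Sigma_{G(X)}$ has boundary and $S_{G(X)} \subset \partial \Sigma_{G(X)}$ is the set of stops attached to the non-compact edges of $G(X)$. In that case the partially wrapped Fukaya category $\mathrm{Fuk}^w(\Sigma_{G(X)}, S_{G(X)})$ is equivalent to the topological Fukaya category of a skeleton whose graph of components is $G(X)$, by the results of \cite{ganatra2020covariantly} (equivalence \eqref{GPS}) combined with the main theorem of \cite{pascaleff2019topological}, which identifies the topological Fukaya category of a punctured surface with pants decomposition $G$ with $\cB^{\omega}(G)$. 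Composing the two identifications produces the desired equivalence.

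The main obstacle is the bookkeeping of $\kappa\rbu$-linear structures and framings: a priori the equivalence on each side depends on choices (a presentation of $X$ as a fiber of a morphism, a choice of Liouville form and balancing datum, orientations of edges and cyclic orders at vertices). The content of Section \ref{sec:overall-dependence} and the framing discussion in Section \ref{sec:local-model-res} is precisely that, once the orientability of the dual intersection complex of $X$ is fixed, there is a canonical framing class on $G(X)$, and $\cB^{\omega}(G(X),\gamma)$ is well-defined up to equivalence; the $\gamma$-dependence on the A-side matches the total symplectic area $-\val_q(\gamma) = \text{Area}(\Sigma_{G(X)})$ under the identifications of Corollary \ref{mainlocalcor}. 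Modulo this already-established bookkeeping, the theorem is a formal consequence of the two identifications above.
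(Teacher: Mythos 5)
Your proposal is correct and takes essentially the same route as the paper: both sides are identified with $\cB^{\omega}(G(X))$, using Theorem \ref{matrixgraph} on the B-side and then splitting according to whether $G(X)$ has $1$-valent vertices, invoking Theorem 8.3 of \cite{pascaleff2019topological} in the boundary case and Corollary \ref{mainlocalcor} in the closed case. The only slip is citing Remark \ref{rem:segal-twist} for absorbing the weight $\gamma$ --- that remark concerns $\Z/2\Z$-valued shift twists, not the $\kappa^{\times}$-valued weight twist, which is governed by Section \ref{sec:overall-dependence} alone --- but this does not affect the argument.
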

%
%
%

  \begin{proof} 
Let us first make a comment of notation:  
if $\Sigma_{G(X)}$ has a   boundary, by $\mathrm{Fuk}(\Sigma_{G(X)}, S_{G(X)}) $ we mean the (partially)  wrapped Fukaya category of  $\Sigma_{G(X)}$, which is an exact symplectic surface. If $\Sigma_{G(X)}$ is a compact Riemann surface without boundary, then $S_{G(X)} = \varnothing$ and $\mathrm{Fuk}(\Sigma_{G(X)}, S_{G(X)})$ stands for the balanced Fukaya category of $\Sigma_{G(X)}$. The theorem is an immediate consequence of the results we obtained in Section \ref{mfat} and \ref{csae}, and in our previous article \cite{pascaleff2019topological}. Indeed,  the claim is obtained by composing the equivalences
 $$
\Dsing(X) \stackrel{(*)}\simeq \mathcal{B}^\omega(G(X)) \stackrel{(**)} \simeq 
\mathrm{Fuk}(\Sigma_{G(X)}, S_{G(X)})
 $$
 Equivalence $(*)$ is given by Theorem \ref{matrixgraph}, after taking compact objects. Let us make some comments on equivalence $(**)$: 
 \begin{itemize}
 \item If $G(X)$ is not compact, or is compact but has 1-valent vertices, then $\Sigma_{G(X)}$ is Riemann surface with boundary, and therefore we are in the exact setting. In this case equivalence $(**)$ follows from Theorem 8.3  in \cite{pascaleff2019topological}. 
 \item If $G(X)$ is a compact  and 3-valent graph, then $\Sigma_{G(X)}$ is a compact Riemann surface without boundary. In this setting, equivalence $(**)$ was obtained in Corollary \ref{mainlocalcor}. \end{itemize}
  \end{proof}

Let $\Sigma_g$ be a compact Riemann surface of genus $g \geq 2$ without boundary. Let us denote by $(Y_g, W_g)$ the mirror LG model of $\Sigma_g$ proposed by Seidel (for $g=2$) and Efimov (for $g >2$) in \cite{seidel2011homological} \cite{efimov2012homological}. We refer the reader to the original papers for the explicit construction of $(Y_g, W_g)$. 
\begin{corollary}[Theorem 1.1 \cite{seidel2011homological}, Theorem 1.1 \cite{efimov2012homological}]
There is an equivalence of categories
$$
 \mathrm{Fuk}(\Sigma_g) \simeq  \mathrm{MF}(Y_g, W_g) 
$$
\end{corollary}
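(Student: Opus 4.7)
The plan is to apply Theorem \ref{main} directly to the pair $(Y_g,W_g)$ coming from Seidel's and Efimov's constructions. Set $X = W_g^{-1}(0) \subset Y_g$, so that by Preygel's equivalence $\mathrm{MF}(Y_g,W_g) \simeq \Dsing(X)$ (after passing to compact objects). The statement will follow once we verify that $X$ meets the hypotheses of Theorem \ref{main}, and that the associated trivalent graph $G(X)$ encodes a pants decomposition of a closed Riemann surface of genus exactly $g$.

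First I would recall the shape of $W_g^{-1}(0)$ in each case. For Seidel's $g=2$ model, $Y_2$ is a resolution of a quotient of $\bA^3$, and $W_2^{-1}(0)$ is a reducible normal crossing surface whose singular locus is a configuration of rational curves meeting transversely. For Efimov's higher genus models the picture is analogous: the central fiber is a simple normal crossing surface built from a finite collection of rational components glued along rational curves, arising as a degeneration of a smooth surface. In both cases the singular locus $Z$ is a union of $\mathbb{P}^1$'s (plus possibly a few $\mathbb{A}^1$'s or $\bG_m$'s on noncompact components), the triple points of $X$ are isolated, and at each triple point three components meet in the standard coordinate-hyperplane fashion. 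This is precisely the statement that $X$ has graph-like singular locus in the sense of Definition in Section \ref{sconcs}.

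Second, I would verify orientability of the dual intersection complex. In both constructions the dual intersection complex of $X$ is designed to be a closed orientable real surface (specifically, a triangulated model of the genus $g$ surface $\Sigma_g$); this is visible either from the explicit combinatorial description of the components given in \cite{seidel2011homological, efimov2012homological}, or from the fact that the construction is obtained from a degeneration whose generic fiber is an orientable surface with non-vanishing holomorphic volume form restricted. Thus the orientability hypothesis of Theorem \ref{main} is satisfied. With these two verifications in hand, Theorem \ref{main} yields
\begin{equation*}
  \mathrm{Fuk}(\Sigma_{G(X)}, S_{G(X)}) \simeq \Dsing(X) \simeq \mathrm{MF}(Y_g, W_g).
\end{equation*}

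Finally I would identify $\Sigma_{G(X)}$ with $\Sigma_g$. Since the dual intersection complex of $X$ is a closed orientable surface of genus $g$, and since $G(X)$ is by construction the $1$-skeleton of this complex (vertices are triple points, edges are components of $Z$), the graph $G(X)$ has no $1$-valent vertices and $S_{G(X)} = \varnothing$, so $\Sigma_{G(X)}$ is the unique closed surface admitting a pants decomposition with dual graph $G(X)$. Counting $H_1(G(X);\Z)$ and comparing with the genus of the dual intersection complex (via Euler characteristic of a trivalent graph on a closed surface) shows that $\Sigma_{G(X)}$ has genus $g$, hence $\Sigma_{G(X)} \simeq \Sigma_g$. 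The main obstacle is this last combinatorial identification: one must actually carry out the bookkeeping of triple points and singular components in the Seidel/Efimov fiber $W_g^{-1}(0)$ to see that they assemble into a pants decomposition graph of the correct genus. This bookkeeping is, however, implicit in the original constructions, where $(Y_g,W_g)$ was reverse-engineered from $\Sigma_g$ precisely so that the central fiber tropicalizes to $\Sigma_g$.
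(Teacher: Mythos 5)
Your overall strategy is the same as the paper's: apply Theorem \ref{main} to $X = W_g^{-1}(0)$, verify the hypotheses (graph-like singular locus, orientability of the dual intersection complex), and then identify $\Sigma_{G(X)}$ with $\Sigma_g$.

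However, the final identification step contains a concrete error. You claim that the dual intersection complex $C(X)$ of $X$ is a closed orientable surface of genus $g$ and that this, together with an Euler-characteristic count, implies $\Sigma_{G(X)}$ has genus $g$. This is not true. If $X$ has $v$ irreducible components, $e$ double curves and $f$ triple points, then $C(X)$ has Euler characteristic $v - e + f$, while $G(X)$ (the graph with vertices the triple points and edges the double curves) has $\mathrm{rank}\, H_1(G(X)) = e - f + 1$. For $G(X)$ to encode a pants decomposition of a genus $g$ surface we need $f = 2g-2$ and $e = 3g-3$, so $\chi(C(X)) = v - g + 1 \geq 2 - g > 2 - 2g$ whenever $g\ge 2$ and $v\ge 1$. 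Thus $C(X)$ has genus \emph{strictly less} than $g$; in Seidel's $g=2$ case ($v=3$, $e=3$, $f=2$) it is in fact a sphere. The genus of $\Sigma_{G(X)}$ is controlled by $\mathrm{rank}\,H_1(G(X))$, not by the genus of $C(X)$, and these two quantities are genuinely different. The correct verification is simply that $G(X_g)$ has $2g-2$ trivalent vertices and $3g-3$ edges, which must be read off from the explicit descriptions of $W_g^{-1}(0)$ in \cite{seidel2011homological,efimov2012homological} (the paper cites Figure~1 of \cite{seidel2011homological} for $g=2$ and the analogous structure in Efimov's extension for $g>2$). Your closing sentence, that this bookkeeping is implicit because $(Y_g,W_g)$ was reverse-engineered from $\Sigma_g$, captures the right idea; but the intermediate claim about $C(X)$ being genus $g$ should be removed, as it is false and the Euler-characteristic comparison you invoke does not give the stated conclusion.
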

\begin{proof}
The equivalence follows from Theorem \ref{main}. We only need to check that $X_g:=W_g^{-1}(0)$ has graph-like singular locus, and that $G(X_g)$ is isomorphic to the dual intersection complex of a triangulation of $\Sigma_g$.   For $g=2$, see Figure 1 in \cite{seidel2011homological}. As for $g >2$, note that Efimov's construction extends all steps of Seidel's proof to higher genus: the singular locus of $W_g$ has an analogous shape to the one of Seidel, except the genus can be arbitrary; in particular, as in Seidel's proof, $G(X_g)$ encodes the dual intersection complex of a triangulation of $\Sigma_g$. \end{proof}

Let $Y$ be a smooth toric $3$-fold, and let $X$ be its toric boundary divisor. As pointed out in Remark \ref{toricnti}, $X$ has graph-like singular locus.  The graph $G(X)$ is trivalent, and it is compact if $Y$ is compact. However, $X$ is merely the zero locus of a section of a line bundle on $Y$ rather than that zero fiber of a morphism $Y \to \bA^{1}$. To fix this, choose a pencil that contains $X$; then we get a rational map $f : Y \dashrightarrow \bP^{1}$ with $X = f^{-1}(0)$ as a fiber. By resolving the singularities of the closure of the graph of $f$, we obtain a variety $\tilde{Y}$ that is birational to $Y$ and a morphism $\tilde{f} : \tilde{Y} \to \bP^{1}$. The resolution process may modify the fiber $X$ by a birational transformation: we call the corresponding fiber $\tilde{X} = \tilde{f}^{-1}(0)$. In many cases it is possible to choose a resolution such that $G(\tilde{X}) \cong G(X)$, that is, the combinatorics of the singular locus is unchanged. As a special case of Theorem \ref{main} we obtain the following. 
\begin{corollary}
\label{horivafa}
There is an equivalence of categories 
$$
\mathrm{Fuk}(\Sigma_{G(\tilde{X})}) \simeq \Dsing(\tilde{X}) $$
\end{corollary}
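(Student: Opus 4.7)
The plan is to apply Theorem \ref{main} directly to the pair $(\tilde{Y}, \tilde{X})$ produced by the pencil-and-resolution construction in the preceding paragraph. To do so I need to verify the three hypotheses of that theorem: $\tilde{X}$ is presented as the zero fiber of a morphism from a smooth $3$-fold to $\mathbb{A}^{1}$; $\tilde{X}$ has graph-like singular locus; and the dual intersection complex of $\tilde{X}$ is orientable.

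First I would promote $\tilde{f} : \tilde{Y} \to \mathbb{P}^{1}$ to a morphism targeting $\mathbb{A}^{1}$ by choosing an affine chart $\mathbb{A}^{1} \subset \mathbb{P}^{1}$ containing $0$ and passing to $\tilde{Y}' := \tilde{f}^{-1}(\mathbb{A}^{1})$, which is still a smooth $3$-fold and still has $\tilde{X}$ as its zero fiber. Because $\Dsing^{\infty}(\tilde{X})$ depends only on the formal neighborhood of the singular locus of $\tilde{X}$ (invoked in the proof of Theorem \ref{matrixgraph} via \cite{orlov2011formal}), this localization does not affect the right-hand side of the desired equivalence.

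Next I would address the combinatorial hypotheses. The toric boundary $X$ of the smooth toric $3$-fold $Y$ has graph-like singular locus by Remark \ref{toricnti}, and the resolution has been chosen, as stated in the paragraph preceding the corollary, so that $G(\tilde X) \cong G(X)$; in particular $\tilde X$ remains a simple normal crossings divisor with graph-like singular locus. For orientability, note that the dual intersection complex of the toric boundary of a smooth toric $3$-fold is naturally identified with (a piece of) the link of the cone point in the fan $\Sigma_{Y} \subset N_{\R} \cong \R^{3}$, and carries a canonical orientation induced by the orientation of the ambient lattice. Since the resolution is assumed to preserve $G$, this orientation passes to the dual intersection complex of $\tilde{X}$.

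With all three hypotheses in place, Theorem \ref{main} supplies an equivalence $\mathrm{Fuk}(\Sigma_{G(\tilde{X})}, S_{G(\tilde{X})}) \simeq \Dsing(\tilde{X})$. Since the graph $G(\tilde{X})$ associated to a toric boundary divisor is trivalent (every cone point of the fan corresponds to a triple intersection), it has no $1$-valent vertices, so $S_{G(\tilde{X})} = \varnothing$ and the left-hand side reduces to $\mathrm{Fuk}(\Sigma_{G(\tilde{X})})$ as stated. The only real subtlety in the argument is bookkeeping around the passage from the rational map $f : Y \dashrightarrow \mathbb{P}^{1}$ to an actual morphism to $\mathbb{A}^{1}$ on a suitable smooth $3$-fold; once the resolution is in hand and the combinatorics is preserved, the corollary is a clean specialization of Theorem \ref{main}.
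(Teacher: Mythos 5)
Your proof is correct and is essentially the paper's (unstated) argument: the corollary is a direct specialization of Theorem \ref{main}, and you carefully verify the three hypotheses. Two small remarks worth noting. First, your parenthetical justification that ``every cone point of the fan corresponds to a triple intersection'' is a bit compressed; the cleaner argument is that in a smooth toric $3$-fold every torus-fixed point corresponds to a smooth $3$-dimensional cone and hence is a genuine triple point of the boundary divisor, while every compact (resp.\ one-ended) torus curve has both (resp.\ its unique) endpoints among such fixed points, so no artificial $1$-valent marked points arise; non-compact edges of $G$, coming from $\mathbb{G}_{m}$-strata, carry no stops either, so $S_{G(\tilde X)}=\varnothing$. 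Second, you should be explicit that ``the resolution preserves $G$'' is taken to mean the full combinatorics of the singular locus (including the $2$-cells of the dual intersection complex), not merely the $1$-skeleton, since otherwise orientability of the dual complex of $\tilde X$ would not formally follow from that of $X$; this stronger hypothesis is what the paper intends by ``the combinatorics of the singular locus is unchanged.'' With those clarifications your argument is complete.
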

\begin{remark}
Let us explain why it is worthwhile to highlight, as  Corollary \ref{horivafa}, this specific setting of Theorem \ref{main}. The statement of Corollary \ref{horivafa} is a   generalization of Hori--Vafa mirror symmetry, which can be recovered as the special case  when $Y$ is Calabi--Yau. Hori--Vafa mirror symmetry is a widely  studied problem, with  contributions by many mathematicians starting with \cite{abouzaid2016lagrangian} \cite{bocklandt2016noncommutative} \cite{lee2016homological}; we addressed Hori--Vafa mirror symmetry from the perspective of the topological Fukaya category in our previous paper \cite{pascaleff2019topological}. If $Y$ is a smooth toric 3-fold we obtain in particular new mirrors for compact curves of genus $g \geq 3$ (the case $g=3$ corresponding to $Y=\mathbb{P}^3$). In future work, we will pursue generalizations of this picture to higher dimensions.
\end{remark}

\subsection{HMS for genus two curves and Mumford degenerations of Abelian surfaces}
We conclude this section by explaining how our methods give a  proof of a version of HMS for the genus two curve $\Sigma$, where the mirror is a Mumford degeneration of abelian surfaces. This type of  mirror symmetry was first suggested by Seidel in \cite{seidel2012some} in the more general contexts of hypersurfaces inside abelian varieties. In \cite{cannizzo2020categorical}  Cannizzo proves HMS for the genus two curve in this framework, but in the opposite direction: she compares the derived category of  $\Sigma$ with a kind of Fukaya--Seidel category of the mirror Mumford degeneration. We complement Cannizzo's result by showing that the reverse HMS  equivalence also holds. This is an immediate consequence of Theorem \ref{main}.

We give an informal treatment of the    set-up, referring to Section 10  of \cite{abouzaid2016lagrangian} (see especially Example 10.6) and Section 3 of \cite{cannizzo2020categorical} for a more thorough account of the underlying geometry.  Let $\Sigma$ be a Riemann surface of genus two. The starting point of the construction is a choice of  embedding of $\Sigma$ inside an abelian surface $A$, such as the Jacobian of $\Sigma$. We can consider a simultaneous degeneration of 
$\Sigma$ inside $A$ over a small disk $\mathbb{D}_\varepsilon$: the central fiber of the degeneration of the ambient abelian surface has two irreducible components, such that their normalizations are both isomorphic to $\mathbb{P}^2$;  
the curve  
$\Sigma$ degenerates to a nodal curve $\Sigma_0$ given by two $\bP^1$ which lie in separate components of $A_0$, and meet transversely in three distinct points: $\Sigma_0$ is sometimes  called the  banana curve.

 The mirror family  can be computed via Legendre transform of these degeneration data, and is also  a Mumford degeneration of abelian surfaces. We obtain a family $W: Y \to \mathbb{D}_\varepsilon$ such that the smooth fibers are abelian surfaces, and the normalization of the singular fiber $Y_0$ is a blow-up of 
$\bP^2$ at the three torus-fixed points.   The LG model $(Y, W)$ gives another mirror of $\Sigma$,  which is different from Seidel's original construction. 

\begin{corollary}
There is an equivalence of categories
$$
 \mathrm{Fuk}(\Sigma) \simeq \mathrm{MF}(Y, W).
$$
\end{corollary}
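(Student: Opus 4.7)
The strategy is to apply Theorem \ref{main} to the Landau-Ginzburg pair $(Y, W)$. Specifically, I would need to check that the singular fiber $X := Y_0 = W^{-1}(0)$ satisfies the hypotheses of Theorem \ref{main}: namely, that $X$ is a simple normal crossing divisor in the smooth $3$-fold $Y$, that $X$ has graph-like singular locus in the sense of Section \ref{sconcs}, that the dual intersection complex of $X$ is orientable, and that the graph $G(X)$ coincides (up to isomorphism) with the graph encoding a pants decomposition of the genus two surface $\Sigma$.

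First I would analyze the central fiber $X$ of the Mumford degeneration $W: Y \to \bD_\varepsilon$. As recalled above (see Section~10 of \cite{abouzaid2016lagrangian} and Section~3 of \cite{cannizzo2020categorical}), the normalization of $X$ consists of two copies of the blow-up of $\bP^2$ at its three torus-fixed points, glued along a cycle of rational curves according to the combinatorics of the Legendre-dual polytope. This makes $X$ a simple normal crossing surface in $Y$, and its singular locus $Z$ is a union of rational curves meeting transversely at the points where three components of $X$ come together. Because $Y$ is a smooth toric-like $3$-fold (the Mumford construction is toric along the singular fiber, cf.\ Remark \ref{toricnti}), $X$ has graph-like singular locus. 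The dual intersection complex of $X$ is (a triangulation of) the mirror degeneration's integral affine $2$-sphere, which is orientable, so the hypothesis on the dual intersection complex is satisfied.

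The key combinatorial step is to identify the graph $G(X)$. By construction, the vertices of $G(X)$ are the triple points of $X$ (equivalently, the nodes of the singular locus $Z$) and the edges correspond to the irreducible components of $Z$. A direct inspection of the Mumford/Legendre-dual picture for the genus two banana degeneration shows that $G(X)$ is the trivalent graph with two vertices joined by three edges, namely the ``theta'' graph. This is exactly the dual intersection graph of the minimal pants decomposition of the genus two closed surface $\Sigma$: indeed $\Sigma = P_1 \cup P_2$ with $P_1 \cap P_2$ a disjoint union of three circles, so the pants-decomposition graph has two trivalent vertices connected by three edges, and $H_1(G,\Z)\cong \Z^2$, consistent with $g(\Sigma) = 2$. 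Hence $\Sigma_{G(X)}\cong \Sigma$.

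With these checks in place, Theorem \ref{main} yields an equivalence $\mathrm{Fuk}(\Sigma_{G(X)})\simeq \Dsing(X)$, and the standard equivalence $\Dsing(X)\simeq \mathrm{MF}(Y,W)$ of \cite[Proposition 3.4.1]{preygel2011thom} (recalled in Section \ref{potfc}) gives the desired result. The main obstacle in this plan is really the verification that $G(X)$ has the claimed shape: one must trace through the Legendre-dual combinatorics of the Mumford degeneration carefully enough to confirm both the count of triple points and the incidence structure of the components of $Z$. Once that combinatorial identification is made, the orientability of the dual intersection complex is automatic (as $X$ arises from a polytope on a sphere) and the rest is a direct appeal to Theorem \ref{main}.
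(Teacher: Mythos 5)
Your proof follows the same route as the paper: identify $Y_0 = W^{-1}(0)$ as a simple normal crossing surface with graph-like singular locus whose graph $G(Y_0)$ is the theta graph (two trivalent vertices joined by three edges), note that $\Sigma_{G(Y_0)}$ is then a closed genus-$2$ surface, and apply Theorem \ref{main} together with $\Dsing(Y_0)\simeq \mathrm{MF}(Y,W)$. The paper's own proof is terser — it simply cites Example 10.6 of \cite{abouzaid2016lagrangian} to identify the singular locus and then invokes Theorem \ref{main} — whereas you spell out the intermediate checks (normal crossing, graph-like, orientability of the dual complex), but the substance and the source of the combinatorial identification are identical.

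One small imprecision worth flagging: you describe the vertices of $G(X)$ as ``the triple points of $X$ (equivalently, the nodes of the singular locus $Z$).'' The vertices of $G(X)$ are the marked points $S'$, which in general strictly contain the triple points $S$; they coincide here only because every component of $Z$ is a $\bP^1$ passing through exactly two triple points. Also, a triple point of $X$ is a point where three branches of $Z$ meet (locally the three coordinate axes in $\bA^3$), not a node in the usual sense of two smooth branches. Neither issue affects the conclusion in this case, but the phrasing should be tightened.
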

\begin{proof}
The singular locus of $Y_0$ is a banana curve, see Example 10.6 in \cite{abouzaid2016lagrangian}.  Thus $Y_0$ has graph-like singular locus and $\Sigma = \Sigma_{G(Y_0)}$, then the statement follows from Theorem \ref{main}. 
\end{proof}

\bibliographystyle{alpha}
\bibliography{fuk}
\end{document}